\newcounter{nameOfYourChoice}
\tikzset{>=latex,shift left/.style ={commutative diagrams/shift left={#1}},
  shift right/.style={commutative diagrams/shift right={#1}}}
\newcolumntype{L}{>{\arraybackslash}X}
\theoremstyle{plain}
\newtheorem{theorem}{Theorem}[section]
\theoremstyle{remark}
\newtheorem{remark}[theorem]{Remark}
\newtheorem{example}[theorem]{Example}
\theoremstyle{plain}
\newtheorem{corollary}[theorem]{Corollary}
\newtheorem{lemma}[theorem]{Lemma}
\newtheorem{proposition}[theorem]{Proposition}
\newtheorem{definition}[theorem]{Definition}
\newtheorem{assumption}[theorem]{Assumption}
\numberwithin{equation}{section}
\def\N{{\mathbb N}}
\def\R{{\mathbb R}}
\def\T{{\mathbb T}}
\newcommand{\E}{{\mathbb E}}
\renewcommand{\P}{{\mathbb P}}
\newcommand{\F}{{\mathscr F}}
\newcommand{\g}{\gamma}
\newcommand{\om}{\omega}
\renewcommand{\O}{\Omega}
\renewcommand{\a}{\kappa}
\newcommand{\loc}{{\rm loc}}
\newcommand{\Tor}{\mathbb{T}}
\newcommand{\Dom}{\mathcal{O}}
\newcommand{\A}{\mathcal{A}}
\newcommand{\D}{\mathscr{D}}
\DeclareMathOperator*{\esssup}{\textup{ess\,sup}}
\newcommand{\wt}{\widetilde}
\newcommand{\one}{{{\bf 1}}}
\newcommand{\embed}{\hookrightarrow}
\newcommand{\s}{\delta}
\newcommand{\dps}{\displaystyle}
\renewcommand{\div}{\normalfont{\text{div}}}
\newcommand{\crit}{\mathrm{c}}
\newcommand{\reg}{\delta}
\newcommand{\norm}[1]{{\left\vert\kern-0.25ex\left\vert\kern-0.25ex\left\vert #1
    \right\vert\kern-0.25ex\right\vert\kern-0.25ex\right\vert}}
\renewcommand{\emptyset}{\varnothing}
\newcommand{\m}{\zeta}
\newcommand{\Progress}{\mathscr{P}}
\newcommand{\rnoise}{g}
\newcommand{\am}{a}
\newcommand{\bm}{b}
\newcommand{\Borel}{\mathscr{B}}
\def\XXint#1#2#3{{\setbox0=\hbox{$#1{#2#3}{\int}$ }
\vcenter{\hbox{$#2#3$ }}\kern-.6\wd0}}
\newcommand{\ellip}{\nu}
\newcommand{\Y}{\mathcal{Y}}
\newcommand{\ej}{\mathcal{E}_{j}}
\newcommand{\dd}{\mathrm{d}}
\newcommand{\HH}{\mathcal{K}}
\newcommand{\MM}{\mathcal{M}}
\begin{document}

\dedicatory{Dedicated to Philippe Cl\'ement on the occasion of his 80th birthday}

\author{Antonio Agresti}
\address{Institute of Science and Technology Austria (ISTA), Am Campus 1, 3400 Klosterneuburg, Austria}
\curraddr{Delft Institute of Applied Mathematics\\
Delft University of Technology \\ P.O. Box 5031\\ 2600 GA Delft\\The
Netherlands}
\email{agresti.agresti92@gmail.com}

\author{Mark Veraar}
\address{Delft Institute of Applied Mathematics\\
Delft University of Technology \\ P.O. Box 5031\\ 2600 GA Delft\\The
Netherlands} \email{M.C.Veraar@tudelft.nl}

\thanks{The first author has received funding from the European Research Council (ERC) under the Eu\-ropean Union’s Horizon 2020 research and innovation programme (grant agreement No 948819) \includegraphics[height=0.4cm]{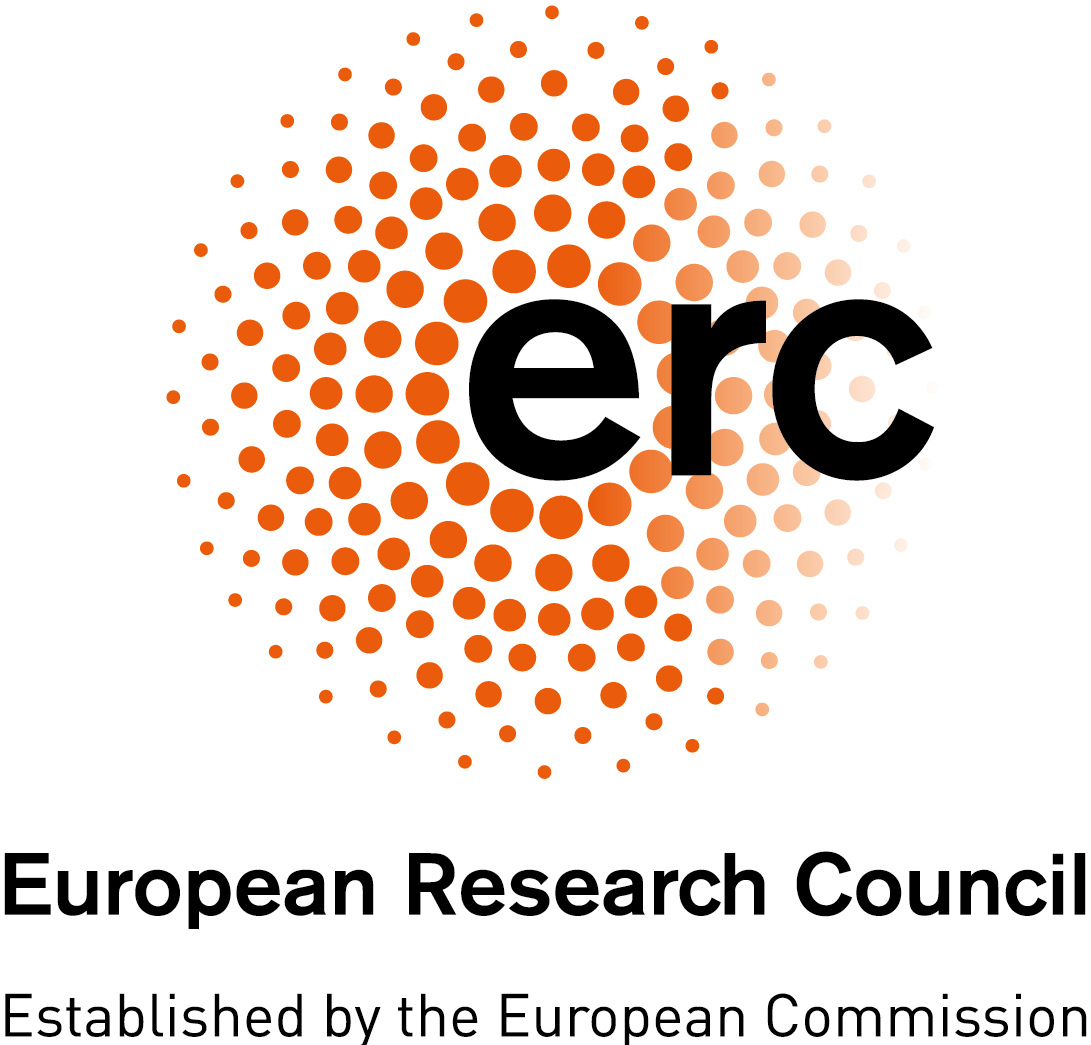}\,\includegraphics[height=0.4cm]{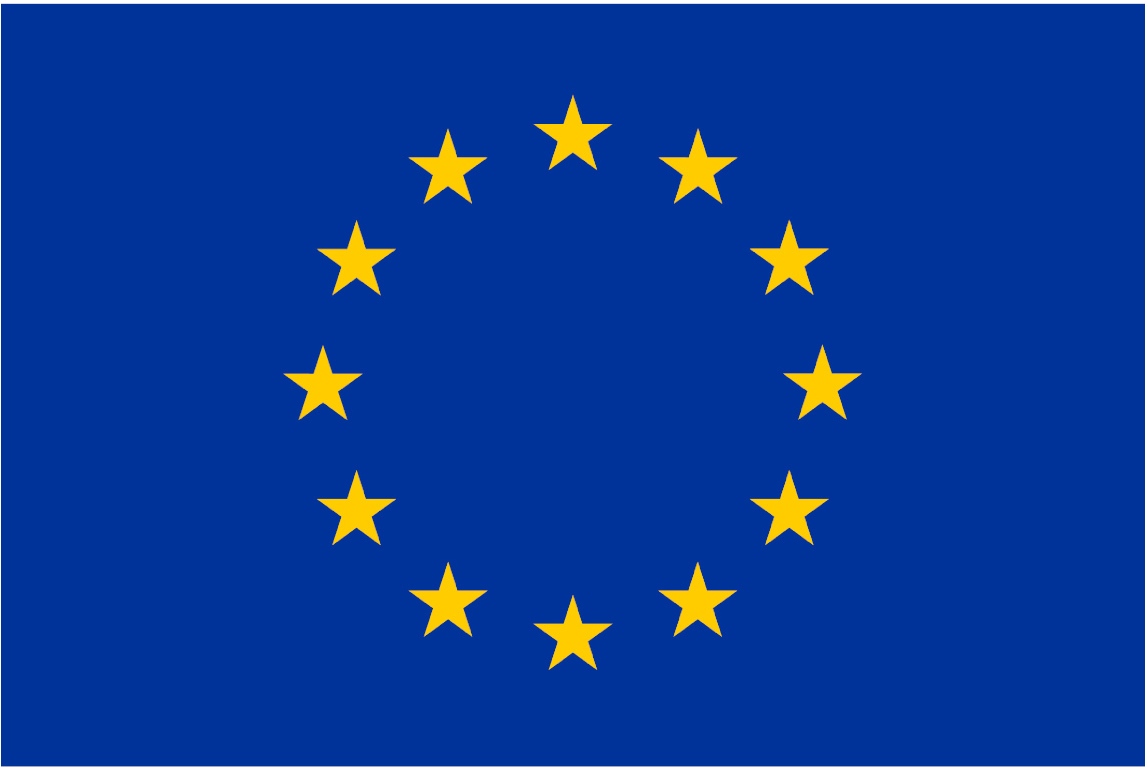}. The second author was supported by the VICI subsidy VI.C.212.027 of the Netherlands Organisation for Scientific Research (NWO)}

\date\today

\title[Global well-posedness of critical reaction-diffusion equations]{Reaction-diffusion equations with transport noise and\\ critical superlinear diffusion: Global well-posedness\\ of weakly dissipative systems}

\keywords{Reaction-diffusion equations, semilinear SPDEs, stochastic evolution equations, global well-posedness, regularity, blow-up criteria, stochastic maximal regularity, Allen-Cahn equation, coagulation dynamics, Lotka-Volterra equations, predator-prey systems, Brusselator, weak coercivity, weak dissipativity.}

\subjclass[2010]{Primary: 35K57, Secondary: 60H15, 35R60, 35A01, 35B44, 35B65, 35K90, 42B37, 58D25}

\begin{abstract}
In this paper, we investigate the global well-posedness of reaction-diffusion systems with transport noise on the $d$-dimensional torus. We show new global well-posedness results for a large class of scalar equations (e.g.\ the Allen-Cahn equation), and dissipative systems (e.g.\ equations in coagulation dynamics). Moreover, we prove global well-posedness for two weakly dissipative systems: Lotka-Volterra equations for $d\in\{1, 2, 3, 4\}$ and the Brusselator for $d\in \{1, 2, 3\}$. Many of the results are also new without transport noise.
The proofs are based on maximal regularity techniques, positivity results, and sharp blow-up criteria developed in our recent works, combined with energy estimates based on It\^o's formula and stochastic Gronwall inequalities.
Key novelties include the introduction of new $L^{\m}$-coercivity/dissipativity conditions and the development of an $L^p(L^q)$-framework for systems of reaction-diffusion equations, which are needed when treating dimensions $d\in \{2, 3\}$ in the case of cubic or higher order nonlinearities.
\end{abstract}

\maketitle



\section{Introduction}
\label{s:intro}

In this paper, we prove \emph{global} well-posedness for several systems of stochastic reaction-diffusion equations with transport noise on the torus $\T^d$. The systems are of the form:
\begin{equation}
\label{eq:reaction_diffusion_system_intro}
\left\{
\begin{aligned}
\dd u_i -\ellip_i\Delta u_i \, \dd  t&= \Big[\div(F_i(\cdot, u)) +f_{i}(\cdot, u)\Big]\,\dd t + \sum_{n\geq 1}  \Big[(b_{n,i}\cdot \nabla) u_i+ \rnoise_{n,i}(\cdot,u) \Big]\,\dd w_t^n,\\
u_i(0)&=u_{i,0},
\end{aligned}\right.
\end{equation}
on $\Tor^d$, where $i\in \{1,\dots,\ell\}$ for some integer $\ell\geq 1$ and $\inf_i\ellip_i>0$. Moreover $u=(u_i)_{i=1}^{\ell}:[0,\infty)\times \O\times \Tor^d\to \R^{\ell}$ is the unknown process, $(w^n)_{n\geq 1}$ is a sequence of standard independent Brownian motions on a filtered probability space
and
$$
(b_{n,i}\cdot\nabla) u_i:=\sum_{j=1}^d b^j_{n,i} \partial_j u_i \ \ \ \text{(transport -- noise coefficients).}
$$
Finally, $F_i,f_i,g_{n,i}$ are given nonlinearities of polynomial growth.

Reaction-diffusion equations have been widely studied in applied mathematics due to their use in biology, economy, chemistry, and population dynamics. Standard examples include the Allen-Cahn equation, reversible chemical reactions, the Brusselator system, and the Lotka-Volterra equations. The reader is referred to \cite{P15_bio,P10_survey,R84_global} for further examples.
Stochastic perturbations can be used to model uncertainty in the determination of parameters, non-predictable forces acting on the system, and the advection of a turbulent fluid.
Stochastic analogues of reaction-diffusion equations have received much attention in recent years, and much progress has been made (see e.g.\
\cite{Agr22,Cer05,Cer09,EGK22,GY21,KN19,MaRo10,S21} and references therein). However, many open problems remain, and in particular, the extensions to the stochastic setting of many known deterministic results are still not well understood. Further references are given in Subsection \ref{ss:literature} below.

The present manuscript can be seen as a continuation of our previous work \cite{AVreaction-local}, where we studied local well-posedness and positivity of solutions to reaction-diffusion equations, by applying the new abstract setting of critical spaces developed in \cite{AV19_QSEE_1,AV19_QSEE_2}.
In this present work, we focus on \emph{global} well-posedness.
In doing so, we exploit (sharp) blow-up criteria for solutions to \eqref{eq:reaction_diffusion_system_intro}, established in \cite{AVreaction-local}. One of the main contributions of our work is to allow for stochastic perturbations of transport type, which can be seen as a simplified model for the advection of a turbulent fluid (see e.g.\ \cite{GE63_turbulence,KD86_turbulence_chemical_reactions,LW76,MC98,SH91_turbulence} for physical motivations, and  \cite[Subsection 1.3]{AVreaction-local} for the modelling). The reader is referred to \cite{DP22,FP21} for (different) situations where this fact can be made rigorous. However, let us emphasize that our results are new even in the case $b\equiv 0$ due to the presence of the critical superlinear diffusion coefficient $g$.
Transport noise was introduced by R.H.\ Kraichnan in \cite{K68,K94} in the study of passive scalars advected by turbulent flows, and has subsequently been widely used in the context of fluid dynamics. In this theory, the regularity (or irregularity) of $(b_{n,i})_{n\geq 1}$ plays an important role. More precisely, one merely assumes
\begin{equation}
\label{eq:b_smoothness_intro}
(b_{n,i})_{n\geq 1} \in C^{\gamma}(\Tor^d;\ell^2(\N;\R^d)) \ \text{ for some }\ \gamma>0.
\end{equation}
The results of the current paper hold under the assumption \eqref{eq:b_smoothness_intro}.
Here, for simplicity, we only consider periodic boundary conditions. However, we expect that the techniques we use can be extended to the case of bounded domains $\Dom\subseteq\R^d$ with suitable boundary conditions. In the latter situation, further constraints on $b_{n,i}|_{\partial\Dom}$ might be needed.

In the main part of the current work, we actually consider reaction-diffusion equations where the leading operators are of the form $\div(a_{i}\cdot\nabla u_i)$ instead of $\ellip_i \Delta u_i$, see \eqref{eq:reaction_diffusion_system}. One motivation for this is that if one considers \eqref{eq:reaction_diffusion_system_intro} with Stratonovich transport noise instead of an It\^o one, then one can reformulate it as a system of SPDEs in It\^o form with a correction term which is in divergence form. Indeed, at least formally (in the case $g_{n,i}\equiv 0$),
\begin{equation}
\label{eq:Itostratintro}
\sum_{n\geq 1}(b_{n,i} \cdot\nabla) u_i\circ \dd w^{n}_t = \div(a_{b,i}\cdot\nabla u_i)\,\dd t +
\sum_{n\geq 1}(b_{n,i} \cdot\nabla) u_i\, \dd w^{n}_t,
\end{equation}
where $a_{b,i}^{j,k}=\frac{1}{2}\sum_{n\geq 1}b^{j}_{n,i}b^{j}_{n,i}$ for all $j,k\in \{1,\dots,d\}$ provided $\div\,b_{n,i}=0$ (incompressibility).

Global well-posedness of reaction-diffusion equations has been extensively studied in the deterministic literature, and this depends on the fine structure of the nonlinearities $(f_i,F_i,g_i)$. Under solely locally Lipschitz and polynomial growth assumptions, commonly used in reaction-diffusion equations, one \emph{cannot} expect global well-posedness even in the scalar case (see e.g.\ \cite{F66,QS19} and \cite[Chapter 6]{P15_bio}). For \emph{systems} of reaction-diffusion equations, blow-up phenomena are possible even in presence of mass dissipation \cite{PS97_blow_up}. The latter is rather surprising at first sight since the mass conservation is strong enough to prevent blow-up in the case of ODEs, i.e.\ in absence of spatial variability.
The previously mentioned blow-up results show that further assumptions on the nonlinearities $(f_i,F_i,g_{i})$ are needed to obtain the global well-posedness of reaction-diffusion equations. In practice, one needs that the nonlinearities are    \emph{dissipative} in some sense so that the formation of singularities is not possible.
To the best of our knowledge, there is no standard way in the deterministic and stochastic literature to capture the dissipative effect of the nonlinearities. We will formulate several \emph{coercivity/dissipativity} conditions on the nonlinearities $(f_i,F_i,g_i)$, and we will show how one can use these conditions to obtain global well-posedness in several concrete situations.

In our study of global well-posedness of \eqref{eq:reaction_diffusion_system_intro}, $L^p(L^q)$-theory ($L^p$ in time and $L^q$ in space)  plays a key role. This theory was already used in our previous work \cite{AVreaction-local} to derive high-order regularity and positivity of solutions, which are both essential ingredients in this manuscript. The positivity of solutions is of fundamental importance for physical reasons, but we will also use it to derive energy estimates.
The need for $L^p(L^q)$-theory in the study of global well-posedness of \eqref{eq:reaction_diffusion_system_intro} can be (roughly) explained as follows. In reaction-diffusion equations, the nonlinearities in \eqref{eq:b_smoothness_intro} are of high polynomial growth, say $h\gg 1$. In the case that $h$ or $d$ are large enough (e.g.\ $(d,h)=(3,2)$ or $(d,h) = (2,3)$), then one can no longer work in an $L^2(L^2)$-setting because the nonlinearities require better Sobolev embeddings. Alternatively, one could use an $L^2(H^{s,2})$-setting. However, this has two disadvantages. Firstly, typically requires transport noise of smoothness $\geq s$ which is not satisfied in general, as $\gamma>0$ in \eqref{eq:b_smoothness_intro} is typically small.
Secondly, energy estimates in $L^2(H^{s,2})$ are hard to prove. In particular, we could not find suitable coercivity/dissipativity conditions that can be verified in applications. In contrast, in an $L^{p}(L^q)$-setting this works very well, as we will see. Finally, we mention that
from the scaling argument in \cite[Subsection 1.4]{AVreaction-local}, the right space to study well-posedness of \eqref{eq:reaction_diffusion_system_intro}, is the (critical) Lebesgue space $L^{\frac{d}{2}(h-1)}(\Tor^d;\R^{\ell})$.

In addition to the more abstract statements on global well-posedness, we have included several concrete applications as well. These include the following well-known equations:
\begin{itemize}
\item Allen-Cahn equation (phase separation) - Theorem \ref{t:allen_Cahn_intro} and \ref{t:global_well_posedness_general}.
\item Coagulation dynamics \cite{FH21_coagulation} (used in chemistry and rain formation) - Theorem \ref{thm:coagulation}.
\item Several types of Lotka-Volterra models (predator-prey systems) - Theorems \ref{thm:SymbLotka-Volterra} and \ref{t:Lotka_Volterra}.
\item Brusselator (chemical morphogenetic processes) - Theorems \ref{t:brusselator_intro}, \ref{t:brusselator_three} and \ref{t:Brusselator}.
\end{itemize}
Several of the global well-posedness results for the above equations appear to be completely new (even without transport noise), and our regularity results also seem new for each of these equations. In the next subsections, we discuss further details in the scalar setting $\ell=1$ and the system case ($\ell\geq 2$)  separately.

\subsection{The scalar case and Allen-Cahn equation}
\label{ss:scalar_AC_intro}
In the scalar case $\ell=1$ of \eqref{eq:reaction_diffusion_system_intro}, we establish global well-posedness of \eqref{eq:reaction_diffusion_system_intro} under the following $L^{\m}$-\emph{coercivity/dissipativity condition}:

There exist $\m\geq \frac{d(h-1)}{2}\vee2$ and
$M,\theta>0$ such that, for all
$u\in C^1(\mathbb{T}^d)$,
\begin{align}
\label{eq:coercivity_Lm_intro}
\int_{\T^d} |u|^{\m-2} \Big(\ellip | \nabla u|^2   + F(\cdot, u) \cdot \nabla u - \frac{u f(\cdot, u)}{\m-1} -\frac12 \sum_{n\geq 1} \big[(b_n \cdot \nabla) u + g_n(\cdot, u) \big]^2 \Big) \, \dd x
&
\\
\nonumber
\geq  \theta \int_{\T^d} |u|^{\m-2}\big(|\nabla u|^2 - M|u|^2\big)  \, \dd x -M.
&
\end{align}
Here we have omitted the subscript $i$, as we are dealing with scalar equations, and $h>1$ denotes the growth of the nonlinearity $(f_i,F_i,g_i)$, see Assumption \ref{ass:reaction_diffusion_global}.
The condition \eqref{eq:coercivity_Lm_intro} can be seen as an $L^{\m}$-variant of the usual coercivity condition for stochastic evolution equations in the variational setting, see e.g.\ \cite{AV22_variational,rockner2022well} and \cite[Chapter 4]{LR15}. The restriction $\m\geq \frac{d}{2}(h-1)$ is related to the aforementioned criticality of $L^{\frac{d}{2}(h-1)}$  for \eqref{eq:reaction_diffusion_system_intro} (see \cite[Subsection 1.4]{AVreaction-local}). The restriction $\zeta\geq 2$ is necessary, because of some nontrivial obstacles in stochastic calculus in $L^{\zeta}$ for $\zeta<2$, cf.\ \cite{NVW13}. In the applications we consider it turns out that the minimal choice $\m=\frac{d(h-1)}{2}\vee 2$ leads to the largest class of nonlinearities $f$, $F$, $g$ (with $h$ fixed) which satisfy \eqref{eq:coercivity_Lm_intro}.

In Subsection \ref{ss:example_nonlinearities} we give examples of nonlinearities satisfying \eqref{eq:coercivity_Lm_intro} and two sufficient ``pointwise'' conditions for \eqref{eq:coercivity_Lm_intro}, namely Lemmas \ref{lem:dissipationI} and \ref{lem:dissipationII}. Moreover, in many situations of interest, one knows a-priori that solutions to \eqref{eq:reaction_diffusion_system_intro} are positive. In that case, it is enough to have \eqref{eq:coercivity_Lm_intro} for $u\geq 0$. This is particularly interesting for polynomial nonlinearities
of \emph{even} order (see Example \ref{ex:strong_dissipation}) which, to the best of our knowledge, have not been considered before in the literature.

To make the introduction and results of the paper more explicit, we discuss what our results give for the Allen-Cahn equation with transport noise and superlinear diffusion, i.e.\ \eqref{eq:reaction_diffusion_system_intro} with
\begin{equation}
\label{eq:AC_intro}
\ell = 1, \ \ f(\cdot,u)= u-u^3 \ \  \text{ and }\ \  g_n(\cdot,u)= \theta_n u^2 \ \text{ where }\ \theta:=(\theta_n)_{n\geq 1}\in \ell^2.
\end{equation}

\begin{theorem}[Global well-posedness -- 3d Allen-Cahn equation]
\label{t:allen_Cahn_intro}
Assume that $\|\theta\|_{\ell^2} \leq 1$ and that \eqref{eq:b_smoothness_intro} holds. Let $f$ and $g$ be as in \eqref{eq:AC_intro} and suppose that $F= 0$.
Fix $\delta\in (1,\frac{4}{3}\wedge (1+\alpha))$, $p\in [3,\infty)$ and $\a_{\crit}:=p(1-\frac{\s}{2})-1$. Let $u_0\in
L^0_{\F_0}(\O;L^3(\Tor^3))
$.
Suppose that, for some $\ellip_0\in (0,\ellip)$,
$$
\div \,b_n=0 \text{ in }\D'(\Tor^3) \  \text{ for } n\geq 1, \ \text{ and } \ \
\sum_{n\geq 1}|  b_n(x) \cdot \xi|^2\leq 2\ellip_0 |\xi|^2\ \text{ for } x\in \Tor^3, \ \xi\in \R^3.
$$
Then there exists a unique \emph{global} $(p,\a_{\crit},\s,q)$-solution $u$ to \eqref{eq:reaction_diffusion_system_intro}
satisfying, a.s.,
\begin{align}
\nonumber
u&\in H_{\loc}^{\theta,p}([0,\infty),t^{\kappa_{\crit}} \, \dd t;H^{2-\s,q}(\Tor^3))\cap  C([0,\infty);L^{3}(\Tor^3))\  \ \forall \theta\in [0,\tfrac{1}{2}), \\
\label{eq:AC_regularization_intro}
u&\in C^{\theta_1,\theta_2}_{\rm loc}((0,\infty)\times \Tor^3) \
\ \forall \theta_1\in  [0,\tfrac{1}{2}), \  \forall  \theta_2\in (0,1+\alpha).
\end{align}
\end{theorem}

$(p,\a,\s,q)$-solutions are defined in
Definition \ref{def:solution}. The above results follow from Theorem \ref{t:global_well_posedness_general} and Lemma \ref{lem:dissipationII}. Continuous dependence on the initial data also holds, see Proposition \ref{prop:L_m_continuity}.

The explicit form of $g_n$ in \eqref{eq:AC_intro} is been chosen to avoid technicalities here, and it is not needed for Theorem \ref{t:allen_Cahn_intro} to hold.
The complete picture is given in Example \ref{ex:strong_dissipation}.

The quadratic nonlinearity in \eqref{eq:AC_intro}, and the choice of the initial data from $L^3$ are optimal from a scaling point of view, see \cite[Subsection 1.4]{AVreaction-local} for $d=h=3$.
In addition to the growth condition of $g_n$, we also assumed in Theorem \ref{t:allen_Cahn_intro} the explicit bound $\|\theta\|_{\ell^2}\leq 1$ that limits the possible growth of $g_n$ at infinity. This restriction appears naturally when checking the $L^{\m}$-coercivity/dissipativity condition \eqref{eq:coercivity_Lm_intro} with $\m=\frac{d(h-1)}{2}=3$. Physically, the condition $\|\theta\|_{\ell^2}\leq 1$ ensures that the dissipation of the deterministic nonlinearity $u-u^3$ is balanced by the energy production of $g_n$.
Hence, in the situation of Theorem \ref{t:allen_Cahn_intro}, the restriction on $\theta$ seems optimal with our methods.

Although there have been several works on Allen-Cahn equations and their variations, see e.g.\ \cite{ABBK16,BBP17_2,FS19,HR18,HMS20,RW13,S00_AC}, part of the content of Theorem \ref{t:allen_Cahn_intro} seems to be new.
The novelty is that the diffusion $g_n$ is of optimal growth (even with explicit constrain on $\theta$) and the transport noise can be rough \eqref{eq:b_smoothness_intro}. In particular, we can cover the case of transport by turbulent fluids (see the comments below \eqref{eq:b_smoothness_intro}). Surprisingly, the solution to \eqref{eq:AC_intro} instantaneously regularizes in time and space \eqref{eq:AC_regularization_intro}, regardless of the one of the initial data $u_0$. With respect to the noise, the gain of regularity in \eqref{eq:AC_regularization_intro} is sub-optimal:
\begin{equation}
\label{eq:optimal_gain_regularity}
(b_n)_{n\geq 1}\in C^{\gamma}(\Tor^3;\ell^2(\N;\R^3))\quad \Longrightarrow \quad u(t)\in C^{1+\gamma-\varepsilon}( \Tor^3) \ \text{ a.s.\ for all }t, \varepsilon>0.
\end{equation}
Moreover, in the Sobolev scale, an optimal gain of regularity holds, cf.\ \cite[Theorem 4.2]{AVreaction-local}.
Such instantaneous gain of regularity can be used, for example, to study the separation property of solutions to Allen-Cahn type equations, see e.g.\ \cite[Theorem 1.3]{BOS22_separation} where a more complicated nonlinearity is considered.

\subsection{The system case and the 3d Brusselator}
In the system case, the situation is much more involved compared to the scalar one. First, there are several ways to extend the condition \eqref{eq:coercivity_Lm_intro} to the system case. One possibility is to require that \emph{all} components of \eqref{eq:reaction_diffusion_system_intro} satisfy \eqref{eq:coercivity_Lm_intro}, or more generally that the sum over $i$ of all components of the system \eqref{eq:reaction_diffusion_system_intro} satisfy an estimate like \eqref{eq:coercivity_Lm_intro}. Such a condition covers the case of ``fully" dissipative systems and we examine this in Section \ref{s:scalar_global}.
However, in most of the cases of practical interests, the dissipative effect of the nonlinearity is \emph{not} present in all components of the system \eqref{eq:reaction_diffusion_system_intro}. A prototype example that will be treated here is the Brusselator system (see Subsection \ref{ss:brusselator} for physical motivations):
\begin{equation}
\label{eq:brusselator_intro}
f_1(\cdot,u)= -u_1 u_2^2 \qquad \text{ and }\qquad f_2(\cdot,u)= u_1u_2^2.
\end{equation}
In the case of physically relevant positive solutions, one sees that $f_1$ is dissipative, while $f_2$ is not, i.e.\ $f_1(\cdot,u)u_1\leq -u_1^2u_2^2\leq 0$ and  $f_2(\cdot,u)u_2= u_1 u_2^3\geq 0$. Thus, it is not possible to prove an estimate for $u=(u_1,u_2)$ directly, since there is no way to adsorb the positive term $f_2(\cdot,u)u_2$ when performing energy estimates.
Our strategy for studying \eqref{eq:reaction_diffusion_system_intro} is to first estimate $u_1$, and based on that, to determine a bound for $u_2$. This is exactly the content of Subsection \ref{ss:suboptimal_coercivity} where we provide a generalization of the condition \eqref{eq:coercivity_Lm_intro} for the case where $M$ is allowed to be random.
The results we can prove for weakly dissipative systems are new, but a complete theory as in the deterministic case, is still out of reach. The reader is referred to Subsection \ref{ss:open_problems} for more details.
At the present stage, the situation for the system case is not as complete as the scalar one.

Below we state our main result for the 3d Brusselator system, i.e.\ \eqref{eq:reaction_diffusion_system_intro} with \eqref{eq:brusselator_intro}. The case of dimensions $\leq 2$ is discussed in Subsection \ref{sss:brusselator_two_dimensional}.

\begin{theorem}[Global well-posedness -- 3d Brusselator]
\label{t:brusselator_intro}
Suppose that \eqref{eq:b_smoothness_intro} holds. Suppose that $F = 0$, $f$ is as in \eqref{eq:brusselator_intro} and there exists an $N>0$ such that $g_{i}=(g_{n,i})_{n\geq 1}:\R^2 \to \ell^2$ satisfies
\begin{align}
\label{eq:ass_g_1_intro}
\|g_{i}(y)-g_i(y')\|_{\ell^2}
&\leq N (1+|y|+|y'|)|y-y'|, &\text{ for all } & \ y,y'\in \R^2, \ i\in \{1,2\},\\
\label{eq:ass_g_2_intro}
\|g_1(y)\|_{\ell^2}^2 &\leq N(1+ y_1)+ \frac{y_1y_2}{5}, &\text{ for all } & \ y=(y_1,y_2)\in [0,\infty)^2,\\
\label{eq:ass_g_3_intro}
\|g_2(y)\|_{\ell^2}^2 &\leq N (1+y_1+y_2+y_1y_2), &\text{ for all } &\ y=(y_1,y_2)\in [0,\infty)^2,
\end{align}
and $g_1(0,y_2) = g_2(y_1,0)=0$ for all $y_1, y_2\geq 0$.
Fix $\delta\in (1,\frac{4}{3}\wedge (1+\alpha))$, $p\in (2,\infty)$ and $\a_{\crit}:=p(1-\frac{\s}{2})-1$. Let $
u_0\in
L^0_{\F_0}(\O;L^3(\Tor^3;\R^2))
$ be such that a.s.\ $u_0\geq 0$ on $\Tor^3$, component-wise.
Suppose that, for some $\ellip_{0,i}\in (0,\ellip_i)$ and $i\in \{1,2\}$,
$$
\sum_{n\geq 1}|  b_{n,i}(x) \cdot \xi|^2\leq 2 \ellip_{0,i}|\xi|^2\ \text{ for } x\in \Tor^3, \ \xi\in \R^3.
$$
Then there exists a unique global $(p,\a_{\crit},\s,q)$-solution $u$ to \eqref{eq:reaction_diffusion_system_intro}
satisfying
\begin{align*}
u&\in H_{\loc}^{\theta,p}([0,\infty),t^{\kappa_{\crit}} \, \dd t;H^{2-\s,q}(\Tor^3;\R^2))\cap C([0,\infty);L^{3}(\Tor^3;\R^2))      \text{ a.s.\ }\forall \theta\in [0,\tfrac{1}{2}), \\
u&\in C^{\theta_1,\theta_2}_{\rm loc}((0,\infty)\times \Tor^3;\R^2)
 \text{ a.s.\  }\forall  \theta_1\in  [0,\tfrac{1}{2}), \ \forall  \theta_2\in (0,1),\\
 u&\geq 0 \text{ component-wise a.e.\ on }(0,\infty)\times \O\times \Tor^3.
\end{align*}
\end{theorem}

The above result follows from Theorem \ref{t:brusselator_three} with $h=3$, and its proof requires the full power of our theory and is the final application in this paper. As above, Proposition \ref{prop:L_m_continuity} ensures continuous dependence on the initial data.

Note that the Brusselator system \eqref{eq:reaction_diffusion_system_intro} has the same local scaling as the Allen-Cahn equation discussed in Subsection \ref{ss:scalar_AC_intro}, i.e.\ the deterministic nonlinearity has cubic growth. Hence, as for the Allen-Cahn equation, the growth of $g_1,g_2$ in \eqref{eq:ass_g_1_intro} and the choice of $L^3$-data is optimal as well.
Moreover, under additional smoothness assumptions on $g_1,g_2$ (cf.\ \cite[Assumption 4.1]{AVreaction-local}), also in the case of the Brusselator system, we have a sub-optimal gain of regularity w.r.t.\ the regularity of the noise, i.e.\ \eqref{eq:optimal_gain_regularity} holds for the solution $u=(u_1,u_2)$ to \eqref{eq:reaction_diffusion_system_intro}. An optimal gain of regularity holds again in the Sobolev scale.

Conditions \eqref{eq:ass_g_2_intro}-\eqref{eq:ass_g_3_intro}
can considered as the analogue of $\|\theta\|_{\ell^2}\leq 1$ in Theorem \ref{t:allen_Cahn_intro}.
Regarding the latter condition, requirements  \eqref{eq:ass_g_2_intro}-\eqref{eq:ass_g_3_intro} ensure that the dissipation of $f_1$ is stronger than the energy production of $(f_2,g_1,g_2)$. In particular, the numerical factor $\frac{1}{5}$ in the last term on the RHS\eqref{eq:ass_g_2_intro} seems optimal in the general situation of Theorem \ref{t:allen_Cahn_intro}, at least with our methods. To prove the global well-posedness of \eqref{eq:reaction_diffusion_system_intro} we use the previously mentioned variant of \eqref{eq:coercivity_Lm_intro} where $M$ is allowed to be random.

\subsection{Further comparison to the literature}
\label{ss:literature}
To the best of our knowledge, the present work is the first to consider superlinear diffusion and transport noise simultaneously (this is the reason why we only consider trace-class noise).

In the presence of transport noise, the only work known to us is by F.\ Flandoli
\cite{F91}. There, the author showed the global existence of scalar reaction-diffusion equations under a strong-dissipation condition on $f$ and $g\equiv 0$. Moreover, it is assumed that $f$ is of \emph{odd} order. The results of Section \ref{s:scalar_global} extend and refine those of \cite{F91}, cf.\ Example \ref{ex:strong_dissipation}.

In absence of transport noise, reaction-diffusion equations have been studied by many authors. Let us emphasise that in the case $b_{n,i}\equiv 0$, one can also treat rougher noise than the one we use in \eqref{eq:reaction_diffusion_system_intro} (i.e.\ white or coloured noise instead of trace-class noise). The one-dimensional case with rough and multiplicative noise has been investigated in \cite{DKZ19,Fra99,M98,M00}. For dimensions $d\geq 2$, the work of S.\ Cerrai \cite{C03} shows global existence under strong dissipative conditions on $f_i$ and sublinear growth of $g_i$.
Later, M.\ Salins in the works \cite{S21_superlinear_no_sign,S21_dissipative} extended the results of \cite{C03} in the case of scalar equations. More precisely, in
\cite{S21_superlinear_no_sign} global existence for scalar reaction-diffusion equations has been proven for superlinear $(f,g)$ without any dissipative conditions but employing a ``Osgood'' type condition. In particular, in \cite{S21_superlinear_no_sign}, $f$ cannot grow more than $|u|^{\g}$ for some $\g>1$.
The paper \cite{S21_dissipative} studies a reaction-diffusion equations in the scalar case where $f$ is strongly dissipative and $g$ is of ``lower-order" compared to the dissipative effect of $f$, see \cite[eq.\ (1.7)]{S21_dissipative}.
The work \cite{S21_dissipative} is closer to the present work. In the special case that both approaches are applicable (i.e.\ in the presence of trace-class noise), our results improve those of \cite{S21_dissipative}. The reader is referred to Remark \ref{r:comparison}\eqref{it:salins} for details.

Finally, we mention further work in the case of non-locally Lipschitz nonlinearity, which is beyond the scope of this paper.
In \cite{K15_holder_diffusion} the case of strong dissipative and locally Lipschitz $f$, and H\"older continuous $g$ was considered, where $g$ grows sublinearly. In the case of (possible) non-continuous $f_i$ and global Lipschitz diffusion $g$, well-posedness was considered in \cite{M18} by using the theory of monotone operators.

\subsection{Open problems}
\label{ss:open_problems}
There are still many open problems concerning the global well-posedness of stochastic reaction-diffusion equations. Two classes for which the {\em deterministic setting} is well understood are:
\begin{itemize}
\item Systems with a \emph{triangular} structure.
\item Systems with mass conservation and \emph{quadratic} nonlinearities.
\end{itemize}

Deterministic systems with a triangular structure are well-understood, see e.g.\ \cite[Section 3]{P10_survey}. Here triangular structure means that for a lower triangular invertible matrix $R$, the mapping $\R^{\ell}\ni y\mapsto R f(y)$ grows linearly in $|y|$. If $F\equiv 0$, then global strong solutions to \eqref{eq:reaction_diffusion_system} (without noise) exist, see e.g.\ \cite[Theorem 3.5]{P10_survey}. The core of the proof is a duality argument (see \cite[Theorem 3.4]{P10_survey}) which we do not know how to extend to SPDEs with transport noise. The Lotka-Volterra equation and the Brusselator system, analyzed in Subsection \ref{ss:Lotka_Volterra}-\ref{ss:brusselator}, are triangular systems.

Deterministic systems with mass conservation and quadratic nonlinearities with $F\equiv 0$ and mass control, global existence of strong solutions to \eqref{eq:reaction_diffusion_system} (without noise) has been shown in \cite{FMT20} provided $|f(y)|\lesssim 1+|y|^{2+\varepsilon}$ where $\varepsilon>0$ is sufficiently small). The proofs are based on an ingenious combination of interpolation inequalities and H\"{o}lder regularity theory for suitable auxiliary functions of $u=(u_i)_{i=1}^{\ell}$. At the present stage, the extension of these arguments to their stochastic variant with transport noise seems out of reach.

\subsection{Notation}
Here we collect some notation that will be used throughout the paper. Further notation will be introduced where needed. We write $A \lesssim_P B$ (resp.\ $A \gtrsim_P B$) whenever there is a constant $C>0$ depending only on $P$  such that $A\leq C B$ (resp.\ $A\geq C B$). We write $C(P)$ if the constant $C$ depends only on $P$. As usual
$a\vee b=\max\{a,b\} $ and $a\wedge b=\min\{a,b\}$ for $a,b\in \R$.

Letting $p\in (1,\infty)$ and $\a\in (-1,p-1)$, we denote by $w_{\a}$ the \emph{weight} $w_{\a}(t)=|t|^{\a}$ for $t\in\R$.
For a Banach space $X$ and an interval $I=(a,b)\subseteq \R$, $L^p(a,b,w_{\a};X)$ denotes the set of all strongly measurable maps $f:I\to X$ such that
$$
\|f\|_{L^p(a,b,w_{\a};X)}:= \Big(\int_a^b \|f(t)\|_{X}^p w_{\a}(t)\,\dd t\Big)^{1/p}<\infty.
$$
Furthermore, $W^{1,p}(a,b,w_{\a};X)\subseteq L^p(a,b,w_{\a};X)$ denotes the set of all $f$ such that $f'\in L^p(a,b,w_{\a};X)$ (here the derivative is taken in the distributional sense) and we set
$$
\|f\|_{W^{1,p}(a,b,w_{\a};X)}:=
\|f\|_{L^p(a,b,w_{\a};X)}+
\|f'\|_{L^p(a,b,w_{\a};X)}.
$$

Let $(\cdot,\cdot)_{\theta,p}$ and $[\cdot,\cdot]_{\theta}$ be the real and complex interpolation functor, respectively. The reader is referred to \cite{BeLo,Analysis1} for details. For each  $\theta\in (0,1)$ we set
$$
H^{\theta,p}(a,b,w_{\a};X) = [L^p(a,b,w_{\a};X),W^{1,p}(a,b,w_{\a};X)]_{\theta}.
$$
In the unweighted case, i.e.\ $\a=0$, we set $ H^{\theta,p}(a,b;X) :=H^{\theta,p}(a,b,w_0;X) $ and similar for Lebesgue spaces.
For $\A\in \{L^p,H^{\theta,p},W^{1,p}\}$, we denote by $\A_{\loc}(a,b;X)$ (resp.\ $\A_{\loc}([a,b),w_{\a};X)$) the set of all strongly measurable maps $f:(c,d)\to X$ such that $f\in\A(c,d;X)$ for all $a<c<d<b$ (resp.\ $f\in\A(a,c,w_{\a};X)$ for all $a<c<b$).

The $d$-dimensional torus is denoted by $\Tor^d$ where $d\geq 1$.
For $\theta_1, \theta_2\in (0,1)$, $C^{\theta_1, \theta_2}_{\loc}((a,b)\times \Tor^d;\R^\ell)$ denotes the space of all maps $v:(a,b)\times \Tor^d\to \R^\ell$ such that for all $a<c<d<b$ we have
\[
|v(t, x) - v(t', x')|\lesssim_{c,d} |t-t'|^{\theta_1} + |x-x'|^{\theta_2}, \ \text{ for all }t,t'\in [c,d], \ x,x'\in \Tor^d.
\]
This definition is extended to $\theta_1,\theta_2\geq 1$ by requiring that the partial derivatives $\partial^{\alpha,\beta}v$ (with $\alpha\in \N$ and $\beta\in \N^d$) exist and are in $C^{\theta_1-|\alpha|, \theta_2-|\beta|}_{\loc}((a,b)\times\Tor^d;\R^{\ell})$ for all
$\alpha \leq \lfloor \theta_1 \rfloor$ and $\sum_{i=1}^d \beta_i\leq \lfloor \theta_2 \rfloor$.

We will use the periodic Bessel potential spaces $H^{s,q}(\T^d)$ and Besov spaces $B^{s}_{q,p}(\T^d)$. Here $q\in (1,\infty)$ denotes the integrability, $p\in [1,\infty]$ is the microscopic parameter, and $s\in \R$ denotes the smoothness. For details on Besov spaces, see e.g.\ \cite{Tri83,Tr1}. Note that we write $u\in B^{s}_{q,p}(\T^d;\R^\ell)$  if each of the components of $u$ is in $B^{s}_{q,p}(\T^d)$, and similarly for Bessel potential spaces. We also employ the standard abbreviation $H^{s}:=H^{s,2}$. We often write $B^s_{q,p}$ instead of $B^{s}_{q,p}(\T^d;\R^\ell)$ if no confusion seems likely, and similarly for $H^{s,q}$ and $H^s$. 

Finally, we collect the main probabilistic notation. In the paper, we fix a filtered probability space $(\O,\mathscr{A},(\F_t)_{t\geq 0}, \P)$ and we denote by $\E[\cdot]=\int_{\O}\cdot\,\dd \P$ the expected value. A map $\sigma:\O\to [0,\infty]$ (note that the value $\infty$ is included) is called a stopping time if $\{\sigma\leq t\}\in \F_t$ for all $t\geq 0$. For stopping times $\sigma,\tau$ we introduce the notation (which might be nonstandard)
$$
[\tau,\sigma]\times \O:=\{(t,\om)\in [0,\infty)\times \O\,:\, \tau(\om)\leq t\leq \sigma(\om)\}.
$$
Similar definitions hold for
$[\tau,\sigma)\times \O$,
$(\tau,\sigma)\times \O$ etc.
$\Progress$ denotes the progressive $\sigma$-algebra on the above mentioned probability space.

\subsection{Overview}
Below we give an overview of the results proven here.

\begin{itemize}
\item Section \ref{s:main_results} - Review of the local existence theory of \cite{AVreaction-local}.
\item Section \ref{s:scalar_global} -
Global existence via $L^{\m}$-coercivity/dissipativity condition, including:
\begin{itemize}
\item[$\diamond$] Subsection \ref{ss:example_nonlinearities} - Fully dissipative systems/equations, e.g.\ Allen-Cahn equation.
\item[$\diamond$] Subsection \ref{ss:coagulation} - Coagulation dynamics.
\item[$\diamond$] Subsection \ref{subsec:Symbiotic Lotka-Volterra} - Symbiotic Lotka-Volterra equations.
\end{itemize}
\item Section \ref{s:global_system} - Global existence for weakly dissipative systems:
\begin{itemize}
\item[$\diamond$] Subsection \ref{ss:suboptimal_coercivity} - Energy estimates via iteration of $L^{\m}$-coercivity/dissipativity conditions.
\item[$\diamond$] Subsection \ref{ss:Lotka_Volterra} - Global existence for Lotka-Volterra equations.
\item[$\diamond$] Subsection \ref{ss:brusselator} - Global existence for Brusselator system.
\end{itemize}
\item Section \ref{s:continuous_dependence} - Continuous dependence on the initial data in the presence of energy estimates.
\end{itemize}

\section{A review of the local existence theory}
\label{s:main_results}
In this section, we recall the local well-posedness results of \cite{AVreaction-local}, as well as the blow-up criteria, regularity, and positivity results.

Consider the following system of SPDEs on $\T^d$:
\begin{equation}
\label{eq:reaction_diffusion_system}
\left\{
\begin{aligned}
\dd u_i -\div(a_i\cdot\nabla u_i) \,\dd t& = \Big[\div(F_i(\cdot, u)) +f_i(\cdot, u)\Big]\,\dd t + \sum_{n\geq 1}  \Big[(b_{n,i}\cdot \nabla) u_i+ \rnoise_{n,i}(\cdot,u) \Big]\,\dd w_t^n,\\
u_i(0)&=u_{0,i},
\end{aligned}\right.
\end{equation}
where $i\in \{1,\dots,\ell\}$ and $\ell\geq 1$ is an integer.
Here $u=(u_i)_{i=1}^{\ell}:[0,\infty)\times \O\times \Tor^d\to \R^\ell$ is the unknown process, $(w^n)_{n\geq 1}$ is a sequence of standard independent Brownian motion on the above mentioned filtered probability space and
$$
\div (a_i\cdot\nabla u_i):=\sum_{j,k=1}^d \partial_j(a^{j,k}_i \partial_k u_i)
\quad \text{ and }\quad
(b_{n,i}\cdot\nabla) u_i:=\sum_{j=1}^d b^j_{n,i} \partial_j u_i.
$$
As explained in Section \ref{s:intro}, $b_{n,i}^j$ models small-scale transport effects. The $a^{j,k}_i$
take into account inhomogeneous conductivity and may also take into account the It\^o correction in the case of Stratonovich noise. Note that in the $i$-th equation there is no mixing in the diffusion terms $\div (a_i\cdot\nabla u_i)$ and $(b_{n,i}\cdot \nabla) u_i$, which is the usual assumption in reaction-diffusion systems.

The following is the main assumption on the coefficients and nonlinearities.
\begin{assumption}
\label{ass:reaction_diffusion_global}
Let $d,\ell\geq 1$ be integers. We say that Assumption \ref{ass:reaction_diffusion_global}$(p,q,h,\s)$ holds if $q\in [2,\infty)$, $p\in [2,\infty)$, $h>1$, $\s\in [1, 2)$ and for all $i\in \{1,\dots,\ell \}$ the following hold:
\begin{enumerate}[{\rm(1)}]
\item\label{it:reaction_diffusion_global1} For each $j,k\in \{1,\dots,d\}$, $\am^{j,k}_i:\R_+\times \O\times \Tor^d\to \R$, $b_{i}^j:=(\bm^{j}_{n,i})_{n\geq 1}:\R_+\times \O\times \Tor^d\to \ell^2$ are $\Progress\otimes \Borel(\Tor^d)$-measurable;
\item\label{it:regularity_coefficients_reaction_diffusion}
There exist $N>0$ and $\alpha>\max\{\frac{d}{\rho},\s-1\}$ where $\rho \in [2,\infty)$
such that a.s.\ for all $t\in \R_+$ and $j,k\in \{1,\dots,d\}$,
\begin{align*}
\|\am^{j,k}_i(t,\cdot)\|_{H^{\alpha,\rho}(\Tor^d)}+\|(\bm^{j}_{n,i}(t,\cdot))_{n\geq 1}\|_{H^{\alpha,\rho}(\Tor^d;\ell^2)}
&\leq N;
\end{align*}
\item\label{it:ellipticity_reaction_diffusion} There exists $\ellip_i>0$ such that a.s.\ for all $t\in \R_+$, $x\in \Tor^d$ and $\xi\in \R^d$,
$$
\sum_{j,k=1}^d \Big(a_i^{j,k}(t,x)-\frac{1}{2}\sum_{n\geq 1} b^j_{n,i}(t,x)b^k_{n,i}(t,x)\Big)
 \xi_j \xi_k
\geq  \ellip_i |\xi|^2;
$$
\item\label{it:growth_nonlinearities} For all $j\in \{1,\dots,d\}$,  the maps
\begin{align*}
F_i^j, \ f_i:\R_+\times \O\times \Tor^d\times \R\to \R,\qquad
g_i:=(g_{n,i})_{n\geq 1}:\R_+\times \O\times \Tor^d\times \R\to \ell^2,
\end{align*}
are $\Progress\otimes \Borel(\Tor^d)\otimes \Borel(\R)$-measurable. Set $F_i:=(F_i^j)_{j=1}^d$. Assume that
\begin{equation*}
F_i^j(\cdot,0), \ f_i(\cdot,0)\in L^{\infty}(\R_+\times \O\times \Tor^d),\qquad
g_i(\cdot,0)\in L^{\infty}(\R_+\times \O\times \Tor^d;\ell^2),
\end{equation*}
and a.s.\ for all $t\in \R_+$, $x\in \Tor^d$ and $y\in\R$,
\begin{align*}
|f_i(t,x,y)-f_i(t,x,y')|
&\lesssim (1+|y|^{h-1}+|y'|^{h-1})|y-y'|,\\
|F_i(t,x,y)-F_i(t,x,y')|
&\lesssim (1+|y|^{\frac{h-1}{2}}+|y'|^{\frac{h-1}{2}})|y-y'|.
\\ \|g_i(t,x,y)-g_i(t,x,y')\|_{\ell^2}
&\lesssim (1+|y|^{\frac{h-1}{2}}+|y'|^{\frac{h-1}{2}})|y-y'|.
\end{align*}
\end{enumerate}
\end{assumption}

Note that Assumption \ref{ass:reaction_diffusion_global}\eqref{it:regularity_coefficients_reaction_diffusion} and Sobolev embeddings imply that $a^{j,k}_i\in C^{\gamma}(\T^d)$ and $(b_{n,i}^j)_{n\geq1} \in C^{\gamma}(\T^d;\ell^2)$ uniformly w.r.t.\ $(t,\om)$ for some $\gamma>0$ depending only on $\alpha,d$ and $\rho$.

Often we will additionally assume the following on the parameters (see \cite[Assumption 2.4 and Section 7]{AVreaction-local}).
\begin{assumption}\label{ass:admissibleexp}
Let $d\geq 2$. We say that Assumption \ref{ass:admissibleexp}$(p,q,h,\delta)$ holds if
$h>1$,
and one of the following cases holds:
\begin{enumerate}[(i)]
\item\label{it:admissibleexp1} $p\in (2,\infty)$, $q\in [2,\infty)$, and $\s\in [1,\frac{h+1}{h})$,  satisfy
\begin{align*}
\frac{1}{p}+\frac{1}{2}\Big(\reg+\frac{d}{q}\Big) \leq \frac{h}{h-1}, \ \
\frac{d}{d-\reg} <q<\frac{d(h-1)}{h+1-\reg(h-1)}, \  \text{and} \ \a_{\crit}:=p\Big(\frac{h}{h-1}-\frac{1}{2}(\reg+\frac{d}{q})\Big)-1.
\end{align*}
\item\label{it:admissibleexp2} $p=q=2$, $\kappa=0$, $\delta=1$ and $h\leq \frac{4+d}{d}$ with the additional restriction $h<3$ if $d=2$.
\end{enumerate}
\end{assumption}
The parameters $q$ and $p$ will be used for spatial and time integrability, respectively, and the parameter $\delta$ is used to decrease spatial smoothness. The parameter $\kappa_{\crit}$ is related to the critical weight $t^{\kappa}$ used for the time-integrability.  It is interesting to note that large values of $h$ require lower values of $\delta$, which requires higher values of $q$.

Below, we stress the dependence on $(p,\a,q,h,\s)$ in the definition of solutions. However, in \cite[Proposition 3.5 and Remark 7.4]{AVreaction-local} it is proved that the solutions to \eqref{eq:reaction_diffusion_system} for different choices of the parameters $(p,\a,q,h,\s)$ satisfying Assumption \ref{ass:admissibleexp}, actually coincide. The sequence of independent standard Brownian motions $(w^n)_{n\geq 1}$ uniquely induce an $\ell^2$-cylindrical Brownian motion given by $W_{\ell^2}(v):=\sum_{n\geq 1} \int_{\R_+} v_n dw^n_t$ where $v=(v_n)_{n\geq 1}\in L^2(\R_+;\ell^2)$.

\begin{definition}
\label{def:solution}
Assume that Assumption \ref{ass:reaction_diffusion_global}$(p,q,h,\s)$ be satisfied for some $h>1$ and let $\a\in [0,\frac{p}{2}-1)$. Suppose that $u_0\in B^{2-\s-2\frac{1+\a}{p}}_{q,p}(\Tor^d;\R^{\ell})$ a.s.
\begin{itemize}
\item Let $\sigma$ be a stopping time and $u=(u_i)_{i=1}^{\ell}:[0,\sigma)\times \O\to H^{2-\s,q}(\Tor^d;\R^\ell)$ be a stochastic process.
We say that $(u,\sigma)$ is a \emph{local $(p,\a,\s,q)$-solution} to \eqref{eq:reaction_diffusion_system} if there exists a sequence of stopping times $(\sigma_j)_{j\geq 1}$ such that the following hold for all $i\in \{1,\dots,\ell\}$.
\begin{itemize}
\item $\sigma_j\leq \sigma$ a.s.\ for all $j\geq 1$ and $\lim_{j\to \infty} \sigma_j =\sigma$ a.s.;
\item for all $j\geq 1$, the process $\one_{[0,\sigma_j]\times \O} u_i$ is progressively measurable;
\item a.s.\ for all $j\geq 1$, we have $u_i\in L^p(0,\sigma_j ,w_{\a};H^{2-\s,q}(\Tor^d))$ and
\begin{equation*}
\begin{aligned}
\div(F_i(\cdot, u)) +f_i(\cdot, u)\in L^p(0,\sigma_j,w_{\a};H^{-\s,q}(\Tor^d)),&\\
(g_{n,i}(\cdot,u))_{n\geq 1}\in L^p(0,\sigma_j,w_{\a};H^{1-\s,q}(\Tor^d;\ell^2));&
\end{aligned}
\end{equation*}
\item a.s.\ for all $j\geq 1$ the following holds for all $t\in [0,\sigma_j]$:
\begin{equation*}
\begin{aligned}
u_i(t)-u_{0,i}
&=\int_{0}^{t} \Big(\div(a_i\cdot\nabla u_i)+ \div(F_i(\cdot, u)) +f_i(\cdot, u)\Big)\,\dd s\\
&=\int_{0}^t\Big( \one_{[0,\sigma_j]}\big[ (b_{n,i}\cdot\nabla)u + g_{n,i}(\cdot,u) \big]\Big)_{n\geq 1} \dd W_{\ell^2}(s).
\end{aligned}
\end{equation*}
\end{itemize}
\item $(u,\sigma)$ is a \emph{$(p,\a,\s,q)$-solution} to \eqref{eq:reaction_diffusion_system} if for any other local $(p,\a,\s,q)$-solution $(u',\sigma')$ to \eqref{eq:reaction_diffusion_system} we have $\sigma'\leq \sigma$ a.s.\ and $u=u'$ on $[0,\sigma')\times \O$.
\end{itemize}
\end{definition}

Note that a $(p,\a,q,\s)$-solution is \emph{unique} by definition. The main result on local existence, uniqueness, and regularity, is as follows (see \cite[Theorem 2.6]{AVreaction-local}).
\begin{theorem}[Local existence in critical spaces]
\label{t:reaction_diffusion_global_critical_spaces}
Let Assumptions \ref{ass:reaction_diffusion_global}$(p,q,h,\s)$ and \ref{ass:admissibleexp}$(p,q,h,\s)$ be satisfied. Then for any
\begin{equation*}
u_0\in L^0_{\F_0}(\O;B^{\frac{d}{q}-\frac{2}{h-1}}_{q,p}(\Tor^d;\R^{\ell})),
\end{equation*}
the problem \eqref{eq:reaction_diffusion_system} has a (unique) $(p,\a_{\crit},\s,q)$-solution $(u,\sigma)$ such that a.s.\ 
 $\sigma>0$ and
\begin{equation}
\label{eq:regularity_u_reaction_diffusion_critical_spaces}
\begin{aligned}
u&\in C([0,\sigma);B^{\frac{d}{q}-\frac{2}{h-1}}_{q,p}(\Tor^d;\R^{\ell})) ,
\\
u&\in H^{\theta,p}_{{\rm loc}}([0,\sigma),w_{\a_{\crit}};H^{2-\s-2\theta,q}(\Tor^d;\R^{\ell})) \ \text{ $\forall \theta\in [0,\tfrac{1}{2})$ if $p>2$, and  $\theta=0$ if $p=2$.}
\end{aligned}
\end{equation}
Moreover, $u$ instantaneously regularizes in space and time: a.s.,
\begin{align}
\label{eq:reaction_diffusion_H_theta}
u&\in H^{\theta,r}_{\rm loc}(0,\sigma;H^{1-2\theta,\zeta}(\Tor^d;\R^{\ell}))  \  \forall\theta\in  [0,1/2), \  \forall r,\zeta\in (2,\infty),\\
\label{eq:reaction_diffusion_C_alpha_beta}
u&\in C^{\theta_1,\theta_2}_{\rm loc}((0,\sigma)\times \Tor^d;\R^{\ell}) \  \forall \theta_1\in  [0,1/2), \ \forall \theta_2\in (0,1).
\end{align}
\end{theorem}

Note that \eqref{eq:reaction_diffusion_C_alpha_beta} is a consequence of \eqref{eq:reaction_diffusion_H_theta} and Sobolev embeddings.
Next, we state one of our blow-up criteria which we use to obtain global existence for the solution to \eqref{eq:reaction_diffusion_system} provided by Theorem \ref{t:reaction_diffusion_global_critical_spaces} (see \cite[Theorems 2.11 and Proposition 7.3]{AVreaction-local}).

\begin{theorem}[Blow-up criteria]\label{thm:blow_up_criteria}
Let the assumptions of Theorem \ref{t:reaction_diffusion_global_critical_spaces} be satisfied and let $(u,\sigma)$ be the $(p,\a_{\crit},q,\s)$-solution to \eqref{eq:reaction_diffusion_system}.
Let $h_0\geq 1+\frac{4}{d}$. Suppose that $p_0\in (2,\infty)$, $h_0\geq h$, $\s_{0}\in (1,2)$  are such that Assumptions \ref{ass:reaction_diffusion_global}$(p_0,q_0,h_0,\s_0)$ and \ref{ass:admissibleexp}$(p_0,q_0,h_0,\s_0)$ hold.
Let $\zeta_0 = \frac{d}{2}(h_0-1)$. The following hold for all $0<s<T<\infty$:
\begin{enumerate}[{\rm(1)}]
\setcounter{enumi}{\value{nameOfYourChoice}}
\item\label{it:blow_up_not_sharp_L}
If $q_0 = \zeta_0$, then for all $\zeta_1>q_0$
\[\P\Big(s<\sigma<T,\, \sup_{t\in [s,\sigma)}\|u(t)\|_{L^{\zeta_1}(\Tor^d;\R^{\ell})} <\infty\Big)=0.
\]
\item\label{it:blow_up_sharp_L}
If $q_0>\zeta_0$, $p_0\in \big(\frac{2}{\s_0-1},\infty\big)$, $p_0\geq q_0$, and $\frac{d}{q_0}+\frac{2}{p_0}=\frac{2}{h_0-1}$, then
\[
\P\Big(s<\sigma<T,\, \sup_{t\in [s,\sigma)}\|u(t)\|_{L^{\zeta_0}(\Tor^d;\R^{\ell})}+
\|u\|_{L^{p_0}(s,\sigma;L^{q_0}(\Tor^d;\R^{\ell}))} <\infty\Big)=0.
\]
\item\label{it:blow_up_sharp_2} If $p_0=q_0=2$, $\delta_0=1$, then
\[\P\Big(s<\sigma<T,\, \sup_{t\in [s,\sigma)}\|u(t)\|_{L^{2}(\Tor^d;\R^{\ell})}+
\|u\|_{L^{2}(s,\sigma;H^{1,2}(\Tor^d;\R^{\ell}))} <\infty\Big)=0.
\]
\end{enumerate}
\end{theorem}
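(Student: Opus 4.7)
The plan is to derive all three criteria from a single abstract blow-up principle, combined with the compatibility of solutions at different admissible parameters. By Proposition 3.5 and Remark 7.4 of \cite{AVreaction-local}, any $(p_0,\a_{\crit}^{(0)},\s_0,q_0)$-solution to \eqref{eq:reaction_diffusion_system} (with $\a_{\crit}^{(0)}$ the critical weight associated with $(p_0,q_0,h_0,\s_0)$) coincides with our $(p,\a_{\crit},\s,q)$-solution $(u,\sigma)$ wherever both exist, and their blow-up times coincide. I would therefore pass to such a solution and prove each statement there. The central tool is the general blow-up principle of \cite{AV19_QSEE_1,AV19_QSEE_2}: blow-up is excluded inside $\{s<\sigma<T\}$ on the subevent where the trace norm $\|u(t)\|_{B^{d/q_0-2/(h_0-1)}_{q_0,p_0}}$ stays bounded uniformly in $t\in[s,\sigma)$. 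Each of the cases \eqref{it:blow_up_not_sharp_L}--\eqref{it:blow_up_sharp_2} then amounts to bootstrapping the hypothesised bound into a bound on this trace norm.

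Case \eqref{it:blow_up_sharp_2} is essentially a variational argument: the hypothesised $\sup_t\|u\|_{L^2}+\|u\|_{L^2(H^{1,2})}$-bound controls the corresponding trace norm $B^0_{2,2}$ via a standard energy embedding, and the subcritical growth $h_0\leq(4+d)/d$ of Assumption \ref{ass:admissibleexp}\eqref{it:admissibleexp2} allows the nonlinear contributions to be absorbed via Sobolev embedding. For case \eqref{it:blow_up_sharp_L}, the Serrin-type balance $\frac{d}{q_0}+\frac{2}{p_0}=\frac{2}{h_0-1}$ is precisely the scaling under which $L^{p_0}(L^{q_0})$-integrability of $u$, interpolated with the hypothesised $L^\infty(L^{\zeta_0})$-bound, controls $f_i(\cdot,u)\in L^{p_0}(H^{-\s_0,q_0})$, $F_i(\cdot,u)\in L^{p_0}(L^{q_0})$, and $g_i(\cdot,u)\in L^{p_0}(H^{1-\s_0,q_0}(\ell^2))$ via H\"older. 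Stochastic maximal $L^{p_0}$-regularity then upgrades this to $u\in L^{p_0}(s,\sigma;H^{2-\s_0,q_0})$, whose trace embedding delivers the required Besov bound.

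For case \eqref{it:blow_up_not_sharp_L}, the plan is to reduce to \eqref{it:blow_up_sharp_L}. Since $q_0=\zeta_0$ is exactly critical, I would choose an auxiliary admissible triple $(p_0',q_0',\s_0')$ satisfying Assumption \ref{ass:admissibleexp}$(p_0',q_0',h_0,\s_0')$ with $q_0'>\zeta_0$ and $\frac{d}{q_0'}+\frac{2}{p_0'}=\frac{2}{h_0-1}$, and then extract an $L^{p_0'}(L^{q_0'})$-bound from the hypothesised $L^\infty(L^{\zeta_1})$-bound together with the instantaneous regularisation \eqref{eq:reaction_diffusion_H_theta} via a Gagliardo--Nirenberg interpolation in space and time. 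Case \eqref{it:blow_up_sharp_L} then closes the argument. The main obstacle lies precisely in this last bootstrap: the auxiliary exponents must simultaneously be admissible in the sense of Assumption \ref{ass:admissibleexp} and be reachable by interpolation from an arbitrarily small strict improvement $\zeta_1>\zeta_0$, which forces a delicate choice of $(p_0',q_0',\s_0')$ depending on $\zeta_1$, and a careful use of \eqref{eq:reaction_diffusion_H_theta} to keep the interpolation within the range of stochastic maximal regularity.
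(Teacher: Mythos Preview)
This theorem is not proved in the present paper. Immediately before its statement the paper writes ``(see \cite[Theorems 2.11 and Proposition 7.3]{AVreaction-local})''; the result appears in Section~\ref{s:main_results} purely as a review of the local theory established in the companion paper, alongside Theorem~\ref{t:reaction_diffusion_global_critical_spaces} and Proposition~\ref{prop:positivity}. There is therefore no in-paper argument for you to compare against.

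That said, your outline is broadly in line with how such criteria are obtained in the framework of \cite{AV19_QSEE_1,AV19_QSEE_2,AVreaction-local}: you correctly invoke the compatibility of solutions across admissible parameter choices, the abstract blow-up principle in the trace space $B^{d/q_0-2/(h_0-1)}_{q_0,p_0}$, and stochastic maximal regularity to close the bootstrap. Your handling of cases \eqref{it:blow_up_sharp_L} and \eqref{it:blow_up_sharp_2} follows the expected pattern. For case \eqref{it:blow_up_not_sharp_L}, the reduction you propose to case \eqref{it:blow_up_sharp_L} via an auxiliary admissible triple and interpolation against \eqref{eq:reaction_diffusion_H_theta} is heavier than necessary and, as you yourself flag, delicate: the instantaneous regularisation \eqref{eq:reaction_diffusion_H_theta} is only \emph{local} on $(0,\sigma)$ and gives no uniform control up to $\sigma$, so it cannot by itself supply the $L^{p_0'}(s,\sigma;L^{q_0'})$-bound you need. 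A cleaner route is to observe that when $q_0=\zeta_0$ the trace space has smoothness zero, and the strict gain $\zeta_1>q_0$ already yields the embedding $L^{\zeta_1}(\Tor^d)\hookrightarrow B^{0}_{q_0,p_0}(\Tor^d)$ on the torus (via $L^{\zeta_1}\hookrightarrow B^{d(1/q_0-1/\zeta_1)}_{q_0,\infty}\hookrightarrow B^{0}_{q_0,1}\hookrightarrow B^{0}_{q_0,p_0}$), so the hypothesised uniform $L^{\zeta_1}$-bound controls the trace norm directly without passing through a Serrin condition. If you want to check your sketch against an actual proof, consult \cite{AVreaction-local} rather than the present paper.
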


The main result on positivity (in a distributional sense) is as follows (see \cite[Theorem 2.13]{AVreaction-local}).

\begin{proposition}[Positivity]
\label{prop:positivity}
Let the assumptions of Theorem \ref{t:reaction_diffusion_global_critical_spaces} be satisfied.
Let $(u,\sigma)$ be the $(p,\a_{\crit},\s,q)$-solution to \eqref{eq:reaction_diffusion_system} where $\a_{\crit}$ is as in Theorem \ref{t:reaction_diffusion_global_critical_spaces}. Suppose that
$$
u_0\geq 0 \ \text{a.s.\ (component-wise)},
$$
and that there exist progressive measurable processes $c_1,\dots,c_{\ell} :\R_+\times \O\to \R$ such that for all $i\in \{1,\dots,\ell\}$, $n\geq 1$, $y=(y_{i})_{i=1}^{\ell}\in [0,\infty)^{\ell}$ and
a.e.\ on $\R_+\times \O$,
\begin{align*}
f_i(\cdot,y_1,\dots,y_{i-1},0,y_{i+1},\dots,y_{\ell})&\geq 0,\\
F_{i}(\cdot,y_1,\dots,y_{i-1},0,y_{i+1},\dots,y_{\ell})&= c_i(\cdot),\\
g_{n,i}(\cdot,y_1,\dots,y_{i-1},0,y_{i+1},\dots,y_{\ell})&=0.
\end{align*}
Then
$
u(t,x)\geq 0
$
 a.s.\ for all  $x\in \Tor^d$ and $t\in [0,\sigma)$.
\end{proposition}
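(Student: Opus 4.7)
My plan is to prove positivity via an It\^o-based energy argument applied to a smooth approximation of the joint negative part $\sum_i (u_i^-)^2$, after a preliminary extension and localization of the nonlinearities. Concretely, I would first extend each of $f_i,F_i,g_{n,i}$ from $[0,\infty)^\ell$ to all of $\R^\ell$ in a globally Lipschitz way that preserves their values on $\{y_i=0\}\cap[0,\infty)^\ell$ (for example, by precomposition with the componentwise positive part $y\mapsto(y_1^+,\ldots,y_\ell^+)$), and truncate the data by a smooth cut-off in $y$. By uniqueness of $(p,\a_{\crit},\s,q)$-solutions in Theorem \ref{t:reaction_diffusion_global_critical_spaces}, the modified problem agrees with the original one up to a localizing sequence $\sigma_R\uparrow\sigma$, so it suffices to prove $u\geq 0$ for the bounded, globally Lipschitz problem.

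Let $\eta_\varepsilon\in C^2(\R)$ be a sequence of convex functions vanishing on $[0,\infty)$ and decreasing to $(y^-)^2$ as $\varepsilon\downarrow 0$, with uniformly bounded derivatives. The instantaneous regularization \eqref{eq:reaction_diffusion_H_theta} places $u(s+\cdot)$ into $L^2_{\loc}([0,\sigma-s);H^{1,2}(\Tor^d;\R^\ell))\cap C([0,\sigma-s);L^2(\Tor^d;\R^\ell))$ a.s.\ for every $s\in(0,\sigma)$, which is the standard regularity class in which an $L^2$ It\^o formula applies to the functional $\Phi_\varepsilon(u):=\sum_{i=1}^\ell\int_{\Tor^d}\eta_\varepsilon(u_i)\,dx$; a passage $s\downarrow 0$ returns $\Phi_\varepsilon(u_0)=0$ on the right. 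The elliptic part together with the It\^o correction from transport noise combines into $-\sum_i\int\eta_\varepsilon''(u_i)\big[(a_i\nabla u_i)\cdot\nabla u_i-\tfrac12\sum_n|(b_{n,i}\cdot\nabla)u_i|^2\big]\,dx\leq 0$ by Assumption \ref{ass:reaction_diffusion_global}\eqref{it:ellipticity_reaction_diffusion} and convexity of $\eta_\varepsilon$. The reaction contribution is handled by Taylor-expanding $f_i(\cdot,u)$ around the hyperplane $\{y_i=0\}$: on $\{u_i<0\}$, the Lipschitz extension gives
\[
f_i(\cdot,u)=f_i(\cdot,u_1^+,\ldots,0,\ldots,u_\ell^+)+O\Big(|u_i|+\sum_{j\ne i}u_j^-\Big),
\]
where the first term is non-negative by hypothesis while $\eta_\varepsilon'(u_i)\leq 0$, and the errors are absorbed into $C\Phi_\varepsilon(u)$ via AM--GM. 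The divergence term and the noise It\^o correction are treated analogously, using respectively that $F_i$ reduces to the $u$-independent $c_i$ on $\{y_i=0\}\cap[0,\infty)^\ell$ (so its integration by parts against $\nabla\eta_\varepsilon(u_i)$ leaves only Lipschitz remainders) and that $g_{n,i}$ vanishes there. After a further stopping time $\tau_n\uparrow\infty$ making the stochastic integral a martingale, taking expectations and applying Gr\"onwall, then sending $\varepsilon\downarrow 0$ and $R,n\to\infty$, yields $\E\int_{\Tor^d}\sum_i(u_i^-)^2(t)\,dx=0$ for every $t<\sigma$, hence $u\geq 0$ componentwise on $[0,\sigma)\times\Tor^d$.

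The main obstacle is the \emph{coupling} of the components: the hypothesis on $f_i,F_i,g_{n,i}$ is formulated only when every other component is non-negative, but that is precisely what one wants to prove. This forces the argument to work with the joint functional $\Phi_\varepsilon(u)$ rather than one component at a time, so that Lipschitz errors arising from other components being slightly negative reappear as terms controlled by $\Phi_\varepsilon(u)$ itself and are absorbed by Gr\"onwall. A secondary technical issue, the rigorous validity of It\^o's formula at the native spatial regularity $H^{2-\s,q}$, is bypassed by the instantaneous $H^{1,2}$- and $C^{\theta_2}$-regularization provided by Theorem \ref{t:reaction_diffusion_global_critical_spaces}, with a final limit $s\downarrow 0$.
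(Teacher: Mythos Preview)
The paper does not prove Proposition~\ref{prop:positivity}; it only restates \cite[Theorem~2.13]{AVreaction-local} and defers to that reference. So there is no in-paper argument to compare against. Your outline --- reduce to a truncated globally Lipschitz problem, apply a regularized It\^o formula to $\sum_i\int_{\T^d}\eta_\varepsilon(u_i)\,dx$, use the parabolicity condition for the $(a,b)$-terms, and close via Gr\"onwall --- is the standard route for positivity of stochastic reaction--diffusion systems and is almost certainly what the cited companion paper does.

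Two comments on the sketch. First, once you have precomposed with $y\mapsto y^+$, the ``coupling obstacle'' you single out as the main difficulty disappears: for $\tilde f_i(\cdot,y)=f_i(\cdot,y^+)$ etc.\ one has, on the support of $\eta_\varepsilon'(u_i)$ (i.e.\ on $\{u_i\leq 0\}$), exactly
\[
\tilde f_i(\cdot,u)=f_i(\cdot,u_1^+,\ldots,0,\ldots,u_\ell^+)\geq 0,\qquad \tilde g_{n,i}(\cdot,u)=0,\qquad \tilde F_i(\cdot,u)=c_i,
\]
with \emph{no} Lipschitz remainder in $\sum_{j\neq i}u_j^-$. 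The Taylor expansion and the cross-absorption into $\Phi_\varepsilon(u)$ you describe are what one would need \emph{without} the precomposition; with it, each component decouples and the argument is cleaner (the $c_i$-term vanishes because $c_i$ is $x$-independent and $\eta_\varepsilon''(u_i)\nabla u_i=\nabla\eta_\varepsilon'(u_i)$ integrates to zero on $\T^d$). Relatedly, the sentence ``by uniqueness the modified problem agrees with the original up to $\sigma_R$'' is written in the wrong order: one first proves $v^R\geq 0$ for the modified problem, observes that the modified and original nonlinearities then coincide along $v^R$, and only \emph{then} invokes uniqueness to conclude $u=v^R$ on $[0,\sigma_R)$.

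Second, the passage $s\downarrow 0$ is a genuine technical point: Theorem~\ref{t:reaction_diffusion_global_critical_spaces} gives continuity of $u$ at $t=0$ only in $B^{d/q-2/(h-1)}_{q,p}$, which may have negative smoothness, so $\Phi_\varepsilon(u(s))\to\Phi_\varepsilon(u_0)=0$ does not follow directly. This is typically handled either by first treating smooth initial data (where $s=0$ causes no trouble) and then using local continuous dependence, or by noting that the truncated Lipschitz problem enjoys $L^2$-continuity up to $t=0$.
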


\section{Global existence and uniqueness for fully dissipative systems}
\label{s:scalar_global}
In this section, we turn to the problem of determining when the solutions of \eqref{eq:reaction_diffusion_system} provided by Theorem \ref{t:reaction_diffusion_global_critical_spaces} are global.
It is well-known that  Assumption \ref{ass:reaction_diffusion_global} is not enough to ensure global existence of \eqref{eq:reaction_diffusion_system_intro} in general even in the scalar case $\ell=1$, and further conditions on the nonlinearities are required (cf. \cite{F66}, \cite{pruss2016moving}, \cite[Section 19]{QS19} for the deterministic case).
In this section, we propose a new type of $L^\m$-coercivity/dissipation condition on $(a_i,F_i,f_i,b_i,g_i)$, where $\m = \frac{d(h-1)}{2}\vee 2$, and $h$ is as in Assumption \ref{ass:reaction_diffusion_global}.
Explicit examples will be discussed in Subsections \ref{ss:example_nonlinearities} and \ref{ss:applications_fully_dissipative} below.

\begin{assumption}[$L^\m$-coercivity/dissipativity]
\label{ass:dissipation_general}
Let Assumption \ref{ass:reaction_diffusion_global}$(p,q,h,\s)$ be satisfied. Let $\m\in [ 2,\infty)$. We say that Assumption \ref{ass:dissipation_general}$(\m)$ holds if there exist constants $\theta,M,C,\alpha_1,\dots,\alpha_{\ell}>0$ such that a.e.\ in $[0,\infty)\times \Omega$ for all $u=(u_i)_{i=1}^{\ell}\in C^1(\mathbb{T}^d;\R^{\ell})$,
\begin{align*}
\sum_{i=1}^{\ell} \alpha_i\int_{\T^d} |u_i|^{\m-2} \Big(\am_i \nabla u_i\cdot \nabla u_i   + F_i(\cdot, u) \cdot \nabla u_i - \frac{u_i f_i(\cdot, u)}{\m-1} -\frac12 \sum_{n\geq 1} \big[(b_{n,i} \cdot \nabla) u + g_{n,i}(\cdot, u) \big]^2 \Big) \, \dd x &
\\
 \geq  \theta
\sum_{i=1}^{\ell}  \int_{\T^d} |u_i |^{\m-2}|\nabla u_i |^2 \,\dd x -  M
\sum_{i=1}^{\ell} \int_{\T^d}|u_i |^{\zeta} \, \dd x -C&.
\end{align*}
Moreover, given $S\subseteq \R^{\ell}$ we say that Assumption \ref{ass:dissipation_general}$(\m)$ holds for $S$-valued functions if the above is satisfies for all $u\in C^1(\T^d;\R^{\ell})$ with $u(x)\in S$ for all $x\in \T^d$.
\end{assumption}

The constants $\alpha_1,\dots,\alpha_{\ell}$ can be chosen to exploit the dissipation effect of a certain nonlinearity appearing in the $i$-th equation in the system \eqref{eq:reaction_diffusion_system} with $i\in \{1,\dots,\ell\}$ fixed. In case of high dissipation in the $i$-th equation, one chooses $\alpha_{i}$ relatively large compared to $\alpha_j$ for $j\neq i$ (see Assumption \ref{ass:condsymb} in the case of symbiotic Lotka-Volterra equations).

By approximation and using Assumption \ref{ass:reaction_diffusion_global}, one can check that if the above assumption holds, then it extends to all $u\in H^{1,q_0}(\T^d;\R^{\ell})$ with $q_0>d$. A typical situation where the above assumption is satisfied is in the case $\frac{y_i f_i(\cdot, y)}{\m-1}\leq 0$ for $|y|\to \infty$ and all $i\in \{1,\dots,\ell\}$, i.e.\ the system of SPDEs \eqref{eq:reaction_diffusion_system} has \emph{dissipation} in all components. In the applications given in Subsections \ref{ss:example_nonlinearities} and \ref{ss:applications_fully_dissipative}, we will see the conditions needed to check Assumption \ref{ass:condsymb}$(\m)$ become more restrictive for larger values of $\m$. The $(a,b)$-terms can usually be estimated using Assumption  \ref{ass:reaction_diffusion_global}\eqref{it:ellipticity_reaction_diffusion}.
The reason for also considering Assumption \ref{ass:dissipation_general} for the restricted class of functions taken values in $S$ is that often the solution to \eqref{eq:reaction_diffusion_system} is known to take values $[0,\infty)^{\ell}$, $[-1,1]^{\ell}$ or $[0,1]^{\ell}$, etc.\ (see Proposition \ref{prop:positivity} for a sufficient condition for positivity).

Next, we state the main result of this section.
\begin{theorem}[Global well-posedness -- Fully dissipative systems]
\label{t:global_well_posedness_general}
Suppose that the assumptions of Theorem \ref{t:reaction_diffusion_global_critical_spaces} hold, and let $(u,\sigma)$ be the $(p,\a_{\crit},\s,q)$-solution to \eqref{eq:reaction_diffusion_system} where $\a_{\crit}$ is as in Theorem \ref{t:reaction_diffusion_global_critical_spaces} and suppose that $u$ takes values in $S$ a.e.\ on $[0,\sigma)\times\Omega$ for some $S\subseteq \R^{\ell}$.
Suppose that Assumption \ref{ass:dissipation_general}$(\m)$ holds for $S$-valued functions with $\m \geq \frac{d(h-1)}{2}\vee 2$. Then $(u,\sigma)$ is \emph{global} in time, i.e.\ $\sigma=\infty$ a.s.
In particular, \eqref{eq:regularity_u_reaction_diffusion_critical_spaces}--\eqref{eq:reaction_diffusion_C_alpha_beta} hold with $\sigma=\infty$. Moreover, for all $\lambda\in (0,1)$, there exist $N_0, N_{0,\lambda}>0$ such that for any $0<s<T$, $L\geq 0$ and $i\in \{1,\dots,\ell\}$,
\begin{align}
\label{eq:aprioribounds1} \sup_{t\in [s,T]}\E \one_{\Gamma} \|u_i(t)\|_{L^{\m}}^{\m } + \E \int_{s}^{T}\int_{\Tor^d} \one_{\Gamma} |u_i|^{\m-2} |\nabla u_i|^{2}\,\dd x\, \dd t&\leq N_0\Big(1+\E\one_{\Gamma}\|u(s)\|_{L^{\m}}^{\m}\Big),
\\ \label{eq:aprioribounds2}
\E \sup_{t\in [s,T)} \one_{\Gamma} \|u_i(t)\|_{L^{\m}}^{\m \lambda} + \E \Big|\int_{s}^{T}\int_{\Tor^d}\one_{\Gamma}  |u_i|^{\m-2} |\nabla u_i|^{2}\,\dd x\,\dd t\Big|^{\lambda}
& \leq N_{0,\lambda}\Big(1+\E\one_{\Gamma}\|u(s)\|_{L^{\m}}^{\m \lambda}\Big),
\end{align}
where $\Gamma = \{\|u(s)\|_{L^{\m}}\leq L\}$.
Moreover, one can take $s=0$ if for some $\varepsilon>0$, $u_0\in B^{\varepsilon}_{\zeta,\infty}(\T^d;\R^{\ell})$ a.s., or $u_0\in L^2(\T^d;\R^{\ell})$ a.s.\ and $\zeta = 2$.
\end{theorem}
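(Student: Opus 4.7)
The plan is to derive an $L^\m$ energy estimate via It\^o's formula, combine it with stochastic Gr\"onwall to obtain \eqref{eq:aprioribounds1}--\eqref{eq:aprioribounds2}, and deduce $\sigma=\infty$ from the blow-up criterion in Theorem~\ref{thm:blow_up_criteria}. Working at the regularised level with $\phi_\varepsilon(u):=\int_{\T^d}(u^2+\varepsilon)^{\m/2}\,dx$, which is $C^2$ and converges to $\|u\|_{L^\m}^{\m}$ as $\varepsilon\downarrow 0$ (the hypothesis $\m\geq 2$ is what keeps the second derivatives integrable, cf.\ \cite{NVW13}), I would localise using the sequence $(\sigma_j)$ from Definition~\ref{def:solution} together with $\tau_k:=\inf\{t\geq s\colon\|u(t)\|_{L^\m}^{\m}\geq k\}\wedge\sigma_j$. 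The instantaneous regularisation \eqref{eq:reaction_diffusion_H_theta}--\eqref{eq:reaction_diffusion_C_alpha_beta} makes $u$ classical on $(s,\sigma_j)$ for any $s>0$, so direct computation of $\phi_\varepsilon',\phi_\varepsilon''$, integration by parts against the drifts $\div(a\nabla u)$ and $\div F(\cdot,u)$, and $\varepsilon\downarrow 0$ produce
\begin{equation*}
\tfrac{1}{\m(\m-1)}d\|u\|_{L^\m}^{\m}+\int_{\T^d}|u|^{\m-2}\Big[a\nabla u\cdot\nabla u+F(\cdot,u)\cdot\nabla u-\tfrac{u f(\cdot,u)}{\m-1}-\tfrac12\sum_{n\geq 1}\big((b_n\cdot\nabla)u+g_n(\cdot,u)\big)^{2}\Big]dx\,dt=dN_t,
\end{equation*}
where $N_t:=\sum_n\int_0^t\int_{\T^d}|u|^{\m-2}u\,\big((b_n\cdot\nabla)u+g_n(\cdot,u)\big)\,dx\,dw_s^n$ is a local martingale. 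The exact matching with the integrand in Assumption~\ref{ass:dissipation_general} comes from $\phi_\varepsilon''(u)[v,v]=\m\int(u^2+\varepsilon)^{\m/2-2}[(\m-1)u^2+\varepsilon]v^2\,dx$, whose limit $\m(\m-1)\int|u|^{\m-2}v^2\,dx$ (with $v=(b_n\cdot\nabla)u+g_n(\cdot,u)$) combines with the It\^o correction to give the $-\tfrac12\sum_n(\cdots)^2$ term.

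Next, Assumption~\ref{ass:dissipation_general}$(\m)$ applied pointwise in $(t,\om)$ (with $u(t,\cdot)\in S$ a.e.\ by hypothesis) bounds the bracketed integrand from below by $\theta\int|u|^{\m-2}(|\nabla u|^2-M|u|^2)\,dx-C$, yielding
\begin{equation*}
d\|u\|_{L^\m}^{\m}+\m(\m-1)\theta\int_{\T^d}|u|^{\m-2}|\nabla u|^2\,dx\,dt\leq\m(\m-1)\theta M\|u\|_{L^\m}^{\m}\,dt+\m(\m-1)C\,dt+\m(\m-1)\,dN_t.
\end{equation*}
Multiplying by $\one_{\Gamma}$ (which is $\F_s$-measurable and hence commutes with the stochastic integral), taking expectations and Gr\"onwall give \eqref{eq:aprioribounds1}. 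For the pathwise supremum in \eqref{eq:aprioribounds2}, Assumption~\ref{ass:reaction_diffusion_global}\eqref{it:growth_nonlinearities} delivers
\[
[N]_t^{1/2}\lesssim\Big(\sup_{r\leq t}\|u(r)\|_{L^\m}^{\m}\Big)^{1/2}\Big(\int_0^{t}\!\!\int_{\T^d}|u|^{\m-2}\big(|\nabla u|^2+|u|^{h+1}+1\big)dx\,dr\Big)^{1/2},
\]
the polynomial term is absorbed via Gagliardo--Nirenberg (using $\m\geq\tfrac{d(h-1)}{2}$) by the gradient term on the left-hand side, and one closes with BDG plus the stochastic Gr\"onwall lemma, finally letting $k,j\to\infty$.

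To conclude $\sigma=\infty$ a.s., \eqref{eq:aprioribounds2} with $\lambda\in(0,1)$ and $L\uparrow\infty$ gives $\sup_{t\in[s,\sigma)}\|u(t)\|_{L^\m}<\infty$ a.s.\ on $\{\sigma<T\}$. If $\m>\tfrac{d(h-1)}{2}$ one takes $h_0=h$, $q_0=\zeta_0=\tfrac{d(h-1)}{2}$, $\zeta_1=\m$ in Theorem~\ref{thm:blow_up_criteria}(1) to force $\P(s<\sigma<T)=0$. In the critical case $\m=\tfrac{d(h-1)}{2}$ the gradient part of \eqref{eq:aprioribounds1} rewrites as $|u|^{\m/2}\in L^2_tH^{1,2}_x\cap L^\infty_tL^2_x$; Gagliardo--Nirenberg then places $u$ in $L^{p_0}_tL^{q_0}_x$ on any point of the parabolic line $\tfrac{d}{q_0}+\tfrac{2}{p_0}=\tfrac{d}{\m}=\tfrac{2}{h-1}$ with $q_0>\m$ and $p_0$ large, which triggers Theorem~\ref{thm:blow_up_criteria}(2) with $h_0=h$. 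When $\m=2\geq\tfrac{d(h-1)}{2}$ (so $h\leq 1+4/d$) the $L^\infty_tL^2\cap L^2_tH^{1,2}$ bound activates Theorem~\ref{thm:blow_up_criteria}(3). Arbitrariness of $s,T,L$ closes the proof; the case $s=0$ is identical once the extra regularity on $u_0$ guarantees $\E\|u_0\|_{L^\m}^{\m}$ (respectively $\E\|u_0\|_{L^2}^2$) finite. The main obstacle I anticipate is the borderline $\m=\tfrac{d(h-1)}{2}$ case, where $L^\infty_tL^\m$ alone lies exactly on the threshold of the blow-up criterion and one must squeeze the additional space-time integrability out of the dissipation term via Gagliardo--Nirenberg.
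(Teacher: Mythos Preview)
Your overall strategy---It\^o for $\|u\|_{L^\m}^\m$, the coercivity Assumption~\ref{ass:dissipation_general}, Gr\"onwall, and then the blow-up criteria of Theorem~\ref{thm:blow_up_criteria} with a case split at $\m=\tfrac{d(h-1)}{2}$---matches the paper's proof closely, and your handling of the critical case for global existence via the interpolation $|u|^{\m/2}\in L^\infty_t L^2_x\cap L^2_t H^1_x$ is exactly what the paper does in its Step~2.

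There is, however, a genuine simplification you are missing for \eqref{eq:aprioribounds2}, and your alternative route has a gap. The stochastic Gr\"onwall lemma the paper invokes (Lemma~\ref{lem:gronwall}, from \cite{geiss2021sharp}) applies directly to an inequality of the form $X(t)\le H+\int_s^t cX(r)\,dr+M(t)$ and yields $\|e^{-cT}\sup_t X(t)\|_{L^\lambda(\Omega)}\lesssim_\lambda\|H\|_{L^\lambda(\Omega)}$ for every $\lambda\in(0,1)$ \emph{without any estimate on the martingale $M$}. Once the It\^o plus coercivity inequality is in place, this lemma finishes \eqref{eq:aprioribounds2} immediately: no BDG, no quadratic-variation bound, and in particular no $|u|^{h+1}$ term to absorb. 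Your proposed absorption of $\int_0^T\!\int|u|^{\m+h-1}$ via Gagliardo--Nirenberg is in fact borderline when $\m=\tfrac{d(h-1)}{2}$: the interpolation gives $\int|u|^{\m+h-1}\lesssim\|u\|_{L^\m}^{h-1}\,\||u|^{\m/2}\|_{H^1}^{2}$, so the $H^1$ factor enters with exponent exactly $2$ and carries a multiplicative prefactor $\|u\|_{L^\m}^{h-1}$ that cannot be made small; Young's inequality then produces $\big(\int_0^T\||u|^{\m/2}\|_{H^1}^2\big)^{d/(d-2)}$ on the right, a strictly higher power than the left-hand side controls. So the closing step you sketch does not go through at criticality, whereas the martingale-free stochastic Gr\"onwall sidesteps the issue entirely.

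Two minor remarks. First, the paper localises with stopping times based on $\|u(t)-u(s)\|_{C(\T^d)}+\|u\|_{L^2(s,t;H^1)}$, exploiting the instantaneous H\"older regularity \eqref{eq:reaction_diffusion_C_alpha_beta}; this forces $u$ to be pointwise bounded on $[s,\tau_j]$ and lets one replace $|y|^\m$ by a $C^2_b$ function coinciding with it on the relevant range, so the generalised It\^o formula (Lemma~\ref{lem:Ito_generalized}) applies directly. Your $\phi_\varepsilon$-regularisation is a legitimate alternative. Second, the paper does not invoke Theorem~\ref{thm:blow_up_criteria}\eqref{it:blow_up_sharp_2} separately; it always uses part~\eqref{it:blow_up_sharp_L} with an artificially enlarged $h_0=h\vee(1+\tfrac{4}{d})$ so that $\zeta_0=\tfrac{d}{2}(h_0-1)=\m$ also when $\m=2$.
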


The final assertion extends to $u_0\in L^{\zeta}(\T^d;\R^{\ell})$ a.s.\ and $\Gamma=\O$ as will be shown in Section \ref{s:continuous_dependence}, see Corollary \ref{cor:estimate_up_to_0}.
In case $s=0$, we can let $L\to \infty$, and thus omit $\Gamma$.
Later on in Proposition \ref{prop:L_m_continuity}, we will see that
if $u_0\in L^{\zeta}(\T^d;\R^\ell)$ a.s., then one can show that the estimates \eqref{eq:aprioribounds1} and \eqref{eq:aprioribounds2} also hold with $s=0$.
Finally, continuity with respect to the initial data follows from Theorem \ref{t:continuity_general}.

The proof of the above result will be given in Subsection \ref{ss:global_general}. In the next subsections, we first discuss some sufficient conditions for Assumption \ref{ass:dissipation_general}.

Often, when dealing with scalar equations, i.e.\ $\ell=1$, we omit the subscript $i=1$.

\subsection{Sufficient conditions for $L^\m$-coercivity/dissipativity}
\label{ss:example_nonlinearities}
In this subsection, we discuss examples of nonlinearities satisfying Assumption \ref{ass:dissipation_general}.
Before doing so, we write down two classes of examples where Assumption \ref{ass:dissipation_general} is satisfied. In Lemma \ref{lem:dissipationI} we present a pointwise condition for $L^{\m}$-coercivity/dissipativity without assuming any smoothness on $(F_i,b_i,g_i)$. This condition is further weakened in Lemma \ref{lem:dissipationII} where the case of $\ell=1$ and smooth functions $(F,b,g)$ is considered. Further variations where only one or two functions have smoothness are also possible.

\begin{lemma}[Pointwise $L^\m$--coercivity/dissipativity without smoothness]
\label{lem:dissipationI}
Suppose that Assumption \ref{ass:reaction_diffusion_global} holds with $\m\geq 2$. For $\varepsilon_i>0$ let
\[N_i^{\varepsilon_i}(\cdot, y) = \frac{y_i f_i(\cdot, y)}{\m-1} +\frac12 \sum_{n\geq 1} |g_{n,i}(\cdot, y)|^2 + \frac{4}{\nu_i-\varepsilon_i} \Big(|F_i(\cdot, y)|+\sum_{n\geq 1} |b_{n,i}(\cdot)| \,  |g_{n,i}(\cdot,y)|\Big)^{2}.\]
If there exist $\varepsilon_i\in (0,\nu_i)$, $M>0$ and $\alpha_1, \ldots, \alpha_{\ell}>0$ such that a.e.\ in $[0,\infty)\times\Omega\times \T^d$ for all $y\in \R^{\ell}$
\begin{equation}\label{eq:dissipationIsystem}
 \sum_{i=1}^\ell \alpha_i |y_i|^{\m-2} N_i^{\varepsilon_i}(\cdot, y) \leq   M(|y|^\m+1),
\end{equation}
then Assumption \ref{ass:dissipation_general}$(\m)$ is satisfied. Moreover, if the above only holds with $y\in S\subseteq \R^{\ell}$, then  Assumption \ref{ass:dissipation_general}$(\m)$ holds for $S$-valued functions.
\end{lemma}

\begin{proof}
We prove the claim only for $\ell=1$ as the general case is analogous. We omit the subscript $i=1$, for notational convenience.

We claim that for all $z\in \R^d$ and $y\in \R$,
\begin{equation}\label{eq:dissipationhelp}
\am z\cdot z   + F(\cdot, y) \cdot z - \frac{y f(\cdot, y)}{\m-1} -\frac12 \sum_{n\geq 1} \big[b_n \cdot z + g_n(\cdot, y) \big]^2 \geq \theta |z|^2 - M(|y|^2+1).
\end{equation}
The claim implies Assumption \ref{ass:dissipation_general}$(\m)$ by applying \eqref{eq:dissipationhelp} with $y=u$ and $z = \nabla u$.

To prove the claim set $h(\cdot, y) = - \frac{y f(\cdot, y)}{\m-1}- \frac{1}{2}\sum_{n\geq 1} |g_n(\cdot,y)|^2$. Then
\begin{align*}
\text{LHS\eqref{eq:dissipationhelp}} &= \am z\cdot z - \frac12\sum_{n\geq 1} |b_n \cdot z|^2 + F(\cdot, y) \cdot z  - \sum_{n\geq 1} b_n \cdot z g_n(\cdot,y)   -h(y)
\\ & \stackrel{(i)}{\geq} \nu|z|^2 - |z|\, |F(\cdot, y)| - |z|\sum_{n\geq 1} |b_n| \,  |g_n(\cdot,y)| -h(y)
\\ & \stackrel{(ii)}{\geq} \varepsilon|z|^2 + \frac{4}{\nu-\varepsilon} \Big(|F(\cdot, y)|+\sum_{n\geq 1} |b_n| \,  |g_n(\cdot,y)|\Big)^{2} - h(y)
\\ & \stackrel{\eqref{eq:dissipationIsystem}}{\geq} \varepsilon |z|^2 - M(|y|^2+1)
\end{align*}
where in $(i)$ we used Assumption \ref{ass:reaction_diffusion_global}\eqref{it:ellipticity_reaction_diffusion} and in $(ii)$ we used $ab\leq (\nu-\varepsilon)a^2+\frac{1}{4(\nu-\varepsilon)}b^2$.
\end{proof}

In case $\ell=1$, then clearly \eqref{eq:dissipationIsystem} implies the existence of $M>0$ such that, a.e.\  in $[0,\infty)\times\Omega\times \T^d$,
\begin{align}\label{eq:dissneccon}
\frac{y f(\cdot, y)}{\m-1} +\frac12 \sum_{n\geq 1} |g_n(\cdot, y)|^2 \leq M(|y|^2+1), \ \ y\in \R.
\end{align}
In the next example, we will see that $f_i$ determines the admissible growth of $F_i$ and $g_{n,i}$ in order to satisfy Assumption \ref{ass:dissipation_general}.

\begin{example}[Strongly dissipative $f$]
\label{ex:strong_dissipation}
Let $h>1$ and $\m \geq 2$. Suppose that Assumption \ref{ass:reaction_diffusion_global} holds.
In the study of reaction-diffusion equations, the following dissipative condition on $f$ is often employed:

There exist $N_0, N_1>0$ such that, a.e.\ in $[0,\infty)\times\Omega\times \T^d$ and for all $i\in \{1,\dots,\ell\}$,
\begin{equation}
\label{eq:dissipative_nonlinearity}
f_i(\cdot,y)y_i\leq - N_0 |y_i|^{h+1}+ N_1 (|y|^2+1), \ \ y\in \R^{\ell},
\end{equation}
cf.\ \cite[Examples 4.2 and 4.5]{KvN12}.
In particular, \emph{odd} polynomial nonlinearities with negative leading coefficients satisfy \eqref{eq:dissipative_nonlinearity}, e.g.\
the Allen-Cahn nonlinearity: $f(u)=u-u^3$ with $\ell=1$.

In case $f_i$ satisfies \eqref{eq:dissipative_nonlinearity}, condition \eqref{eq:dissipationIsystem} holds if there exist  $\varepsilon\in (0,\nu)$ and $M>0$ such that, a.e.\ in $[0,\infty)\times\Omega\times \T^d$ and for all 
$y\in \R$, $i\in \{1,\dots,\ell\}$,
\begin{align}\label{eq:condFblemmaex}
\frac{1}{4(\nu-\varepsilon)}\Big(|F_i(\cdot,y)|
+\sum_{n\geq 1} |b_{n,i}(\cdot)|\, |g_{n,i}(\cdot,y)|\Big)^2 &+\frac12\|(g_{n,i}(\cdot,y))_{n\geq 1}\|_{\ell^2}^2
\\
\nonumber
 & \leq M (1+|y|^{2}) +\frac{N_0}{\m-1}|y_i|^{h+1}.
\end{align}
This shows that $F_i$ and $g_i$ can have growth of order $|y|^{\frac{h+1}{2}}$ for $|y|\to \infty$, but with a restriction on the constant which depends on the constant $N_0$ appearing in \eqref{eq:dissipative_nonlinearity}.

In particular, a standard Young's inequality argument shows that \eqref{eq:condFblemmaex} holds if there exist $M_1,M_2>0$ and $\delta\in (0,h+1)$ such that a.e.\ in $[0,\infty)\times\Omega\times\T^d$  and for all $y\in \R$, $i\in \{1,\dots,\ell\}$,
\begin{equation}
\label{eq:condFblemmaex2}
|F_i(\cdot,y)|^2+\|\rnoise_i(\cdot,y)\|_{\ell^2}^2
\leq M_1 (1+|y|^2)+M_2 |y_i|^{h+1-\delta}.
\end{equation}

Finally, if one can prove that the local solution provided by \eqref{t:reaction_diffusion_global_critical_spaces} takes values in $S\subseteq \R^{\ell}$ for some $S$, then Assumption \ref{ass:dissipation_general}$(\m)$ holds for $S$-valued functions if \eqref{eq:condFblemmaex} and \eqref{eq:condFblemmaex2} holds only for $y\in S$. In particular, many even polynomials satisfy \eqref{eq:dissipative_nonlinearity} for $y\geq 0$, e.g.\ for $\ell=1$ and the \emph{logistic} nonlinearity $f(u)=u(1-u)$.
\end{example}

Using smoothness assumptions on the coefficients and the nonlinearity, we formulate a simple sufficient condition for Assumption \ref{ass:dissipation_general}$(\m)$ in the case of scalar equations. A version of the results below also holds in the system case as well, but will not state this explicitly.

\begin{lemma}[Pointwise $L^\m$-coercivity/dissipativity with smoothness -- Scalar case]
\label{lem:dissipationII}
Assume that $\ell=1$ and that Assumption \ref{ass:reaction_diffusion_global} holds. Let $\m \geq 2$. Suppose that a.s.\ for every fixed $t\in [0,\infty)$ and $u\in \R$, for every $\phi\in \{F,g\}$, $x\mapsto \phi(t,x,u)$ and $x\mapsto \nabla_x \phi(t,x,u)$ are continuous (and thus periodic), and that $\div(b)\in L^\infty([0,\infty)\times \Omega\times\T^d;\ell^2)$.
If there exists an $M>0$ and $\varepsilon>0$ such that for all $u\in \R$, a.e.\ in $[0,\infty)\times\Omega\times \T^d$
\begin{equation}\label{eq:dissipationII}
\begin{aligned}
\frac{u f(\cdot, u)}{\m-1}+  \frac{1}{2} \|(g_n(\cdot,u))_{n\geq 1}\|_{\ell^2}^2  +
 \varepsilon \sup_{|y|\leq |u|}|\div_x F(\cdot,x,y)|^2&
\\  + \varepsilon\sum_{k\in \{0,1\}} \sup_{|y|\leq |u|} \|(\nabla_x^k g_n(\cdot, x,y))_{n\geq 1}\|_{\ell^2}^2
&\leq M(1+|u|^2),
\end{aligned}
\end{equation}
then Assumption \ref{ass:dissipation_general}$(\m)$ is satisfied. Moreover, if the above only holds for $S$-valued functions $u$ with $S\subseteq \R$, then Assumption \ref{ass:dissipation_general}$(\m)$ holds for all $S$-valued functions.
\end{lemma}
Note that in many cases the $\varepsilon$-terms vanish, and in that case, the condition reduces to \eqref{eq:dissneccon}. In particular, the proof below shows that the last term on the LHS\eqref{eq:dissipationII} can be omitted if $b=0$ or ($\div(b) = 0$ in $\D'(\T^d)$, and $g$ is $x$-independent). In order to keep the focus on global well-posedness, the tedious but elementary proof of Lemma \ref{lem:dissipationII} is given in Appendix \ref{app:proofs_lemmas}.

From Lemma \ref{lem:dissipationII} and the text below it we obtain the following:
\begin{example}
Let $\m\geq 2$. Suppose that $F$ is $x$-independent, and suppose that $b=0$ or ($\div(b) = 0$ and $g$ is $x$-independent).
Then Assumption \ref{ass:dissipation_general} holds provided there exist constants $N_0, N_1, M\geq 0$ such that, for all $y\in \R$ and a.e.\ on $\R_+\times \O$,
\begin{equation}
\label{eq:fyyN0}
\begin{aligned}
f(\cdot,y)y  \leq - N_0 |y|^{h+1}+ N_1 (|y|^2+1)
\ \  \text{ and } \ \
\|g(t,y)\|_{\ell^2}^2 \leq M (1+|y|^2)+ \frac{2N_0}{\m-1} |y|^{h+1}.
\end{aligned}
\end{equation}
As in Example \ref{ex:strong_dissipation}, if the local solution provided by \eqref{t:reaction_diffusion_global_critical_spaces} is positive, then it suffices to consider $y\geq 0$ in the above conditions.
\end{example}

\begin{remark}
\label{r:comparison}
\
\begin{enumerate}[(a)]
\item Many authors consider the ``one-sided Lipschitz condition'' (see \cite{DG15_boundedness, MaRo10, NeeSis,S21}):
\[(y_1-y_2)(f(y_1)- f(y_2))\leq C(y_1-y_2)^2, \ \ \ y_1, y_2\in \R.\]
Clearly, it implies \eqref{eq:fyyN0} with $N_0=0$.

\item\label{it:salins} The dissipative condition \eqref{eq:fyyN0} appears for instance in \cite{C03,F91,KvN12, S21_dissipative,WML20} and the references therein. In these works, $g$  usually has linear growth. An exception is \cite{S21_dissipative}, where the higher growth order of $\|g\|_{\ell^2}^2$ is allowed as long as it is strictly lower than that of $f$.
An important difference with \cite{S21_dissipative} is that we consider transport noise, and therefore it is assumed to be trace-class. Motivations for gradient noise are given in \cite{AVreaction-local}.

\item The use of positivity to check blow-up criteria for SPDEs with polynomial nonlinearities of even degree and negative leading coefficients seems to be new.
\end{enumerate}
\end{remark}

\subsection{Further applications: Coagulation dynamics and symbiotic Lotka-Volterra model}
\label{ss:applications_fully_dissipative}

\subsubsection{A model from coagulation dynamics}
\label{ss:coagulation}
Here we consider a model from \cite{FH21_coagulation} on coagulation dynamics. Coagulation is used in chemistry and rain formation. It is a chemical process used to neutralize charges and form a gelatinous mass. In \cite{FH21_coagulation} (probabilistically) weak solutions are obtained and pathwise uniqueness is proved
on $\R^d$. Below we will show that, if $\R^d$ is replaced by $\T^d$, then our setting can be used to obtain strong global well-posedness directly, and moreover deduce high order regularity of the solution.
More precisely, we consider the stochastic coagulation equations (with transport noise and superlinear diffusion), i.e.\ \eqref{eq:reaction_diffusion_system} with $\ell\geq 1$,
\begin{equation}
\label{eq:coagulation}
f_i(y) = \sum_{j=1}^{i-1} y_j y_{i-j} -2 \sum_{j=1}^{\ell} y_j y_i \quad  \text{ for } y\in \R^{\ell} \text{ and } i\in \{1,\dots,\ell\} ,
\end{equation}
$F_i\equiv 0$ and $(a_i,b_i,g_i)$ is as in Assumption \ref{as:gtermcoagulation} below. In the above we set $\sum_{j=1}^{0}:=0$. The unknown $u_i:[0,\infty)\times\O\times \T^d\to \R^{\ell}$ describes the density of particle $i\in \{1,\dots,\ell\}$.
As explained in \eqref{eq:Itostratintro}, the Stratonovich formulation is covered as well. A difficulty in proving global well-posedness for \eqref{eq:coagulation} is that the quadratic nonlinearity $f$ does not satisfy the usual one-sided Lipschitz condition (see Remark \ref{r:comparison}).

Consider the following conditions on the parameters and coefficients:
\begin{assumption}\label{as:gtermcoagulation}
Let $d\geq 1$, $\delta\in [1, 2)$, $h=2$, $q = \max\{d/2,2\}$, and let $p\geq \max\{\frac{2}{2-\delta},q\}$.
Suppose that Assumption \ref{ass:reaction_diffusion_global}$(p, q, h, \delta)$ holds with $F=0$.
\begin{enumerate}[{\rm(1)}]
\item\label{it:gtermcoagulation1}
Suppose that for all $i\in \{1,\dots,\ell\}$, $n\geq 1$, $y=(y_{i})_{i=1}^{\ell}\in [0,\infty)^{\ell}$ and a.e.\ on $\R_+\times \O$,
\[g_{n,i}(\cdot,y_1,\dots,y_{i-1},0,y_{i+1},\dots,y_{\ell}) = 0.\]

\item\label{it:gtermcoagulation2} Suppose that there is a $\lambda\in [0,2)$  such that a.e.\ on $\R_+\times \O$,
\[\limsup_{y\in [0,\infty)^\ell, |y|\to \infty} \frac{q-1}{|y|_2^2|y|_1} \esssup_{(t,\omega,x)} \sum_{n\geq 1} \sum_{i=1}^\ell |g_{n,i}(t,\omega,x, y)|^2  \leq  \lambda.\]
\end{enumerate}
\end{assumption}
In the above, we used the notation $|y|_p = \big(\sum_{j=1}^\ell |y_j|^p\big)^{1/p}$. The function $g$ does not appear in \cite{FH21_coagulation}, but under the above hypothesis, it can be added without much additional difficulty.  The smallness condition of Assumption \ref{as:gtermcoagulation}\eqref{it:gtermcoagulation2} on $g$ is often satisfied with $\lambda=0$.  Note that for $d\in \{1, 2, 3, 4\}$ one can take $q=2$.

Assumption \ref{ass:reaction_diffusion_global} is a condition on the coefficients $(a,b)$ and the nonlinearity $g$. Clearly, $f$ is quadratic and therefore, satisfies the local Lipschitz requirement with $h=2$.

Our main result concerning the equation \eqref{eq:coagulation} is the following well-posedness results.

\begin{theorem}[Global well-posedness -- Coagulation equations]
\label{thm:coagulation}
Suppose that Assumption \ref{as:gtermcoagulation} holds and set $\kappa_{\crit} = p(1-\frac{\delta}{2})-1$.
Then for every $u_{0}\in L^0_{\F_0}(\Omega;L^q(\T^d;\R^\ell))$ with $u_{0}\geq 0$ (componentwise), there exists a (unique) global $(p,\kappa_{\crit},q,\delta)$-solution $u:[0,\infty)\times\Omega\times\T^d\to [0,\infty)^{\ell}$ to the stochastic coagulation equations (as described near \eqref{eq:coagulation}). Moreover, a.s.,
\begin{align*}
u&\in C([0,\infty);B^0_{q,p}(\Tor^d;\R^\ell)),\\
 u&\in H^{\theta,r}_{\rm loc}(0,\infty;H^{1-2\theta,\zeta}(\Tor^d;\R^{\ell}))  \ \ \forall \theta\in  [0,1/2), \ \forall   r,\zeta\in (2,\infty),\\
u&\in C^{\theta_1,\theta_2}_{\rm loc}((0,\infty)\times \Tor^d;\R^{\ell})
\ \ \forall \theta_1\in  [0,1/2), \ \forall  \theta_2\in (0,1).
\end{align*}
Moreover, the energy estimates \eqref{eq:aprioribounds1}  and \eqref{eq:aprioribounds2} hold with $\zeta = q$. Moreover, one can take $s=0$ if for some $\varepsilon>0$, $u_0\in B^{\varepsilon}_{\zeta,\infty}(\T^d;\R^{\ell})$ a.s., or $u_0\in L^2(\T^d;\R^{\ell})$ a.s.\ and $\zeta =p=q= 2$.
\end{theorem}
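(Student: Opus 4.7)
The strategy is to apply the general machinery developed earlier: local existence via Theorem~\ref{t:reaction_diffusion_global_critical_spaces}, positivity via Proposition~\ref{prop:positivity}, and global existence via Theorem~\ref{t:global_well_posedness_general_system}, with the choice $\m=q=\frac{d(h-1)}{2}\vee 2$. The nontrivial content is verifying the $L^\m$-coercivity/dissipativity Assumption~\ref{ass:dissipation_system}$(\m)$ on the positive cone $[0,\infty)^\ell$, exploiting the specific quadratic structure of $f$.

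First, I would verify Assumptions~\ref{ass:reaction_diffusion_global} and \ref{ass:admissibleexp}. Since $f_i$ is quadratic, it trivially satisfies the local-Lipschitz growth condition with $h=2$; the other parts of \ref{ass:reaction_diffusion_global} are given. For \ref{ass:admissibleexp} I split cases: when $d\leq 4$ so that $q=2$, use case (ii) with $p=q=2$, $\delta=1$, $h=2\leq (d+4)/d$; when $d\geq 5$ so that $q=d/2>2$, use case (i) with $\delta\in(1,3/2)$ and $p$ large, noting that $\frac{d}{q}=2=\frac{2}{h-1}$ makes $L^q$ the critical Lebesgue space, so $u_0\in L^q$ embeds into the required Besov space. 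Theorem~\ref{t:reaction_diffusion_global_critical_spaces} then produces a unique local $(p,\kappa_{\crit},q,\delta)$-solution $(u,\sigma)$ with the claimed regularity \eqref{eq:reaction_diffusion_H_theta}--\eqref{eq:reaction_diffusion_C_alpha_beta}. Positivity follows from Proposition~\ref{prop:positivity}: setting $y_i=0$ with the other $y_j\geq 0$ gives $f_i(y)=\sum_{j=1}^{i-1}y_j y_{i-j}\geq 0$, $F_i\equiv 0$ trivially matches the prescribed form, and $g_{n,i}$ vanishes by Assumption~\ref{as:gtermcoagulation}\eqref{it:gtermcoagulation1}.

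The core step is verifying Assumption~\ref{ass:dissipation_system}$(\m)$ with $\alpha_i=1$ via the pointwise condition of Lemma~\ref{lem:dissipationIsystem}, restricted to $y\in[0,\infty)^\ell$. Writing the double sum by pairing $(j,k)$ with $m=j+k$ and applying Young's inequality $y_j y_{j+k}^{\m-1}\leq \frac{1}{\m}y_j^\m+\frac{\m-1}{\m}y_{j+k}^\m$, I would obtain
\[
\sum_{\substack{j,k\geq 1\\ j+k\leq \ell}} y_j y_k y_{j+k}^{\m-1}
\leq \tfrac{1}{\m}\sum_k y_k \!\!\sum_{j:\,j+k\leq\ell}\!\! y_j^\m + \tfrac{\m-1}{\m}\sum_k y_k \!\!\sum_{m>k}\!\! y_m^\m
\leq |y|_1 \sum_i y_i^\m,
\]
so that
\[
\sum_i y_i^{\m-1} f_i(y) \leq |y|_1\sum_i y_i^\m - 2|y|_1\sum_i y_i^\m = -|y|_1\sum_i y_i^\m.
\]
For the $g$-term, I combine the finite-dimensional reverse bound $\bigl(\sum_i c_i^{q/2}\bigr)^{2/q}\leq \sum_i c_i$ (valid for $c_i\geq 0$ and $q\geq 2$) with Hölder's inequality and the norm equivalence $|y|_2^q\leq \ell^{q/2-1}\sum_i y_i^q$ to show
\[
\sum_i y_i^{\m-2}\sum_n g_{n,i}^2 \leq C_{\ell,q}\,\lambda\,|y|_1\sum_i y_i^\m + \text{lower order},
\]
using Assumption~\ref{as:gtermcoagulation}\eqref{it:gtermcoagulation2}. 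The cross term involving $b$ in $N_i^{\varepsilon_i}$ is dominated by a multiple of $\sum_n g_{n,i}^2$ through $(\sum_n |b_{n,i}||g_{n,i}|)^2\leq \|b_i\|_{\ell^2}^2\sum_n g_{n,i}^2$, with the prefactor controlled by choosing $\varepsilon_i<\nu_i$ appropriately. The smallness built into $\lambda<2$ then makes the total $g$-contribution strictly less than the dissipation, giving $\sum_i \alpha_i |y_i|^{\m-2} N_i^{\varepsilon_i}(y)\leq M(|y|^\m+1)$.

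The main technical obstacle is calibrating constants in the $g$-estimate so that the dissipation coefficient from $f$ strictly dominates; this is the reason for the somewhat delicate normalization $\frac{q-1}{|y|_2^2|y|_1}$ in Assumption~\ref{as:gtermcoagulation}\eqref{it:gtermcoagulation2}. Once Assumption~\ref{ass:dissipation_system}$(\m)$ is established on $[0,\infty)^\ell$, Theorem~\ref{t:global_well_posedness_general_system} yields $\sigma=\infty$ a.s., the stated regularity assertions, and the a priori bounds \eqref{eq:aprioribounds1}--\eqref{eq:aprioribounds2}. The extension to $s=0$ under the extra Besov or $L^2$ regularity of $u_0$ is the last clause of Theorem~\ref{t:global_well_posedness_general_system}.
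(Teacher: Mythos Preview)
Your overall strategy matches the paper's exactly: local existence via Theorem~\ref{t:reaction_diffusion_global_critical_spaces}, positivity via Proposition~\ref{prop:positivity}, then Theorem~\ref{t:global_well_posedness_general_system} after verifying Assumption~\ref{ass:dissipation_system}$(q)$ on $[0,\infty)^\ell$ through Lemma~\ref{lem:dissipationIsystem}. Your checks of Assumptions~\ref{ass:reaction_diffusion_global}, \ref{ass:admissibleexp} and of the positivity hypotheses are correct.

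The $f$-dissipation estimate is where your argument genuinely differs from the paper's. The paper bounds $\sum_{j<i}y_jy_{i-j}\leq |y|_2^2$ and obtains $\sum_i y_i f_i(y)\leq -|y|_2^2|y|_1$; this is exactly the $\m=2$ instance of your inequality $\sum_i y_i^{\m-1}f_i(y)\leq -|y|_1\sum_i y_i^\m$. Your Young-inequality computation is strictly more general, producing directly the \emph{weighted} estimate that Lemma~\ref{lem:dissipationIsystem} requires for arbitrary $\m\geq 2$. For $d\leq 4$, where $q=\m=2$ and the weights disappear, the two computations coincide and your proof is complete.

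There is, however, a gap in your $g$-estimate when $\m=q>2$ (i.e.\ $d\geq 5$). Assumption~\ref{as:gtermcoagulation}\eqref{it:gtermcoagulation2} controls only the \emph{unweighted} sum $\sum_{n,i}|g_{n,i}|^2$ by $\tfrac{\lambda}{q-1}|y|_2^2|y|_1$, whereas the condition \eqref{eq:dissipationIsystem} of Lemma~\ref{lem:dissipationIsystem} needs the \emph{weighted} sum $\sum_i y_i^{q-2}\sum_n|g_{n,i}|^2$. The passage you sketch introduces a constant $C_{\ell,q}$; the natural route via $y_i^{q-2}\leq|y|_\infty^{q-2}$ together with $|y|_\infty^{q-2}|y|_2^2\leq \ell^{1-2/q}\sum_i y_i^q$ gives $C_{\ell,q}=\ell^{1-2/q}>1$, and then the threshold $\lambda<2$ no longer forces the $g$-contribution to be strictly smaller than your $f$-dissipation $\tfrac{1}{q-1}|y|_1\sum_i y_i^q$. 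To close this you would need either a sharp inequality giving $C_{\ell,q}=1$, or to exploit Assumption~\ref{as:gtermcoagulation}\eqref{it:gtermcoagulation1} together with the Lipschitz growth of $g_i$ to obtain $\|g_i(y)\|_{\ell^2}\lesssim y_i(1+|y|^{1/2})$, which places the weight on $g$ directly rather than on the sum. (The paper's own proof verifies only $\sum_i N_i^{\nu/2}(y)\leq C$, which is the unweighted $\m=2$ condition, so the same issue is present there for $d\geq 5$.)
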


Note that for $d\leq 4$ we can take $p=q=2$ and $\s=1$ (thus $\a_{\crit}=0$).

\begin{proof}
First consider $d\geq 4$.
Then Assumption \ref{ass:admissibleexp}$(p,q,h,\s)$ holds.
By Theorem \ref{t:reaction_diffusion_global_critical_spaces} there exists a $(p,\kappa_{\crit}, \delta, q)$-solution $(u,\sigma)$. Moreover, \eqref{eq:regularity_u_reaction_diffusion_critical_spaces}, \eqref{eq:reaction_diffusion_H_theta} and \eqref{eq:reaction_diffusion_C_alpha_beta} imply the required regularity if we can check that $\sigma=\infty$.

Since $u_0\geq 0$ and $g$ satisfies Assumption \ref{as:gtermcoagulation}\eqref{it:gtermcoagulation1}, it follows from Proposition  \ref{prop:positivity} that a.s.\ $u\geq 0$ on $[0,\sigma)$.
To prove global existence by Theorem \ref{t:global_well_posedness_general}  it is enough to check Assumption \ref{ass:dissipation_general}$(q)$ for $[0,\infty)^d$-valued functions. By Lemma \ref{lem:dissipationI} it is enough to show
\begin{align}\label{eq:toprovecoagulation}
\sum_{i=1}^\ell N_i^{\nu/2}(y)\leq C, \ \ \text{for all $y\in [0,\infty)^d$}.
\end{align}
Moreover, by the properties of $f$ and $g$ it suffices to prove \eqref{eq:toprovecoagulation} for $|y|_2\geq R$ for some $R>0$. By Assumption \ref{as:gtermcoagulation}\eqref{it:gtermcoagulation2} we can find $\eta>0$ and $R>0$ such that, for all $|y|_2\geq R$,
\begin{align}\label{eq:growthestgcoag}
\Big(\frac12+\eta\Big)\sum_{n\geq 1} \sum_{i=1}^\ell |g_{n,i}(\cdot,y)|^2  \leq  \frac{|y|_2^2|y|_1}{q-1}.
\end{align}

Since $\sum_{j=1}^{i-1} y_j y_{i-j} \leq \frac12\sum_{j=1}^{i-1} y_j^2  + y_{i-j}^2 \leq |y|^2_2$,
we obtain, for $y\in [0,\infty)^{\ell}$,
\begin{align*}
\sum_{i=1}^\ell y_i f_i(\cdot, y) \leq \sum_{i=1}^\ell y_i \Big[|y|^2 - 2 \sum_{j=1}^{\ell} y_j y_i\Big] = -|y|_2 |y|_1.
\end{align*}
Therefore, for all $|y|_2\geq R$,
\begin{align*}
\sum_{i=1}^\ell N_i^{\nu/2}(\cdot, y) &\leq -\frac{|y|_2^2 |y|_1}{q-1} + \frac12\sum_{i=1}^\ell \sum_{n\geq 1} |g_{n,i}(\cdot,y)|^2 + \frac{8}{\nu}\sum_{i=1}^\ell\sum_{n\geq 1} |b_{n,i}| |g_{n,i}(\cdot, y)|
\\ & \leq -\frac{|y|_2^2 |y|_1}{q-1} + \big(\frac12+\eta\big)\sum_{i=1}^\ell \sum_{n\geq 1} |g_{n,i}(\cdot,y)|^2 + C_{\eta,\nu,b}\leq C_{\eta,\nu,b},
\end{align*}
where we used \eqref{eq:growthestgcoag}. This proves \eqref{eq:toprovecoagulation}, and the global existence follows.

In case $d\in \{1, 2, 3\}$, we can introduce dummy variables to reduce to the case $d=4$  (see \cite[Remark 2.2(d)]{AVreaction-local}).
\end{proof}

\subsubsection{Symbiotic Lotka-Volterra equations}\label{subsec:Symbiotic Lotka-Volterra}
Here we consider the \emph{symbiotic} Lotka-Volterra equations, which are used in the case species actually benefit from each other. This differs from the classical predatory-prey system, which is mathematically more complicated and will be discussed in Subsection \ref{ss:Lotka_Volterra}.
The stochastic symbiotic Lotka-Volterra equations (with transport noise and superlinear diffusion) are of the form \eqref{eq:reaction_diffusion_system} with $\ell=2$,
\begin{equation}
\label{eq:symlotka_volterra}
f_i(\cdot,y)=-y^2_i + \chi_{i} y_1y_2+\lambda_i y_i \ \ \text{ for }y\in \R^2,\  i\in\{1,2\},
\end{equation}
$F_i\equiv 0$,
and $(a_i,b_i,g_i)$ is as in Assumption \ref{ass:condsymb}.  The unknowns $u_1,u_2:[0,\infty)\times \O\times \Tor^d \to \R$ are the population of the species.
The reader is referred to \cite{DLGS00,K92} for some comments on the deterministic model.
The presence of transport noise models the small-scale randomness in migration processes (see \cite[Subsection 1.3]{AVreaction-local}). The additional terms $g_{n,i}$ can model further random forces acting on the system.
A schematic idea of the model is depicted in Figure \ref{fig:lotka_volterra_symbotic} where $\lambda_i^{\prime}-\lambda_i^{\prime\prime}=\lambda_i$ for $i\in \{1,2\}$.
For simplicity, we take a unitary rate of self-interaction, but this is not essential.
\begin{figure}[h!]
\centering

\tikzstyle{io} = [ellipse,text width=2cm, minimum height=1cm, text centered, draw=black]
\tikzstyle{nn} = [rectangle,text width=1.8cm, minimum height=0.5cm, minimum width=2.5cm, text centered, draw=black]
\tikzstyle{nncircle} = [circle,text width=0.8cm, minimum height=0.5cm, minimum width=0.9cm, text centered, draw=black]
\tikzstyle{arrow} = [thick,->,>=stealth]

\begin{tikzpicture}[scale=0.95, transform shape, node distance=2.1cm]
\node (in1) [nn] {\small Species $u_2$};
\node (in2) [nn,below of = in1] {\small Species $u_1$};
\node (in3) [nncircle,left of = in1,yshift=-1cm,xshift=-2.8cm] {\small Food};
\node (in4) [nncircle,right of = in1,yshift=-1cm,xshift=2.8cm] {\small Death};

\path[thick,<-,>=stealth,shift left=0.2cm]
    (in1) edge node [left, xshift=-0.4cm]{$\chi_{1}u_{1}u_{2}\geq0$} (in2)
    (in2) edge node [right, xshift=0.4cm]{$\chi_{2}u_{1}u_{2}\geq0$}(in1);
\path [thick,<-,>=stealth](in1) edge [out=50,in=120,looseness=10] node [right, xshift=0.3cm]{$-u_{2}^{2}\leq0$}(in1);
\path [thick,<-,>=stealth](in2) edge [out=-50,in=-120,looseness=10] node [right, xshift=0.3cm]{$-u_{1}^{2}\leq0$} (in2);
\draw [thick,<-,>=stealth] (in1.east) -- node[above right]{$-\lambda_{2}^{\prime\prime}u_{2}\leq0$} ([yshift=0.1cm]in4.west);
\draw [thick,<-,>=stealth] (in2.east) -- node[below right]{$-\lambda^{\prime\prime}_{1}u_{1}\leq0$} ([yshift=-0.1cm]in4.west);
\draw [arrow] ([yshift=-0.1cm]in3.east) -- node[below left]{$\lambda_{1}^{\prime}u_{1}\geq0$} (in2.west);
\draw [arrow] ([yshift=0.1cm]in3.east) -- node[above left]{$\lambda_{2}^{\prime}u_{2}\geq0$} (in1.west);

\end{tikzpicture}
\caption{Scheme for the symbiotic Lotka-Volterra equations.}
\label{fig:lotka_volterra_symbotic}
\end{figure}
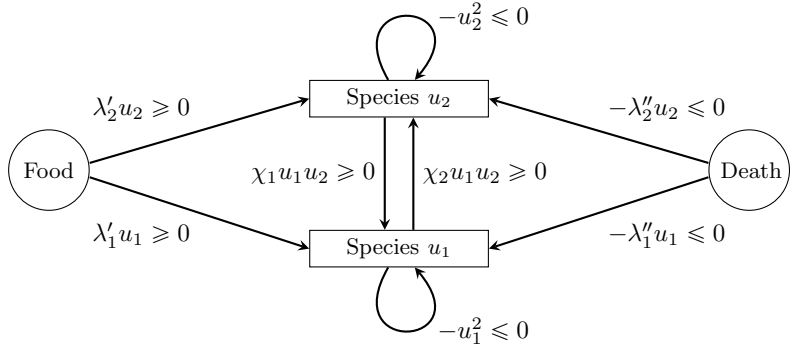

In order to state our global well-posedness result for $\varepsilon_1, \varepsilon_2>0$ we take $N_i^{\varepsilon_i}$ as in Lemma \ref{lem:dissipationI}:
\[N_i^{\varepsilon_i}(\cdot, y) = -y_i^3 +  \chi_{i} y_i y_1 y_2  +\frac12 \sum_{n\geq 1} |g_{n,i}(\cdot, y)|^2 + \frac{4}{\nu_i-\varepsilon_i} \Big(\sum_{n\geq 1} |b_{n,i}| \,  |g_{n,i}(\cdot,y)|\Big)^{2}.\]

\begin{assumption}\label{ass:condsymb}
Let $d\in \{1, 2, 3, 4\}$, $p=q=2$, $\kappa=\kappa_{\crit}=0$, $h=2$, $\delta = 1$, and Assumption \ref{ass:reaction_diffusion_global}$(p, q, h, \delta)$ holds with $F=0$ and $\ell=2$. Suppose
\begin{enumerate}[{\rm(1)}]
\item\label{it:condsymb1} The mappings $\lambda_1,\lambda_2, \chi_1, \chi_2:\R_+\times\O\times \Tor^d\to \R$ are bounded $\Progress\otimes \Borel(\Tor^d)$-measurable maps.
\item\label{it:condsymb3} There exist $\alpha_i>0,\varepsilon_i\in (0,\nu_i)$ for $i\in \{1, 2\}$ and  $M>0$ such that
\begin{align*}
 \sum_{i\in \{1, 2\}} \alpha_i N_i^{\varepsilon_i}(\cdot,y) \leq   M(|y|^2+1), \ \ y\geq 0.
\end{align*}
\item\label{it:condsymb4} $g_{n,1}(\cdot,0,y_2)=g_{n,2}(\cdot,y_1,0)=0$ for all $y\in \R^2$ and $n\geq 1$.
\end{enumerate}
\end{assumption}
Note that Assumption \ref{ass:condsymb}\eqref{it:condsymb4} is used to obtain the positivity of the solution $u$.

Now suppose that $g$ has at most linear growth. Using $v^2 w \leq \frac{2}{3}v^3  + \frac13 w^3$ one can check that
\[-y_1^3 +  \chi_{1} y_1 y_1 y_2  -y_2^3 +  \chi_{2} y_1y_2^2 \leq -(1-\tfrac{2}{3} \chi_1 - \tfrac13 \chi_2) y_1^3  -(1-\tfrac{1}{3} \chi_1 - \tfrac23 \chi_2) y_2^3.\]
Thus, Assumption \ref{ass:condsymb}\eqref{it:condsymb3} holds with $\alpha_1 = \alpha_2 = 1$ if $\chi_1,\chi_2\in [0,1]$. If $\chi_1,\chi_2\in [0,1)$ and can even take allow $g$ which has growth $\lesssim (1+|y|)^{\frac32-\gamma}$ for some $\gamma>0$. One can also allow large $\chi_1>1$ if $\chi_2$ is small enough, which follows by choosing $0<\alpha_1<\alpha_2$.

\begin{theorem}[Global well-posedness -- Symbiotic Lotka-Volterra]\label{thm:SymbLotka-Volterra}
Let $1\leq d\leq 4$ and Assumption \ref{ass:condsymb} be satisfied.
Then for every $u_{0}\in L^0_{\F_0}(\Omega;L^2( \T^d;\R^2))$ with $u_{0}\geq 0$ (componentwise), there exists a (unique) global $(2,0,1,2)$-solution $u:[0,\infty)\times\Omega\times\T^d\to [0,\infty)^2$ to the stochastic symbiotic Lotka-Volterra equations (as described near \eqref{eq:symlotka_volterra}). Moreover, a.s.,
\begin{align*}
u&\in L^2_{\loc}([0,\infty);H^{1}(\T^d;\R^2))\cap C([0,\infty);L^2(\Tor^d;\R^2)),\\
 u&\in H^{\theta,r}_{\rm loc}(0,\infty;H^{1-2\theta,\zeta}(\Tor^d;\R^{2}))  \ \ \forall \theta\in  [0,1/2), \ \forall r,\zeta\in (2,\infty),\\
u&\in C^{\theta_1,\theta_2}_{\rm loc}((0,\infty)\times \Tor^d;\R^{2})
\ \ \forall \theta_1\in  [0,1/2), \ \forall \theta_2\in (0,1).
\end{align*}
Finally, the energy estimates \eqref{eq:aprioribounds1}-\eqref{eq:aprioribounds2} hold for $\zeta=2$.
\end{theorem}

\begin{proof}
First, consider $d=4$. Assumptions \ref{ass:reaction_diffusion_global}$(2,2,2,1)$ and \ref{ass:admissibleexp}$(2,2,2,1)$ hold. From Theorem \ref{t:reaction_diffusion_global_critical_spaces} we obtain a local $(2,0,1,2)$-solution $(u,\sigma)$ which satisfies the regularity results \eqref{eq:reaction_diffusion_H_theta} and \eqref{eq:reaction_diffusion_C_alpha_beta}. Moreover, Proposition \ref{prop:positivity} implies $u\geq 0$.

It remains to check global existence and the a priori estimates. By Assumption \ref{ass:condsymb}\eqref{it:condsymb3} we can apply Lemma \ref{lem:dissipationI} to verify Assumption \ref{ass:dissipation_general}$(2)$. Therefore, Theorem \ref{t:global_well_posedness_general} gives $\sigma=\infty$.

In case $1\leq d\leq 3$, we can introduce dummy variables to reduce to the case $d=4$.
\end{proof}

\subsection{Proof of Theorem \ref{t:global_well_posedness_general} via energy estimates}
\label{ss:global_general}
To prove Theorem \ref{t:global_well_posedness_general} we check the blow-up criterion of Theorem \ref{thm:blow_up_criteria}. The key ingredients for this are the following energy estimates for $(p,\a_{\crit},q,\s)$-solutions to \eqref{eq:reaction_diffusion_system_intro}, where $\a_{\crit}$ is as in Theorem \ref{t:reaction_diffusion_global_critical_spaces}.

\begin{lemma}[Energy estimates]
\label{l:high_integrability_reaction_diffusion}
Let the assumptions of Theorem \ref{t:reaction_diffusion_global_critical_spaces} be satisfied. Let $(u,\sigma)$ be the local $(p,\a_{\crit},\s,q)$-solution to \eqref{eq:reaction_diffusion_system}. Suppose that Assumption \ref{ass:dissipation_general}$(\m)$ holds, where $\m\geq \frac{d(h-1)}{2}\vee 2$ is fixed.
Then, for all $0<s<T<\infty$ and $i\in \{1,\dots,\ell\}$,
\begin{align}
\label{eq:energy_estimates_lemma_reaction_diffusion}
\sup_{t\in [s,\sigma\wedge T)}\|u_i(t)\|_{L^{\m}}^{\m}&<\infty  \ \ \text{ a.s.\ on }\{\sigma>s\},\\
\label{eq:energy_estimates_lemma_reaction_diffusion_gradient}
\int_{s}^{\sigma\wedge T}\int_{\Tor^d}
|u_i|^{\m-2} |\nabla u_i|^{2}\,\dd x\, \dd t&<\infty \ \ \text{ a.s.\ on }\{\sigma>s\}.
\end{align}
Moreover, for all $\lambda\in (0,1)$ there exist $N_0, N_{0,\lambda}>0$ such that for all $0<s<T<\infty$, $L\geq 0$  and $i\in \{1,\dots,\ell\}$,
\begin{align}
\label{eq:energylambda1} \sup_{t\in [s,T]}\E \Big[\one_{[s,\sigma)}(t) \one_{\Gamma} \|u_i(t)\|_{L^{\m}}^{\m } \Big]+ \E \int_{s}^{\sigma\wedge T}\int_{\Tor^d} \one_{\Gamma} |u_i|^{\m-2} |\nabla u_i|^{2}\,\dd x\, \dd t&\\
\nonumber
\leq N_0\Big(1+\E\one_{\Gamma}\|u(s)\|_{L^{\m}}^{\m}\Big),&\\
\label{eq:energylambda2} \E \sup_{t\in [s,\sigma\wedge T)} \one_{\Gamma} \|u_i(t)\|_{L^{\m}}^{\m \lambda} 
+ \E \Big|\int_{s}^{\sigma\wedge T}\int_{\Tor^d} \one_{\Gamma} |u_i|^{\m-2} |\nabla u_i|^{2}\,\dd x\, \dd t\Big|^{\lambda}&
\\ \nonumber \leq N_{0,\lambda}\Big(1+\E\one_{\Gamma}\|u(s)\|_{L^{\m}}^{\m \lambda}\Big),&
\end{align}
where $\Gamma = \{\sigma>s,\, \|u(s)\|_{L^{\m}}\leq L\}$. Furthermore, one can take $s=0$ in \eqref{eq:energylambda1}-\eqref{eq:energylambda2} if for some $\varepsilon>0$, $u_0\in B^{\varepsilon}_{\zeta,\infty}(\T^d;\R^{\ell})$ a.s.,\ or $u_0\in L^2(\T^d;\R^{\ell})$ a.s.\ and $\zeta = p=q=2$ 
\end{lemma}

By the regularity results \eqref{eq:reaction_diffusion_H_theta}-\eqref{eq:reaction_diffusion_C_alpha_beta} and  $s>0$, the quantities in \eqref{eq:energy_estimates_lemma_reaction_diffusion}-\eqref{eq:energy_estimates_lemma_reaction_diffusion_gradient} are well-defined.

Before giving the proof of Lemma \ref{l:high_integrability_reaction_diffusion}, we show how it implies Theorem \ref{t:global_well_posedness_general}. The proof of Lemma \ref{l:high_integrability_reaction_diffusion} will be given in Subsection \ref{ss:reaction_diffusion_energy_estimate}.

\begin{proof}[Proof of Theorem \ref{t:global_well_posedness_general}]
We only consider the case $\ell=1$ as the general case is analogous. As above, we omit the subscript $i=1$.
Let $(u,\sigma)$ be $(p,\a_{\crit},\s,q)$-solution to \eqref{eq:reaction_diffusion_system} provided by Theorem \ref{t:reaction_diffusion_global_critical_spaces}. It remains to show that
$\sigma=\infty$ a.s.

{\em Step 1: Case $\m>\frac{d(h-1)}{2}\vee 2$}. Fix $0<s<T<\infty$.
By Lemma \ref{l:high_integrability_reaction_diffusion},
\begin{equation}
\label{eq:not_sharp_case_u_integrability}
u\in L^{\infty}(s,\sigma\wedge T;L^{\m})\quad \text{ a.s.\ on }\{\sigma>s\},
\end{equation}
and $u:[s,\sigma\wedge T)\to L^{\m}$ is continuous by \eqref{eq:reaction_diffusion_C_alpha_beta}. Let $h_0:=\max\{h,1+\frac{4}{d}\}$.
Applying  \eqref{eq:not_sharp_case_u_integrability} and then Theorem \ref{thm:blow_up_criteria}\eqref{it:blow_up_not_sharp_L} with $\zeta_1=\m$, $\delta_0 \in (1,\frac{h_0+1}{h_0}]$ small enough, and $p_0$ large enough, it follows that
\begin{equation}
\label{eq:combining_blow_up_and_energy_estimates}
\P(s<\sigma<T)
=
\P\Big(s<\sigma<T,\, \sup_{t\in [s,\sigma)} \|u(t)\|_{L^{\m}}<\infty\Big)=0.
\end{equation}
Since $\sigma>0$ a.s.\ by Theorem \ref{t:reaction_diffusion_global_critical_spaces}, letting $s\downarrow 0$ we obtain $\P(\sigma<T)=0$.
Letting $T\to \infty$, we find that $\sigma=\infty$ a.s.

{\em Step 2: Case $\m= \frac{d(h-1)}{2}\vee 2$}. In this case, we apply the sharper result of Theorem \ref{thm:blow_up_criteria}\eqref{it:blow_up_sharp_L} with a specific choice of the parameters $(p_0,q_0,\s_0)$, and $h_0$ as in Step 1.
Let $\zeta_0 = \frac{d}{2}(h_0-1)$. Then one can check that $\zeta_0 = \m$.
Choose $\s_0\in (1,\frac{h_0+1}{h_0})$. For $\varepsilon>0$ fixed, let
$q_0 = \frac{d(h_0-1)}{2}(1+\varepsilon)$.
For $\varepsilon>0$ small enough one has $q_0<\frac{d(h_0-1)}{h_0+1-\reg_0(h_0-1)}$. Moreover,
$q_0>\zeta_0$, and $q_0>2$. Define $p_0$ by
$\frac{2}{p_0}=\frac{2}{h_0-1}-\frac{d}{q_0}$. Then $p_0\to \infty$ as $\varepsilon\downarrow 0$. From this one can see that for $\varepsilon>0$ small enough
\begin{equation*}
p_0>\big(\frac{2}{\s_0-1}\big)\vee q_0
\qquad \text{ and }\qquad
 \frac{1}{p_0}+\frac{1}{2}\Big(\s_0+\frac{d}{q_0}\Big)\leq \frac{h_0}{h_0-1}.
\end{equation*}
In particular, Assumptions \ref{ass:reaction_diffusion_global}$(p_0,q_0,h_0,\s_0)$ and \ref{ass:admissibleexp}$(p_0,q_0,h_0,\s_0)$ hold.

We claim that it is enough to show that for all $0<s<T<\infty$ and a.s.\ on $\{\sigma>s\}$
\begin{align}
\label{eq:sup_B_L_sharp}
u\in L^{\infty}(s,\sigma\wedge T;L^{\m_0})  \quad \text{and} \quad  u \in L^{p_0}(s,\sigma\wedge T;L^{q_0}) .
\end{align}
Indeed, if \eqref{eq:sup_B_L_sharp} holds a.s.\ on $\{\sigma>s\}$, then the claim follows from Theorem \ref{thm:blow_up_criteria}\eqref{it:blow_up_sharp_L}, and an argument similar to the one used in \eqref{eq:combining_blow_up_and_energy_estimates} and below it.
Thus, it remains to prove \eqref{eq:sup_B_L_sharp}. To this end, fix $0<s<T<\infty$. By Lemma \ref{l:high_integrability_reaction_diffusion} and $\m=\zeta_0$, a.s.\ on $\{\sigma>s\}$
\begin{align}
\label{eq:sharp_case_u_integrability}
u&\in L^{\infty}(s,\sigma\wedge T;L^{\zeta_0}) \ \ \text{and}  \ \  |u|^{(\zeta_0-2)/2}|\nabla u| \in L^{2}(s,\sigma\wedge T;L^2).
\end{align}
Thus, the first part of \eqref{eq:sup_B_L_sharp} follows from the first part of \eqref{eq:sharp_case_u_integrability}. To show the second part of \eqref{eq:sup_B_L_sharp} we use an interpolation argument. By the chain rule and the smoothness of $u$ (see \eqref{eq:reaction_diffusion_H_theta}), a.e.\ on $ [s,\sigma)\times \{\sigma>s\}$ one has
\begin{equation*}
\frac{\zeta_0^2}{4}\int_{\Tor^d} |u|^{\zeta_0-2} |\nabla u|^2 \,\dd x
=\int_{\Tor^d} \Big|\nabla \big[|u|^{\zeta_0/2}\big]\Big|^2\,\dd x.
\end{equation*}
Thus, \eqref{eq:sharp_case_u_integrability} implies that a.s.\ on $\{\sigma>s\}$
\begin{align}
\label{eq:sharp_case_u_integrability_zeta_0}
|u|^{\zeta_0/2}&\in L^{\infty}(s,\sigma\wedge T;L^{2})\ \ \ \text{ and } \  \ \
|u|^{\zeta_0/2} \in L^{2}(s,\sigma\wedge T;H^1).
\end{align}
Standard interpolation inequalities and Sobolev embedding imply that
\begin{align*}
L^{\infty}(s,\sigma\wedge T;L^{2})\cap L^{2}(s,\sigma\wedge T;H^1)\hookrightarrow L^{2/\theta}(s,\sigma\wedge T;H^{\theta})\hookrightarrow L^{2/\theta}(s,\sigma\wedge T;L^{\xi}),
\end{align*}
where $\theta\in (0,1)$ and $\xi\in (2, \infty)$ satisfies $\theta  - \frac{d}{2} = -\frac{d}{\xi}$. Thus, by \eqref{eq:sharp_case_u_integrability_zeta_0},
\begin{align}\label{eq:normuzeta}
u\in L^{\zeta_0/\theta}(s,\sigma\wedge T;L^{\xi\zeta_0/2}) \ \text{ a.s.\ for all } \theta\in (0,1).
\end{align}
Taking $\theta = \zeta_0/p_0<1$ (here we use $p_0>q_0>\zeta_0$), we obtain
$\zeta_0/\theta = p_0$ and
\[\frac{2}{\xi\zeta_0} = \frac{2}{d\zeta_0} \frac{d}{\xi} = \frac{1}{\zeta_0} - \frac{2\theta}{d\zeta_0} = \frac{2}{d(h_0-1)}-\frac{2}{d p_0} = \frac{1}{q_0}.\]
Hence, the second part of \eqref{eq:sup_B_L_sharp} follows from \eqref{eq:normuzeta}.

{\em Step 3:}  The energy estimates \eqref{eq:aprioribounds1} and \eqref{eq:aprioribounds2} are immediate from the bounds in Lemma \ref{l:high_integrability_reaction_diffusion}.
\end{proof}

\subsection{Proof of Lemma \ref{l:high_integrability_reaction_diffusion} -- Energy estimates for \eqref{eq:reaction_diffusion_system}}
\label{ss:reaction_diffusion_energy_estimate}

The proof uses It\^o's formula for $\|u\|_{L^{\zeta}}^{\zeta}$ combined with the $L^\zeta$-coercivity of Assumption \ref{ass:dissipation_general}, and the stochastic Gronwall Lemma (see \cite[Corollary 5.4b)]{geiss2021sharp}). The validity of It\^o's formula heavily relies on the regularity of $u$ obtained in Theorem \ref{t:reaction_diffusion_global_critical_spaces}.

\begin{proof}[Proof of Lemma \ref{l:high_integrability_reaction_diffusion}]
Let $0<s<T<\infty$ be fixed. We split the proof into several steps. To keep the notation short, we consider the case $\ell=1$ in Steps 1-5. Comments on the case $\ell\geq 1$ are given in Step 6.

\emph{Step 1: Preparation -- case $\ell=1$.}
To obtain estimates leading to \eqref{eq:energy_estimates_lemma_reaction_diffusion}--\eqref{eq:energy_estimates_lemma_reaction_diffusion_gradient} we need a localization argument. Hence, we introduce stopping times $(\tau_j)_{j\geq 1}$.
To define $\tau_j$ we exploit the instantaneous regularization of $u$, see \eqref{eq:reaction_diffusion_H_theta}-\eqref{eq:reaction_diffusion_C_alpha_beta}. More precisely, for each $j\geq 1$,
we can define the stopping time $\tau_j$ by
\begin{equation*}
\tau_j=
\inf\big\{t\in [s,\sigma)\,:\, \|u(t)-u(s)\|_{C(\Tor^d)}+\|u\|_{L^2(s,t;H^{1}(\Tor^d))}\geq j\big\}\wedge T,
\end{equation*}
on $\{\sigma>s,\|u(s)\|_{C(\Tor^d)}\leq j-1\}$, and $\tau_j = s$ otherwise. Here we also set $\inf\emptyset:=\sigma\wedge T$.

By \eqref{eq:reaction_diffusion_H_theta}-\eqref{eq:reaction_diffusion_C_alpha_beta}, we have $\lim_{j\to \infty}\tau_j=\sigma$. In the following, $[s,\tau_j)\times \O$ serves as approximation of the stochastic interval $ [s,\sigma)\times \{\sigma>s\}$ on which we prove bounds.
Note that
$$
\|u\|_{ L^{\infty}((s,\tau_j)\times \T^d)}\leq 2j-1 \text{ a.s.\ on }\{\tau_j>s\}.
$$
Thus, Assumption \ref{ass:reaction_diffusion_global}\eqref{it:growth_nonlinearities} yields, for all $r\in (2,\infty)$, $i\in \{1,\dots,\ell\}$,
\begin{equation}
\label{eq:def_f_u_g_u}
\begin{aligned}
\one_{[s,\tau_j)\times \O}\big[\div(F_i(\cdot, u)) +f_i(\cdot, u)\big]&\in L^\infty(\Omega;L^{r}(s,\tau_j;H^{-1,r})),\\
\one_{[s,\tau_j)\times \O}(g_{n,i}(\cdot,u))_{n\geq 1}
&\in L^\infty(\Omega;L^{r}(s,\tau_j;L^{r}(\ell^2))).
\end{aligned}
\end{equation}
Combining \eqref{eq:def_f_u_g_u}, the stochastic maximal $L^p$-regularity estimates of \cite[Theorem 1.2]{AV21_SMR_torus} and the fact that $(u|_{[0,\tau_j)\times \O},\tau_j)$
is a local $(p,\a,\s,q)$-solution to \eqref{eq:reaction_diffusion_system}, we have
$u\in C([s,\tau_j];C(\Tor^d))$ a.s.

Fix $\Gamma\in \F_s$ such that $\Gamma\subseteq \{\sigma>s\}$.
Since it is convenient to work with processes defined on $\O\times [s,T]$ rather than $ [s,\tau_j)\times \Gamma$, we set
\begin{equation*}
u^{(j)}(t)=\one_{\Gamma} u(t\wedge \tau_j) \ \ \text{ on }\O\times [s,T].
\end{equation*}
We omit the dependence on $\Gamma$ for brevity. 
Since $(u,\sigma)$ is a $(p,\a_{\crit},\s,q)$-solution to \eqref{eq:reaction_diffusion_system}, we have
a.s.\ for all $t\in [s,T]$,
\begin{equation}\label{eq:Itoappliedto}
\begin{aligned}
u^{(j)}(t) - u^{(j)}(s) & =
\int_{s}^{t}  \one_{[s,\tau_j]\times \Gamma}\Big[\div(\am\cdot\nabla u)  + \big(  \div(F(\cdot, u)) +f(\cdot, u) \big)\Big] \,\dd r
\\ &\, + \sum_{n\geq 1}  \int_{s}^{t}  \one_{[s,\tau_j]\times \Gamma} \Big[(\bm_{n}\cdot \nabla) u+ \rnoise_{n}(\cdot,u)\Big]\,\dd w_r^n.
\end{aligned}
\end{equation}
From the definition of $\tau_j$, it is clear that
\begin{align}\label{eq:ujspacelem}
u^{(j)}\in L^2(\O;L^2(s,\tau_j;H^1)).
\end{align}
Now arguing as in the proof of \cite[Theorem 3.3]{AV22_variational} (see the argument below (4.5) there), by stochastic maximal $L^2$-regularity applied to \eqref{eq:Itoappliedto}, we also obtain that $u^{(j)}\in L^2(\O;C([s,T];L^2))$.

Let $\xi\in C^2_{\rm b}(\R)$ be such that $\xi(y) = |y|^{\m}$ for $|y|\leq 2j-1$.
By \eqref{eq:Itoappliedto} and \eqref{eq:def_f_u_g_u}, we may apply an extended form of It\^o's formula (see Lemma \ref{lem:Ito_generalized}) to $\xi(u)$ to obtain that a.s.\ for all $t\in [s,T]$,
\begin{align}\label{eq:identityIto}
\|u^{(j)}(t)\|_{L^{\m}}^{\m} = \|u^{(j)}(s)\|_{L^{\m}}^{\m} + \m(\m-1) \mathcal{D}(t) + \m \mathcal{S}(t),
\end{align}
where $\mathcal{D}$ and $\mathcal{S}$ stand for ``deterministic''  and ``stochastic'' part, respectively. They are defined by
\begin{align*}
\mathcal{D}(t) &= \int_s^t \int_{\T^d} \one_{[s,\tau_j]\times \Gamma} |u|^{\m-2} \Big(\frac{u f(\cdot, u)}{\m-1}  - \nabla u \cdot a\nabla u - \nabla u\cdot F(\cdot,u) \\ & \qquad \qquad\qquad\qquad\qquad\qquad
+\frac12 \sum_{n\geq 1} [(b_n\cdot\nabla) u  + g_n(\cdot, u)]^2   \Big) \, \dd x\,\dd r,
&
\\ \mathcal{S}(t)& = \sum_{n\geq 1} \int_s^t \int_{\T^d} \one_{[s,\tau_j]\times \Gamma}  |u|^{\m-2} u  [(b_n\cdot\nabla) u  + g_n(\cdot, u)]   \, \dd x \,\dd w^n_r.
\end{align*}
By Assumption \ref{ass:dissipation_general}$(\m)$, one has
\begin{align*}
\mathcal{D}(t)
&\leq C_0(t-s)-\theta \int_s^t \int_{\T^d}\one_{[s,\tau_j]\times \Gamma} |u|^{\m-2} |\nabla u|^2 \, \dd x\,\dd r+ M_0 \int_s^t \int_{\T^d} \one_{[s,\tau_j]\times \Gamma} |u|^{\m}  \,  \dd x \,\dd r,
\end{align*}
where $M_0 = \theta M$ and $C_0 = \theta C$. Therefore, by \eqref{eq:identityIto} for $t\in [s,T]$,
\begin{equation}\label{eq:Itoest}
\begin{aligned}
&\|u^{(j)}(t)\|_{L^{\m}}^{\m} +
\theta \m(\m-1)\int_s^t \int_{\T^d} \one_{[s,\tau_j]\times \Gamma} |u|^{\m-2} |\nabla u|^2 \, \dd x\,\dd r
\\ &\qquad
\leq\m(\m-1) C_0 (t-s) + \|u^{(j)}(s)\|_{L^{\m}}^{\m} + \m(\m-1) M_0 \int_s^t
\one_{[s,\tau_j]\times \Gamma}\|u(r)\|^{\m}_{L^{\m}}  \, \dd r +\m \mathcal{S}(t),
\end{aligned}
\end{equation}
where we note that $u = u^{(j)}$ on $[s,\tau_j]\times \Gamma$, so that $u$ can be replaced by $u^{(j)}$. The above estimate will be used to derive the assertion of the lemma in several steps.

{\em  Step 2 -- case $\ell=1$: Let $(\theta,M,C)$ be as in Assumption \ref{ass:dissipation_general}, and set $C_0=C\theta$, $M_0:=M \theta$. Then} 
\begin{equation}
\label{eq:step_1_main_estimate_proof}
\sup_{r\in [s,t]}\E\| u^{(j)}(r)\|_{L^{\m}}^{\m} \leq
e^{t\m(\m-1) M_0}\Big(\m(\m-1)C_0 t +\E[\one_{\Gamma}\| u(s)\|_{L^{\m}}^{\m}]\Big)\ \ \  \text{\emph{for all} }t\in [s,T].
\end{equation}
Note that the constants appearing on the RHS\eqref{eq:step_1_main_estimate_proof} are independent of $(j,s,\Gamma)$.

Taking expectations in \eqref{eq:Itoest} and using $\E \mathcal{S}(t) =0$ by \eqref{eq:def_f_u_g_u} and \eqref{eq:ujspacelem}, we obtain
\begin{equation}\label{eq:apriopriestIto1}
\begin{aligned}
\E\|u^{(j)}(t)\|_{L^{\m}}^{\m}
&+ \theta\m(\m-1) \E \int_s^t \int_{\T^d} \one_{[s,\tau_j]\times \Gamma} |u(r)|^{\m-2} |\nabla u|^2 \, \dd x\, \dd r
\\ & \leq \m(\m-1) C_0 (t-s)+\E\|u^{(j)}(s)\|_{L^{\m}}^{\m}
+ \m(\m-1) M_0 \int_s^t \E\|u^{(j)}(r)\|^{\m}_{L^{\m}}  \, \dd r,
\end{aligned}
\end{equation}
for all $t\in [s,T]$. Thus \eqref{eq:step_1_main_estimate_proof} follows from Gronwall's inequality applied to $t\mapsto \sup_{r\in [s,t]}\E\|u^{(j)}(r)\|^{\m}_{L^{\m}}$.

{\em Step 3  -- case $\ell=1$: There exists a constant $N\geq 1$ depending only on $(\theta,M,C,T,\m)$ such that}
\begin{align}\label{eq:aprioribothinu}
\E \int_s^{\sigma\wedge T} \one_{\Gamma} \|u(r)\|_{L^\m}^{\m}  \, \dd r
 &+
 \E \int_s^{\sigma\wedge T}\int_{\T^d}
 \one_{\Gamma} |u|^{\m-2} |\nabla u|^2 \, \dd x\,\dd r
 \leq N\big[1+\E\|\one_{\Gamma} u(s)\|_{L^{\m}}^{\m}\big].
\end{align}
To see this, note that by \eqref{eq:step_1_main_estimate_proof} and Fubini's theorem we obtain
\begin{align}\label{eq:aprioriintu}
 \E \int_s^T \|u^{(j)}(r)\|_{L^\m}^{\m}  \, \dd r
\leq \Big(\m(\m-1) C_0+\E\big[\one_{\Gamma}\|u(s)\|_{L^{\m}}^{\m}\big]\Big)  (1+T) e^{\m(\m-1)TM_0}.
\end{align}
Therefore, letting $j\to \infty$, by Fatou's lemma we find that
\begin{align*}
\E \int_s^{\sigma\wedge T} \one_{\Gamma}\| u(r)\|_{L^\m}^{\m}  \, \dd r & \leq
\liminf_{j\to \infty}
\E \int_s^{\tau_j}\one_{\Gamma} \| u(r)\|_{L^\m}^{\m}  \, \dd r
\\ &
\leq  \liminf_{j\to \infty}
\E \int_s^{T} \one_{\Gamma}\| u^{(j)}(r)\|_{L^\m}^{\m}  \, \dd r
\\ & \leq \Big(\m(\m-1) C_0+\E\big[\one_{\Gamma}\| u(s)\|_{L^{\m}}^{\m}\big]\Big) (1+T) e^{\m(\m-1)TM_0},
\end{align*}
which gives the estimate for the first term in \eqref{eq:aprioribothinu}.

From \eqref{eq:apriopriestIto1} and \eqref{eq:aprioriintu} we obtain
\begin{equation*}
 \theta\m(\m-1) \E \int_s^{\tau_j} \int_{\T^d} \one_{\Gamma} |u|^{\m-2} |\nabla u|^2 \, \dd x\,\dd r
 \leq M_1 \Big( \m(\m-1)C_0+\E\big[\one_{\Gamma}\|u(s)\|_{L^{\m}}^{\m}\big]\Big),
\end{equation*}
where $M_1 = M_0 (T+1) e^{\m(\m-1)TM_0}$. Letting $j\to \infty$ in the same way as before, we find that
\begin{equation*}
\theta\m(\m-1) \E \int_s^{\sigma\wedge T} \int_{\T^d} \one_{\Gamma} |u|^{\m-2} |\nabla u|^2 \, \dd x\,\dd r
\leq M_1 \Big( C_0+\E\big[\one_{\Gamma}\|u(s)\|_{L^{\m}}^{\m}\big]\Big),
\end{equation*}
which gives the estimate for the second term in \eqref{eq:aprioribothinu}.

{\em Step 4  -- case $\ell=1$: There exists $N_0>0$ depending only on $(\theta,C,M,T,\m, \lambda)$ such that}
\begin{equation*}
\E \Big[\one_{\Gamma} \sup_{t\in [s,\sigma\wedge T)} \|u(t)\|_{L^{\m}}^{\m \lambda} \Big]
+ \E \Big| \int_s^t \int_{\T^d} \one_{[s,\tau_j]\times \Gamma} |u|^{\m-2} |\nabla u|^2 \, \dd x\,\dd r\Big|^{\lambda}
\leq N_0\Big(1+\E\big[\one_{\Gamma}\|u(s)\|_{L^{\lambda\m}}^{\lambda\m}\big]\Big),
\end{equation*}
\emph{where $0<\lambda<1$.}
The latter estimate follows with $u$ replaced by $u^{(j)}$ by combining \eqref{eq:Itoest} with the stochastic Gronwall inequality \cite[Corollary 5.4b)]{geiss2021sharp}, and afterwards letting $j\to \infty$.

{\em Step  5 -- case $\ell=1$: Conclusions}.
By \eqref{eq:reaction_diffusion_C_alpha_beta}, for each $k\geq 1$ we set
\begin{equation*}
\Gamma_k :=\{\sigma>s,\, \|u(s)\|_{L^{\infty}}\leq k\}\in\F_s.
\end{equation*}
Using the estimates \eqref{eq:aprioribothinu} and Step 4, we find that for all $k\geq 1$
\begin{align*}
\sup_{t\in [s,\sigma\wedge T)}\|u(t)\|_{L^{\m}}^{\m}+\int_{s}^{\sigma\wedge T}\int_{\Tor^d}
|u|^{\m-2} |\nabla u|^{2}\,\dd x\,\dd r
&<\infty  \ \ \text{ a.s.\ on }\Gamma_k.
\end{align*}
Since $\P(\{\sigma>s\} \setminus \Gamma_k)\to 0$ as $k\to \infty$ by \eqref{eq:reaction_diffusion_C_alpha_beta}, the boundedness of \eqref{eq:energy_estimates_lemma_reaction_diffusion} and \eqref{eq:energy_estimates_lemma_reaction_diffusion_gradient} follow.

It remains to prove the estimates \eqref{eq:energylambda1} and \eqref{eq:energylambda2} with $\Gamma = \{\sigma>s\, \|u(s)\|_{L^{\m}}\leq L\}$. These follow from Steps 2, 3 and 4 by letting $j\to \infty$. If additionally, $u_0\in B^{\varepsilon}_{\zeta,\infty}$ a.s.\ or ($u_0\in L^2$ a.s.\ and $\zeta = 2$), then it follows from \cite[Propositions 3.1 and 7.1]{AVreaction-local} that $u\in C([0,\sigma);L^{\zeta})$ a.s. Therefore, we can let $s\downarrow 0$ in \eqref{eq:energylambda1} and \eqref{eq:energylambda2}.

{\em Step  6: The system case $\ell>1$}.
One can apply It\^o's formula to $\|u_i^{(j)}\|_{L^{\m}}^{\m}$ for each $i\in \{1,\dots,\ell\}$ to obtain an analogue of \eqref{eq:identityIto} for the $i$-th equation of the system. Multiplying these equations by $\alpha_i$ and summing them up over $i$, one can use Assumption \ref{ass:dissipation_general} to obtain the following analogue of \eqref{eq:Itoest}:
\begin{equation*}
\begin{aligned}
&\sum_{i=1}^\ell \|u^{(j)}_i(t)\|_{L^{\m}}^{\m} + \theta \sum_{i=1}^\ell \int_s^t \int_{\T^d} \one_{[s,\tau_j]\times\Gamma}(r) |u_i(r)|^{\m-2} |\nabla u_i(r)|^2 \, \dd x\,\dd r
\\ & \leq C_1 + \sum_{i=1}^\ell  \alpha_i \|u_i^{(j)}(s)\|_{L^{\m}}^{\m} + M \sum_{i=1}^\ell \int_s^t  \one_{[s,\tau_j]\times\Gamma}(r)  \|u_i^{(j)}(r)\|^{\m}_{L^{\m}}  \, \dd r + \mathcal{S}(t),
\end{aligned}
\end{equation*}
where $\mathcal{S} = \sum_{i=1}^\ell \alpha_i\mathcal{S}_i$, and $\mathcal{S}_i$ is given by
\begin{align*}
\mathcal{S}_i(t)  = \sum_{n\geq 1}  \int_s^t \int_{\T^d} \one_{[s,\tau_j,]\times\Gamma}  |u_i^{(j)}|^{\m-2} u_i  [(b_{n,i}\cdot\nabla) u_i^{(j)}  + g_n(\cdot, u^{(j)})]   \, \dd x\, \dd w^n_r.
\end{align*}
After that Steps 1-5 can be repeated almost verbatim.
\end{proof}

\section{Global existence and uniqueness for weakly dissipative systems}
\label{s:global_system}
In this section, we establish global well-posedness for certain \emph{weakly} dissipative systems. In the case of $2\times 2$ systems, this means that there is a dissipative nonlinearity in one of the equations but not in both. Therefore, the corresponding system is only `weakly' dissipative and the results of the previous section cannot be applied.
More precisely, here we investigate the following cases:
\begin{itemize}
\item Lotka-Volterra equations (Subsection \ref{ss:Lotka_Volterra}).
\item The Brusselator system (Subsection \ref{ss:brusselator}).
\end{itemize}
Moreover, we show that Subsection \ref{ss:Lotka_Volterra} also covers the SIR model (susceptible-infected-removed), and  Subsection \ref{ss:brusselator}
also covers the Gray--Scott model.

In the case of weakly dissipative systems, it does not seem possible to formulate a general `weak' dissipation condition which extends Assumption \ref{ass:dissipation_general}. At the moment, we can only argue `case by case' by possibly relying on the precise structure of the equations, and on some of the analysis done in the scalar case.
For the sake of simplicity, all the above examples are $2$$\times$$2$ systems. However, our arguments can be generalized in certain situations to $\ell$$\times$$\ell$ ones with $\ell\geq 3$ as well, see e.g.\ Remark \ref{r:multi_species}.
Our strategy to deal with weakly dissipative systems is to obtain suitable estimates for $u_1$ by using the dissipation effect of $f_1$ as done in Section \ref{s:scalar_global} for scalar equations. Subsequently, we estimate $u_2$ by exploiting the estimate for $u_1$. Since in the $u_2$-equation in \eqref{eq:reaction_diffusion_system} there is no dissipation, we cannot follow the arguments of Section \ref{s:scalar_global}. To prove estimates on $u_2$, we use a variant of Assumption \ref{ass:dissipation_general} (i.e.\ $L^{\m}$--coercivity/dissipativity) that we investigate in Subsection \ref{ss:suboptimal_coercivity} below.

\subsection{Energy estimates:  Iteration via $L^{\m}$--coercivity/dissipativity}
\label{ss:suboptimal_coercivity}

In this section, we prove the main energy estimates for $u_j$, where $j\in\{1,\dots,\ell\}$ is fixed, and $\ell\geq 2$ is general for the moment. The $j$--equation of the system \eqref{eq:reaction_diffusion_system}, is
\begin{equation*}
\left\{
\begin{aligned}
\dd u_j -\div(a_j\cdot\nabla u_j )\,\dd t
&= \big[\div(F_j(\cdot, u)) +f_j(\cdot, u)\big]\,\dd t
+ \sum_{n\geq 1}  \Big[(b_{n,j}\cdot \nabla) u_j+ \rnoise_{n,j}(\cdot,u) \Big]\,\dd w_t^n,\\
u_j(0)&=u_{0,j},
\end{aligned}
\right.
\end{equation*}
on $\Tor^d$.
The aim of this subsection is to provide estimates for $u_j$. Later on, this will be combined with bounds for $(u_i)_{i=1}^{j-1}$ to get a priori bounds. This eventually leads to global existence in the case of weakly dissipative systems.

Consider the following assumption:
\begin{assumption}[Random $L^{\m}$-coercivity/dissipativity]
\label{ass:dissipation_general_sub_optimal}
Suppose that the assumptions of Theorem \ref{t:reaction_diffusion_global_critical_spaces} are satisfied. Let $(u,\sigma)$ be the corresponding $(p,\a_{\crit},\s,q)$-solution.
For $0\leq s<T<\infty$, $j\in \{1,\dots,\ell\}$, and $\m\in [2,\infty)$, we say that Assumption \ref{ass:dissipation_general_sub_optimal}$(s,T,j,\m)$ holds if there exists a constant $\theta>0$ and there exist $\Progress \otimes \Borel(\Tor^d)$-measurable maps
$M_1,M_2:[s,T]\times \O\times \Tor^d\to [0,\infty)$ and $\mathcal{R}_j:[s,T]\times\Omega\to [0,\infty)$ satisfying
\begin{equation}
\label{eq:integrability_M_k}
M_{k}\in L^{\varphi_k}(s,T;L^{\psi_k}(\Tor^d))  \ \text{ a.s.\ for all }k\in \{1,2\},
\end{equation}
where $\psi_1:= \frac{\m}{2}$,  $\varphi_1:=1$, $\psi_2\in (\frac{d}{2}\vee 1,\infty)$, and $\varphi_2:=\frac{2\psi_2}{2\psi_2-d}$,
for which the following conditions hold a.e.\ in $(s,\sigma)\times \{\sigma>s\}$:
\begin{align*}
\int_{\T^d} |u_j|^{\m-2} \Big(\am_j \nabla u_j\cdot \nabla u_j + F_j(\cdot, u) \cdot \nabla u_j  -&\frac{u_j f_j(\cdot, u)}{\zeta-1}
-\frac12 \sum_{n\geq 1} \big[(b_{n,j} \cdot \nabla) u_j + g_{n,j}(\cdot, u) \big]^2 \Big) \, \dd x
\\ & \geq  \int_{\T^d} |u_j|^{\m-2}\Big( \theta |\nabla u_j|^2 - M_1- M_2|u_j|^2\Big) \dd x + \mathcal{R}_j.
\end{align*}
\end{assumption}

In the examples below $M_1$, $M_2$, and the possible extra term $\mathcal{R}_j$ will depend on $u$ as well, and often we will be able to bootstrap the integrability of $u$ in applications.

There is no direct connection between Assumption \ref{ass:dissipation_general_sub_optimal} and
Assumption \ref{ass:dissipation_general}. However, the following lemma can be proved in a similar way as Lemma \ref{lem:dissipationI}. Lemma \ref{lem:dissipationII} does not seem to have an analogue which is easy to state.

\begin{lemma}
\label{lem:pointwise_bound_suboptimal}
Suppose that Assumption \ref{ass:reaction_diffusion_global} holds.
Suppose that the assumptions of Theorem \ref{t:reaction_diffusion_global_critical_spaces} are satisfied. Let $(u,\sigma)$ be the corresponding $(p,\a_{\crit},\s,q)$-solution.
Let $0\leq s<T<\infty$, $j\in \{1,\dots,\ell\}$, and $\m\in [2,\infty)$. Then
Assumption \ref{ass:dissipation_general_sub_optimal}$(s,T,j,\m)$ holds if there exist  $\Progress\otimes \Borel(\Tor^d)$-measurable  $M_1,M_2:[s,T]\times \O\times\Tor^d\to [0,\infty)$ satisfying \eqref{eq:integrability_M_k}, and $\varepsilon\in (0,\nu)$ such that
\begin{equation*}
\begin{aligned}
\frac{u_j f_j(\cdot,u)}{\zeta-1}+ \frac{1}{4(\ellip_j-\varepsilon)}\Big(|F_j(\cdot,u)|
+\sum_{n\geq 1} |b_{n,j}|\, |g_{n,j}(\cdot,u)|\Big)^2
& +\frac12\|(g_{n,j}(\cdot,u))_{n\geq 1}\|_{\ell^2}^2 \\ & \leq M_1 + M_2|u_j|^2 -\mathcal{R}_j,
\end{aligned}
\end{equation*}
a.e.\ on $(s,\sigma)\times \{\sigma>s\}\times \Tor^d$.
\end{lemma}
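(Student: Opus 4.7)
The plan is to mimic the argument of Lemma \ref{lem:dissipationI}, suitably componentised to the $j$-th equation of the system. First I would establish a pointwise inequality valid for arbitrary $z \in \R^d$ and $y\in\R^\ell$: namely, for a.e.\ $(t,\omega,x)$,
\begin{equation*}
a_j z\cdot z + F_j(\cdot,y)\cdot z - \frac{y_j f_j(\cdot,y)}{\zeta-1} - \tfrac12\sum_{n\geq 1}\bigl[b_{n,j}\cdot z + g_{n,j}(\cdot,y)\bigr]^2 \geq \theta|z|^2 - M_1 - M_2|y_j|^2 + \mathcal{R}_j.
\end{equation*}
Substituting $z = \nabla u_j(x)$ and $y = u(x)$, multiplying by $|u_j(x)|^{\zeta-2}\geq 0$ and integrating over $\T^d$ then recovers exactly the integrand on the left-hand side of Assumption \ref{ass:dissipation_general_sub_optimal}$(s,T,j,\zeta)$, which is the content we need.

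To prove the pointwise inequality I would expand the square $[b_{n,j}\cdot z + g_{n,j}]^2$ and split the resulting expression into three blocks. The ``quadratic in $z$'' block combines $a_j z\cdot z$ with $-\tfrac12\sum_n(b_{n,j}\cdot z)^2$, which is at least $\nu_j|z|^2$ by the ellipticity in Assumption \ref{ass:reaction_diffusion_global}\eqref{it:ellipticity_reaction_diffusion}. The ``cross'' block $F_j\cdot z - \sum_n(b_{n,j}\cdot z)\,g_{n,j}$ is controlled by Cauchy--Schwarz followed by Young's inequality $ab\leq(\nu_j-\varepsilon)a^2 + \tfrac{1}{4(\nu_j-\varepsilon)}b^2$, absorbing a fraction $\nu_j-\varepsilon$ of $|z|^2$ and leaving the square $\tfrac{1}{4(\nu_j-\varepsilon)}\bigl(|F_j|+\sum_n|b_{n,j}|\,|g_{n,j}|\bigr)^2$. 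Finally the ``data'' block $-\tfrac{y_j f_j}{\zeta-1}-\tfrac12\|g_j\|_{\ell^2}^2$ combines with this square so that the hypothesis of the lemma applies directly, yielding the lower bound $\varepsilon|z|^2 - M_1 - M_2|y_j|^2 + \mathcal{R}_j$ with $\theta=\varepsilon$.

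I expect no substantive obstacle: everything is pointwise and algebraic, and the integrability and progressive measurability of $M_1,M_2$ in \eqref{eq:integrability_M_k} carry over verbatim. The only mild subtlety concerns the remainder term $\mathcal{R}_j$, which depends only on $(t,\omega)$ but appears inside an integral weighted by $|u_j(x)|^{\zeta-2}$ after the substitution; since $\mathcal{R}_j\geq 0$ this contributes a nonnegative remainder $\mathcal{R}_j\int_{\T^d}|u_j|^{\zeta-2}\,dx$, which one can relabel as the $\mathcal{R}_j$ appearing in Assumption \ref{ass:dissipation_general_sub_optimal} (still nonnegative, progressively measurable, and $x$-independent). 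No estimates involving $h$, the weight $w_\alpha$, stochastic maximal regularity, or the stopping-time structure enter at this stage; these are reserved for the subsequent application of the lemma to obtain energy bounds for $u_j$.
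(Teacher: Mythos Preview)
Your proposal is correct and follows exactly the approach the paper indicates (it says the lemma ``can be proved in a similar way as Lemma \ref{lem:dissipationI}''): expand the square, use the ellipticity of Assumption \ref{ass:reaction_diffusion_global}\eqref{it:ellipticity_reaction_diffusion} on the quadratic-in-$z$ part, absorb the cross terms via Young's inequality with parameter $\nu_j-\varepsilon$, and then invoke the pointwise hypothesis. One small wording point: the pointwise inequality need not (and cannot) be established for \emph{arbitrary} $y\in\R^\ell$, since the hypothesis is only assumed along $y=u(t,\omega,x)$; but your argument only uses freedom in $z$, so this is harmless. Your observation about relabelling $\mathcal{R}_j\mapsto \mathcal{R}_j\int_{\T^d}|u_j|^{\zeta-2}\,dx$ is correct and is exactly what is needed to match the $x$-independent remainder in Assumption \ref{ass:dissipation_general_sub_optimal}; note that in the paper's actual applications one always has either $\mathcal{R}_j=0$ or $\zeta=2$, so this relabelling is in fact vacuous there.
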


We are ready to state a key lemma in this section.
\begin{lemma}[Energy estimates with dissipation]
\label{l:high_integrability_reaction_diffusion_with_dissipation}
Suppose that the assumptions of Theorem \ref{t:reaction_diffusion_global_critical_spaces} are satisfied. Let $(u,\sigma)$ be the corresponding $(p,\a_{\crit},\s,q)$-solution. Fix $0< s<T<\infty$, $j\in \{1,\dots,\ell\}$ and $\m\in [2,\infty)$ and suppose that Assumption \ref{ass:dissipation_general_sub_optimal}$(s,T,j,\m)$ holds.
For all $0\leq t_0<t_1<\infty$ let
\begin{align*}
\ej(t_0,t_1)
&:=\sup_{r\in (t_0, t_1\wedge \sigma)}\|u_j(r)\|_{L^{\m}}^{\m}+ \frac{\theta}{2}\int_{t_0}^{t_1\wedge \sigma }\int_{\Tor^d}
|u_j|^{\m-2} |\nabla u_j|^{2} \,\dd x\,\dd r + \int_{t_0}^{t_1} \mathcal{R}_j(r)  \, \dd r.
\end{align*}
Then
$\ej(s,T) <\infty\ \  \text{ a.s.\ on }\{\sigma>s\}$,
and there exists a constant $C$ only depending on $(d,\zeta,\psi_2)$ such that for all $R,\gamma,\lambda>0$
 \begin{equation}
 \label{eq:estimate_D_j}
 \begin{aligned}
 \P(\one_{\Gamma}\ej(s,T)>\g)
 \leq & \frac{e^{R}}{\g} \E(\one_{\Gamma}\xi_{1,M}\wedge \lambda) + \P(\one_{\Gamma}\xi_{1,M}> C^{-1}\lambda) + \P(\one_{\Gamma}\xi_{2,M}>C^{-1}R),
\end{aligned}
\end{equation}
where $\Gamma\in \F_s$ such that $\Gamma\subseteq \{\sigma>s\}$ is arbitrary,
\[\xi_{1, M} = \|u_{j}(s)\|_{L^{\m}}^{\m} + \|M_1\|_{L^{\varphi_1}(s,T;L^{\psi_1})}, \ \ \ \text{ and }\ \ \
\xi_{2,M} = \sum_{n\in\{1,2\}}\|M_n\|_{L^{\varphi_k}(s,T;L^{\psi_k})}.\]
Moreover, for every $\eta\in (0,1)$, there exists $\alpha_{\eta}>0$ such that
\begin{align}\label{eq:etagronwall}
\E[e^{-\xi_{2,M} \one_{\Gamma}} \one_{\Gamma}\ej(s,T)]^{\eta}\leq \alpha_{\eta}\,\E[\one_{\Gamma} \xi_{1,M}]^{\eta}.
\end{align}
Furthermore, one can take $s=0$ if for some $\varepsilon>0$, $u_0\in B^{\varepsilon}_{\zeta,\infty}(\T^d;\R^{\ell})$ a.s.,\ or $u_0\in L^2(\T^d;\R^{\ell})$ a.s.\ and $\zeta =p=q= 2$.
\end{lemma}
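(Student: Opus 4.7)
The strategy is to mimic the scalar argument in Lemma \ref{l:high_integrability_reaction_diffusion}, but working only with the $j$-th component and allowing the coercivity constants to be genuinely random. First I would introduce the localizing stopping times $\tau_k\uparrow\sigma$ (as in the proof of Lemma \ref{l:high_integrability_reaction_diffusion}) and the truncated process $u^{(k)}=\one_{\Gamma}u(\cdot\wedge\tau_k)$, so that It\^o's formula (Lemma \ref{lem:Ito_generalized}) can be applied rigorously to $\xi(u_j^{(k)})$, where $\xi\in C^2_{\mathrm{b}}(\R)$ coincides with $|\cdot|^{\m}$ on a sufficiently large ball. Together with Assumption \ref{ass:dissipation_general_sub_optimal}, this yields the pointwise-in-time inequality
\begin{align*}
\|u_j^{(k)}(t)\|_{L^{\m}}^{\m}
&+\theta\m(\m-1)\int_s^t\!\int_{\Tor^d}\one_{[s,\tau_k]\times\Gamma}|u_j|^{\m-2}|\nabla u_j|^2\,dxdr
+\m(\m-1)\int_s^t\!\one_{[s,\tau_k]\times\Gamma}\mathcal{R}_j\,dr\\
&\leq\|u_j^{(k)}(s)\|_{L^{\m}}^{\m}
+\m(\m-1)\int_s^t\!\int_{\Tor^d}\one_{[s,\tau_k]\times\Gamma}|u_j|^{\m-2}\bigl(M_1+M_2|u_j|^2\bigr)dxdr
+\m\,\mathcal{S}_j(t),
\end{align*}
where $\mathcal{S}_j$ is the obvious martingale.

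The heart of the argument is the treatment of the $M_1$ and $M_2$ terms. For $M_1$ I would apply H\"older with exponents $(\psi_1,\psi_1')=(\m/2,\m/(\m-2))$ followed by Young's inequality to obtain
\[
\int_s^T\!\!\int_{\Tor^d}M_1|u_j|^{\m-2}dxdr\leq \tfrac{1}{2\m(\m-1)}\sup_{r\in[s,T]}\|u_j(r)\|_{L^{\m}}^{\m}+C\|M_1\|_{L^{1}(s,T;L^{\m/2})},
\]
and the first summand can be absorbed into the LHS. For $M_2$ I would H\"older in space with exponent $\psi_2$, rewrite $\int|u_j|^{\m}\phi\,dx=\||u_j|^{\m/2}\|_{L^{2\psi_2'}}^2\|\phi\|_{L^{\psi_2}}$, and apply the Gagliardo--Nirenberg interpolation
\[
\|v\|_{L^{2\psi_2'}}^{2}\lesssim \|v\|_{L^{2}}^{2(1-\eta)}\|v\|_{H^{1}}^{2\eta},\qquad \eta=\tfrac{d}{2\psi_2}\in(0,1),
\]
to $v=|u_j|^{\m/2}$, noting that $\||u_j|^{\m/2}\|_{H^{1}}^2 \simeq \|u_j\|_{L^{\m}}^{\m}+\int|u_j|^{\m-2}|\nabla u_j|^2dx$. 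A Young with conjugate exponents $(1/\eta,1/(1-\eta))$ then gives
\[
\int_s^T\!\!\int M_2|u_j|^{\m}dxdr
\leq \tfrac{\theta}{2}\int_s^T\!\!\int|u_j|^{\m-2}|\nabla u_j|^2dxdr
+C\int_s^T \|M_2\|_{L^{\psi_2}}^{\varphi_2}\|u_j\|_{L^{\m}}^{\m}dr,
\]
where crucially $1/(1-\eta)=2\psi_2/(2\psi_2-d)=\varphi_2$, which is exactly the exponent allowed by \eqref{eq:integrability_M_k}. Absorbing the gradient term yields the master inequality
\[
\ej^{(k)}(s,t)\leq \xi_{1,M}+C\int_s^t\xi_2'(r)\ej^{(k)}(s,r)\,dr+\m\,\mathcal{S}_j(t),
\]
with $\int_s^T\xi_2'(r)\,dr\leq \xi_{2,M}$ (after redefining constants).

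At this point I would invoke the stochastic Gronwall lemma \ref{lem:gronwall}: applied to the nonnegative c\`adl\`ag process $\ej^{(k)}$ against the martingale $\mathcal{S}_j(\cdot\wedge\tau_k)$ and the random integrator $\xi_2'(r)dr$, it produces both the Lenglart-type tail bound
\[
\P(\one_{\Gamma}\ej^{(k)}(s,T)>\g)\leq \frac{e^{R}}{\g}\E(\one_{\Gamma}\xi_{1,M}\wedge\lambda)+\P(\one_{\Gamma}\xi_{1,M}\geq\lambda)+\P(\one_{\Gamma}\xi_{2,M}\geq R)
\]
and the weighted $L^{\eta}$-estimate \eqref{eq:etagronwall}. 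Letting $k\to\infty$ and using Fatou on $\ej^{(k)}\uparrow\ej$ (together with a.s.\ finiteness of $\ej(s,T)$ on $\{\sigma>s\}$, which follows from the same estimate applied with $\Gamma=\{\|u_j(s)\|_{L^{\m}}\leq L,\,\|M_1\|+\|M_2\|\leq L\}$ and $L\to\infty$) completes the proof. Finally, the extension to $s=0$ in the regular cases follows from \cite[Propositions 3.1 and 7.1]{AVreaction-local} exactly as in Lemma \ref{l:high_integrability_reaction_diffusion}.

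The main obstacle is the handling of the $M_2$ term: one has to choose the exponents in the H\"older/Gagliardo--Nirenberg/Young chain so that the dissipation in $|\nabla u_j|^2|u_j|^{\m-2}$ can absorb the superlinear growth in $\|u_j\|_{L^{\m}}^{\m}$, while the residual random coefficient $\|M_2\|_{L^{\psi_2}}^{\varphi_2}$ falls in $L^1(s,T)$ thanks to the precise integrability \eqref{eq:integrability_M_k} imposed on $M_2$. Once this matching is done, the stochastic Gronwall step is standard.
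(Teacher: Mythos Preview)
Your strategy matches the paper's almost exactly: localize via $\tau_k$, apply It\^o's formula to $\|u_j^{(k)}\|_{L^{\m}}^{\m}$, invoke Assumption~\ref{ass:dissipation_general_sub_optimal}, estimate the $M_1,M_2$ integrals, and close with the stochastic Gronwall lemma. The treatment of $M_2$ via H\"older--interpolation--Young is the same (the paper phrases it as $H^{d/(2\psi_2)}\hookrightarrow L^{2\psi_2'}$ plus complex interpolation rather than Gagliardo--Nirenberg, but this is cosmetic), and the exponent matching $\varphi_2=1/(1-\eta)$ is exactly the point.

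There is one inaccuracy in your $M_1$ step. After H\"older in space you have $\int_s^t\|M_1(r)\|_{L^{\m/2}}\|u_j(r)\|_{L^{\m}}^{\m-2}\,dr$; pulling out the sup and applying Young with exponents $(\m/(\m-2),\m/2)$ gives
\[
\varepsilon\sup_{r\in[s,t]}\|u_j(r)\|_{L^{\m}}^{\m}+C_{\varepsilon}\|M_1\|_{L^1(s,t;L^{\m/2})}^{\m/2},
\]
so the additive term carries the exponent $\m/2$, not $1$ as you wrote (and as the stated $\xi_{1,M}$ requires). The paper avoids this by not absorbing at all: it uses the elementary bound $\|u_j\|_{L^{\m}}^{\m-2}\leq 1+\|u_j\|_{L^{\m}}^{\m}$, which yields
\[
\int_{\Tor^d}M_1|u_j|^{\m-2}\,dx\lesssim \|M_1\|_{L^{\m/2}}\big(1+\|u_j\|_{L^{\m}}^{\m}\big),
\]
so that $\|M_1\|_{L^{\m/2}}$ appears linearly in the additive term \emph{and} as a coefficient of $\|u_j\|_{L^{\m}}^{\m}$ inside the Gronwall integrand. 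This is precisely why the stated $\xi_{2,M}$ contains $\|M_1\|_{L^{\varphi_1}(s,T;L^{\psi_1})}$ as well as the $M_2$ norm --- a point your master inequality misses. Your variant (once the exponent is corrected) would still suffice for all the downstream applications, but it does not reproduce the specific $\xi_{1,M},\xi_{2,M}$ in the lemma.
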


The above result is a variant of Lemma \ref{l:high_integrability_reaction_diffusion}, where additional processes $M_1$, $M_2$, and $\mathcal{R}_j$ appear which are crucial for applications to several of the concrete systems we consider below.  Note that in \eqref{eq:estimate_D_j} no integrability in $\Omega$ is required.

\begin{proof}[Proof of Lemma \ref{l:high_integrability_reaction_diffusion_with_dissipation}]
Fix $0<s<T<\infty$ and let $\Gamma\in \F_s$ be such that $\Gamma\subseteq \{\sigma>s\}$ and $\one_{\Gamma} u(s)\in L^{\m}(\O\times \Tor^d)$.
As in the proof of Lemma \ref{l:high_integrability_reaction_diffusion} we need a localization argument. For each $k\geq 1$, define the stopping time $\tau_k$ by
\begin{equation*}
\tau_k=
\inf\big\{t\in [s,\sigma)\,:\, \|u(t)-u(s)\|_{C(\Tor^d;\R^{\ell})}+\|u\|_{L^2(s,t;H^{1}(\Tor^d;\R^{\ell}))}\geq k\big\}\wedge T,
\end{equation*}
on $\{\sigma>s,\|u(s)\|_{C(\Tor^d;\R^{\ell})}\leq k-1\}$, and $\tau_k = s$ otherwise. Here we also set $\inf\emptyset:=\sigma\wedge T$. Note that $\tau_k$ is well defined due to \eqref{eq:reaction_diffusion_H_theta}--\eqref{eq:reaction_diffusion_C_alpha_beta}.

Set $u_{j}^{(k)}(t)= \one_{\Gamma} u_j(t\wedge \tau_k)$ on $\O\times [s,T]$.
Here to economize the notation we do not display the dependence on $\Gamma$.
Arguing as in \eqref{eq:def_f_u_g_u}-\eqref{eq:identityIto}, an application of the It\^{o} formula of Lemma \ref{lem:Ito_generalized} yields, a.e.\ on $[s,T]\times \O$,
\begin{align}\label{eq:identityIto_2}
\|u_j^{(k)}(t)\|_{L^{\m}}^{\m} = \|u_j(s)\|_{L^{\m}}^{\m} + \m(\m-1) \mathcal{D}(t) + \m \mathcal{S}(t),
\end{align}
where $\mathcal{D}$ and $\mathcal{S}$ are the deterministic and stochastic terms, and are given by
\begin{align*}
\mathcal{D}(t) &= \int_s^t \int_{\T^d} \one_{[s,\tau_k]\times\Gamma} |u_j|^{\m-2} \Big(\frac{u_j f_j(\cdot, u)}{\m-1}  - \nabla u_j \cdot a_j\nabla u_j - \nabla u_j\cdot F_j(\cdot,u) \\ & \qquad \qquad\qquad\qquad\qquad\qquad\quad+\frac12 \sum_{n\geq 1} [(b_{n,j}\cdot\nabla) u_j  + g_{n,j}(\cdot, u)]^2   \Big) \, \dd x \, \dd r,
&
\\ \mathcal{S}(t)& = \sum_{n\geq 1} \int_s^t \int_{\T^d} \one_{[s,\tau_k]\times\Gamma}  |u_j|^{\m-2} u_j  [(b_{n,j}\cdot\nabla) u_j  + g_{n,j}(\cdot, u)]   \, \dd x \,\dd w^n_r.
\end{align*}

To conclude,
it is suffices to show the existence of $C>0$ only depending on $(d,\zeta,\psi_2)$ such that
\begin{equation}
\label{eq:application_grownall_simple_bound}
\begin{aligned}
\one_{\Gamma} &\ej(s,t\wedge \tau_k)
\leq  \one_{\Gamma} \|u_j(s)\|_{L^{\m}}^{\m} + C \|M_1\|_{L^1(s,t;L^{\psi_1})} \\
 &+C\int_s^t\one_{[s,\tau_k]\times\Gamma}\Big(1+\|M_1(r)\|_{L^{\psi_1}}+\|M_2(r)\|_{L^{\psi_2}}^{\varphi_2}\Big)\|u_j(r)\|_{L^{\m}}^{\m} \,\dd r + C\mathcal{S}(t\wedge \tau_k).
\end{aligned}
\end{equation}
Indeed, \eqref{eq:application_grownall_simple_bound} implies
\begin{align*}
\one_{\Gamma} &\ej(s,t\wedge \tau_k)
\leq  \|u_j(s)\|_{L^{\m}}^{\m} + C \|M_1\|_{L^1(s,t;L^{\psi_1})}
\\ &  + \int_s^t C\Big(1+\|M_1(r)\|_{L^{\psi_1}}+\|M_2(r)\|_{L^{\psi_2}}^{ \varphi_2}\Big)\one_{\Gamma}\ej(s,r\wedge \tau_k) \,\dd r + C\mathcal{S}(t\wedge \tau_k).
\end{align*}
Recall that $\xi_{1,M}$ and $\xi_{2,M}$ are defined in Lemma \ref{l:high_integrability_reaction_diffusion_with_dissipation}.
By stochastic Gronwall inequality \cite[Corollary 5.4b)]{geiss2021sharp} applied to the process $t\mapsto \one_{\Gamma}\ej(s,t\wedge \tau_k)$, we find that, for all $\g,\lambda,R>0$,
\begin{align}
\label{eq:estimate_tail_probability_sub_optimal}
\P\Big(\Gamma \cap \big\{\ej(s,\tau_k)\geq \g\big\}\Big)
&\leq \frac{e^{R}}{\g}\E[\xi_{1,M}\wedge \lambda] + \P(\xi_{1,M}>C^{-1}\lambda)
+ \P(\xi_{2,M}> C^{-1}R),
\end{align}
and for all $\eta\in (0,1)$ there exists $\alpha_{\eta}>0$ depending only on $\eta$ such that
\begin{align}
\label{eq:estimate_tail_probability_sub_optimal2}
\E[e^{-\one_{\Gamma}\xi_{2,M}} \one_{\Gamma}\ej(s,\tau_k)]^{\eta}\leq
\alpha_{\eta}\E[\one_{\Gamma} \xi_{1,M}]^{\eta}.
\end{align}
Taking $k\to \infty$  in \eqref{eq:estimate_tail_probability_sub_optimal}, by monotone convergence, one sees that \eqref{eq:estimate_tail_probability_sub_optimal} and \eqref{eq:estimate_tail_probability_sub_optimal2} hold with $\tau_k$ replaced by $\sigma$.
Hence, \eqref{eq:estimate_D_j} and \eqref{eq:etagronwall} follow by replacing $\Gamma$ by  $\Gamma_N=\{\sigma>s,\ \|u_j(s)\|_{L^{\infty}}\leq N\}\cap \Gamma$ and letting $N\to \infty$. Finally, $\ej(s,T) <\infty$ on $\sigma>s$ follows by first choosing $R$ and $\lambda$ large enough, and then letting $\gamma\to \infty$.

Next, we prove \eqref{eq:application_grownall_simple_bound}. By Assumption \ref{ass:dissipation_general_sub_optimal}
\begin{equation}
\label{eq:deterministic_estimate_pointwise_sub_optimal_proof}
\begin{aligned}
\mathcal{D}(t)
&\leq -\theta \int_s^t \int_{\T^d}\one_{[s,\tau_k]\times\Gamma}  |u_j|^{\m-2} |\nabla u_j|^2 \, \dd x\, \dd r\\
& \ +  \theta \int_s^t \int_{\T^d}\one_{[s,\tau_k]\times\Gamma} |u_j|^{\m-2}(M_1+M_2|u_j|^2) \, \dd x\,\dd r - \int_s^t \one_{[s,\tau_k]\times\Gamma} \mathcal{R}_j(r) \, \dd r.
\end{aligned}
\end{equation}

We estimate the $M_i$-terms. One has a.e.\ on $[s,\tau_k]\times \O$,
\begin{align*}
\int_{\Tor^d}
M_2 |u_j|^{\m}\,\dd x
&\leq \|M_2\|_{L^{\psi_2}}\big\||u_j|^{\m}\big\|_{L^{\psi_2'}}\\
&=  \|M_2\|_{L^{\psi_2}} \big\||u_j|^{\m/2}\big\|_{L^{2 \psi_2'}}^2\\
&\stackrel{(i)}{\lesssim}  \|M_2\|_{L^{\psi_2}} \big\||u_j|^{\m/2}\big\|_{H^{\theta}}^2\\
&\lesssim \|M_2\|_{L^{\psi_2}} \big\||u_j|^{\m/2}\big\|_{L^2}^{2(1-\theta)}\big\||u_j|^{\m/2}\big\|_{H^1}^{2\theta}\\
&\lesssim \|M_2\|_{L^{\psi_2}} \big\||u_j|^{\m/2}\big\|_{L^2}^{2(1-\theta)}\big( \big\||u_j|^{\m/2}\big\|_{L^2}^{2\theta}+
 \big\|\nabla [|u_j|^{\m/2}]\big\|_{L^2}^{2\theta}\big)\\
 &\lesssim \|M_2\|_{L^{\psi_2}} \|u_j\|_{L^{\m}}^{\m(1-\theta)}\big( \|u_j\|_{L^{\m}}^{\m\theta}+
 \big\||u_j|^{\m/2-1}|\nabla u_j|\big\|_{L^2}^{2\theta}\big)\\
&\stackrel{(ii)}{\leq} \frac{1}{2\varepsilon}\big(\|M_2\|_{L^{\psi_2}}+\|M_2\|_{L^{\psi_2}}^{\varphi_2}\big)
\|u_j\|_{L^{\m}}^{\m}
+ \frac{\varepsilon}{2} \int_{\Tor^d} |u_j|^{\m-2}|\nabla u_j|^2\,\dd x,
\end{align*}
where in $(i)$ we used the embedding $H^{\theta}(\Tor^d)\embed L^{2 \psi_2'}(\Tor^d)$ with $\theta=\frac{d}{2}-\frac{d}{2 \psi_2'}=\frac{d}{2 \psi_2}\in (0,1)$, and in $(ii)$ Young's inequality as well as $\varphi_2=\frac{1}{1-\theta}$.

For $M_1$ we show that a.e.\ on $[s,\tau_k]\times \O$,
\begin{equation}
\label{eq:M_2_estimate_suboptimal}
\int_{\Tor^d} M_1  |u_j|^{\m-2}\,\dd x
\lesssim_{\m} \|M_1\|_{L^{\m/2}} (1+\|u_j\|_{L^{\m}}^{\m}).
\end{equation}
Note that \eqref{eq:M_2_estimate_suboptimal} is immediate in the case $\m=2$. In case $\m>2$, one has \eqref{eq:M_2_estimate_suboptimal} follows from
\begin{align*}
\int_{\Tor^d} M_1  |u_j|^{\m-2}\,\dd x
&\leq \|M_1\|_{L^{\m/2}} \big\||u_j|^{\m-2}\big\|_{L^{{\m}/(\m-2)}}= \|M_1\|_{L^{\m/2}} \|u_j\|_{L^{\m}}^{\m-2}\lesssim_{\m} \|M_1\|_{L^{\m/2}}(1+ \|u_j\|_{L^{\m}}^{\m}).
\end{align*}

From \eqref{eq:deterministic_estimate_pointwise_sub_optimal_proof}, and the above estimates for $M_1$ and $M_2$  with $\varepsilon = \theta$, we obtain
\begin{align*}
\mathcal{D}(t) &\leq -\frac{\theta}{2} \int_s^t \int_{\T^d}\one_{[s,\tau_k]\times\Gamma}  |u_j|^{\m-2} |\nabla u_j|^2 \, \dd x\,\dd r - \int_s^t \one_{[s,\tau_k]\times\Gamma} \mathcal{R}_j(r) \, \dd r \\ & \ + C_{\zeta,\theta}  \int_s^t \one_{[s,\tau_k]\times\Gamma} \Big[\|M_1\|_{L^{\m/2}}  + \big(1+\|M_1\|_{L^{\m/2}} + \|M_2\|_{L^{\psi_2}}^{\varphi_2}\big)
\|u_j\|_{L^{\m}}^{\m}\Big] \, \dd r
\end{align*}
a.e.\ on $[s,T]\times \O$.
This implies the estimate \eqref{eq:application_grownall_simple_bound}.

The final assertion concerning $s=0$ follows by taking $s\downarrow 0$ as in Step 5 of Lemma \ref{l:high_integrability_reaction_diffusion}.
\end{proof}

\subsection{Stochastic Lotka-Volterra equations}
\label{ss:Lotka_Volterra}
Lotka-Volterra equations (also known as \emph{predatory-prey} model) were initially proposed by Lotka in the theory of \emph{autocatalytic chemical reactions} in 1910 \cite{L10}. In the book \cite{L25}, the same author used this model to study \emph{predatory-prey} interactions. Around the same years, Volterra proposed the same equations to study similar systems \cite{V31}. The initially proposed model that consists of a system of ODEs for the unknown density of species, can be extended to a system of PDEs taking into account spatial inhomogeneity and diffusivity, see e.g.\ \cite{CL84,GL94}. Stochastic perturbations of such models take into account uncertainties in the determination of the external forces and/or parameters, see e.g.\ \cite{CE89,NY21} and the references therein. As explained before, \emph{transport noise} can be thought of as the `small-scale' effect of migration phenomena of the species. To the best of our knowledge, our result is the first global existence result for Lotka-Volterra equations with transport noise. But even if $b=0$, our results seem to be new.
Here we are mainly concerned with modelling of the dynamics of \emph{two} species. However, our arguments extend to certain Lotka-Volterra equations for multi-species. The reader is referred to Remark \ref{r:multi_species} for the precise description.
Moreover, in order to simplify the presentation, we only consider the low dimensional case $d\leq 4$ (therefore including the physical dimensions $d\in \{1,2,3\}$). The high-dimensional setting requires some modifications and additional conditions.

Here we consider the stochastic Lotka-Volterra equations (with transport noise and superlinear diffusion), i.e.\ \eqref{eq:reaction_diffusion_system} with $\ell=2$,
\begin{equation}
\label{eq:nonlinearity_lotka_volterra_last_section}
\begin{aligned}
f_1(\cdot,y)
&= \lambda_1 y_1-\chi_{1,1}y_1^2 -\chi_{1,2} y_1y_2 & \text{ for }\ &y\in\R^2,\\
f_2(\cdot,y)
&= \lambda_2 y_2-\chi_{2,2}y_2^2 +\chi_{2,1} y_1y_2 &\text{ for }\ &y\in\R^2,
\end{aligned}
\end{equation}
$F_i\equiv 0$,  and $(a,b,\lambda_i,\chi_{i,j},g)$ is  as in  Assumption \ref{ass:lotka_volterra} below. In particular $(\lambda_i,\chi_{i,j})$ are chosen so that $f_1$ and $f_2$ satisfy Assumption \ref{ass:reaction_diffusion_global}\eqref{it:growth_nonlinearities} for $h=2$.

The unknowns
 $u_1,u_2:[0,\infty)\times \O\times \Tor^d \to \R$ model the population of the $i$-th specie. More precisely, $u_1$ and $u_2$ denote the population of the \emph{prey} and \emph{predator}, respectively.
A schematic idea of the Lotka-Volterra equations is given in Figure \ref{fig:lotka_volterra}
where $\lambda'_1+\lambda''_1=\lambda_1$.

\begin{figure}[h!]
\centering

\tikzstyle{io} = [ellipse,text width=2cm, minimum height=1cm, text centered, draw=black]
\tikzstyle{nn} = [rectangle,text width=1.8cm, minimum height=0.5cm, minimum width=2.5cm, text centered, draw=black]
\tikzstyle{nncircle} = [circle,text width=0.8cm, minimum height=0.5cm, minimum width=0.9cm, text centered, draw=black]
\tikzstyle{arrow} = [thick,->,>=stealth]

\begin{tikzpicture}[scale=0.95, transform shape, node distance=2.1cm]
\node (in1) [nn] {\small Predator $u_2$};
\node (in2) [nn,below of = in1] {\small Prey $u_1$};
\node (in3) [nncircle,left of = in2,xshift=-2.8cm] {\small Food };
\node (in4) [nncircle,right of = in1,yshift=-1cm,xshift=2.8cm] {\small Death};

\path[thick,<-,>=stealth,shift left=0.2cm]
    (in1) edge node [left, xshift=-0.5cm]{$\chi_{2,1}u_{1}u_{2}\geq 0$} (in2)
    (in2) edge node [right, xshift=0.5cm]{$-\chi_{1,2}u_{1}u_{2}\leq0$}(in1);
\path [thick,<-,>=stealth](in1) edge [out=50,in=120,looseness=10] node [right, xshift=0.3cm]{$-\chi_{2,2}u_{2}^{2}\leq0$}(in1);
\path [thick,<-,>=stealth](in2) edge [out=-50,in=-120,looseness=10] node [right, xshift=0.3cm]{$-\chi_{1,1}u_{1}^{2}\leq0$} (in2);
\draw [arrow] (in3) -- node [below]{$\lambda^{\prime}_{1}u_{1}\geq0$} (in2);
\draw [arrow] ([yshift=0.05cm]in4.west) -- node[above right]{$\lambda_{2}u_{2}\leq0$} (in1.east);
\draw [arrow] ([yshift=-0.05cm]in4.west) -- node[below right]{$\lambda^{\prime\prime}_{1}u_{1}\leq0$} (in2.east);

\end{tikzpicture}
\caption{Scheme for the Lotka-Volterra equations.}
\label{fig:lotka_volterra}
\end{figure}
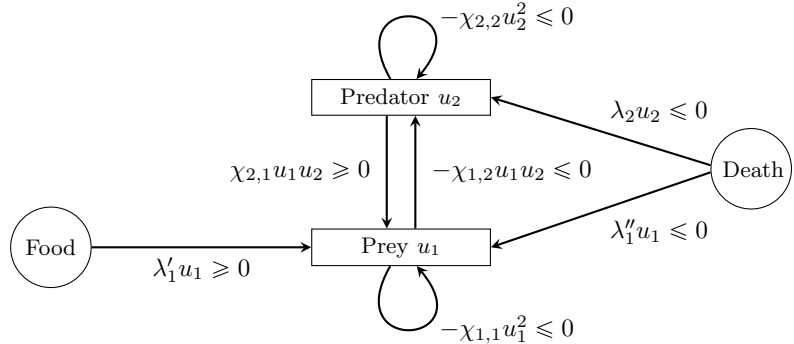

In particular, the signs in \eqref{eq:nonlinearity_lotka_volterra_last_section} are motivated from a modelling point of view.
Note that we allow $\chi_{i,j}\equiv 0$ for $i,j\in \{1,2\}$.
In particular, situations without self-interacting are covered, i.e. $\chi_{i,i}\equiv 0$.
From a mathematical viewpoint, the term  $+\chi_{2,1}  u_1u_2$ shows that the $u_2$-equation in the Lotka-Volterra equations is \emph{not} dissipative (below we are concerned with positive solutions). Hence, the Lotka-Volterra system is only \emph{weakly} dissipative and is \emph{not} covered by Section \ref{s:scalar_global}.

\begin{assumption}
\label{ass:lotka_volterra}
Let $p=q=2$, $\kappa = 0$, $h=2$, $1\leq d\leq 4$, $\ell=2$, $\delta=1$, and suppose that Assumption \ref{ass:reaction_diffusion_global} holds for $(a,b,g)$. Suppose that
\begin{enumerate}[{\rm(1)}]
\item\label{it:chi_sign}  $\lambda_i,\chi_{i,j}$ are bounded $\Progress\otimes \Borel(\Tor^d)$-measurable  and $\chi_{i,j}\geq 0$.
\item\label{it:lotka_volterra_positivity} a.s.\ for all $t\in \R_+$, $x\in \Tor^d$, $y_1, y_2\geq 0$, $n\geq 1$,
\begin{align*}
g_{n,1}(t,x,y_1,0)&=g_{n,2}(t,x,0,y_2)=0.
\end{align*}
\item\label{it:lotka_volterra_dissipation} There exist $M,\varepsilon>0$ such that for all $i\in \{1,2\}$, a.e.\ on $\R_+\times \O\times \Tor^d$ and  $y=(y_1,y_2)\in [0,\infty)^2$,
\begin{align*}
\frac{1}{4(\ellip_i -\varepsilon)}\Big(\sum_{n\geq 1} |b_{n,i}(\cdot)|\, |g_{n,i}(\cdot,y)|\Big)^2 +\frac12\|(g_{n,i}(\cdot,y))_{n\geq 1}\|_{\ell^2}^2\leq    N_i(t,x,y) & & \text{(growth condition),}
\end{align*}
where $\ellip_i$ are as in Assumption \ref{ass:reaction_diffusion_global}\eqref{it:ellipticity_reaction_diffusion} and
\begin{align*}
N_1(\cdot,y)&:= M \big(1+y_1^2 \big)+ \chi_{1,1} y_1^3+\chi_{1,2} y_1^2 y_2,\\
N_2(\cdot,y)&:= M \big[1 +  (1+ y_1)y_2^{2} + y^2_1 y_2+ y_1^3 \big] +\chi_{2,2} y_2^3.
\end{align*}
\end{enumerate}
\end{assumption}

Assumption \eqref{it:lotka_volterra_positivity} will be used to prove \emph{positivity}. This fact is consistent with the modelling interpretation of $u_i$ as the population of the $i$-th species. Condition \eqref{it:lotka_volterra_dissipation} shows that $g_2$ is allowed to be superlinear in $y=(y_1,y_2)$. The growth of $\|g_2(\cdot,y)\|_{\ell^2}$ is at most $|y|^{3/2}$  consistently with Assumption \ref{ass:reaction_diffusion_global}\eqref{it:growth_nonlinearities} in the case $h=2$.
On the other hand, $g_1$ is allowed to be (super)linear in $y_1$ (if $\chi_{1,1}\geq c_0>0$). In applications, there is always interaction between predator and prey, so that $\chi_{1,2}\geq c_0> 0$. In this case, the growth of $\|g_1(\cdot, y)\|_{\ell^2}$  is at most $|y_1| |y_2|^{1/2}$ for $|y|$ large. If additionally $\chi_{1,1}\geq c_0>0$, then we can allow growth $|y_1|^{3/2} + |y_1| |y_2|^{1/2}$ for $|y|$ large.

\begin{theorem}[Global well-posedness -- Lotka-Volterra]
\label{t:Lotka_Volterra}
Let $1\leq d\leq 4$. Suppose that Assumption \ref{ass:lotka_volterra} holds.
Then for every $u_{0}\in L^0_{\F_0}(\Omega;L^2( \T^d;\R^2))$ with $u_{0}\geq 0$ (componentwise), there exists a (unique) global $(2,0,1,2)$-solution $u:[0,\infty)\times\Omega\times\T^d\to [0,\infty)^2$ to the stochastic Lotka-Volterra equations (as described near \eqref{eq:nonlinearity_lotka_volterra_last_section}). Moreover, a.s.,
\begin{align*}
u&\in L^2_{\loc}(0,\infty;H^1(\T^d;\R^2)) \cap C([0,\infty);L^2(\Tor^d;\R^2)),\\
u&\in H^{\theta,r}_{\rm loc}(0,\infty;H^{1-2\theta,\zeta}(\Tor^d;\R^{2})) \ \ \forall\theta\in  [0,\tfrac{1}{2}), \ \forall  r,\zeta\in (2,\infty),
\\
u&\in C^{\theta_1,\theta_2}_{\rm loc}((0,\infty)\times \Tor^d;\R^{2})
\ \  \forall\theta_1\in  [0,1/2), \ \forall \theta_2\in (0,1),
\end{align*}
and the following estimates hold for all $0<T<\infty$ and $\gamma>0$:
\begin{align}
\label{eq:aprioridecayLV1}\P\Big(\sup_{t\in [0,T]} \|u(t)\|_{L^{2}}^{2}\geq \g\Big)
&\leq \psi(\g)(1+\E\|u_0\|_{L^{2}}^2),\\
\label{eq:aprioridecayLV2}\P\Big(\max_{1\leq i\leq 2} \int_{0}^{T}\int_{\Tor^d}|\nabla u_i|^2\,\dd x\,\dd t\geq \g\Big)
&\leq \psi(\g)(1+\E\|u_0\|_{L^{2}}^{2}),
\end{align}
where $\psi$ does not depend on $\gamma$ and $u$, and satisfies $\lim_{\gamma\to \infty}\psi(\gamma) = 0$.
\end{theorem}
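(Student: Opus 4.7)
The proof will bootstrap through the hierarchy local existence $\to$ positivity $\to$ $L^2$-energy estimate for $u_1$ (whose equation is dissipative) $\to$ $L^2$-energy estimate for $u_2$ using the $u_1$-bounds $\to$ sharp $L^2$-blow-up criterion. Theorem \ref{t:reaction_diffusion_global_critical_spaces} with $(p,q,\kappa,\delta,h) = (2,2,0,1,2)$ produces a local $(2,0,1,2)$-solution $(u,\sigma)$, since Assumption \ref{ass:admissibleexp}\eqref{it:admissibleexp2} holds (one has $h = 2 \leq (4+d)/d$ for $d \leq 4$, attained at $d = 4$). Proposition \ref{prop:positivity}, combined with Assumption \ref{ass:lotka_volterra}\eqref{it:lotka_volterra_positivity} and the sign structure of $f_i$, forces $u_1,u_2 \geq 0$ a.s.\ on $[0,\sigma)\times\T^d$. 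Both energy estimates will be derived from Lemma \ref{l:high_integrability_reaction_diffusion_with_dissipation} with $\zeta = 2$, each time checking Assumption \ref{ass:dissipation_general_sub_optimal} through the pointwise criterion of Lemma \ref{lem:pointwise_bound_suboptimal}.

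For $u_1$ (taking $j = 1$), the decisive cancellation on positive pairs $(y_1,y_2)$ reads
\begin{equation*}
y_1 f_1(\cdot,y) + N_1(\cdot,y) \leq (\lambda_1 + M) y_1^2 + M, \qquad y_1, y_2 \geq 0,
\end{equation*}
with the $\chi_{1,1} y_1^3$ and $\chi_{1,2} y_1^2 y_2$ terms matching exactly. Thus one can take $M_1, M_2$ as deterministic constants and $\mathcal{R}_1 \equiv 0$, trivially satisfying \eqref{eq:integrability_M_k}. Lemma \ref{l:high_integrability_reaction_diffusion_with_dissipation} then yields
\begin{equation*}
\sup_{t \in [0,\sigma \wedge T)} \|u_1(t)\|_{L^2}^2 + \int_0^{\sigma \wedge T} \int_{\T^d} |\nabla u_1|^2 \, dx\, dr < \infty \quad \text{a.s.,}
\end{equation*}
together with the tail bound \eqref{eq:estimate_D_j}; the option $s = 0$ applies since $u_0 \in L^2$. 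Parabolic Gagliardo--Nirenberg along the scaling line $\tfrac{2}{p} + \tfrac{d}{q} = \tfrac{d}{2}$ upgrades this to $u_1 \in L^3_t L^3_x$ and, for $d = 4$, also to $u_1 \in L^4_t L^{8/3}_x$, each with quantitative tail bounds.

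For $u_2$ (taking $j = 2$) the cancellation is only partial: summing $u_2 f_2$ and $N_2$ the $\chi_{2,2} u_2^3$ contributions match, leaving at most $M(1 + u_1^3) + C(1 + u_1) u_2^2 + M u_1^2 u_2$. The Young split $u_1^2 u_2 \leq \tfrac12 u_1^3 + \tfrac12 u_1 u_2^2$ recasts this as $M_1 + M_2 u_2^2$ with
\begin{equation*}
M_1 := M'(1 + u_1^3), \qquad M_2 := C'(1 + u_1).
\end{equation*}
Now $M_1 \in L^1_t L^1_x$ is exactly $u_1 \in L^3_t L^3_x$, and at $d = 4$ the choice $(\varphi_2, \psi_2) = (4, 8/3)$ lies on the parabolic scaling line with $\psi_2 > \tfrac{d}{2} \vee 1$ and $\varphi_2 = 2\psi_2/(2\psi_2 - d)$, giving $M_2 \in L^{\varphi_2}_t L^{\psi_2}_x$ by Gagliardo--Nirenberg. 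A second application of Lemma \ref{l:high_integrability_reaction_diffusion_with_dissipation} then produces the analogous $L^\infty_t L^2_x \cap L^2_t H^1_x$ bound for $u_2$ with the tail estimate \eqref{eq:estimate_D_j}.

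Combining both bounds, $\sup_{t \in [0,\sigma \wedge T)} \|u(t)\|_{L^2}^2 + \int_0^{\sigma \wedge T} \|\nabla u\|_{L^2}^2 \, dt < \infty$ a.s., so the sharp blow-up criterion Theorem \ref{thm:blow_up_criteria}\eqref{it:blow_up_sharp_2} (applicable at $d = 4$ with $h_0 = 2$) forces $\P(\sigma < T) = 0$, and letting $T \uparrow \infty$ gives $\sigma = \infty$ a.s. The regularity statements are then inherited from \eqref{eq:regularity_u_reaction_diffusion_critical_spaces}--\eqref{eq:reaction_diffusion_C_alpha_beta}; the decay estimates \eqref{eq:aprioridecayLV1}--\eqref{eq:aprioridecayLV2} follow from the two tail bounds \eqref{eq:estimate_D_j} for $u_1$ and $u_2$ after optimizing in the parameters $R,\lambda$, with $\psi(\gamma)$ extracted from these tails. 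For $d \in \{1,2,3\}$ one adds dummy variables to reduce to $d = 4$, as in the proof of Theorem \ref{thm:SymbLotka-Volterra}. The main obstacle is the non-dissipative cross term $+\chi_{2,1} u_1 u_2$ in $f_2$, which prevents applying Theorem \ref{t:global_well_posedness_general_system} directly; the hierarchical ``$u_1$-first-then-$u_2$'' strategy together with the Young split $u_1^2 u_2 \leq \tfrac12 u_1^3 + \tfrac12 u_1 u_2^2$ is precisely what transfers integrability from $u_1$ to $u_2$, and this in turn dictates the admissible growth encoded by $N_2$ in Assumption \ref{ass:lotka_volterra}\eqref{it:lotka_volterra_dissipation}.
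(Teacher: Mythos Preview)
Your proof is correct and follows essentially the same approach as the paper: reduce to $d=4$ via dummy variables, obtain local existence and positivity, then estimate $u_1$ first (its equation is genuinely dissipative) and feed the resulting space-time integrability of $u_1$ into the random-coefficient estimate for $u_2$ via Lemma~\ref{l:high_integrability_reaction_diffusion_with_dissipation}, finally invoking the $p=q=2$ blow-up criterion. The only noteworthy differences are cosmetic: the paper cites Lemma~\ref{l:high_integrability_reaction_diffusion} rather than Lemma~\ref{l:high_integrability_reaction_diffusion_with_dissipation} for the $u_1$-step (your choice is arguably cleaner since the latter is stated for systems), and for $M_2 = C'(1+u_1)$ the paper takes $(\varphi_2,\psi_2)=(2,4)$ via $H^1(\T^4)\hookrightarrow L^4(\T^4)$ whereas you take $(4,8/3)$ via $H^{1/2}(\T^4)\hookrightarrow L^{8/3}(\T^4)$; both lie on the same parabolic scaling line and satisfy $\psi_2>d/2$, so either works.
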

Using Theorem \ref{t:continuity_general} one can further weaken the assumption on the initial data.

The main idea in the proof of Theorem \ref{t:Lotka_Volterra} is to exploit the dissipative effect of $\chi_{1,1}u_1^2$ and $\chi_{1,2}u_1u_2$ in the $u_1$-equation in the Lotka-Volterra equations.
To this end, we need the positivity of $u_1,u_2$ which is ensured by Proposition \ref{prop:positivity}, \eqref{it:lotka_volterra_positivity} and $u_{0}\geq 0$ a.s.\ on $\Tor^d$.

\begin{proof}[Proof of Theorem \ref{t:Lotka_Volterra}]
As in the proof of Theorem \ref{thm:SymbLotka-Volterra}, by adding dummy variables, it suffices to consider the case $d=4$. The existence of a $(2,0,1,2)$-solution $(u,\sigma)$ with the required regularity up to $\sigma$ follows from Theorem \ref{t:reaction_diffusion_global_critical_spaces}.
Moreover, the assumption \eqref{it:lotka_volterra_positivity} and Proposition \ref{prop:positivity} yield $u\geq 0$ on $\Tor^d$ a.s.\ for all $t\in[0,\sigma)$.
Hence, it remains to prove that $\sigma=\infty$ a.s. To this end, note that $\max\{\frac{d}{2}(h-1),2\}=2$ since $d=4$ and $h=2$.
Localizing in $\O$, it is enough to consider $u_0\in L^2(\O;L^2(\T^4;\R^2))$, see \cite[Proposition 4.13]{AV19_QSEE_2}.
Thus, as in Step 1 in the proof of Theorem \ref{t:global_well_posedness_general} (but now using Theorem \ref{thm:blow_up_criteria}\eqref{it:blow_up_sharp_2}), it is enough to show that, for every $0<T<\infty$,
\begin{align}
\label{eq:Lotka_volterra_bound}
\ej(0,T):=\sup_{t\in [0,\sigma\wedge T)} \|u_j(t)\|^2_{L^{2}}+\int_0^{\sigma\wedge T}\int_{\Tor^d}|\nabla u_j(t)|^2\,\dd x\,\dd t&<\infty \ \
\text{ a.s.}
\end{align}

From Assumption  \ref{ass:lotka_volterra}\eqref{it:lotka_volterra_dissipation} and Lemma \ref{lem:dissipationI} we see that $L^{\zeta}$-coercivity for $\zeta=2$ holds for the equation for $u_1$. Thus, Lemma \ref{l:high_integrability_reaction_diffusion} implies that for all $0\leq s<T<\infty$ and $j=1$, \eqref{eq:Lotka_volterra_bound}  holds, and
\begin{align}
\nonumber
\sup_{t\in [0,T]}\E \one_{[0,\sigma)}(t) \|u_1(t)\|_{L^{2}}^{2} + \E \int_{0}^{\sigma\wedge T}\int_{\Tor^d} |\nabla u_1|^{2}\,\dd x\,\dd t&\leq N_0\Big(1+\E\big[\|u_{1,0}\|_{L^{2}}^{2}\big]\Big),
\\
\label{eq:energylambda2LV} \E \sup_{t\in [0,\sigma\wedge T)} \|u_1(t)\|_{L^{2}}^{2 \lambda} + \E \Big|\int_{0}^{\sigma\wedge T}\int_{\Tor^d} |\nabla u_1|^{2}\,\dd x\,\dd t\Big|^{\lambda}
& \leq N_{0,\lambda}\Big(1+\E\big[\|u_{1,0}\|_{L^{2}}^{2 \lambda}\big]\Big).
\end{align}

To check \eqref{eq:Lotka_volterra_bound} for $j=2$ note by Assumption \ref{ass:lotka_volterra}\eqref{it:lotka_volterra_dissipation} a.e.\ on $[0,\sigma)\times \O$,
\begin{equation*}
\begin{aligned}
{f}_2(\cdot,u)u_2
+\frac{1}{4(\ellip_2-\varepsilon) }\Big(\sum_{n\geq 1} |b_{n,2}(\cdot)|\, |g_{n,2}(\cdot,u)|\Big)^2
&+\frac12\|g_{1}(\cdot,u)\|_{\ell^2}^2\leq M_0( M_1 |u_2|^2+ M_2),
\end{aligned}
\end{equation*}
where we used that $u_1^2 u_2= u_1^{3/2} (u_1^{1/2} u_2)\lesssim u_1^3 +u_1 u^2_2$, where $ M_0\geq 0$ is a constant, and $M_1$ and $M_2$ are given by
\begin{equation*}
M_1:=\one_{[0,\sigma)}|u_1|^3 + 1
\quad \text{ and }\quad
M_2:=\one_{[0,\sigma)} |u_1| + 1.
\end{equation*}

We claim $M_1\in L^1(0,T;L^1(\Tor^4))$ and $M_2\in L^{2}(0,T;L^{4}(\Tor^4))$  a.s.  Indeed, from \eqref{eq:Lotka_volterra_bound} for $j=1$ we obtain
\begin{equation}
\label{eq:integrability_u_1_lotka_volterra}
u_1\in L^{\infty}(0,\sigma\wedge T;L^{2}(\Tor^4))\cap L^2(0,\sigma\wedge T;H^1(\Tor^4)) \  \text{ a.s.}
\end{equation}
By Sobolev embedding, interpolation, and \eqref{eq:integrability_u_1_lotka_volterra}, we have
a.s.\ on $\{\sigma>s\}$,
\begin{align*}
\|u_1\|_{L^{3} (0,\sigma\wedge T;L^{3}(\Tor^4))} & \lesssim \|u_1\|_{L^{3}(0,\sigma;H^{2/3}(\Tor^4))}
\leq \sup_{t\in [0,\sigma\wedge T)}\|u_1(t)\|_{L^{2}(\Tor^4)} + \|u_1\|_{L^2(0,\sigma\wedge T;H^1(\Tor^4))},
\end{align*}
which already gives the first part of the claim. Moreover, taking moments, by  \eqref{eq:energylambda2LV} (with $\lambda = 1/2$ which is non-optimal) we obtain
\begin{equation}\label{eq:suboptimalestu1LV}
\begin{aligned}
\E\|u_1\|_{L^{3} (0,\sigma\wedge T;L^{3}(\Tor^4))}
 & \lesssim \E\sup_{t\in [0,\sigma\wedge T)}\|u_1(t)\|_{L^{2}(\Tor^4)} + \E\|u_1\|_{L^2(0,\sigma\wedge T;H^1(\Tor^4))}
 \\ & \lesssim 1+\E\|u_{1,0}\|_{L^{2}}.
\end{aligned}
\end{equation}
The second part of the claim follows from \eqref{eq:integrability_u_1_lotka_volterra} and $H^1(\Tor^4)\embed L^4(\Tor^4)$. Moreover,
\begin{align}\label{eq:suboptimalestu2LV}
\E\|u_1\|_{L^{2} (0,\sigma\wedge T;L^{4}(\Tor^4))} &\leq \E\|u_1\|_{L^{2} (0,\sigma\wedge T;H^1(\Tor^4))}
\lesssim 1+\E\|u_{1,0}\|_{L^{2}}.
\end{align}

The claim can be combined with Lemmas \ref{lem:pointwise_bound_suboptimal} and \ref{l:high_integrability_reaction_diffusion_with_dissipation} to obtain \eqref{eq:Lotka_volterra_bound} for $i=2$. Moreover, this also implies the following estimate for all $\gamma>0$ and $\lambda=R\geq \max\{4C,1\}$ (say):
\begin{align*}
 \P(\mathcal{E}_{2}(0,T)>\g)
 &\leq  \frac{R e^{R}}{\g}   + \P\big(\|u_{2,0}\|_{L^{2}}^{2} + C \|M_1\|_{L^{1}(0,T;L^{1})}\geq R\big) \\ & \qquad + \P\big(C\|M_1\|_{L^{1}(0,T;L^{1})} + C\|M_2\|_{L^{2}(0,T;L^{4})}\geq R\big)
 \\ & \leq \frac{e^{2R}}{\g} + \P\big(\|u_{2,0}\|_{L^{2}}^{2}\geq R/2\big) + 2\P\big( C \|u_1\|_{L^{3}(0,T;L^{3})}^3+C\geq R/2\big) \\ & \quad + \P\big(C\|u_1\|_{L^{2}(0,T;L^{4})} + C \geq R/2\big)
\\ & \lesssim \frac{e^{2R}}{\g} + \frac{\E \|u_{2,0}\|_{L^{2}}^{2}}{R} + \frac{\E \|u_{1}\|_{L^{3}(0,T;L^{3})}}{R^{1/3}} + \frac{\E \|u_{1}\|_{L^{2}(0,T;L^{4})}}{R}
\\ & \lesssim \Big(\E \|u_0\|_{L^{2}}^{2}+1\Big) \Big(\frac{e^{2R}}{\g} + \frac{1}{R^{1/3}}\Big),
\end{align*}
where we used \eqref{eq:suboptimalestu1LV} and \eqref{eq:suboptimalestu2LV}.  It remains to take $R = \max\{4C,1, \log(\gamma^{1/4})\}$ to obtain
\[\P(\mathcal{E}_{2}(0,T)\geq \g)\leq \psi(\gamma) \big(\E \|u_0\|_{L^{2}}^{2}+1\big),\]
where $\lim_{\gamma\to \infty} \psi(\gamma) = 0$.   Combined with \eqref{eq:energylambda2LV}, this the energy estimates \eqref{eq:aprioridecayLV1} and \eqref{eq:aprioridecayLV2}.
\end{proof}

We conclude this subsection with several remarks.
\begin{remark}[Stochastic Lotka-Volterra equations for multi-species]
\label{r:multi_species}
In the study of multi-species systems, one may consider the following extension of the Lotka-Volterra equations:

For $i\in \{1,\dots,\ell\}$ and $\ell\geq 1$,
\begin{align*}
\dd u_i -\div(a_i\cdot\nabla u_i) \,\dd t
= & \Big[  \lambda_i u_i + u_i\sum_{1\leq j<i} \chi_{i,j} u_j - u_i\sum_{j\geq i} \chi_{i,j} u_j\Big]\,\dd t
+ \sum_{n\geq 1}  \Big[(b_{n,i}\cdot \nabla) u_i+ g_{n,i}(\cdot,u) \Big]\,\dd w_t^n
\end{align*}
on $\Tor^d$. As above, $u_i$ denotes the density of the $i$-th species,  $(\chi_{i,j})_{i,j=1}^{\ell}:(0,\infty)\times\O\times \Tor^d \to [0,\infty)^{\ell \times \ell}$ is a matrix with bounded $\Progress\otimes \Borel(\Tor^d)$-measurable entries which models the interaction of the species.
One can readily check that
Theorem \ref{t:Lotka_Volterra} extends to the above system
with appropriate assumptions on the $(u_0,a,b,g)$ (i.e.\ extending \eqref{it:lotka_volterra_positivity}-\eqref{it:lotka_volterra_dissipation} to the above setting). In particular, $g$ can be chosen to be superlinear in $u$.
We leave the details to the interested reader.
We conclude by mentioning that the corresponding condition for global existence is satisfied for the natural SPDE version of the Lotka-Volterra system for three species as analyzed in \cite{HRY15}.
\end{remark}

\begin{example}[The stochastic diffusive SIR model] In the deterministic setting, the SIR model (susceptible-infected-removed)
is a widely used model in epidemiology \cite{M89_SIR,H89_SIR,KM27_SIR}. Taking into account spatial diffusivity and small-scale fluctuations, stochastic perturbations of the SIR model (with constant population $N>0$) are of the form \eqref{eq:reaction_diffusion_system} with $\ell=2$, $F=0$,
$f_1(\cdot,u)= -r_1 u_1u_2$, and $f_2(\cdot,u)=r_2 u_1u_2+ r_3 u_2$.
The quantities $u_1$ and $u_2$ are the population of the stock of susceptible and infected subjects, respectively; while the stock of removed population $u_3$ (either by death or recovery) is given by $N-u_1-u_2$. Finally, $(r_1,r_2)$  are $\Progress\otimes \Borel(\Tor^d)$-measurable maps satisfying $r_i\geq 0$ a.e.\ on $\R_+\times \O\times \Tor^d$ and are determined experimentally.
Of course, as a sub-case the above model contains the classical SIR model in the case of $(\om,x)$-independent solutions.

The stochastic diffusive SIR model is a special case of the Lotka-Volterra equations where
one takes $\chi_{1,2}=r_1$, $\chi_{2,1}=r_2$, $\chi_{i,i}\equiv 0$, $\lambda_1=0$ and $\lambda_2=r_3$.
Thus, under suitable assumptions on $(a,b,g,u_0)$, the SIR model is included in our setting, and global well-posedness follows from Theorem \ref{t:Lotka_Volterra}.
\end{example}

\subsection{The stochastic Brusselator}
\label{ss:brusselator}
Here we study a stochastic perturbation of the so-called \emph{Brusselator}. This model appears in the study of \emph{chemical morphogenetic processes} (see e.g.\ \cite{PL68,PN71,Tu90}) and in \emph{autocatalytic reactions} (see e.g.\ \cite{AO78,B95}). For historical notes and examples of autocatalytic reactions that can be modelled by using the Brusselator, the reader is referred to the introduction of \cite{YZ12}. Stochastic perturbations of this model can be used to model thermal fluctuations and/or small-scale turbulence.
Here we consider the stochastic Brusselator  (with transport noise and superlinear diffusion), i.e.\ \eqref{eq:reaction_diffusion_system} with $\ell=2$,
\begin{equation}
\label{eq:brusselator}
\begin{aligned}
f_1(\cdot,y)&=
- u_1 u_2^2+ \alpha_{1} y_1+\alpha_{2}y_2+\alpha_{0} &\text{ for }\ &y\in\R^2,\\
f_2(\cdot,y)&=+  y_1y_2^2 +\beta_1 y_1+\beta_2y_2+\beta_0&\text{ for }\ &y\in\R^2,
\end{aligned}
\end{equation}
$F_i\equiv 0$ and $(a_i,b_i,\alpha_i,\beta_j,g_i)$ is described below, cf.\ Assumptions \ref{ass:brusselator_two_dimensions} and \ref{ass:brusselator_three} below. The unknowns $u_1,u_2:[0,\infty)\times \O\times \Tor^d \to \R$ model the unknown concentrations.

Note that $f$ satisfies Assumption \ref{ass:reaction_diffusion_global}\eqref{it:growth_nonlinearities} for all $h\geq 3$.
In the following, we assume that $(a,b,g)$ satisfies Assumption \ref{ass:reaction_diffusion_global} with $h=3$.
We say that \emph{$(u,\sigma)$ is a (unique) $(p,\a,q,\s)$-solution} to \eqref{eq:brusselator} if $(u,\sigma)$ is a $(p,\a,q,\s,3)$-solution to \eqref{eq:reaction_diffusion_system} with the above choice of $(a,b,f,F,g)$, see Definition \ref{def:solution}. A $(p,\a,q,\s)$-solution $(u,\sigma)$ to \eqref{eq:brusselator} is said to be \emph{global} if $\sigma=\infty$ a.s. In this case, we simply write $u$ instead of $(u,\sigma)$.

In this subsection, we prove \emph{global well-posedness} of \eqref{eq:brusselator} in the physical dimensions $ d\leq 3$.
The analysis of \eqref{eq:brusselator} for $d\in \{1,2\}$ is easier than the three-dimensional case.
In particular, in three dimensions, we need positivity of solutions (see Proposition \ref{prop:positivity}).
For the reader's convenience, we split the argument into the cases $d\in \{1,2\}$ and $d=3$, see Subsection \ref{sss:brusselator_two_dimensional} and \ref{sss:brusselator_three_dimensional} respectively.

We believe that the arguments below can be extended to study stochastic perturbations of the \emph{extended} Brusselator (see e.g.\ \cite{YZ12}). For the sake of brevity, we do not pursue this here.

\subsubsection{The stochastic Brusselator in one and two dimensions}
\label{sss:brusselator_two_dimensional}
We begin by listing the assumptions needed in this subsection.

\begin{assumption}
\label{ass:brusselator_two_dimensions}
Suppose that Assumption \ref{ass:reaction_diffusion_global}\eqref{it:ellipticity_reaction_diffusion}-\eqref{it:growth_nonlinearities} hold as well as the following:
\begin{enumerate}[{\rm(1)}]
\item The mappings $\alpha_i,\beta_i:\R_+\times \O\times \Tor^d\to \R$ are bounded and $\Progress\otimes \Borel(\Tor^d)$-measurable.
\item\label{it:brusselator_two_dimensions_2} There exist $M,\varepsilon>0$ such that for all $i\in \{1,2\}$, a.e.\ on $\R_+\times \O\times \Tor^d$ and  $y=(y_1,y_2)\in [0,\infty)^2$,
\begin{align*}
\frac{1}{4(\ellip_i -\varepsilon)}\Big(\sum_{n\geq 1} |b_{n,i}(\cdot)|\, |g_{n,i}(\cdot,y)|\Big)^2 +\frac12\|(g_{n,i}(\cdot,y))_{n\geq 1}\|_{\ell^2}^2\leq    N_{i}(t,x,y) & & \text{(growth condition),}
\end{align*}
where $\ellip_i$ are as in Assumption \ref{ass:reaction_diffusion_global}\eqref{it:ellipticity_reaction_diffusion} and
\begin{align*}
N_{1}(\cdot,y)
&:=M [1+|y_1|^2] +  (1-\varepsilon) |y_1|^2|y_2|^2,\\
N_{2}(\cdot,y)
&:=M [1+(1  + |y_1|^2)|y_2|^2 +|y_1||y_2|^{3}+ |y_1|^4].
\end{align*}
\end{enumerate}

\end{assumption}

Note that $N_{i}$ grows like $|y|^4$ consistently with Assumption \ref{ass:reaction_diffusion_global}\eqref{it:growth_nonlinearities} in case $h=3$.
However, the condition for $g_1$ is quite restrictive since only the factor $|y_1|^2|y_2|^2$ is allowed. Instead, the one for $g_2$ allows additional terms of quadratic growth and with a constant $M$ that can be large.

\begin{theorem}[Brusselator in one and two dimensions]
\label{t:Brusselator}
Let $d\in \{1, 2\}$. Suppose that Assumption \ref{ass:brusselator_two_dimensions} holds.
Fix $\s\in (1, 2)$, $q\in (2,\frac{2}{2-\delta})$,  and $p\in [\frac{2}{2-\delta},\infty)$. Suppose that  Assumption \ref{ass:reaction_diffusion_global}$(p,q,h,\s)$ holds with $h=3$ and $\a_{0}:=p(1-\frac{\s}{2})-1$.
Then for all
$
u_0\in L^0_{\F_0}(\O;L^{q}(\Tor^d;\R^2))
$,
the stochastic Brusselator (as described near \eqref{eq:brusselator}) has
\emph{a (unique) global $(p,\a_0,q,\s)$-solution} $u$, and a.s.,
\begin{align*}
u& \in H^{\theta,p}_{\rm loc}([0, \infty),w_{\a_0};H^{2-\s-2\theta,q}(\Tor^d;\R^2))\cap C([0,\infty);B^{0}_{q,p}(\Tor^d;\R^2))  ,\\
u& \in H^{\theta,r}_{\rm loc}(0, \infty;H^{1-2\theta,\zeta}(\Tor^d;\R^2))\ \ \forall \theta\in  [0,\tfrac{1}{2}), \ \forall r,\zeta\in (2,\infty),\\
u&\in C^{\theta_1,\theta_2}_{\rm loc}((0,\infty)\times \Tor^d;\R^{2})
\ \  \forall\theta_1\in  [0,1/2), \ \forall \theta_2\in (0,1).
\end{align*}
Moreover, there exists a $\zeta>2$ such that for all $0<s<T<\infty$, $\gamma>0$ and $L\geq 0$,
\begin{align}
\label{eq:tailestBrus1} \P\Big(\sup_{t\in [s,T]} \one_{\Gamma}\|u(t)\|_{L^{\zeta}}^{\zeta}\geq \g\Big)
&\leq \psi(\g)(1+\E\one_{\Gamma}\|u(s)\|_{L^{\zeta}}^{\zeta}),\\
\label{eq:tailestBrus2} \P\Big(\max_{i\in \{1, 2\}} \one_{\Gamma}\int_{s}^{T}\int_{\Tor^d}|u_i|^{\zeta-2}|\nabla u_i|^2\,\dd x\, \dd r\geq \g\Big)
&\leq \psi(\g)(1+\E\one_{\Gamma}\|u(s)\|_{L^{\zeta}}^{\zeta}),
\end{align}
where $\Gamma = \{\|u(s)\|_{L^{\zeta}}\leq L\}$ and $\psi$ does not depend on $L\geq 1$, $s$, $\gamma$ and $u$ and satisfies $\lim_{\gamma\to \infty}\psi(\gamma)=0$, and one can take $s=0$ if $u_0\in B^{\varepsilon}_{\zeta,\infty}(\T^d;\R^2)$ a.s.\ for some $\varepsilon>0$.

Furthermore, if $u_0\geq 0$ (component-wise) a.e.\ on $\O\times \Tor^d$, $\alpha_{2},\alpha_{0}, \beta_{1},\beta_{0}\geq 0$ a.e.\ on $\R_+\times\O\times \Tor^d$ and $g_{n,1}(\cdot,0,y_2)=g_{n,2}(\cdot,y_1,0)=0$ a.e.\ on $\R_+\times\O\times \Tor^d$  for all $y_1, y_2\in [0,\infty)$, then
$$
u\geq 0 \text{ (component-wise) a.e.\ on } (0,\sigma)\times \O\times \Tor^d.
$$
\end{theorem}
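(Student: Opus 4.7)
My plan is to combine local existence from Theorem~\ref{t:reaction_diffusion_global_critical_spaces}, the blow-up criterion of Theorem~\ref{thm:blow_up_criteria}, and the random-coefficient energy bound of Lemma~\ref{l:high_integrability_reaction_diffusion_with_dissipation}, exploiting the strict $(1-\varepsilon)$ slack in Assumption~\ref{ass:brusselator_two_dimensions}\eqref{it:brusselator_two_dimensions_2} to push the integrability past the critical exponent $\zeta_0=2$. The overall scheme parallels the proof of Theorem~\ref{t:Lotka_Volterra}, but with $\zeta>2$ in place of $\zeta=2$.

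First I would apply Theorem~\ref{t:reaction_diffusion_global_critical_spaces} with $h=3$ to produce a local $(p,\kappa_{\crit},q,\delta)$-solution $(u,\sigma)$ with the regularity \eqref{eq:regularity_u_reaction_diffusion_critical_spaces}--\eqref{eq:reaction_diffusion_C_alpha_beta}; positivity under the extra assumptions in the last claim follows at once from Proposition~\ref{prop:positivity} using $g_{n,1}(\cdot,0,y_2)=g_{n,2}(\cdot,y_1,0)=0$ and the sign conditions on $\alpha_2,\alpha_0,\beta_1,\beta_0$. Setting $h_0:=1+4/d\geq h$ so $\zeta_0=d(h_0-1)/2=2$, the blow-up criterion Theorem~\ref{thm:blow_up_criteria}\eqref{it:blow_up_not_sharp_L} reduces $\sigma=\infty$ to showing $\sup_{t\in[s,\sigma\wedge T)}\|u(t)\|_{L^{\zeta_1}}<\infty$ a.s.\ on $\{\sigma>s\}$ for some $\zeta_1>2$ and each $0<s<T<\infty$.

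For the coupled energy estimate I would choose $\zeta\in\bigl(2,\,1+\tfrac{1}{1-\varepsilon}\bigr)$ so that $\eta:=\tfrac{1}{\zeta-1}-(1-\varepsilon)>0$. Writing $\tfrac{u_1f_1(u)}{\zeta-1}=-\tfrac{u_1^2u_2^2}{\zeta-1}+O(|u|^2)$ and adding the bound on $g_1$ from Assumption~\ref{ass:brusselator_two_dimensions}\eqref{it:brusselator_two_dimensions_2} produces net dissipation $-\eta u_1^2u_2^2$ plus a lower-order remainder $C(1+u_1^2+u_2^2)$. This fits Lemma~\ref{lem:pointwise_bound_suboptimal} with $M_1^{(1)}=C(1+u_2^2)$ and $M_2^{(1)}=C$, yielding Assumption~\ref{ass:dissipation_general_sub_optimal}$(s,T,1,\zeta)$. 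In parallel, for the $u_2$-equation with $\zeta=2$, combining the bound on $g_2$ with Young's inequality on the non-dissipative term $u_1u_2^3$ yields $M_1^{(2)},M_2^{(2)}$ that are polynomials in $u_1$ and $u_2$; the $u_2$-dependence of $M_1^{(2)}$ (through $u_2^4$ contributions) is handled by the interpolation $L^\infty_tL^2_x\cap L^2_tH^1_x\hookrightarrow L^4_{t,x}$, which is available precisely because $d\leq 2$.

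The two random-coefficient coercivities are then fed into Lemma~\ref{l:high_integrability_reaction_diffusion_with_dissipation} simultaneously; the tail bound \eqref{eq:estimate_D_j} together with the gradient controls $\int|u_i|^{\zeta_i-2}|\nabla u_i|^2\,dx\,dt$ packaged inside $\mathcal{E}_i(s,T)$ and the Sobolev embedding $H^1(\Tor^d)\hookrightarrow L^r(\Tor^d)$ for all $r<\infty$ (valid when $d\leq 2$) deliver finite $\mathcal{E}_1(s,T)+\mathcal{E}_2(s,T)$ a.s.\ on $\{\sigma>s\}$, together with the tail bounds \eqref{eq:tailestBrus1}--\eqref{eq:tailestBrus2}. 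The reduction in the second paragraph then gives $\sigma=\infty$; admissibility of $s=0$ when $u_0\in B^\varepsilon_{\zeta,\infty}$ is inherited from the time-continuity at $t=0$ provided by \cite[Propositions~3.1 and 7.1]{AVreaction-local}. The main obstacle will be precisely this decoupling: the bound on $u_1$ needs spacetime integrability of $u_2$ and conversely, and closing the loop succeeds only through the $d\leq 2$ Sobolev embeddings; the three-dimensional case will instead require positivity and the full $L^p(L^q)$-theory deployed in Subsection~\ref{sss:brusselator_three_dimensional}.
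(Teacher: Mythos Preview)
Your overall architecture (local existence, regularity, blow-up criterion, random $L^\zeta$-coercivity) is the same as the paper's, but the coupling argument has a genuine gap.

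The choice $M_1^{(1)}=C(1+u_2^2)$ makes the $u_1$-estimate depend on a priori integrability of $u_2$: to invoke Lemma~\ref{l:high_integrability_reaction_diffusion_with_dissipation} you need $M_1^{(1)}\in L^1(s,T;L^{\zeta/2})$, i.e.\ $u_2\in L^2(s,T;L^{\zeta})$, which you do not have before the $u_2$-estimate is done. Your appeal to a ``simultaneous'' application of Lemma~\ref{l:high_integrability_reaction_diffusion_with_dissipation} is not a well-defined operation; that lemma is stated and proved for a single component $j$, and a joint Gronwall for $\|u_1\|_{L^\zeta}^\zeta+\|u_2\|_{L^2}^2$ with the cross-terms you generate would require a separate (non-trivial) argument that you have not supplied. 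The paper avoids this circularity altogether: the only $u_2$-dependence in $u_1f_1$ besides the good term $-u_1^2u_2^2$ is $\alpha_2 u_1u_2$, and Young's inequality $|\alpha_2 u_1u_2|\le \delta u_1^2u_2^2+C_\delta$ lets one absorb it into the dissipation, so that $M_1^{(1)},M_2^{(1)}$ are \emph{constants}. The $u_1$-estimate then closes without any input from $u_2$, and in addition (running the same argument at $\zeta=2$ with $\mathcal R_1=\varepsilon_1\int_{\T^d}u_1^2u_2^2\,dx$) yields the crucial byproduct $\int_s^{\sigma\wedge T}\!\int_{\T^d}u_1^2u_2^2\,dx\,dt<\infty$.

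Second, doing the $u_2$-equation only at $\zeta_2=2$ is not enough: the theorem asserts the tail estimates \eqref{eq:tailestBrus1}--\eqref{eq:tailestBrus2} for \emph{both} components in $L^{\zeta}$ with $\zeta>2$, and the blow-up criterion you invoke (Theorem~\ref{thm:blow_up_criteria}\eqref{it:blow_up_not_sharp_L}) requires $\sup_t\|u(t)\|_{L^{\zeta_1}}<\infty$ for the full vector $u=(u_1,u_2)$ with $\zeta_1>2$. The paper therefore runs the $u_2$-estimate at the \emph{same} $\zeta=2+\varepsilon_0>2$, taking $M_1=C(1+|u_1|^4)$ and $M_2=C(1+|u_1||u_2|)$ (after the rearrangement $|u_1|^2|u_2|^2\lesssim |u_1|^4+|u_1||u_2|^3$). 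Here $M_2\in L^2(s,T;L^2)$ is exactly the byproduct $\int u_1^2u_2^2<\infty$ from the previous paragraph, and $M_1\in L^1(s,T;L^{\zeta/2})$ follows from the $u_1$-energy at level $\zeta$ via interpolation in $d=2$. This sequential structure --- $u_1$ first with constant $M_i$'s, harvest the cross term via $\mathcal R_1$, then $u_2$ at the same $\zeta$ --- is the mechanism you are missing.
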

Using Theorem \ref{t:continuity_general} one can further weaken the assumption on the initial data.

The main idea in the proof of Theorem \ref{t:Brusselator} is to exploit the dissipative effect of the nonlinearity $-u_1 u_2^2 $ appearing in the $u_1$-equation of \eqref{eq:brusselator} to estimate certain quantities involving $u_1$ and the product $u_1u_2$. Then we use this information to estimate $u_2$. Note that there are no dissipative effects in the $u_2$-equation.

\begin{proof}[Proof of Theorem \ref{t:Brusselator}]
We only consider $d=2$, since $d=1$ can be obtained by adding a dummy variable. Let $h_0 = q+1$.

{\em Step 1: Local well-posedness.}
One can check that Assumption \ref{ass:reaction_diffusion_global}$(p,q,h_0,\s)$ and \ref{ass:admissibleexp}$(p,q,h_0,\s)$ hold. To obtain a $(p, \kappa_0, q, \delta)$-solution one can apply Theorem \ref{t:reaction_diffusion_global_critical_spaces} with $h$ replaced by $h_0$. At the same time, this gives the regularity properties for the paths of $u$ stated in Theorem \ref{t:Brusselator} on $(0,\sigma)$ instead of $(0,\infty)$. It remains to check $\sigma=\infty$.

{\em Step 2: Energy estimate.}
We claim that there is a $\zeta>2$ such that for all $0<s<T<\infty$
\begin{equation}
\label{eq:energy_estimate_brusselator}
 \mathcal{E}_{i}(s,T) := \sup_{t\in [s,\sigma\wedge T)} \|u_i(t)\|_{L^{\zeta}}^{\zeta} + \int_s^{\sigma\wedge T} |u_i|^{\m-2}|\nabla u_i|^2\,\dd x\,\dd r <\infty \  \ \text{ a.s.\ on }\{\sigma>s\},
\end{equation}
for $i\in \{1, 2\}$, and for $\Gamma = \{\sigma>s, \|u(s)\|_{L^{\zeta}}\leq L\}$, one has
\begin{equation}
\label{eq:energy_estimate_brusselator2}
\P(\one_{\Gamma}\mathcal{E}_{i}(s,T)>\g) \leq \psi(\gamma) \big(\E\one_{\Gamma}\|u(s)\|_{L^{\m}}^{\m}+1\big)\quad \text{ for }\    i\in \{1, 2\}.
\end{equation}
The proofs of \eqref{eq:energy_estimate_brusselator} and \eqref{eq:energy_estimate_brusselator2} are given in the next two sub-steps.

\emph{Step 2a: There exists $\m>2$ such that a.s.\ on $\{\sigma>s\}$,}
\begin{align*}
\wt{\mathcal{E}}_{1}(s,T):= \sup_{t\in [s,\sigma\wedge T)}\|u_1(t)\|_{L^{\m}}^{\m}+
\int_{s}^{\sigma\wedge T} \int_{\Tor^2} |u_1|^{\m-2}|\nabla u_1|^2\,\dd x\,\dd r+
\int_{s}^{\sigma\wedge T} \int_{\Tor^2}u_1^2 u_2^2\,\dd x\,\dd r<\infty.
\end{align*}
\emph{In particular, \eqref{eq:energy_estimate_brusselator} holds for $i=1$. Moreover, for all $\eta\in (0,1)$ there is a constant $M_{\eta}$ independent of $s$, $k$,  and $u$ such that}
\begin{align}\label{eq:MetaestBrus2d}
\E |\one_{\Gamma}\wt{\mathcal{E}}_{1}(s,T)|^{\eta} \leq M_{\eta} (1+\E \one_{\Gamma}\|u_1(s)\|_{L^\zeta}^{\zeta \eta}).
\end{align}

From Assumption \ref{ass:brusselator_two_dimensions}\eqref{it:brusselator_two_dimensions_2} and \eqref{eq:brusselator} it is elementary to check that there exists $M,\varepsilon_0,\varepsilon_1>0$ such that for every $\m\in \{2,2+\varepsilon_0\}$, $y\in\R^2$ and a.e.\ on $\R_+\times \O$,
\begin{equation*}
\begin{aligned}
\frac{{f}_1(\cdot,y)y_1 }{\m-1}
+\frac{1}{4(\ellip_1-\varepsilon) }\Big(\sum_{n\geq 1} |b_{n,1}(\cdot)| \, |g_{n,1}(\cdot,y)|\Big)^2
&+\frac12\|(g_{n,1}(\cdot,y))_{n\geq 1}\|_{\ell^2}^2 \leq M_0(1+|y|^2)- \varepsilon_1|y_1|^2|y_2|^2.
\end{aligned}
\end{equation*}
Moreover, we may suppose that $2+\varepsilon_0<\frac{2}{2-\delta}$.
Thus, by Lemma \ref{lem:pointwise_bound_suboptimal}, Assumption \ref{ass:dissipation_general_sub_optimal}$(\m)$ holds for $\m \in\{2,2+\varepsilon_0\}$. Now the boundedness of the first two terms in Step 2a follows by applying Lemmas \ref{lem:pointwise_bound_suboptimal} and \ref{l:high_integrability_reaction_diffusion_with_dissipation} with $\m = 2+\varepsilon_0$, $M_1 = M_2 = M_0$, and $\mathcal{R}_1 = 0$. The boundedness of the last term in Step 2a follows by applying Lemmas \ref{lem:pointwise_bound_suboptimal} and \ref{l:high_integrability_reaction_diffusion_with_dissipation} with $\m=2$, $M_1 = M_2 = M_0$, and
\[\mathcal{R}_1(t) = \int_{\T^2} \varepsilon_1 \one_{(0,\sigma)}(t)|u_1(t)|^2 |u_2(t)|^2 \, \dd x.\]
Moreover, \eqref{eq:MetaestBrus2d} (and thus \eqref{eq:energy_estimate_brusselator2} for $i=1$) follows from \eqref{eq:etagronwall} and the fact that $M_1=M_2 = M_0$ are constant.

\emph{Step 2b: \eqref{eq:energy_estimate_brusselator} for $i=2$ holds with $\m = 2+\varepsilon_0$, where $\varepsilon_0$ is as in Step 2a}.
By Assumption  \ref{ass:brusselator_two_dimensions}\eqref{it:brusselator_two_dimensions_2} we have, a.e.\ on $[0,\sigma)\times \O$,
\begin{align}
\label{eq:estimate_brusselator_two_d_u_2}
\frac{{f}_2(\cdot,u)u_2}{\zeta-1}
+\frac{1}{4(\ellip_2-\varepsilon) }\Big(\sum_{n\geq 1} |b_{n,2}(\cdot)|\, |g_{n,2}(\cdot,u)|\Big)^2
&+\frac12\|(g_{n,2}(\cdot,u))_{n\geq 1}\|_{\ell^2}^2\\
\nonumber
&\leq M_0( M_1 |u_2|^2+ M_2),
\end{align}
where $M_0>0$ is a constant, $M_1:=\one_{[s,\sigma)} |u_1|^4 + 1$ and
$M_2:= \one_{[s,\sigma)}|u_1| |u_2| +1$.
In the above estimate, we used that
$|u_1|^2|u_2|^2=|u_1|^{4/3} \big(|u_1|^{2/3} |u_2|^2\big)\lesssim  |u_1|^4+ |u_1| |u_2|^{3}$
as it follows from the Young inequality with exponents $(\frac{3}{2},3)$.

We claim that a.s.\
\begin{equation}
\label{eq:M_brusselator_estimates}
M_1\in L^{1}(s,T;L^{\zeta/2}(\Tor^2)) \quad \text{ and }\quad
M_2\in L^2(s,T;L^2(\Tor^2)) .
\end{equation}
We begin by noticing that the second assertion of \eqref{eq:M_brusselator_estimates} follows from Step 2a, and moreover by \eqref{eq:MetaestBrus2d} with $\eta=1/2$ we obtain
\begin{align}\label{eq:estM2Brusd2}
\E (\one_{\Gamma}\|M_2\|_{L^2(s,T;L^2(\T^2))})\lesssim  (1+\E \one_{\Gamma}\|u_1(s)\|_{L^\zeta}^{\zeta/2}).
\end{align}

For the first assertion of \eqref{eq:M_brusselator_estimates},
note that reasoning as in \eqref{eq:sharp_case_u_integrability}-\eqref{eq:sharp_case_u_integrability_zeta_0} we find that  a.s.
\begin{align*}
\||u_1|^{\zeta/2}\|_{L^{4}(s,\sigma\wedge T;L^{4}(\Tor^2))}&\lesssim  \||u_1|^{\zeta/2}\|_{L^{4}(s,\sigma\wedge T;H^{1/2}(\Tor^2))}
\\ &  \lesssim \||u_1|^{\zeta/2}\|_{L^{4}(s,\sigma\wedge T;H^{1/2}(\Tor^2))}
\\ &  \lesssim \sup_{t\in [s,\sigma\wedge T)}\||u_1(t)|^{\zeta/2}\|_{L^2(\Tor^2)} + \||u_1|^{\zeta/2}\|_{L^{2}(s,\sigma\wedge T;H^{1}(\Tor^2))}
\\ &  \lesssim \sup_{t\in [s,\sigma\wedge T)}\|u_1(t)\|_{L^\zeta(\Tor^2)}^{\zeta/2} + \| |u|^{(\zeta-2)/2} |\nabla u_1|\|_{L^{2}(s,\sigma\wedge T;L^2(\Tor^2))}<\infty.
\end{align*}
In particular, taking ${1/2}$-moments it follows from \eqref{eq:MetaestBrus2d} with $\eta = 2/\zeta$ that
\begin{equation}\label{eq:estM2Brusd1}
\begin{aligned}
\E\|\one_{\Gamma} M_1&\|_{L^1(s,T;L^{\zeta/2}(\T^2))}^{1/2}\lesssim 1+ \E\| \one_{\Gamma} |u_1|^{\zeta/2}\|_{L^{4}(s,\sigma\wedge T;L^{4}(\Tor^2))}^{4/\zeta}
\\ & \lesssim 1+
\E \sup_{t\in [s,\sigma\wedge T)} \one_{\Gamma} \|u_1(t)\|_{L^\zeta(\Tor^2)}^{2} + \E\|\one_{\Gamma}  |u|^{(\zeta-2)/2} |\nabla u_1|\|_{L^{2}(s,\sigma\wedge T;L^2(\Tor^2))}^{4/\zeta}
\\ & \lesssim 1+\E \one_{\Gamma}\|u_1(s)\|_{L^\zeta}^{2}.
\end{aligned}
\end{equation}

By Lemma \ref{lem:pointwise_bound_suboptimal},  \eqref{eq:estimate_brusselator_two_d_u_2}, \eqref{eq:M_brusselator_estimates}, and Lemma \ref{l:high_integrability_reaction_diffusion_with_dissipation} it follows that \eqref{eq:energy_estimate_brusselator} holds for $i=2$, and by \eqref{eq:estM2Brusd2} and \eqref{eq:estM2Brusd1} for all $\gamma>0$ and $R\geq \max\{4C,1\}$ we obtain
\begin{align*}
 \P(\one_{\Gamma}\mathcal{E}_{2}(s,T)>\g)
 & \leq  \frac{R e^{R}}{\g} + \P\big(\one_{\Gamma}\|u_{2}(s)\|_{L^{\m}}^{\m}+C \one_{\Gamma}\|M_1\|_{L^{1}(s,T;L^{\zeta/2}(\Tor^2))}\geq R\big) \\ & \ \ +  \P\big(C\one_{\Gamma}\|M_1\|_{L^{1}(s,T;L^{\zeta/2}(\Tor^2))}+C\one_{\Gamma}\|M_2\|_{L^2(s,T;L^2(\Tor^2))}\geq R\big)
 \\ & \lesssim  \frac{e^{2R}}{\g} + \frac{\E\one_{\Gamma}\|u_{2}(s)\|_{L^{\m}}^{\m}}{R^{1/2}} + \frac{\E\one_{\Gamma}\|M_1\|_{L^{1}(s,T;L^{\zeta/2}(\Tor^2))}^{1/2} }{R^{1/2}} + \frac{\E\one_{\Gamma}\|M_2\|_{L^2(s,T;L^2(\Tor^2))}}{R}
 \\ & \lesssim  \Big(\frac{e^{2R}}{\g}+\frac{1}{R^{1/2}}\Big) \Big(\E\one_{\Gamma}\|u(s)\|_{L^{\m}}^{\m}+1\Big).
\end{align*}
Taking $R = \max\{\log(\gamma^{1/4}),4C,1\}$, this gives \eqref{eq:energy_estimate_brusselator2} for $i=2$.

{\em Step 3: Global well-posedness.} Let $\varepsilon_0$ be as in Step 2a, let $q_0\in (2, 2+\varepsilon_0)$, and take $\zeta = 2+\varepsilon_0$.  By Theorem \ref{thm:blow_up_criteria}\eqref{it:blow_up_not_sharp_L} with $\zeta_1=\m$, $\delta_0 \in (1,\frac{h_0+1}{h_0}]$ small enough, and $p_0$ large enough, it follows that
\begin{equation*}
\P(s<\sigma<T)
=
\P\Big(s<\sigma<T,\, \sup_{t\in [s,\sigma)} \|u(t)\|_{L^{\m}}<\infty\Big)=0,
\end{equation*}
where we used \eqref{eq:energy_estimate_brusselator}.
Since $\sigma>0$ a.s.\ by Theorem \ref{t:reaction_diffusion_global_critical_spaces}, letting $s\downarrow 0$ we obtain $\P(\sigma<T)=0$.
Letting $T\to \infty$, we find that $\sigma=\infty$ a.s. Moreover, \eqref{eq:energy_estimate_brusselator2} implies the tail estimates \eqref{eq:tailestBrus1} and \eqref{eq:tailestBrus2} for $s>0$. The case $s=0$ for smooth initial data can be obtained as in Lemma \ref{l:high_integrability_reaction_diffusion} by letting $s\downarrow 0$.

{\em Step 4: Positivity}.
This is immediate from Proposition \ref{prop:positivity}.
\end{proof}

\subsubsection{The stochastic Brusselator in three dimensions}
\label{sss:brusselator_three_dimensional}
In this subsection, we prove global existence \eqref{eq:brusselator} in the case $d=3$. This will require a more detailed analysis compared to the case $d\in \{1, 2\}$ treated in Subsection \ref{sss:brusselator_two_dimensional} because
we cannot apply Theorem \ref{thm:blow_up_criteria}\eqref{it:blow_up_not_sharp_L} as this would require $\sup_{t\in [s,\sigma\wedge T)} \|u(t)\|_{L^{\zeta_1}}<\infty$ a.s. for some $\zeta_1>3$. For $u_1$ such an estimate holds, but for $u_2$ it seems that we can only deduce such an a priori energy estimate for $\zeta_1=3$. Instead, we will apply the sharper blow-up criterion of Theorem \ref{thm:blow_up_criteria}\eqref{it:blow_up_sharp_L}. This will require a more detailed analysis with energy estimates which are mixed in space and time.
To prove these new energy estimates for $u_2$, the positivity of $u_1$ and $u_2$ plays an essential role. Therefore, unlike in the case $d\in \{1, 2\}$ we need to restrict ourselves to those $g$ which ensure $u\geq 0$ from the beginning. We start by listing our assumptions.

\begin{assumption}
\label{ass:brusselator_three}
Let $d=3$, $p\in (2,\infty)$ and suppose that following:
\begin{enumerate}[{\rm(1)}]
\item\label{it:brusselator_three_1} Assumption \ref{ass:reaction_diffusion_global}$(p,q,h,\s)$ for $q=h=3$ and some $\s>1$.
\item\label{it:brusselator_three_alphabeta}  The mappings $\alpha_i,\beta_i:\R_+\times \O\times \Tor^3\to \R$ are bounded and $\Progress\otimes \Borel(\Tor^3)$-measurable.
\item\label{it:brusselator_three_2} a.e.\ on $\R_+\times \O\times \Tor^3$ and for all $y_1, y_2\geq 0$,
\begin{align*}
\alpha_2,\alpha_0,\beta_1,\beta_0 \ \ \text{are non-negative and} \ \
g_{n,1}(\cdot,y_1,0)=g_{n,2}(\cdot,0,y_2)=0.&
\end{align*}
\item\label{it:brusselator_three_3} There exist $M,\varepsilon>0$ such that for all $i\in \{1,2\}$, a.e.\ on $\R_+\times \O\times \Tor^3$ and  $y=(y_1,y_2)\in [0,\infty)^2$,
\begin{align*}
\frac{1}{4(\ellip_i -\varepsilon)}\Big(\sum_{n\geq 1} |b_{n,i}(\cdot)|\, |g_{n,i}(\cdot,y)|\Big)^2 +\frac12\|(g_{n,i}(\cdot,y))_{n\geq 1}\|_{\ell^2}^2\leq    N_i(t,x,y) & & \text{(growth condition),}
\end{align*}
where $\ellip_i$ are as in Assumption \ref{ass:reaction_diffusion_global}\eqref{it:ellipticity_reaction_diffusion} and
\begin{align*}
N_1(\cdot,y)
&:=M \big[1+|y_1|^2\big] + \frac{|y_1|^2|y_2|^2}{5}  ,\\
N_2(\cdot,y)
&:=M\big[1+(1  + |y_1|^2)|y_2|^2 +|y_1||y_2|^{3}+ |y_1|^4\big].
\end{align*}
\end{enumerate}
\end{assumption}

The main result of this subsection reads as follows.

\begin{theorem}[Brusselator in three dimensions]
\label{t:brusselator_three}
Suppose that Assumption \ref{ass:brusselator_three} holds.
Assume that $\s\in (1,\frac{4}{3}]$, $p\geq 3\vee \frac{2}{2-\s}$ and set $\a_{0}:=p(1-\frac{\s}{2})-1$.
Then for every $u_0\in L^0_{\F_0}(\O;L^3(\Tor^3;\R^2))$
with $u_0\geq 0$ (component-wise), there exists
a (unique) \emph{global} $(p,\a_0,q,\s)$-solution $u:[0,\infty)\times \O\times \Tor^3\to \R^2$ to the stochastic Brusselator (as described near \eqref{eq:brusselator}) satisfying $u\geq 0$ (component-wise) a.e.\ on $\R_+\times \O\times \Tor^3$, and a.s.,
\begin{align*}
u& \in  H^{\theta,p}_{\rm loc}([0, \infty),w_{\a_0};H^{2-\s-2\theta,q}(\Tor^3;\R^2))\cap C([0,\infty);B^{0}_{3,p}(\Tor^3;\R^2)),\\
u& \in  H^{\theta,r}_{\rm loc}(0, \infty;H^{1-2\theta,\zeta}(\Tor^3;\R^2)) \ \ \forall \theta\in  [0,\tfrac{1}{2}), \ \forall r,\zeta\in (2,\infty),\\
u&\in C^{\theta_1,\theta_2}_{\rm loc}((0,\infty)\times \Tor^3;\R^{2})
\ \ \forall\theta_1\in  [0,1/2), \ \forall \theta_2\in (0,1).
\end{align*}
Moreover, there for all $0<s<T<\infty$, $\gamma>0$ and $L\geq 0$
\begin{align}
\label{eq:tailestBrus13d}
\P\Big(\sup_{t\in [s,T]} \one_{\Gamma}\|u(t)\|_{L^{3}}^{3}\geq \g\Big)
&\leq \psi(\g)(1+\E\one_{\Gamma}\|u(s)\|_{L^{6}}^{6}),\\
\label{eq:tailestBrus23d} \P\Big(\max_{i\in \{1, 2\}} \one_{\Gamma}\int_{s}^{T}\int_{\Tor^d}|u_i||\nabla u_i|^2\,\dd x\,\dd r\geq \g\Big)
&\leq \psi(\g)(1+\E\one_{\Gamma}\|u(s)\|_{L^{6}}^{6}),
\end{align}
where $\Gamma = \{\|u(s)\|_{L^{6}}\leq L\}$ and $\psi$ does not depend on $L\geq 1$, $s$, $\gamma$ and $u$, and satisfies $\lim_{\gamma\to \infty}\psi(\gamma)=0$, and one can take $s=0$ if $u_0\in B^{\varepsilon}_{6,\infty}(\T^3;\R^2)$ a.s.\ for some $\varepsilon>0$.
\end{theorem}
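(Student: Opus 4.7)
My plan is to follow the template of the two-dimensional proof (Theorem~\ref{t:Brusselator}) but with the refinements forced by the criticality of $L^3$ in three dimensions. With $h_0=3$ and $q=3$, Assumptions~\ref{ass:reaction_diffusion_global} and~\ref{ass:admissibleexp} both hold, so Theorem~\ref{t:reaction_diffusion_global_critical_spaces} produces a local $(p,\a_0,q,\s)$-solution $(u,\sigma)$ with the stated path regularity, and Proposition~\ref{prop:positivity} gives componentwise positivity thanks to Assumption~\ref{ass:brusselator_three}\eqref{it:brusselator_three_2}. Since $\zeta_0:=d(h_0-1)/2=3$ is exactly critical, I cannot use the non-sharp criterion Theorem~\ref{thm:blow_up_criteria}\eqref{it:blow_up_not_sharp_L} as in the two-dimensional case; instead I will show $\sigma=\infty$ via the sharp criterion Theorem~\ref{thm:blow_up_criteria}\eqref{it:blow_up_sharp_L} for some $(p_0,q_0)$ on the scaling line $\tfrac{3}{q_0}+\tfrac{2}{p_0}=1$ with $q_0>3$, letting $s\downarrow0$ afterwards using the instantaneous regularization \eqref{eq:reaction_diffusion_C_alpha_beta}.

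The key energy estimate for $u_1$ exploits the slack between $1/(\zeta-1)$ and the $1/5$-coefficient in $N_1$: working at $\zeta=3$ rather than the critical $\zeta=6$, the pointwise bound of Lemma~\ref{lem:pointwise_bound_suboptimal} applies, by positivity, with $M_1=C(1+u_2^2)$, $M_2=C$, and pointwise dissipation density $\tfrac{3}{10}u_1^2 u_2^2$. The local regularity $u\in C([0,\sigma);L^3)$ ensures $M_1\in L^1(s,T;L^{3/2})$ a.s.\ on $\{\sigma>s\}$, so Lemma~\ref{l:high_integrability_reaction_diffusion_with_dissipation} yields both the bound
\[
\sup_{t\in[s,T\wedge\sigma)}\|u_1(t)\|_{L^3}^3+\int_s^{T\wedge\sigma}\!\!\int|u_1||\nabla u_1|^2\,dxdt+\int_s^{T\wedge\sigma}\!\!\int u_1^3 u_2^2\,dxdt<\infty
\]
and the tail control \eqref{eq:estimate_D_j}. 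Applying the Ladyzhenskaya embedding $L^\infty_t L^2\cap L^2_t H^1\hookrightarrow L^{10/3}_{t,x}$ on $\T^3$ to $|u_1|^{3/2}$ then upgrades this to $u_1\in L^5_{t,x}\cap L^3_t L^9$ almost surely on $\{\sigma>s\}$.

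For $u_2$ the cross term $+u_1 u_2^2$ in $f_2$ is non-dissipative, so Lemma~\ref{l:high_integrability_reaction_diffusion_with_dissipation} does not apply directly. Instead I would apply It\^o's formula to $\|u_2\|_{L^3}^3$ and control the dominant quartic term by H\"older in the form
\[
\int_s^t\!\!\int u_1 u_2^4\,dxdr\leq \Big(\int_s^t\!\!\int u_1^3 u_2^2\,dxdr\Big)^{1/3}\Big(\int_s^t\!\!\int u_2^5\,dxdr\Big)^{2/3},
\]
where the first factor is controlled in the previous step and the second is bounded by a power of $\mathcal{E}_2(s,T):=\sup_{[s,T\wedge\sigma)}\|u_2\|_{L^3}^3+\int\int u_2|\nabla u_2|^2$ via the same Ladyzhenskaya embedding applied to $|u_2|^{3/2}$. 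After a Young-type absorption of the sub-critical part against the coercive dissipation, this yields a closed stochastic Gronwall-type inequality for $\mathcal{E}_2$; Lemma~\ref{lem:gronwall} then gives both finiteness of $\mathcal{E}_2$ and the tail bounds \eqref{eq:tailestBrus13d}--\eqref{eq:tailestBrus23d}, the $\|u_0\|_{L^6}^6$-dependence arising through the sixth-power accumulation in the Gronwall step. The Ladyzhenskaya embedding for $|u_2|^{3/2}$ then also delivers $u_2\in L^{p_0}_t L^{q_0}$ for suitable $(p_0,q_0)$, and Theorem~\ref{thm:blow_up_criteria}\eqref{it:blow_up_sharp_L} closes the argument.

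The main obstacle is the Gronwall step for $u_2$: the H\"older split with exponents $1/3,2/3$ is essentially forced by matching the dissipative integral $\int u_1^3 u_2^2$ produced in Step~2 with the Ladyzhenskaya exponent $10/3$ on $\T^3$, so the resulting power of $\mathcal{E}_2$ is precisely critical. Closing the inequality requires that the $L^2_t H^1$-piece of the Ladyzhenskaya norm be dominated by the coercive $\int u_2|\nabla u_2|^2$-term with a constant small enough to be absorbed on the left, and careful tracking of these constants is what fixes the precise form of \eqref{eq:tailestBrus13d}--\eqref{eq:tailestBrus23d} in terms of $\|u_0\|_{L^6}^6$; starting from general $u_0\in L^3$ is then handled by a truncation/approximation argument combined with the continuity result of Proposition~\ref{prop:L_m_continuity} mentioned in the introduction.
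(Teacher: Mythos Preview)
Your plan has a genuine gap in the step for $u_2$. When you apply It\^o to $\|u_2\|_{L^3}^3$, the bad term is $3\int_{\T^3} u_1 u_2^4\,dx$. Your H\"older split $\int u_1 u_2^4\le(\int u_1^3 u_2^2)^{1/3}(\int u_2^5)^{2/3}$ followed by Young (exponents $3,\tfrac32$) to peel off $\int u_2^5$ gives, after the Gagliardo--Nirenberg estimate $\int u_2^5\lesssim \|u_2\|_{L^3}^{2}\||u_2|^{3/2}\|_{H^1}^2$, a residual term of the form
\[
C_\varepsilon\int_s^t\Big(\int_{\T^3}u_1^3 u_2^2\,dx\Big)\,\|u_2(r)\|_{L^3}^{4}\,dr.
\]
This is $\|u_2\|_{L^3}^{3}$ to the power $4/3$, i.e.\ \emph{superlinear} in $\mathcal{E}_2$, so the stochastic Gronwall lemma does not apply. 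The ``constant small enough to be absorbed'' you hope for is actually $\sim\|u_2\|_{L^3}^2$, which is exactly the quantity you are trying to bound; there is no smallness available without assuming small data or short time.

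The paper closes this gap by inserting an intermediate step (``the bad part''): it first proves $u_2\in L^\infty_t L^2\cap L^2_t H^1$ by combining It\^o for $\|u_2\|_{L^2}^2$ with It\^o applied to the \emph{mixed} functional $\int_{\T^3}u_1 u_2\,dx$. The point is that differentiating $\int u_1 u_2$ produces from $f_1\cdot u_2$ the term $-\int u_1 u_2^3$, which exactly cancels the problematic $+\int u_1 u_2^3$ coming from $f_2$ in the $L^2$-energy of $u_2$; positivity of $u$ is used to drop $\int u_1(t)u_2(t)\ge0$ on the left. With this $L^2_tH^1$-bound on $u_2$ in hand, one can factor $\|u_1 u_2\|_{L^2_tL^3}\le\|u_1\|_{L^\infty_tL^6}\|u_2\|_{L^2_tL^6}$ and then run Lemma~\ref{l:high_integrability_reaction_diffusion_with_dissipation} for $u_2$ at $\zeta=3$ with $M_2=C(1+u_1u_2)$. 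Note also that the paper takes the $u_1$-energy at $\zeta=6$ (not $\zeta=3$), precisely to obtain $u_1\in L^\infty_tL^6$; your $\zeta=3$ gives only $L^\infty_tL^3$, which is insufficient for this factorisation. (A minor side remark: in your $u_1$-step you do not need $M_1=C(1+u_2^2)$ --- the cross term $\alpha_2 u_1 u_2\le \varepsilon u_1^2u_2^2+C_\varepsilon$ can be absorbed into the dissipation, so $M_1,M_2$ can be taken constant and the circularity you flag there disappears.)
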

Using Theorem \ref{t:continuity_general} one can further weaken the assumption on the initial data.

Before we start with the proof of Theorem \ref{t:brusselator_three} we show the a.s.\ boundedness of three consecutive parts: {\em the good}, {\em the bad}, and {\em the ugly part}.

\begin{itemize}
\item
{\em The good part:}
 $\sup_{t}\|u_1\|_{L^6_x}^6$, $\int_{(t,x)} |u_1|^4|\nabla u_1|^2\,\dd x\,\dd t$ and $\int_{(t,x)}u_1^2u_2^2\,\dd x\,\dd t$;
\item
{\em The bad part:}
$\sup_{t}\|u_2\|_{L^2_x}^2$ and $\int_{(t,x)} |\nabla u_2|^2\,\dd x\,\dd t$;
\item
{\em The ugly part:} $\sup_{t}\|u_2\|_{L^3_x}^3$ and $\int_{(t,x)} |u_2| |\nabla u_2|^2\,\dd x\,\dd t$.
\end{itemize}

The $L^3_x$-norm in the {\em ugly part} appears because we use the blow-up criterion of Theorem \ref{thm:blow_up_criteria} with $d=h=3$ to check global existence. The estimates in {\em the good part} and {\em the bad part} will be needed to estimate $(M_1,M_2)$ in {\em the ugly part}, see \eqref{eq:M_1_L_6_u_1} below.
As we will see {\em the good part} and {\em the ugly part} follow from Lemma \ref{l:high_integrability_reaction_diffusion} and \ref{l:high_integrability_reaction_diffusion_with_dissipation}, respectively. The proof of {\em the bad part} requires an additional argument based on the dissipative effect of $- u_1 u_2^2$ in the $u_1$-equation of \eqref{eq:brusselator}.

The existence of a (unique) $(p,\a_{\crit},\s,q)$-solution $(u,\sigma)$ to \eqref{eq:brusselator} with the required regularity up to the explosion time $\sigma$ follows from Theorem \ref{t:reaction_diffusion_global_critical_spaces} and \cite[Remark 2.8(c)]{AVreaction-local} with $h=d=3$. Moreover, by Assumption \ref{ass:brusselator_three}\eqref{it:brusselator_three_2} and Proposition \ref{prop:positivity}, we have
\begin{equation}
\label{eq:positivity_u_brusselator_3D}
u\geq 0\ \text{ (component-wise)}  \ \text{ a.e.\ on }[0,\sigma)\times \O\times \Tor^3.
 \end{equation}

\begin{proof}[Boundedness of the good part]
We will show that a.s.\ on $\{\sigma>s\}$,
\begin{align}
\label{eq:brusselator_three_claim_Step_1_1}
 \mathcal{E}_{11}(s,T):= \sup_{t\in [s,\sigma\wedge T) }\|u_1(t)\|_{L^{6}}^{6} +
\int_s^{\sigma\wedge T}\int_{\Tor^3} |u_1|^{4} |\nabla u_1|^2 \,\dd x\, \dd r&<\infty,\\
\label{eq:brusselator_three_claim_Step_1_2}
\mathcal{E}_{12}(s,T):=\int_s^{\sigma\wedge T }\int_{\Tor^3} |\nabla u_1|^2\,\dd x\,\dd r
+ \int_s^{\sigma\wedge T }\int_{\Tor^3} u_1^{2}u_2^2\,\dd x\,\dd r&<\infty,
\end{align}
and moreover, for every $\eta\in (0,1)$ there is a constant $M_{\eta}$ independent of $s$, $u$ and $L$ such that
\begin{align}\label{eq:MetaestBrus3d1}
\E |\one_{\Gamma}\mathcal{E}_{11}(s,T)|^{\eta} & \leq M_{\eta} (1+\E \one_{\Gamma}\|u_1(s)\|_{L^6}^{6\eta}),
\\ \label{eq:MetaestBrus3d2} \E |\one_{\Gamma}\mathcal{E}_{12}(s,T)|^{\eta} &\leq M_{\eta} (1+\E \one_{\Gamma}\|u_1(s)\|_{L^2}^{2\eta}).
\end{align}
The bounds \eqref{eq:brusselator_three_claim_Step_1_1}, \eqref{eq:brusselator_three_claim_Step_1_2}, and \eqref{eq:MetaestBrus2d} are proved in a similar way as in Theorem \ref{t:Brusselator}. Indeed, from Assumption \ref{ass:brusselator_three}\eqref{it:brusselator_three_3} and Lemma \ref{lem:pointwise_bound_suboptimal} one can check that
Assumption \ref{ass:dissipation_general_sub_optimal}$(\zeta)$ holds for $j=1$ and with $\zeta=6$, $M_1, M_2$ constant and $\mathcal{R}_1 = 0$.
Therefore, \eqref{eq:brusselator_three_claim_Step_1_1} and \eqref{eq:MetaestBrus3d1} are immediate from
Lemma \ref{l:high_integrability_reaction_diffusion_with_dissipation}.

For \eqref{eq:brusselator_three_claim_Step_1_2}, again by Assumption \ref{ass:brusselator_three}\eqref{it:brusselator_three_3} and Lemma \ref{lem:pointwise_bound_suboptimal} one has
that
Assumption \ref{ass:dissipation_general_sub_optimal}$(\zeta)$ holds for $j=1$, but this time with $\zeta=2$, $M_1, M_2$ constant and
\[\mathcal{R}_1(t) := \frac{4}{5}\int_{\Tor^3} u_1^2(t,x)u_2^2(t,x)\,\dd x.\]
Here the constant $\frac45$ is not important and can be replaced by a lower number. Therefore, the bounds \eqref{eq:brusselator_three_claim_Step_1_2} and \eqref{eq:MetaestBrus3d2} follow from Lemma \ref{l:high_integrability_reaction_diffusion_with_dissipation}.
\end{proof}

\begin{proof}[Boundedness of the bad part]
We show that for all $0<s<T<\infty$
\begin{equation}
\label{eq:brusselator_three_claim_Step_2}
\mathcal{E}_{21}:=\int_s^{\sigma\wedge T}\int_{\Tor^3} |u_2|^2+ |\nabla u_2|^2 \,\dd x\,\dd r<\infty,  \ \text{ a.s.\ on $\{\sigma>s\}$},
\end{equation}
and moreover, for every $\eta\in (0,1)$ there is a constant $M_{\eta}$ independent of $s$, $u$ and $L$ such that
\begin{align}\label{eq:brusselator_three_claim_Step_2est}
\E|\one_{\Gamma}\mathcal{E}_{21}|^\eta\leq M_{\eta} (1+\E \one_{\Gamma}\|u(s)\|_{L^6}^{4}).
\end{align}

Since there is no strong dissipation in the equation for $u_2$ due to the term $u_1 u_2^2$, this term is viewed as the bad term.
To prove \eqref{eq:brusselator_three_claim_Step_2} we need a new energy estimate, which requires a localization argument and It\^o's formula once more.

Fix $0<s<T<\infty$, $k\geq 1$, and let $(\Gamma,\tau_k)$ be as in the proof of  Lemma \ref{l:high_integrability_reaction_diffusion_with_dissipation}.
For $j\in \{1,2\}$, let
\begin{equation*}
u_j^{(k)}(t)=
\one_{\Gamma} u_j(t\wedge \tau_k) \ \ \text{ on }\O\times [s,T].
\end{equation*}
Note that $u_j^{(k)}\geq 0$ a.e.\ on $[s,T]\times \O\times \Tor^3$ by \eqref{eq:positivity_u_brusselator_3D}.

We claim that there exist $\theta,c_0>0$ such that for all $t\in [s,T]$,
\begin{equation}
\begin{aligned}
\label{eq:claim_Step_2_brusselator_three_d}
\one_{\Gamma} \|u_{2}^{(k)}(t)\|_{L^2}^2
&+
\theta \int_{s}^t \int_{\Tor^3} \one_{[s,\tau_{k}]\times \Gamma}  |\nabla u_2|^2\,\dd x\,\dd r
\leq  \one_{\Gamma}\|u(s)\|^2_{L^2}
\\
&\qquad
+ c_0
\int_{s}^t \int_{\Tor^3} \one_{[s,\tau_{k}]\times \Gamma}  | u_2|^2\,\dd x\,\dd r+ c_0\HH(t) + \MM(t),
\end{aligned}
\end{equation}
where $\MM$ is a continuous local martingale (which depends on $k$) and such that $\MM(s)=0$ and
\begin{equation}
\label{eq:def_H_3d_brusselator}
\HH(t):=
\int_s^t \int_{\Tor^3}  \one_{[s,\sigma) \times \Gamma} \big( 1+u_1^2u_2^2 + u_1^4 +
|\nabla u_1|^2\big)\,\dd x\,\dd r.
\end{equation}

To see that the claim \eqref{eq:claim_Step_2_brusselator_three_d} yields \eqref{eq:brusselator_three_claim_Step_2} one can argue as follows. The boundedness of the good part implies that $\HH(T)<\infty$ a.s.\ on $\{\sigma>s\}$. Therefore, \eqref{eq:brusselator_three_claim_Step_2}, \eqref{eq:brusselator_three_claim_Step_2est} follow from the stochastic Gronwall lemma \cite[Corollary 5.4b)]{geiss2021sharp}, and \eqref{eq:claim_Step_2_brusselator_three_d} and embedding $L^6(\T^3)\hookrightarrow L^\zeta(\T^3)$ for $\zeta\leq 6$. We divide the proof of \eqref{eq:claim_Step_2_brusselator_three_d} into three steps.

\emph{Step 1: There exists $\theta, c_1>0$ such that for all $t\in [s,T]$ and $\Gamma$ as above,}
\begin{align}
\label{eq:claim_Substep_2a_brusselator}
\one_{\Gamma} \|u_{2}^{(k)}(t)\|_{L^2}^2
&+
\theta \int_{s}^t \int_{\Tor^3} \one_{[s,\tau_{k}]\times \Gamma}  |\nabla u_2|^2\,\dd x\,\dd r
 \leq \one_{\Gamma}\|u_2(s)\|_{L^2}^2\\
 \nonumber
&+ c_1 \int_s^t \int_{\Tor^3} \one_{[s,\tau_{k}]\times \Gamma}
(u_2^2+u_1u_2^3)\,\dd x\, \dd r
 +\HH(t)+\MM_1(t),
\end{align}
\emph{where $\MM_1$ is a continuous local martingale (which depends on $k$) such that $\MM_1(s)=0$.}

As in \eqref{eq:identityIto_2}, by Assumption \ref{ass:reaction_diffusion_global}\eqref{it:ellipticity_reaction_diffusion} and applying the It\^{o} formula to $t\mapsto \|u_2^{(k)}(t)\|_{L^2}^2$ it follows that, a.s.\ for all $t\in [s,T]$,
\begin{align}
\label{eq:Ito_L_2_brusselator_three_d}
\one_{\Gamma}\|u_{2}^{(k)}(t)\|_{L^{2}}^{2}
+ \ellip_2 \int_{s}^t\int_{\Tor^3}\one_{[s,\tau_{k}]\times \Gamma}|\nabla u_2|^2\,\dd x\,\dd r
\leq\one_{\Gamma}\|u_2(s)\|_{L^2}^2+ I_1(t)+I_2(t)+\MM_1(t),
\end{align}
where
$\MM_1(t)$ is a local continuous martingale such that $\MM_1(s)=0$ and
\begin{align*}
I_1(t)&:=
2\int_{s}^{t} \int_{\Tor^d} \one_{[s,\tau_{k}]\times \Gamma}
 f_2(\cdot,u)u_2\,\dd x\,\dd r, \\
I_2(t)&:=
\sum_{n\geq 1}\int_{s}^{t} \int_{\Tor^d}\one_{[s,\tau_{k}]\times \Gamma} \Big(
 | g_{n,2}(\cdot,u)|^2
+ 2\sum_{n\geq 1} | (b_{n,2}\cdot \nabla) u_2| |g_{n,2}(\cdot,u)|\Big)\,
 \dd x\,\dd r.
\end{align*}
We estimate $I_1$ and $I_2$ separately. It is easy to see that a.s.\ for all $t\in [s,T]$ (using \eqref{eq:positivity_u_brusselator_3D}),
\begin{equation*}
I_1(t)
\lesssim \int_{s}^{t} \int_{\Tor^3} \one_{[s,\tau_{k}]\times \Gamma} (u^2_2+ u_1 u_2^3)\,\dd x\,\dd r+ \HH(t).
\end{equation*}
Moreover, by Assumption \ref{ass:brusselator_three}\eqref{it:brusselator_three_3}, one can check that also
\begin{align*}
I_2(t)\lesssim \int_{s}^{t} \int_{\Tor^d}\one_{[s,\tau_{k}]\times \Gamma} (1+|\nabla u_2|) N_2(\cdot, u)^{1/2} \dd  x\, \dd r \lesssim \int_{s}^{t} \int_{\Tor^3} \one_{[s,\tau_{k}]\times \Gamma} (u^2_2+ u_1 u_2^3)\,\dd x\, \dd r+  \HH(t).
\end{align*}
Hence, \eqref{eq:claim_Substep_2a_brusselator} follows by combining the previous estimates with \eqref{eq:Ito_L_2_brusselator_three_d}.

\emph{Step 2: For each $\varepsilon_0>0$, there exists a local continuous martingale $\MM_2$ (depending on $k$) such that  $\MM_2(s)=0$, and a.s.\ for all $t\in [s,T]$,}
\begin{equation}
\label{eq:claim_Substep_2b_brusselator}
\begin{aligned}
\int_{s}^t \int_{\Tor^3} \one_{[s,\tau_{k}]\times \Gamma}  u_1u_2^3\,\dd x\, \dd r
&\leq\one_{\Gamma} \|u_2(s)\|_{L^2}^2
+ C_{\varepsilon_0} \HH(t) +\MM_2(t)\\
& \ +
\int_{s}^t \int_{\Tor^3} \one_{[s,\tau_{k}]\times \Gamma}  \Big[C_0u_2^2 + \varepsilon_0|\nabla u_2|^2\Big]\,\dd x\,\dd r.
\end{aligned}
\end{equation}

Applying It\^o's formula to
$
(u_1,u_2)\mapsto \int_{\Tor^3} u_1 u_2\,\dd x
$
gives that a.s.\ for all $t\in [s,T]$,
\begin{align}
\label{eq:Ito_brusselator_step_2b}
\int_{\Tor^3} u_{1}^{(k)}(t,x) u_{2}^{(k)}(t,x)\,\dd x
\leq \one_{\Gamma}\int_{\Tor^3} u_{1}(s,x) u_{2}(s,x)\,\dd x
+\sum_{i=1}^3 J_i(t)  +\MM_2(t),
\end{align}
where
$\MM_2(t)$ is a local continuous martingale such that $\MM_2(s)=0$, and
\begin{align*}
J_1(t)&:=- \int_{s}^t  \int_{\Tor^3}\one_{[s,\tau_{k}]\times \Gamma} \Big[(a_1 \cdot \nabla u_1)\cdot\nabla u_2 + (a_2 \cdot \nabla u_2)\cdot\nabla u_1 \Big]\,\dd x\, \dd r,\\
J_2(t)&:= \int_{s}^t\int_{\Tor^3}  \one_{[s,\tau_{k}]\times \Gamma} \Big[f_1(\cdot,u)u_2+f_2(\cdot,u)u_1 \Big]\,\dd x\,\dd r,\\
J_3(t)&:= \sum_{n\geq 1} \int_{s}^t \int_{\Tor^3}  \one_{[s,\tau_{k}]\times \Gamma}
\big[(b_{n,1} \cdot\nabla) u_1+g_{n,1}(\cdot,u)\big]\big[(b_{n,2} \cdot\nabla) u_2+g_{n,2}(\cdot,u)\big]\,\dd x\,\dd r.
\end{align*}

The first term on the RHS\eqref{eq:Ito_brusselator_step_2b} can be estimates as:
$
\int_{\Tor^3} u_{1}(s,x) u_{2}(s,x)\,\dd x\leq \|u_1(s)\|^2_{L^2}+\|u_2(s)\|^2_{L^2}
$
a.s.\ on $\Gamma$.
Next, we estimate the remaining terms.
We begin with $J_1$. Since $a_i$ is bounded, it follows that for all $\varepsilon_0\in (0,1)$,
\begin{align*}
J_1(t)
\leq \frac{\varepsilon_0}{2}\int_{s}^t  \int_{\Tor^3}\one_{[s,\tau_{k}]\times \Gamma}
|\nabla u_2|^2\,\dd x\, \dd r
+ C_{\varepsilon_0} \HH(t).
\end{align*}
Next, we estimate $J_2$. From the definition of $f=(f_i)_{i=1}^2$ (see \eqref{eq:brusselator}) and boundedness of the coefficients $\alpha_i, \beta_i$ (see Assumption \ref{ass:brusselator_three}\eqref{it:brusselator_three_alphabeta}) we obtain that a.s.\ for all $t\in [s,T]$,
\begin{align*}
J_2(t)
&\leq - \int_{s}^t\int_{\Tor^3} \one_{[s,\tau_{k}]\times \Gamma}  u_1u_2^3\,\dd x\, \dd r
+C_0  \int_{s}^t\int_{\Tor^3} \one_{[s,\tau_{k}]\times \Gamma}  (1+ u_1^2u_2^2 + u_1^2+u_2^2)\,\dd x\,\dd r\\
&\stackrel{\eqref{eq:def_H_3d_brusselator}}{\leq} -\int_{s}^t\int_{\Tor^3} \one_{[s,\tau_{k}]\times \Gamma}  u_1u_2^3\,\dd x\, \dd r +C_0 \int_{s}^t\int_{\Tor^3} \one_{[s,\tau_{k}]\times \Gamma}  u_2^2\,\dd x\, \dd r
 + C_0 \HH(t).
\end{align*}

Finally, to estimate $J_3$, note that by the Cauchy-Schwartz' inequality, for all $t\in [s,T]$ and $\varepsilon_0\in (0,1)$
\begin{align*}
J_3(t)
&\leq \frac{\varepsilon_0}{2} \int_{s}^t  \int_{\Tor^3}\one_{[s,\tau_{k}]\times \Gamma}
(|\nabla u_2|^2+  \|(g_{n,2}(\cdot,u))_{n\geq 1}\|_{\ell^2}^2)\,\dd x\,\dd r\\
& \qquad +C_{\varepsilon_0}
\int_{s}^t  \int_{\Tor^3}\one_{[s,\tau_{k}]\times \Gamma}
(|\nabla u_1|^2+  \|(g_{n,1}(\cdot,u))_{n\geq 1}\|_{\ell^2}^2)\,\dd x\, \dd r
\\
&\stackrel{(i)}{\leq} \frac{\varepsilon_0}{2}  \int_{s}^t  \int_{\Tor^3}\one_{[s,\tau_{k}]\times \Gamma}
(u_2^2 +  |\nabla u_2|^2+  u_1 u_2^3)\,\dd x\,\dd r+ C_{\varepsilon_0}\HH(t),
\end{align*}
where in $(i)$ we used Assumption \ref{ass:brusselator_three}\eqref{it:brusselator_three_3} as well as \eqref{eq:def_H_3d_brusselator}.

Now \eqref{eq:claim_Substep_2b_brusselator} follows by combining the previous estimates with \eqref{eq:Ito_brusselator_step_2b}, and additionally using that $\int_{\Tor^3} u_{1}^{(k)}(t,x) u_{2}^{(k)}(t,x)\,\dd x\geq 0$ a.s.\ on $[s,T]$ as $u\geq 0$ (see \eqref{eq:positivity_u_brusselator_3D}).

\emph{Step 3: Proof of \eqref{eq:claim_Step_2_brusselator_three_d}}. Let $c_1>0$ be as in \eqref{eq:claim_Substep_2a_brusselator}. To prove \eqref{eq:claim_Step_2_brusselator_three_d}, it is suffices to use  \eqref{eq:claim_Substep_2b_brusselator} with $\varepsilon_0=(2c_1)^{-1}$ in \eqref{eq:claim_Substep_2a_brusselator}. Note that on the RHS of the corresponding estimate the term $\frac{1}{2}\int_{s}^t \int_{\Tor^3} \one_{[s,\tau_{k}]\times \Gamma}  |\nabla u_2|^2\,\dd x\,\dd r$ can be absorbed on the LHS as $\int_{s}^{\tau_{k}} \int_{\Tor^3} |\nabla u_2|^2\,\dd x\,\dd r\leq k$ a.s.\ on $\Gamma$ by the definition of $\tau_k$. This completes the proof of \eqref{eq:claim_Step_2_brusselator_three_d}.
\end{proof}

After these preparations, we now estimate the ugly part needed for the proof of Theorem \ref{t:brusselator_three}.
\begin{proof}[Boundedness of the ugly part]
We show that for all $0<s<T<\infty$,
\begin{equation}
\label{eq:energy_estimate_brusselator_three}
\mathcal{E}_{22}:= \sup_{t\in [s,\sigma\wedge T)} \|u_2(t)\|^3_{L^{3}}+\int_s^{\sigma\wedge T}\int_{\Tor^d} |u_2||\nabla u_2|^2\,\dd x\,\dd r<\infty \ \  \text{ a.s.\ on }\{\sigma>s\}.
\end{equation}
and for all $\gamma>0$
\begin{equation}\label{eq:energy_estimate_brusselator_threetail}
\P(\one_{\Gamma}\mathcal{E}_{22}(s,T)>\g) \leq \psi(\gamma) \big(\E\one_{\Gamma}\|u(s)\|_{L^{6}}^{4}+1\big).
\end{equation}
Indeed, we will deduce this from Lemma \ref{l:high_integrability_reaction_diffusion_with_dissipation} with $\zeta=3$ and $j=2$.

First observe that by Assumption \ref{ass:brusselator_three}\eqref{it:brusselator_three_3} on $(s,\sigma)\times \{\sigma>s\}\times \Tor^d$ it holds that
\begin{align*}
\frac{u_2 f_2(\cdot,u)}{2}&+ \frac{1}{4(\ellip_2-\varepsilon)}\Big(\sum_{n\geq 1} |b_{n,2}|\, |g_{n,2}(\cdot,u)|\Big)^2
 +\frac12\|(g_{n,2}(\cdot,u))_{n\geq 1}\|_{\ell^2}^2  \\ & \leq \frac{u_2 f_2(\cdot,u)}{2} + N_2(u)
\\ & \leq \frac12 \big[u_1u_2^3 + \beta_1 u_1 u_2 + \beta_2 u_2^2 + \beta_3u_2\big] + M\big[1+(1  + u_1^2)u_2^2 +u_1u_2^{3}+ u_1^4\big]
\\ & \leq M_1 + M_2|u|^2,
\end{align*}
where we take $M_1
:=
C \one_{[s,\sigma\wedge T)}(u_1^4+1)$,
and $M_2
:=
C\one_{[s,\sigma\wedge T)}(u_1 u_2+1)$ for a suitable constant $C$ depending on $\beta_i$ and $M$.

We claim that
$M_1 \in L^1(s,T;L^{3/2}(\Tor^3))$ and $M_2 \in L^{2}(s,T;L^{3}(\Tor^3))$.
The assertion for $M_1$ is equivalent to $u_1\in L^{4}(s,\sigma\wedge T ;L^{6}(\Tor^3))$ a.s., which is clear from the boundedness of the good part (see \eqref{eq:brusselator_three_claim_Step_1_1}),
and moreover by \eqref{eq:MetaestBrus3d1} with $\eta=2/3$, we see
\begin{align}\label{eq:M1bound3d}
\E \one_{\Gamma}\| M_1\|_{L^1(s,T;L^{3/2}(\Tor^3))} \lesssim (1+\E \one_{\Gamma}\|u_1(s)\|_{L^6}^{4}).
\end{align}
The required integrability of $M_2$ follows from
\begin{equation}
\label{eq:M_1_L_6_u_1}
\begin{aligned}
\|u_1 u_2\|_{L^2(s,T;L^3)}
&\leq \one_{\{\sigma> s\}}
\|u_1\|_{L^{\infty}(s,\sigma\wedge T;L^{6})}
\|u_2\|_{L^{2}(s,\sigma\wedge T;L^{6})} \\
&\stackrel{(i)}{\lesssim}
\one_{\{\sigma>s\}} \|u_1\|_{L^{\infty}(s,\sigma\wedge T;L^{6})}
\|u_2\|_{L^{2}(s,\sigma\wedge T;H^1)}\stackrel{(ii)}{<}\infty \ \text{a.s.},
\end{aligned}
\end{equation}
where in $(i)$ we used the Sobolev embedding $H^1(\Tor^3)\embed L^6(\Tor^3)$, and in $(ii)$ we used \eqref{eq:brusselator_three_claim_Step_1_1} and \eqref{eq:brusselator_three_claim_Step_2}. Moreover,  using \eqref{eq:MetaestBrus3d1}, and \eqref{eq:brusselator_three_claim_Step_2est} with $\eta = 2/3$ in \eqref{eq:M_1_L_6_u_1}, we find that
\begin{equation}\label{eq:M2bound3d}
\begin{aligned}
\E \one_{\Gamma}\| M_2\|_{L^2(s,T;L^{3}(\Tor^3))} &\lesssim
1+ \E\one_{\Gamma} \sup_{t\in [s, \sigma\wedge T)}\|u_1(t)\|_{L^{6}}^4 +
\E\one_{\Gamma}  \|u_2\|_{L^{2}(s,\sigma\wedge T;H^1)}^{4/3}
\\ & \lesssim
1+\E \one_{\Gamma}\|u(s)\|_{L^6}^{4}.
\end{aligned}
\end{equation}

From the above, Lemma \ref{lem:pointwise_bound_suboptimal} and Assumption \ref{ass:brusselator_three}\eqref{it:brusselator_three_3}, we see that Assumption \ref{ass:dissipation_general_sub_optimal} holds for $\zeta=3$ and $j=2$ with $R_2 = 0$. Therefore, by Lemma \ref{l:high_integrability_reaction_diffusion_with_dissipation}, \eqref{eq:energy_estimate_brusselator_three} holds. Moreover, from \eqref{eq:estimate_D_j}, \eqref{eq:M1bound3d}, and \eqref{eq:M2bound3d}, we obtain that for all $\gamma>0$, $R\geq\max\{4C, 1\}$
\begin{align*}
\P(\one_{\Gamma}\mathcal{E}_{22}(s,T)>\g) &\leq \frac{R e^{R}}{\g} + \P\big(\one_{\Gamma}\|u_2(s)\|_{L^{3}}^{3}+C \one_{\Gamma}\|M_1\|_{L^{1}(s,T;L^{3/2})}\geq R\big) \\ & \ \ +  \P\big(C\one_{\Gamma}\|M_1\|_{L^{1}(s,T;L^{3/2})}+C\one_{\Gamma}\|M_2\|_{L^2(s,T;L^3)}\geq R\big)
\\ & \lesssim \frac{e^{2R}}{\g} + \frac{\E\one_{\Gamma} \|u_2(s)\|_{L^{3}}^{3}}{R} + \frac{\E \one_{\Gamma}\|M_1\|_{L^{1}(s,T;L^{3/2})}}{R} + \frac{\E \one_{\Gamma}\|M_2\|_{L^{2}(s,T;L^{3})}}{R}
\\ & \lesssim \frac{e^{2R}}{\g} + \frac{1+\E\one_{\Gamma} \|u_2(s)\|_{L^{6}}^{4}}{R}.
\end{align*}
Taking $R = \max\{\log(\gamma^{1/4}),4C,1\}$, this gives \eqref{eq:energy_estimate_brusselator_threetail}.
\end{proof}

\begin{proof}[Proof of Theorem \ref{t:brusselator_three}] By \eqref{eq:brusselator_three_claim_Step_1_1}, \eqref{eq:brusselator_three_claim_Step_1_2} and \eqref{eq:energy_estimate_brusselator_three}, it holds that, for all $0<s<T<\infty$ and $i\in\{1,2\}$,
\begin{equation*}
 \mathcal{E}_{i}(s,T) := \sup_{t\in [s,\sigma\wedge T)} \|u_i(t)\|^3_{L^{3}}
 +\int_s^{\sigma\wedge T}\int_{\Tor^d} |u_i||\nabla u_i|^2\,\dd x\,\dd r<\infty \ \  \text{ a.s.\ on }\{\sigma>s\}.
\end{equation*}
For $i=1$ note that we used $|u_1|\leq |u_1|^4  + 1$.

Next, we will apply Theorem  \ref{thm:blow_up_criteria}\eqref{it:blow_up_sharp_L} with $d=3$ and $h_0=h=3$ (thus $\zeta_0 = \frac{d(h-1)}{2}=3$) in a similar way as in Step 2 of Theorem \ref{t:global_well_posedness_general}. We first claim that for all $\eta\in [0,1]$, $s>0$ and a.s.\
\begin{align}\label{eq:energy_estimate_brusselator_threealt}
u_i\in L^{3/\eta}(s,\sigma\wedge T;L^{9/(3-2\eta)}),  \ \ i\in \{1, 2\}.
\end{align}

Fix $i\in \{1,2\}$ and observe that from \eqref{eq:energy_estimate_brusselator_three} we see that for $v_i:=\one_{(s,\sigma\wedge T)}|u_i|^{3/2}\in L^\infty(s,T;L^2)\cap L^2(s,T;H^1)$ a.s.
By interpolation and the Sobolev embedding $H^1\embed L^6$, we find that
\begin{align*}
v_i\in
L^{\infty}(s,T;L^{2})\cap L^{2}(s,T;L^6)
\hookrightarrow L^{2/\eta}(s,T;L^{6/(3-2\eta)}) \ \  \text{ for }\eta\in [0,1],
\end{align*}
which implies the claim by rewriting the integrability in terms of $u_i$ again.

Now let $\delta_0\in (1, \frac53)$ be such that Assumption \ref{ass:reaction_diffusion_global}\eqref{it:regularity_coefficients_reaction_diffusion} holds.
Then one can readily check that there exist $q_0\in (3,\frac{3}{2-\s_0})$ and $p_0>\frac{2}{\s_0-1}\vee q_0$ such that $\frac{2}{p_0}+\frac{3}{q_0}=1$ and
Assumptions \ref{ass:reaction_diffusion_global}$(p_0,q_0,h_0,\s_0)$ and \ref{ass:admissibleexp}$(p_0,q_0,h_0,\s_0)$ hold. Observe that by the restriction on $\delta_0$ we have $q_0\in (3, 9)$. Hence, there exists $\eta_0\in (0,1)$ such that $q_0=\frac{9}{3-2\eta_0}$ and therefore $\frac{2}{p_0}=1-\frac{3}{q_0}=\frac{2\eta_0}{3}$, and thus $p_0 = \frac{3}{\eta_0}$.
From \eqref{eq:energy_estimate_brusselator_threealt} with $\eta\in \{0,\eta_0\}$ it follows that
\[\P(s<\sigma<T) = \P\Big(s<\sigma<T,\, \sup_{t\in [s,\sigma)}\|u(t)\|_{L^{3}}+
\|u\|_{L^{p_0}(s,\sigma;L^{q_0})} <\infty\Big)=0,\]
where in the last equality we applied Theorem \ref{thm:blow_up_criteria}\eqref{it:blow_up_sharp_L} with $h_0=d=3$ (and thus $\frac{d(h_0-1)}{2}=3$).
Letting $s\downarrow 0$ and using $\sigma>0$ a.s., we find that $\P(\sigma<T) = 0$. Letting $T\to \infty$ we get $\sigma = \infty$ a.s.

Finally, note that \eqref{eq:tailestBrus13d} follows from \eqref{eq:MetaestBrus3d1} and \eqref{eq:energy_estimate_brusselator_threetail}.
Similarly, \eqref{eq:tailestBrus23d} for $i=2$ follows from \eqref{eq:energy_estimate_brusselator_threetail}. For $i=1$ it follows from
\eqref{eq:MetaestBrus3d1} and \eqref{eq:MetaestBrus3d2} and the estimate $|u_1|\leq |u_1|^4  + 1$.
\end{proof}

\begin{example}[Stochastic Gray--Scott model]
In the deterministic setting, the Gray--Scott model has been proposed by P.\ Gray and S.K.\ Scott in \cite{GS83_model,GS84_model,GS85_model}
in the study of autocatalytic reactions.
Taking into account spatial diffusivity and small-scale fluctuations, stochastic perturbations of the Gray--Scott model are of the form \eqref{eq:reaction_diffusion_system} with $\ell=2$, $F\equiv 0$,
and $f_1(\cdot,u)= - u_1 u_2^2+ \g_1 u_1 + \eta_1$ and $f_2(\cdot,u)=u_1u_2^2+\g_2 u_2+ \eta_2$.
Here $u_1$ and $u_2$ denote the concentration of reactants in certain chemical reactions. Finally, $\g_i,\eta_i$ are $\Progress\otimes \Borel(\Tor^d)$-measurable maps to be determined experimentally.

It is easy to see that the stochastic diffusive Gray--Scott model is a special case of the Brusselator system \eqref{eq:brusselator}. Hence, under suitable assumptions on $(a,b,g,u_0)$, global well-posedness for this model in $d\in \{1, 2, 3\}$ follow from Theorems \ref{t:Brusselator} and \ref{t:brusselator_three} as well.
\end{example}

\section{Continuous dependence on initial data}
\label{s:continuous_dependence}
The aim of this section is to prove the continuity of global solutions of reaction-diffusion equations with respect to $u_0$ assuming (relatively weak) energy estimates. As we have shown in  Sections \ref{s:scalar_global}--\ref{s:global_system}, the validity of such energy estimates depends on the fine structure of the SPDE under consideration. However, they hold in many situations as we have seen in the applications of Sections \ref{s:scalar_global}--\ref{s:global_system}.

As in Section \ref{s:main_results}, let us consider the following system of SPDEs on $\T^d$ with initial data at time $s\geq 0$:
\begin{equation}
\label{eq:reaction_diffusion_system_continuity}
\left\{
\begin{aligned}
\dd u_i -\div(a_i\cdot\nabla u_i) \,\dd t&= \Big[\div(F_i(\cdot, u)) +f_i(\cdot, u)\Big]\,\dd t + \sum_{n\geq 1}  \Big[(b_{n,i}\cdot \nabla) u_i+ \rnoise_{n,i}(\cdot,u) \Big]\,\dd w_t^n,\\
u_i(s)&=u_{s,i},
\end{aligned}\right.
\end{equation}
where $i\in \{1,\dots,\ell\}$ and $\ell\geq 1$ is an integer. Note that \eqref{eq:reaction_diffusion_system_continuity} coincide with \eqref{eq:reaction_diffusion_system} in the case $s=0$.

\subsection{Main results}
We start by formulating the main assumption used in this section.
\begin{assumption}[Global energy estimate]
\label{ass:global_s}
Suppose the assumptions of Theorem \ref{t:reaction_diffusion_global_critical_spaces} are satisfied for $(p,q,h,\s)$ and with $\a_{\crit}$ as in Assumption \ref{ass:admissibleexp}.
Set $q_0:=\frac{d(h-1)}{2}\vee 2$.
Let $\zeta_0\in [q_0,\infty]$ be such that $2-\s-\frac{d}{q}>-\frac{d}{\zeta_0}$ and let $s_0\geq 0$.
We say that Assumption \ref{ass:global_s}$(s_0,\zeta_0)$ holds if we have the following:

For all initial data such that
$$
u_{s_0}\in L^{0}_{\F_{s_0}}(\O;C^{\theta}(\Tor^d;\R^{\ell}))\   \text{ for some $\theta>0$ and }  \ u_{s_0}\in L^{\zeta_0}(\O\times \Tor^d;\R^{\ell}),
$$
there exists a global $(p,\a_{\crit},\s,q)$-solution to \eqref{eq:reaction_diffusion_system_continuity}, and for all $T>s_0$ and $\gamma>0$
\begin{align*}
\P\Big(\sup_{t\in (s_0,T]} \|u(t)\|_{L^{q_0}}^{q_0}\geq \g\Big)
&\leq \psi_{s_0}(\g)(1+\E\|u_{s_0}\|_{L^{\zeta_0}}^{\zeta_0}),\\
\P\Big(\max_{1\leq i\leq \ell} \int_{s_0}^{T}\int_{\Tor^d}|u_i|^{q_0-2}|\nabla u_i|^2\,\dd x\,\dd t\geq \g\Big)
&\leq \psi_{s_0}(\g)(1+\E\|u_{s_0}\|_{L^{\zeta_0}}^{\zeta_0}),
\end{align*}
where $\psi$ does not depend on $\gamma$ and $u$, and satisfies $\lim_{\gamma\to \infty}\psi_{s_0}(\gamma)=0$.

Moreover, given $S\subseteq \R^{\ell}$ we say that Assumption \ref{ass:global_s}$(s_0,\zeta_0)$ holds for $S$-valued functions if the above is satisfies for all $u_{s_0}$ with values in $S$.
\end{assumption}

Due to the estimates proved in Section \ref{s:scalar_global}-\ref{s:global_system} and a translation argument, all (system of) SPDEs analyzed in these sections satisfy
Assumption \ref{ass:global_s}$(s_0,\zeta_0)$ for all $s_0\geq 0$ and some $\zeta_0\in [q_0,\infty]$, where in some cases we need to take $S = [0,\infty)^{\ell}$. Moreover, in all but one case, one can take $\zeta_0=q_0$. The exception is the 3d Brusselator of Subsection \ref{sss:brusselator_three_dimensional} where we need $6=\zeta_0>q_0=q=3$ (see \eqref{eq:tailestBrus13d}-\eqref{eq:tailestBrus23d}). In the latter case, $2-\s-\frac{d}{q}>-\frac{d}{\zeta_0}$ still holds, where without loss of generality, we can restrict ourselves to $\s<\frac{3}{2}$.
In particular, the results below apply to all SPDEs considered in the paper. Finally, an inspection of the following proofs shows that in Assumption \ref{ass:global_s} we may replace $\E\|u_{s_0}\|_{L^{\zeta_0}}^{\zeta_0}$ by $\E\|u_{s_0}\|_{L^{\zeta_0}}^{r_0}$ where $r_0\in (0,\infty)$ is an additional parameter which can be chosen independently of $\zeta_0$.

Next, we formulate the main results of this section.

\begin{theorem}[Continuous dependence on initial data]
\label{t:continuity_general}
Let the assumptions of Theorem \ref{t:reaction_diffusion_global_critical_spaces} be satisfied with $q,p\geq \frac{d(h-1)}{2}\vee 2$ and $p\geq q$. Let Assumption \ref{ass:global_s}$(s_0,\zeta_0)$ be satisfied for all $s_0\in (0,\infty)$ and some $S\subseteq \R^{\ell}$.
Let $u_0$ and $(u_{0}^{(n)})_{n\geq 1}$ be $S$ valued and such that
\[u_{0}^{(n)}\to u_0 \ \ \text{in} \ \ L^0_{\F_0}(\Omega;B^{\frac{d}{q}-\frac{2}{h-1}}_{q,p}(\Tor^d;\R^{\ell})).\]
Then there exist global $(p,\a_{\crit},\s,q)$-solutions $u$ and $u^{(n)}$ to \eqref{eq:reaction_diffusion_system_continuity} with initial data $u_0$ and $u_{0}^{(n)}$ at time $s=0$, respectively, and for all $T\in (0,\infty)$,
$$
u^{(n)}\to u \ \  \text{in} \ \ L^0(\Omega;C([0,T];B^{\frac{d}{q}-\frac{2}{h-1}}_{q,p}(\Tor^d;\R^{\ell}))).
$$

\end{theorem}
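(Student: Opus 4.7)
The strategy is to combine the local continuous dependence on initial data, which is built into the maximal regularity-based local theory used in Theorem \ref{t:reaction_diffusion_global_critical_spaces} (cf.\ \cite{AV19_QSEE_1,AV19_QSEE_2}), with the quantitative tail bounds of Assumption \ref{ass:global_s}, in order to upgrade local in time convergence to convergence on any deterministic interval $[0,T]$. First, I would apply Theorem \ref{t:reaction_diffusion_global_critical_spaces} to each datum to obtain local $(p,\a_{\crit},\s,q)$-solutions $(u,\sigma)$ and $(u^{(n)},\sigma_n)$. By the instantaneous regularization \eqref{eq:reaction_diffusion_H_theta}--\eqref{eq:reaction_diffusion_C_alpha_beta}, for any $s_0>0$ both $u(s_0)$ and $u^{(n)}(s_0)$ lie in $C^\theta(\Tor^d;\R^\ell)$ a.s.\ on the event that the solution has not yet blown up. Combined with a cutoff on $\{\|u_0^{(n)}\|_{B^{d/q-2/(h-1)}_{q,p}}\leq R\}$, this enables an application of Assumption \ref{ass:global_s}$(s_0,\zeta_0)$ after restart at time $s_0$; uniqueness of $(p,\a_{\crit},\s,q)$-solutions then yields $\sigma=\sigma_n=\infty$ a.s.

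For the core stability argument, I would fix $T>0$ and, for $M\geq 1$, introduce the stopping times
\[
\tau^M := \inf\{t\in [s_0,T] : \|u(t)\|_{L^{q_0}}\geq M\}\wedge T,\qquad \tau_n^M := \inf\{t\in [s_0,T]:\|u^{(n)}(t)\|_{L^{q_0}}\geq M\}\wedge T.
\]
The tail estimate of Assumption \ref{ass:global_s} gives $\P(\tau^M<T)\leq \psi_{s_0}(M^{q_0})(1+\E\|u(s_0)\|_{L^{\zeta_0}}^{\zeta_0})$ and, after the cutoff described above, also the uniform estimate $\sup_n\P(\tau_n^M<T)\to 0$ as $M\to\infty$. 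On the complementary event, the blow-up criterion of Theorem \ref{thm:blow_up_criteria}\eqref{it:blow_up_not_sharp_L} is not triggered, and the local continuous dependence from the abstract theory, applied up to $\tau^M\wedge \tau_n^M$, yields $u^{(n)}\to u$ in $C([0,\tau^M\wedge\tau_n^M]; B^{d/q-2/(h-1)}_{q,p})$ in probability. Sending $M\to\infty$ and patching with the convergence on $[0,s_0]$ obtained directly from the local theory then gives the desired global convergence.

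The hardest part will be to make Assumption \ref{ass:global_s}$(s_0,\zeta_0)$ applicable uniformly along the sequence $(u_0^{(n)})_n$: the assumption requires $L^{\zeta_0}$-moments of the solution at the starting time $s_0$, whereas the data are only controlled in the critical Besov space $B^{d/q-2/(h-1)}_{q,p}$ in probability. To bridge this gap I would use the embedding $B^{d/q-2/(h-1)}_{q,p}\hookrightarrow L^{q_0}$ (which holds by $-2/(h-1)\geq -d/q_0$ in view of the definition $q_0=\tfrac{d(h-1)}{2}\vee 2$), the hypothesis $2-\s-d/q>-d/\zeta_0$ built into Assumption \ref{ass:global_s} (yielding $H^{2-\s,q}\hookrightarrow L^{\zeta_0}$, so that maximal $L^p$-regularity bounds control $u(s_0)$ in $L^{\zeta_0}$), and the stability of these maximal regularity estimates with respect to the initial data, after localization on $\{\|u_0^{(n)}\|_{B^{d/q-2/(h-1)}_{q,p}}\leq R\}$. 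A diagonal argument letting $R\to\infty$ and then $n\to\infty$ in the appropriate order should close the argument.
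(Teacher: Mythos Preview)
Your overall architecture—local continuous dependence near $t=0$, then a priori energy bounds to propagate to $[0,T]$—matches the paper's strategy. However, the step ``the local continuous dependence from the abstract theory, applied up to $\tau^M\wedge\tau_n^M$, yields $u^{(n)}\to u$ in $C([0,\tau^M\wedge\tau_n^M];\Y_p)$'' is where the real work lies, and your proposal does not explain how to do it. The local continuous dependence results available from the fixed-point machinery (cf.\ \cite[Proposition 2.9]{AVreaction-local}) only control $u-u^{(n)}$ on a \emph{short} random interval $[0,\sigma_0\wedge\sigma_0^{(n)}]$ determined by a contraction argument, not on an arbitrary interval on which the $L^{q_0}$-norms are bounded. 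Knowing that $\|u(t)\|_{L^{q_0}},\|u^{(n)}(t)\|_{L^{q_0}}\leq M$ prevents blow-up, but it does not by itself yield a stability estimate for the difference $u-u^{(n)}$ on $[s_0,T]$; the blow-up criterion of Theorem~\ref{thm:blow_up_criteria} is an existence statement, not a Lipschitz-type bound.

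The paper closes exactly this gap with Lemma~\ref{l:tail_estimate}: one applies It\^o's formula directly to $\|u_i-v_i\|_{L^{q_0}}^{q_0}$, uses the local Lipschitz bounds of Assumption~\ref{ass:reaction_diffusion_global}\eqref{it:growth_nonlinearities} to control the cross terms by $(1+\|u\|_{L^{q_0+h_0-1}}^{q_0+h_0-1}+\|v\|_{L^{q_0+h_0-1}}^{q_0+h_0-1})\|u-v\|_{L^{q_0}}^{q_0}$, and then applies the stochastic Gronwall lemma. The random Gronwall coefficient $\|u\|_{L^{q_0+h_0-1}((0,T)\times\T^d)}^{q_0+h_0-1}$ is controlled in probability precisely by the energy estimates of Assumption~\ref{ass:global_s} via interpolation (this is Step~2 of the proof of Lemma~\ref{l:tail_estimate}). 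This yields a tail estimate for $\sup_{[s_0,T]}\|u-v\|_{L^{q_0}}$ directly, and the embedding $L^{q_0}\hookrightarrow B^{d/q-2/(h-1)}_{q,p}$ (using $p\geq q_0$) brings one back to the critical space. Your stopping times $\tau^M,\tau_n^M$ are then unnecessary: the energy bounds enter only through the Gronwall coefficient. If you wish to salvage your route, you would need either this difference-energy estimate or a careful concatenation argument showing that the local existence/stability time from the fixed-point theory is uniformly bounded below along the sequence—neither of which is automatic.
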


Note that the assumption $q\geq \frac{d(h-1)}{2}\vee 2$ implies that the critical space considered in Theorem \ref{t:continuity_general} has smoothness $\leq 0$. It is likely that the restriction $p\geq \frac{d(h-1)}{2}\vee 2$ can be removed, but it does not lead to any serious limitations.

Next, we state a similar result for critical Lebesgue spaces under the further restriction that Assumption \ref{ass:global_s}$(s_0,\zeta_0)$ holds for $s_0=0$ as well (which is the case in all applications). Recall that Theorem \ref{t:reaction_diffusion_global_critical_spaces} applies for $p$ large enough (see  \cite[Remark 2.8(c)]{AVreaction-local}).

\begin{proposition}[Continuous dependence on initial data in $L^q$]
\label{prop:L_m_continuity}
Let the
Assumptions \ref{ass:reaction_diffusion_global}$(p,q,h,\s)$ and \ref{ass:admissibleexp}$(p,q,h,\s)$ be satisfied with $q=\frac{d(h-1)}{2}\vee 2$ and for some $\s>1$, $p\geq q\vee \frac{2}{2-\s}$. Set $\a:=\a_{\crit}:=p\big(1-\frac{\s}{2}\big)-1$. Assume that Assumption \ref{ass:global_s}$(s,\zeta)$ holds and for all $s\geq 0$ and for some $\zeta\in [q,\infty]$, $S\subseteq \R^\ell$.
Let $u_0, u_0^{(n)}\in L^0_{\F_0}(\Omega;L^{\m}(\Tor^d;\R^{\ell}))$ be $S$-valued and such that
\[
u_0^{(n)}\to u_0 \ \ \text{in} \ \ L^0(\Omega;L^{\m}(\Tor^d;\R^{\ell})).
\]
Then there exist global $(p,\a_{\crit},\s,q)$-solutions $u$ and $u^{(n)}$  to \eqref{eq:reaction_diffusion_system_continuity} with initial data $u_0$ and $u_0^{(n)}$ at time $s=0$, respectively, and for all and $T\in (0,\infty)$
\begin{align*}
u^{(n)}&\to u \ \ L^0(\Omega;C([0,T];L^q(\Tor^d;\R^{\ell}))).
\end{align*}
\end{proposition}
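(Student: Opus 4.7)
The plan is to reduce Proposition \ref{prop:L_m_continuity} to Theorem \ref{t:continuity_general} via a double approximation argument: approximate both $u_0$ and $u_0^{(n)}$ by Hölder-smooth data, invoke Theorem \ref{t:continuity_general} for the smoothed sequence, and then control the error introduced by the smoothing uniformly in $n$ using the energy bounds from Assumption \ref{ass:global_s}. As a preliminary step, by the usual $\Omega$-localization with $\Gamma_k = \{\|u_0\|_{L^q} + \sup_n \|u_0^{(n)}\|_{L^q}\le k\}$ and a diagonal selection, I would reduce to the case where $u_0, u_0^{(n)}$ are uniformly bounded in $L^\infty(\Omega; L^q)$ with $u_0^{(n)} \to u_0$ in $L^q(\Omega\times \T^d)$.

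For each $\varepsilon>0$, I would construct Hölder-smooth approximations $v_0^\varepsilon, v_0^{(n),\varepsilon}\in L^\infty(\Omega; C^\theta(\T^d;\R^\ell))$ (for some $\theta>0$) by mollification on $\T^d$, preserving the range constraint $S$ when necessary (e.g.\ positivity is preserved by convolution with a nonnegative kernel), and satisfying $v_0^{(n),\varepsilon}\to v_0^\varepsilon$ in $L^\infty(\Omega; B^\alpha_{q,p})$ as $n\to\infty$ for some $\alpha>0$, as well as $v_0^\varepsilon \to u_0$ and $v_0^{(n),\varepsilon}\to u_0^{(n)}$ in $L^q(\Omega\times\T^d)$ as $\varepsilon\to 0$, the latter uniformly in $n$ (this uniformity follows from the uniform $L^q$-integrability of $(u_0^{(n)})_n$). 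By Assumption \ref{ass:global_s}$(0,\zeta)$, which applies to Hölder data, there exist global $(p,\a_{\crit},\s,q)$-solutions $v^\varepsilon, v^{(n),\varepsilon}$ to \eqref{eq:reaction_diffusion_system_continuity} with initial data at $s=0$, satisfying the tail bounds of Assumption \ref{ass:global_s}. Since $v_0^{(n),\varepsilon}\to v_0^\varepsilon$ in $B^{\frac{d}{q}-\frac{2}{h-1}}_{q,p} = B^0_{q,p}$, Theorem \ref{t:continuity_general} gives $v^{(n),\varepsilon}\to v^\varepsilon$ in probability in $C([0,T]; L^q)$ as $n\to\infty$, for each fixed $\varepsilon$.

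The heart of the proof is the uniform stability estimate
\[
\lim_{\varepsilon\downarrow 0}\sup_{n}\P\Big(\|u^{(n)} - v^{(n),\varepsilon}\|_{C([0,T]; L^q)} \geq \eta\Big) = 0 \quad \text{for every } \eta>0,
\]
and the analogous bound for $u - v^\varepsilon$, where $u^{(n)}, u$ are the global solutions to be constructed. To prove it, I would follow the blueprint of Lemma \ref{l:high_integrability_reaction_diffusion}: introduce stopping times $\tau_k$ freezing a joint bound on the $L^q$-norm and the gradient-integral quantity $\int|u|^{q-2}|\nabla u|^2$ of both $u^{(n)}$ and $v^{(n),\varepsilon}$, which thanks to Assumption \ref{ass:global_s} satisfy $\P(\tau_k<T) \leq \psi_0(k)$ with $\psi_0(k)\to 0$ as $k\to\infty$, uniformly in $n$ and $\varepsilon$. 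On $[0,\tau_k]$, apply the generalized It\^o formula of Lemma \ref{lem:Ito_generalized} to $\|u^{(n)}-v^{(n),\varepsilon}\|_{L^q}^q$. The local Lipschitz estimates of Assumption \ref{ass:reaction_diffusion_global}\eqref{it:growth_nonlinearities} together with the interpolation inequalities of Section \ref{ss:reaction_diffusion_energy_estimate} (Gagliardo--Nirenberg of the form $L^\infty(L^q)\cap L^2(|u|^{q-2}|\nabla u|^2) \hookrightarrow L^{p_0}(L^{q_0})$ with $\frac{d}{q_0}+\frac{2}{p_0}=\frac{2}{h-1}$) allow one to dominate the drift of $\|u^{(n)}-v^{(n),\varepsilon}\|_{L^q}^q$ by $C_k (1 + \Psi(t))\|u^{(n)}-v^{(n),\varepsilon}\|_{L^q}^q$ with $\Psi\in L^1(0,T)$ controlled uniformly in $n,\varepsilon$ on $[0,\tau_k]$; the stochastic Gronwall inequality \eqref{eq:GrownwallLp} then gives control of the difference in terms of $\|u_0^{(n)}-v_0^{(n),\varepsilon}\|_{L^q}$, which vanishes uniformly in $n$ as $\varepsilon\to 0$. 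Existence of $u^{(n)}$ and $u$ with initial data in $L^q$ is deduced by the same Cauchy argument applied to the family $\{v^\varepsilon\}_\varepsilon$; the identification as $(p,\a_{\crit},\s,q)$-solutions follows from the compatibility results \cite[Proposition 3.5]{AVreaction-local}, with continuity $u^{(n)}\in C([0,T]; L^q)$ obtained from \cite[Propositions 3.1 and 7.1]{AVreaction-local} quoted in Lemma \ref{l:high_integrability_reaction_diffusion}. The main obstacle will be the superlinearity of $f$ and $g$: the linearized $L^q$-energy identity produces terms like $(|u^{(n)}|^{h-1}+|v^{(n),\varepsilon}|^{h-1})|u^{(n)}-v^{(n),\varepsilon}|^q$, and their absorption into a Gronwall-type bound relies crucially on the precise critical $L^p(L^q)$ integrability provided by the energy bounds of Assumption \ref{ass:global_s}, which is why the critical choice $q = \frac{d(h-1)}{2}\vee 2$ is essential.
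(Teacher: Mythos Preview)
Your approach is essentially the same as the paper's: both rest on an $L^{q_0}$-stability estimate for the difference of two solutions, obtained by applying It\^o's formula to $\|u-v\|_{L^{q_0}}^{q_0}$ and the stochastic Gronwall lemma (the paper isolates this as Lemma \ref{l:tail_estimate}), combined with approximation of $L^q$ data by H\"older data and identification of the limit via Theorem \ref{t:continuity_general}. One small slip: Theorem \ref{t:continuity_general} only yields convergence in $C([0,T];B^0_{q,p})$, not in $C([0,T];L^q)$; you should instead apply your own stability estimate to the H\"older pair $(v^{(n),\varepsilon},v^{\varepsilon})$ to get $L^q$-convergence for fixed $\varepsilon$, and likewise your step 4 must first be proved for H\"older pairs and then passed to $L^q$ data by density (this is exactly the paper's Step 2), since applying It\^o from $t=0$ to $u^{(n)}$ directly presupposes the $C([0,T];L^q)$-regularity you are trying to establish.
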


Let us stress that $u^{(n)}, u\in C([0,T];L^q(\Tor^d;\R^{\ell}))$ a.s.\ is part of the assertion in the above.  It does not follow from Theorem \ref{t:reaction_diffusion_global_critical_spaces} because we only have $L^q\subsetneq B^{0}_{q,p}$ for $p\geq q$.

In particular, Proposition \ref{prop:L_m_continuity} implies that $\lim_{t\downarrow 0}u(t)= u_0$ in $L^0(\O;L^q(\Tor^d;\R^{\ell}))$. As a consequence, the energy estimates of Sections \ref{s:scalar_global} and \ref{s:global_system} extend to $s=0$ for initial data in $L^\zeta$:
\begin{corollary}
\label{cor:estimate_up_to_0}
The following energy estimates extend to $s=0$ for the initial data as stated:
\begin{enumerate}[{\rm(1)}]
\item\label{it:estimate_up_to_0_Lzeta} \eqref{eq:aprioribounds1} and \eqref{eq:aprioribounds2} with $\Gamma=\O$ for $u_0\in L^{\zeta}(\T^d;\R^{\ell})$.
\item \eqref{eq:tailestBrus1} and \eqref{eq:tailestBrus2} with $\Gamma=\O$ for $u_0\in L^{\zeta}(\T^d;\R^2)$.
\item \eqref{eq:tailestBrus13d} and \eqref{eq:tailestBrus23d} with $\Gamma=\O$ for $u_0\in L^{6}(\T^3;\R^{2})$.
\end{enumerate}
\end{corollary}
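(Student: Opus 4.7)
The plan is to treat all four parts by a single approximation--and--limit scheme, using Proposition \ref{prop:L_m_continuity} to transport bounds from regular to rough initial data. Given $S$-valued $u_0\in L^{\zeta}(\O;L^{\zeta}(\Tor^d;\R^{\ell}))$, construct smoothed $\F_0$-measurable approximations $u_{0}^{(n)}$ obtained by truncation at level $n$ (and, if necessary, projection onto $S$), so that $u_{0}^{(n)}\in L^{\infty}(\O\times\Tor^d;\R^{\ell})\subset B^{\varepsilon}_{\zeta,\infty}(\Tor^d;\R^{\ell})$ for every $\varepsilon>0$, and $u_{0}^{(n)}\to u_0$ in $L^{\zeta}(\O;L^{\zeta}(\Tor^d;\R^{\ell}))$ by dominated convergence. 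Let $u^{(n)}$ and $u$ be the corresponding global solutions given by Theorem \ref{t:global_well_posedness_general}, Theorem \ref{t:global_well_posedness_general_system}, Theorem \ref{t:Brusselator}, or Theorem \ref{t:brusselator_three}. Since $\zeta\in[q,\infty]$, Proposition \ref{prop:L_m_continuity} applies and delivers $u^{(n)}\to u$ in probability in $C([0,T];L^{q}(\Tor^d;\R^\ell))$; extract an a.s.\ convergent subsequence (not relabelled).

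For each $n$, smoothness of $u_{0}^{(n)}$ triggers the ``$s=0$'' clauses in Lemma \ref{l:high_integrability_reaction_diffusion} and in the analogous energy lemmas used in Sections \ref{ss:dissipation_system_case}--\ref{s:global_system}. Letting $L\to\infty$ in the truncation $\Gamma=\{\|u_{0}^{(n)}\|_{L^{\zeta}}\le L\}$ via monotone convergence (admissible because $u_{0}^{(n)}\in L^{\zeta}(\O\times\Tor^d)$), each of the bounds \eqref{eq:aprioribounds1}, \eqref{eq:aprioribounds2}, \eqref{eq:tailestBrus1}--\eqref{eq:tailestBrus2}, and \eqref{eq:tailestBrus13d}--\eqref{eq:tailestBrus23d} holds with $s=0$ and $\Gamma=\O$, with right-hand sides of the form $N_0(1+\E\|u_{0}^{(n)}\|_{L^{\zeta}}^{\zeta})$ or $\psi(\gamma)(1+\E\|u_{0}^{(n)}\|_{L^{\zeta}}^{\zeta})$. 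Since $u_{0}^{(n)}\to u_0$ in $L^{\zeta}$, the right-hand sides converge to their analogues for $u_0$.

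It remains to pass to the limit on the left-hand sides. For the supremum-type terms in \eqref{eq:aprioribounds1} and \eqref{eq:aprioribounds2}, a.s.\ convergence $u^{(n)}(t)\to u(t)$ in $L^{q}$ combined with Fatou's lemma (in $\om$, and in $t$ for the pathwise supremum) yields the desired inequalities; if $\zeta>q$ the $L^{\zeta}$-norm is lower semicontinuous w.r.t.\ $L^{q}$-convergence by Fatou applied pointwise in $x$ along a further a.e.\ convergent subsequence. For the tail bounds of parts (3)--(4), we use a portmanteau-type argument: replacing $\psi$ by the non-increasing envelope $\bar\psi(\gamma)=\sup_{\gamma'\ge\gamma}\psi(\gamma')$ (which still satisfies $\bar\psi(\gamma)\to0$), the inclusion $\{X\ge\gamma\}\subseteq\{X^{(n)}\ge\gamma-\varepsilon\}\cup\{|X-X^{(n)}|>\varepsilon\}$, followed by Fatou in $n$ and $\varepsilon\downarrow0$, delivers the same tail estimate for the limit solution $u$.

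The key obstacle is the gradient integral $\E\int_0^T\int_{\Tor^d}|u_i|^{\zeta-2}|\nabla u_i|^2\,dxdt$, since convergence in $C([0,T];L^q)$ gives no direct gradient control. To overcome this, set $v_i^{(n)}:=|u_i^{(n)}|^{(\zeta-2)/2}u_i^{(n)}$; the energy bound for $u^{(n)}$ translates into a uniform bound for $v_i^{(n)}$ in $L^{2}(\O;L^{\infty}(0,T;L^2)\cap L^{2}(0,T;H^{1}))$ via the identity $|\nabla v_i^{(n)}|^2\asymp|u_i^{(n)}|^{\zeta-2}|\nabla u_i^{(n)}|^2$. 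The a.s.\ $L^{q}$-convergence $u^{(n)}\to u$ forces $v_i^{(n)}\to v_i:=|u_i|^{(\zeta-2)/2}u_i$ in $L^{2}(0,T;L^{2})$ a.s., and weak compactness in $L^{2}(\O;L^{2}(0,T;H^{1}))$ identifies the weak limit of $\nabla v_i^{(n)}$ with $\nabla v_i$. Weak lower semicontinuity of the $L^{2}(\O\times(0,T)\times\Tor^d)$-norm then gives
\[
\E\int_0^T\!\int_{\Tor^d}|u_i|^{\zeta-2}|\nabla u_i|^2\,dxdt\le\liminf_n\E\int_0^T\!\int_{\Tor^d}|u_i^{(n)}|^{\zeta-2}|\nabla u_i^{(n)}|^2\,dxdt,
\]
closing the argument.
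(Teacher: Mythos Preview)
Your overall scheme---approximate $u_0$ by data for which the $s=0$ bounds are already known, then pass to the limit via Proposition \ref{prop:L_m_continuity} and weak lower semicontinuity---is exactly the paper's approach, and your treatment of the gradient term through $v_i^{(n)}=|u_i^{(n)}|^{(\zeta-2)/2}u_i^{(n)}$ and weak compactness in $L^2(\Omega;L^2(0,T;H^1))$ mirrors the paper's argument.

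There is, however, a genuine gap in your construction of the approximants. You write that truncation at level $n$ yields $u_0^{(n)}\in L^{\infty}(\O\times\Tor^d;\R^{\ell})\subset B^{\varepsilon}_{\zeta,\infty}(\Tor^d;\R^{\ell})$ for every $\varepsilon>0$. This embedding is false: for $\varepsilon>0$ the Besov space $B^{\varepsilon}_{\zeta,\infty}$ encodes positive H\"older--type smoothness, and a generic bounded function does not belong to it. Truncation in the range variable produces $L^{\infty}$ functions but confers no spatial regularity whatsoever. Consequently the ``$s=0$'' clause in Lemma \ref{l:high_integrability_reaction_diffusion} (and its variants), which requires $u_0\in B^{\varepsilon}_{\zeta,\infty}$, is not triggered by your approximants. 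The paper avoids this by taking $u_0^{(n)}\in L^{\zeta}_{\F_0}(\O;C^1(\Tor^d;\R^{\ell}))$, obtained for instance by mollification; mollification also preserves the positivity needed for $S=[0,\infty)^{\ell}$. Once you replace truncation by mollification (possibly after a preliminary truncation to control moments), the rest of your argument goes through.

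A smaller point: your claim that a.s.\ convergence in $C([0,T];L^{q})$ ``forces $v_i^{(n)}\to v_i$ in $L^2(0,T;L^2)$ a.s.'' is too strong when $\zeta>q$, since $|v_i^{(n)}|^2=|u_i^{(n)}|^{\zeta}$ and you only control $L^q$. What you actually need---and what suffices to identify the weak limit---is convergence in measure on $(0,T)\times\Tor^d$, which does follow from $L^q$-convergence. The paper makes this explicit by passing through convex combinations (Mazur) to pin down the weak limit as $|u|^{\zeta/2}$.
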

Before turning to the proofs of the above results we collect some observations.

\begin{remark}[More on continuity in Lebesgue spaces]
\label{r:continuity_Lq}
Suppose that the conditions of Proposition \ref{prop:L_m_continuity} are satisfied (in particular $q=\frac{d(h-1)}{2}\vee 2$). Let $T>0$ and $i\in \{1, \ldots, \ell\}$. Proposition \ref{prop:L_m_continuity} implies that $\lim_{t\downarrow 0}u_i(t)=u_{0,i}$ in $L^0(\Omega;L^q(\T^d))$. As a consequence, the following improvement can be obtained under further restrictions
\begin{enumerate}[{\rm(a)}]
\item\label{it:Lq_continuity_in_norma} If $r_0>0$ and $\dps \sup_{s\in [0,T]}\E \|u_i(s)\|_{L^{q}(\T^d)}^{r_0}<\infty$,
 then $\dps\lim_{t\downarrow 0}u(t)=u_{0}$ in $L^r(\O;L^q(\Tor^d))$ for all $r\in (0,r_0)$.
\item\label{it:Lq_continuity_in_normb}
If $\dps\limsup_{s\downarrow 0} \E \|u_i(s)\|_{L^q(\T^d)}^q\leq \E\|u_{0,i}\|_{L^q(\T^d)}^q<\infty$,
then $\dps\lim_{t\downarrow 0}u_i(t)=u_{0,i} $ in $L^q(\O;L^q( \Tor^d))$.
\end{enumerate}
Part \eqref{it:Lq_continuity_in_norma} follows from \cite[Theorem 5.12]{Kal}. The condition is satisfied in many of the applications.

To prove \eqref{it:Lq_continuity_in_normb} note that by Fatou's lemma and the convergence in probability
\[\E\|u_{0,i}\|_{L^q(\T^d)}^q  \leq \liminf_{s\downarrow 0} \E \|u_i(s)\|_{L^q(\T^d)}^q \leq \limsup_{s\downarrow 0} \E \|u_i(s)\|_{L^q(\T^d)}^q\leq \E\|u_{0,i}\|_{L^q(\T^d)}^q.\]
Hence, $\lim_{s\downarrow 0}\E\|u_i(s)\|_{L^q(\T^d)}^q = \E\|u_{0,i}\|_{L^q(\T^d)}^q$ and the required result follows again from \cite[Theorem 5.12]{Kal}. The condition in \eqref{it:Lq_continuity_in_normb} is satisfied in the case of fully dissipative systems discussed in Section \ref{s:scalar_global} in case where Assumption \ref{ass:dissipation_general}$(q)$ holds, cf.\ \eqref{eq:aprioribothinu} and Step 6 in the proof of Lemma \ref{l:high_integrability_reaction_diffusion} since $\sigma=\infty$ a.s.\ by Theorem \ref{t:global_well_posedness_general}.
\end{remark}

\subsection{Proofs of Theorem \ref{t:continuity_general}, Proposition \ref{prop:L_m_continuity} and Corollary \ref{cor:estimate_up_to_0}}
\label{ss:proofs_continuity_data}
We begin by proving the following key lemma. The following should be compared with \cite[Proposition 4.5]{AV22_variational}.

\begin{lemma}
\label{l:tail_estimate}
Let the assumptions of Theorem \ref{t:reaction_diffusion_global_critical_spaces} be satisfied, in particular $(p,\s,q)$ satisfy Assumption \ref{ass:admissibleexp} and $\a_{\crit}:=p\big(\frac{h}{h-1}-\frac{1}{2}(\reg+\frac{d}{q})\big)-1$.
 Set $q_0:=\frac{d(h-1)}{2}\vee 2$.
Let $s_0\geq 0$ be fixed and suppose that Assumption \ref{ass:global_s}$(s_0,\zeta_0)$ holds for some $S\subseteq \R^{\ell}$.
Fix
$$
u_{s_0},v_{s_0}\in L^{0}_{\F_{s_0}}(\O;C^{\theta}(\Tor^d;\R^{\ell}))\   \text{ for some $\theta>0$ such that }  \ u_{s_0},v_{s_0}\in L^{q_0}(\O\times \Tor^d;\R^{\ell}).
$$
Let $u$ and $v$ be the $(p,\a_{\crit},\s,q)$-solution to \eqref{eq:reaction_diffusion_system_continuity} with $S$-valued initial data $u_{s_0}$ and $v_{s_0}$, respectively.
Then for all  $T>s$,  $R>0$, there are constants $C_R$ and $C_T$ such that $\gamma>0$
\begin{equation}\label{eq:tail_estimateFeller}
\P\Big(\max_{i\in\{1, \ldots, \ell\}} \sup_{t\in [s,T]}\|u_i(t)-v_i(t)\|_{L^{q_0}}^{q_0}\geq \g  \Big)
 \leq
\frac{2e^{R}}{\g}\E\|u_{s_0}-v_{s_0}\|_{L^{q_0}}^{q_0}+ \wt{\psi}_{s_0}(R) (1+\E\|u_{s_0}\|_{L^{\zeta_0}}^{\zeta_0}+\E\|v_{s_0}\|_{L^{\zeta_0}}^{\zeta_0}),
\end{equation}
where $\wt{\psi}_{s_0}$ only depends on $\psi_{s_0}$, $T$, $q_0$, $\nu_i$ and $d$, and satisfies $\lim_{R\to \infty} \wt{\psi}_{s_0}(R) = 0$.
\end{lemma}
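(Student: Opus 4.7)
The plan is to derive a stochastic differential inequality for $w:=u-v$ in $L^{q_0}$ and close it via the stochastic Gronwall lemma (Lemma \ref{lem:gronwall}), with the integrable rate $M(\cdot)$ being controlled in probability by means of the a priori bounds supplied by Assumption \ref{ass:global_s}$(s_0,\zeta_0)$ applied to both $u$ and $v$.

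\textbf{Step 1 (Equation for $w$ and localization).} The process $w$ solves the system obtained by subtracting \eqref{eq:reaction_diffusion_system_continuity} for $v$ from the one for $u$, with drift $\div(a_i\cdot \nabla w_i)+\div(F_i(\cdot,u)-F_i(\cdot,v))+(f_i(\cdot,u)-f_i(\cdot,v))$ and diffusion $(b_{n,i}\cdot \nabla)w_i+(g_{n,i}(\cdot,u)-g_{n,i}(\cdot,v))$. As in the proofs of Lemmas \ref{l:high_integrability_reaction_diffusion} and \ref{l:high_integrability_reaction_diffusion_with_dissipation}, I introduce stopping times $(\tau_k)_{k\geq 1}$ increasing to $T$, defined via the instantaneous regularization \eqref{eq:reaction_diffusion_H_theta}--\eqref{eq:reaction_diffusion_C_alpha_beta} applied to both $u$ and $v$, so that $u,v\in C([s_0,\tau_k];C(\Tor^d;\R^\ell))$ a.s.

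\textbf{Step 2 (It\^o's formula for $\|w_i\|_{L^{q_0}}^{q_0}$).} Using the extended It\^o formula (Lemma \ref{lem:Ito_generalized}) as in \eqref{eq:identityIto}, I obtain, for each $i$,
\begin{align*}
\|w_i^{(k)}(t)\|_{L^{q_0}}^{q_0} = \|w_i(s_0)\|_{L^{q_0}}^{q_0} + q_0(q_0-1)\mathcal{D}_i(t) + q_0\mathcal{S}_i(t),
\end{align*}
where $\mathcal{S}_i$ is a continuous local martingale with $\mathcal{S}_i(s_0)=0$, and where $\mathcal{D}_i$ collects the dissipation $-\int|w_i|^{q_0-2}(a_i\nabla w_i\cdot\nabla w_i-\tfrac12\sum_n((b_{n,i}\cdot\nabla)w_i)^2)$, the $F$-cross term, the $f$-difference term, and the $g$-difference correction. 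By Assumption \ref{ass:reaction_diffusion_global}\eqref{it:ellipticity_reaction_diffusion}, the leading order term produces a term $\leq -\theta\int |w_i|^{q_0-2}|\nabla w_i|^2$ with $\theta=\min_i\ellip_i-\varepsilon>0$.

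\textbf{Step 3 (Nonlinear estimate with integrable rate).} Assumption \ref{ass:reaction_diffusion_global}\eqref{it:growth_nonlinearities} gives, pointwise,
\begin{align*}
|f_i(\cdot,u)-f_i(\cdot,v)|+|F_i(\cdot,u)-F_i(\cdot,v)|^2+\|g_i(\cdot,u)-g_i(\cdot,v)\|_{\ell^2}^2\lesssim \Psi(u,v)|w|,
\end{align*}
with $\Psi(u,v):=1+|u|^{h-1}+|v|^{h-1}$ (with appropriate exponents for the quadratic terms). Proceeding as in the proof of Lemma \ref{l:high_integrability_reaction_diffusion_with_dissipation}, I estimate $\int|w_i|^{q_0-2}|w_i||w|\Psi(u,v)\,dx$ by Hölder/Sobolev interpolation, setting $v_{q_0}:=|w_i|^{q_0/2}$ and exploiting $H^{\theta}(\Tor^d)\hookrightarrow L^{r}(\Tor^d)$ for the appropriate exponents. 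This yields
\begin{align*}
\mathcal{D}_i(t)\leq -\tfrac{\theta}{2}\int_{s_0}^{t}\!\!\int_{\Tor^d}\!\!|w_i|^{q_0-2}|\nabla w_i|^2\,dxdr+\int_{s_0}^{t}\!\!M(r)\sum_{j=1}^{\ell}\|w_j(r)\|_{L^{q_0}}^{q_0}\,dr,
\end{align*}
where $M(r)$ depends only on suitable space-time norms of $u(r)$ and $v(r)$; the key point, as in Step 2 of the proof of Theorem \ref{t:global_well_posedness_general}, is that the choice $q_0=\frac{d(h-1)}{2}\vee 2$ is exactly critical for the interpolation, so that $M$ can be taken integrable on $[s_0,T]$ as a function of $(\|u\|_{L^\infty_tL^{q_0}},\int|u|^{q_0-2}|\nabla u|^2\,dxdt)$ and similarly for $v$.

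\textbf{Step 4 (Stochastic Gronwall and tail bound).} Summing over $i$, setting $\Phi(t):=\sum_i\|w_i^{(k)}(t)\|_{L^{q_0}}^{q_0}$ I get $\Phi(t)\leq \Phi(s_0)+\int_{s_0}^{t}CM(r)\Phi(r)\,dr+\mathrm{mart}$. Applying Lemma \ref{lem:gronwall} in the same fashion as in \eqref{eq:estimate_tail_probability_sub_optimal} gives, for any $R,\gamma>0$,
\begin{align*}
\P\bigl(\sup_{t\in[s_0,\tau_k]}\Phi(t)\geq\gamma\bigr)\leq \frac{e^R}{\gamma}\E\Phi(s_0)+\P\Bigl(C\!\int_{s_0}^{T}\!M(r)\,dr>R\Bigr).
\end{align*}
Letting $k\to\infty$ and using Fatou gives the same bound with $\tau_k$ replaced by $T$. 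The first term on the right-hand side of \eqref{eq:tail_estimateFeller} follows after absorbing the constant $C$ into $R$ (replace $R$ by $R+\log(2C)$, say).

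\textbf{Step 5 (Controlling $\int M(r)\,dr$).} By construction, $\int_{s_0}^{T}M(r)\,dr$ is dominated by a polynomial expression in the quantities $\sup_{t\in [s_0,T]}\|u(t)\|_{L^{q_0}}^{q_0}$, $\sup_{t\in[s_0,T]}\|v(t)\|_{L^{q_0}}^{q_0}$ and the integrated gradient terms $\int_{s_0}^{T}\!\int_{\Tor^d}|u_i|^{q_0-2}|\nabla u_i|^2\,dxdt$ and similarly for $v$. Applying Assumption \ref{ass:global_s}$(s_0,\zeta_0)$ to both $u$ and $v$, together with Chebyshev's inequality, yields
\begin{align*}
\P\Bigl(\int_{s_0}^{T}M(r)\,dr>R\Bigr)\leq \wt{\psi}_{s_0}(R)\bigl(1+\E\|u_{s_0}\|_{L^{\zeta_0}}^{\zeta_0}+\E\|v_{s_0}\|_{L^{\zeta_0}}^{\zeta_0}\bigr),
\end{align*}
where $\wt{\psi}_{s_0}(R)\to 0$ as $R\to\infty$. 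Combining Steps 4 and 5 gives \eqref{eq:tail_estimateFeller}.

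\textbf{Main obstacle.} The delicate point is Step 3: one must produce a rate $M$ that is simultaneously (i) integrable in time a.s., (ii) tail-controlled by the quantities appearing in Assumption \ref{ass:global_s}, and (iii) able to absorb the nonlinear cross term via Hölder and Sobolev interpolation after the gradient part has been absorbed into the dissipation. This is exactly where the criticality $q_0=\frac{d(h-1)}{2}\vee 2$ is used, and the required interpolation mirrors the scaling computation performed in Step 2 of the proof of Theorem \ref{t:global_well_posedness_general}.
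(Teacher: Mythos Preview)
Your proposal is correct and follows essentially the same route as the paper's proof: It\^o's formula for $\|w_i\|_{L^{q_0}}^{q_0}$, a critical Sobolev/interpolation estimate to reduce the nonlinear cross terms to $M(r)\sum_j\|w_j\|_{L^{q_0}}^{q_0}$, stochastic Gronwall, and then control of $\int M$ via Assumption~\ref{ass:global_s}. The paper makes your Step~3 explicit by setting $h_0:=1+\tfrac{2}{d}q_0\geq h$ and identifying $M(r)=1+\|u(r)\|_{L^{q_0+h_0-1}}^{q_0+h_0-1}+\|v(r)\|_{L^{q_0+h_0-1}}^{q_0+h_0-1}$, then bounding $\|z\|_{L^{q_0+h_0-1}((0,T)\times\T^d)}^{q_0}\lesssim \|z\|_{L^\infty(L^{q_0})}^{q_0}+\int\!\!\int|z|^{q_0-2}|\nabla z|^2$ (your Step~5) --- this is precisely the interpolation you allude to.
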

The tail estimate \eqref{eq:tail_estimateFeller} is only useful if $\E\|u_{s_0}-v_{s_0}\|_{L^{q_0}}^{q_0}$ is very small. For instance this is the case if $v_{s_0} = u_{s_0}^{(n)}$ converges to $u_{s_0}$ in $L^{q_0}(\Omega\times\T^d)$. After letting $n\to \infty$, we can let $R\to \infty$ to obtain a convergence result for the corresponding solutions.

\begin{proof}
By a translation argument, we can suppose $s_0=0$. From Assumption \ref{ass:global_s} and arguing as in Step 2 of Theorem \ref{t:global_well_posedness_general}, we obtain global existence and uniqueness for \eqref{eq:reaction_diffusion_system_continuity} with initial data $u_0$ and $v_0$. Moreover, by \cite[Proposition 3.1]{AVreaction-local} with large integrability parameters $(\wt{p}, \wt{q})$, $\delta=1$ and $\kappa=0$, we find that $u$ and $v$ are in $C([0,T]\times \T^d;\R^\ell)\cap L^{\wt{p}}(0,T;H^{1,\wt{q}}(\T^d;\R^\ell))$ a.s. Note that here we also used the compatibility result \cite[Proposition 3.5]{AVreaction-local}.

Let $w_i:=u_i-v_i$ and $w_{0,i}:=u_{0,i}-v_{0,i}$ for all $i\in \{1,\dots,\ell\}$. By It\^o's formula applied to $w_i\mapsto \|w_i\|_{L^{q_0}}^{q_0}$ (and an approximation argument as in \eqref{eq:identityIto}) we have, a.s.\ for all $t\in [0,T]$,
\begin{equation}
\label{eq:Ito_difference_continuity}
\|w_i(t)\|_{L^{q_0}}^{q_0}
+ \frac{\ellip_i q_0(q_0-1)}{2} \int_0^t \int_{\Tor^d} |w_i|^{q_0-2} |\nabla w_i|^2\,\dd x\,\dd s\leq
\|w_0\|_{L^{q_0}}^{q_0} + C_0\sum_{j=0}^2\int_0^t  J_j\,\dd s + M(t),
\end{equation}
where $M$ is a continuous local martingale such that $M(0)=0$, $C_0$ is a constant independent of $u_0,v_0$ and $J_j:[0,T]\times \Omega\to \R$ for $j\in \{0, 1, 2\}$ are given by
\begin{align*}
J_0 &:=
 \int_{\Tor^d}|w_i|^{q_0-1} |f_i(\cdot,u)-f_i(\cdot,v)|\,\dd x ,\\
J_1&:= \int_{\Tor^d}|F_i(\cdot,u)-F_i(\cdot,v)| |w|^{q_0-2} |\nabla w|\,\dd x, \\
J_2&:=\sum_{n\geq 1} \int_{\Tor^d} |w_i|^{q_0-2}|g_{n,i}(\cdot,u)-g_{n,i}(\cdot,v)|^{2}\,\dd x.
\end{align*}
Here and below we omitted the $(t,x)$-dependency for notational brevity.

\emph{Step 1: For all $\varepsilon>0$ there exists $C_{\varepsilon}>0$ such that a.s.\ pointwise in $[0,T]$,}
\begin{equation*}
\begin{aligned}
\sum_{j=0}^2 J_{j}
&\leq \varepsilon \max_{1\leq i\leq \ell} \int_{\Tor^d} |w_i|^{q_0-2}|\nabla w_i|^2\,\dd x +  C_{\varepsilon}(1+ \|u\|_{L^{q_0+h_0-1}(\Tor^d;\R^{\ell})}^{q_0+h_0-1}+ \|v\|_{L^{q_0+h_0-1}(\Tor^d;\R^{\ell})}^{q_0+h_0-1}) \|w\|_{L^{q_0}(\Tor^d;\R^{\ell})}^{q_0},
\end{aligned}
\end{equation*}
\emph{where $h_0:=1+\frac{2}{d}q_0\geq h$ and $q_0=\frac{d(h-1)}{2}\vee 2$.}

We begin by estimating $J_0$. Assumption \ref{ass:reaction_diffusion_global}\eqref{it:growth_nonlinearities}  and H\"{o}lder's inequality gives
\begin{equation}
\begin{aligned}
\label{eq:estimate_J0}
J_0
&\lesssim
 \int_{\Tor^d} (1+ |u|^{h_0-1}+|v|^{h_0-1})|w|^{q_0}\,\dd x\\
&\lesssim (1+\|u\|_{L^{q_0+h_0-1}}^{h_0-1}
+\|v\|_{L^{q_0+h_0-1}}^{h_0-1})\|w\|_{L^{q_0+h_0-1}}^{q_0}\\
&\lesssim (1+\|u\|_{L^{q_0+h_0-1}}^{h_0-1}
+\|v\|_{L^{q_0+h_0-1}}^{h_0-1})\big(\max_{1\leq i\leq \ell}\|w_i\|_{L^{q_0+h_0-1}}^{q_0}\big).
\end{aligned}
\end{equation}
Note that $q_0+h_0-1=(\frac{d+2}{2})(h_0-1)=\frac{d+2}{d}q_0$. Fix $i\in \{1,\dots,\ell\}$. By Sobolev embedding and interpolation estimates we find that with $\theta=\frac{d}{d+2}$,
\begin{equation}
\begin{aligned}
\label{eq:estimate_w_i}
\|w_i\|_{L^{q_0+h-1}(\Tor^d)}^{q_0}
&=\big\||w_i|^{q_0/2}\big\|_{L^{\frac{2(d+2)}{d}}}^{2}
\\
&\lesssim \big\||w_i|^{q_0/2}\big\|_{H^{\theta}}^{2}
\\
&\leq
 \big\||w_i|^{q_0/2}\big\|_{L^2}^{2(1-\theta)}\big\||w_i|^{q_0/2}\big\|_{H^{1}}^{2\theta}\\
&\lesssim \|w_i\|_{L^{q_0}}^{q_0} +  \|w_i\|_{L^{q_0}}^{q_0(1-\theta)}
\Big(\int_{\Tor^d} |w_i|^{q_0-2}|\nabla w_i|^2\,\dd x\Big)^{2\theta}.
\end{aligned}
\end{equation}
Since $\frac{h_0-1}{1-\theta}=q_0+h_0-1$, the $J_0$-part of Step 1 follows by combining \eqref{eq:estimate_w_i} in \eqref{eq:estimate_J0} with Young's inequality. To estimate $J_1,J_2$, note that by Assumption \ref{ass:reaction_diffusion_global}\eqref{it:growth_nonlinearities} and Cauchy-Schwarz' inequality we have
\begin{align*}
J_1+J_2\leq \frac{\varepsilon}{4} \int_{\Tor^d} |w_i|^{q_0-2}|\nabla w_i|^2\,\dd x+
C_{\varepsilon}  \int_{\Tor^d} (1+|u|^{h_0-1}+|v|^{h_0-1})|w|^{q_0}\,\dd x.
\end{align*}
Hence, we can apply the previous argument to estimate the last term in the above estimate.

\emph{Step 2: Let $(\psi_0,\zeta_0)$ be as in Assumption \ref{ass:global_s}$(s_0,\zeta_0)$ with $s_0=0$. Then there exists a constant $K>0$ depending on $q_0,h_0,d$ such that  for each $z\in \{u, v\}$ and all $r>0$}
\begin{align*}
\P(\|z\|_{L^{q_0+h_0-1}((0,T)\times \Tor^d;\R^\ell)}^{q_0+h_0-1}> r)\leq K \psi_{0}(r^{\theta}) (1+\E\|z_0\|_{L^{\zeta_0}}^{\zeta_0}),
\end{align*}
{\em where $\theta = \frac{d}{d+2}$.}
We argue as in \eqref{eq:estimate_w_i} with an additional integration in time, and use Young's inequality to obtain
\begin{align*}
\|u_i\|^{q_0}_{L^{q_0+h_0-1}((0,T)\times \Tor^d)}
&= \big\||u_i|^{q_0/2}\big\|_{L^{\frac{2(d+2)}{d}}((0,T)\times \Tor^d)}^{2}\\
&\lesssim \big\||u_i|^{q_0/2}\big\|_{L^{\infty}(0,T;L^2)}^{2(1-\theta)}
\big\||u_i|^{q_0/2}\big\|_{L^{2}(0,T;H^1)}^{2\theta}\\
&\lesssim \|u_i\|_{L^{\infty}(0,T;L^{q_0})}^{q_0}+
\int_0^T\int_{\Tor^d} |u_i|^{q_0-2}|\nabla u_i|^{2}\,\dd x\,\dd s.
\end{align*}
Now the claim of this step follows from Assumption \ref{ass:global_s}$(0,\zeta_0)$ and $\frac{q_0+h_0-1}{q_0} = \frac{1}{\theta}$.

\emph{Step 3: Conclusion}.
Combining the estimates of Steps 1 in \eqref{eq:Ito_difference_continuity} and choosing $\varepsilon>0$ small enough, we find that
\begin{align*}
X^w(t) &: = \max_{i\in \{1, \ldots, \ell\}}\|w_i(t)\|_{L^{q_0}}^{q_0}
+ \max_{i\in \{1, \ldots, \ell\}} \frac{\ellip_i q_0(q_0-1)}{4} \int_0^t \int_{\Tor^d} |w_i|^{q_0-2} |\nabla w_i|^2\,\dd x\,\dd s
\\ & \leq
\|w_0\|_{L^{q_0}}^{q_0} + C_{\varepsilon} \int_0^t (1+ a^{u}(s) + a^v(s))X^w(s) \, \dd s + M(t),
\end{align*}
where $a^z = \|z\|_{L^{q_0+h_0-1}(\Tor^d;\R^{\ell})}^{q_0+h_0-1}$ for $z\in \{u,v\}$. Therefore, by the stochastic Gronwall lemma \cite[Corollary 5.4b)]{geiss2021sharp} we obtain that for all $\gamma>0$, $\lambda>0$, $R>0$
\begin{align*}
\P(X^{w}(t)>\gamma)\leq \frac{e^{R}}{\gamma} \E(\|w_0\|_{L^{q_0}}^{q_0}\wedge \lambda) + \P(\|w_0\|_{L^{q_0}}^{q_0}>\lambda) + \P\Big(C_{\varepsilon}\int_0^T (1+ a^{u}(t) + a^v(t))\,\dd t>R\Big).
\end{align*}
By Step 2 and Assumption \ref{ass:global_s} for each $z\in \{u,v\}$ and all $r>0$ we can estimate
\begin{align*}
\P\Big(\int_0^T a^{z}(t)\, \dd t>r\Big) \leq K \psi_0(r^{\theta}) (1+\E\|z_0\|_{L^{\zeta_0}}^{\zeta_0}).
\end{align*}
After setting $r = \frac{R}{2C_{\varepsilon}} - \frac{T}{2}$ we obtain that
\begin{align*}
\P\Big(C_{\varepsilon}\int_0^T (1+ a^{u}(t) + a^v(t))\, \dd t>R\Big) & \leq \sum_{z\in \{u,v\}} \P\Big(\int_0^T a^{z}(t)\, \dd t>\frac{R}{2C_{\varepsilon}} - \frac{T}{2}\Big) \\ & \leq K \psi_0(r^{\theta}) (1+\E\|u_0\|_{L^{\zeta_0}}^{\zeta_0} + \E\|v_0\|_{L^{\zeta_0}}^{\zeta_0}).
\end{align*}
Therefore, taking $\lambda = \gamma/e^{R}$ we can conclude
\begin{align*}
\P(X^{w}(t)>\gamma) \leq \frac{2e^{R}}{\gamma} \E\|w_0\|_{L^{q_0}}^{q_0} + K \psi_0(r^{\theta}) (1+\E\|u_0\|_{L^{\zeta_0}}^{\zeta_0} + \E\|v_0\|_{L^{\zeta_0}}^{\zeta_0}).
\end{align*}
Hence, the required assertion is proved with $\wt{\psi}_0(R) =  K\psi_0(r^{\theta}) = K\psi_0((\tfrac{R}{2C_{\varepsilon}} - \frac{T}{2})^{\theta})$ for $R\geq C_{\varepsilon}T$ and $\wt{\psi}(R) = 1$ otherwise.
\end{proof}

We are ready to prove Theorem \ref{t:continuity_general}. The main idea is to combine local well-posedness, say on $[0,T_0]$ for $T_0>0$ (see e.g.\ \cite[Proposition 2.9]{AVreaction-local}), and then to use Lemma \ref{l:tail_estimate} on the time interval $[t,T]$ where $0<t<T_0$.
The advantage is that on $[t,T_0]$ we may use the instantaneous regularization of anisotropic weighted function spaces (see e.g.\ \cite[Theorem 1.2]{ALV21} with positive weights) to go from the (possible) negative smoothness of the space $B^{\frac{d}{q}-\frac{2}{h-1}}_{q,p}$ if $q>\frac{d(h-1)}{2}$ to a space with positive smoothness so that Lemma \ref{l:tail_estimate} is applicable.

\begin{proof}[Proof of Theorem \ref{t:continuity_general}]
To economize the notation we let
$
\Y_p:=B^{\frac{d}{q}-\frac{2}{h-1}}_{q,p}.
$
Let us begin by noticing that, as Assumption \ref{ass:global_s}$(s,\zeta_0)$ holds for all $s>0$, the SPDE \eqref{eq:reaction_diffusion_system_continuity} has a \emph{global} $(p,\a_{\crit},\s,q)$-solution $u$ for all $u_0\in L^0_{\F_0}(\O;\Y_p)$. To see this, recall that the existence of a $(p,\a_{\crit},\s,q)$-solution $(u,\sigma)$ follows from Theorem \ref{t:reaction_diffusion_global_critical_spaces} with $\sigma>0$ a.s.\ and that \eqref{eq:reaction_diffusion_C_alpha_beta} holds. In particular, $\one_{\{\sigma>s\}}u(s)\in L^0_{\F_s}(\O;C^{\theta})$ for all $\theta\in (0,1)$. Now Assumption \ref{ass:global_s}$(s,\zeta_0)$ for all $s>0$ and the proof of Step 2 of Theorem \ref{t:global_well_posedness_general}, shows that there exists a \emph{global} $(p,\a_{\crit},\s,q)$-solution $v$ to \eqref{eq:reaction_diffusion_system_continuity} on $[s,\infty)$ with initial data $\one_{\{\sigma>s\}} u(s)$. Moreover, $v$ has the regularity stated in \eqref{eq:reaction_diffusion_H_theta}-\eqref{eq:reaction_diffusion_C_alpha_beta}.
Clearly, $(u\one_{\{\sigma>s\}}, \sigma\vee s)$ is a $(p,\a_{\crit},\s,q)$-solution to \eqref{eq:reaction_diffusion_system_continuity} on $[s,\infty)$ as well.
By maximality of the $(p,\a_{\crit},\s,q)$-solution $v$, we have $v=u$ on $[s,\sigma)$. Let $\beta_0 = \frac{d}{q} - \frac{2}{h-1}$ and $\gamma_0 = \frac{d}{q}+\frac{2}{p}-\frac{2}{h-1}$.
Then using the regularity of $v$ and $u=v$ on $[s,\sigma)$, we find that
\begin{align*}
\P(s<\sigma<T)
&= \P\Big(s<\sigma<T, \sup_{t\in [0,\sigma)}\|v(t)\|_{L^{\zeta_0}}<\infty\Big)\leq \P\Big(s<\sigma<T,\, \sup_{t\in [s,\sigma)}\|u(t)\|_{\zeta_0}<\infty\Big) = 0,
\end{align*}
where in the last step we used \cite[Theorem 2.10(1)]{AVreaction-local} .
Letting $T\to \infty$ and $s\downarrow 0$ we find that $\P(0<\sigma<\infty) = 0$. Since $\sigma>0$ a.s., we can conclude $\sigma = \infty$.

\emph{Step 1: Reduction to the case where $(u_0^{(n)})_{n\geq 1}$ are uniformly bounded as $\Y_p$-valued random variables}. In this step, we assume the statement of Theorem \ref{t:continuity_general} holds for bounded initial data.
Let $(u_0^{(n)})_{n\geq 1}$ be as in the statement of Theorem \ref{t:continuity_general}. Let $\eta>0$. Using Egorov's theorem, one can see that there exists an $\O_{0}\in \F_0$ and a constant $K$ such that $\P(\Omega_0)>1-\eta$ and
$\|u_0^{(n)}\|_{\Y_p}\leq K$ on $\Omega_0$. In particular, $\|u_0\|_{\Y_p}\leq K$ a.s.\ on $\Omega_0$.

Let $\wt{u}$ and $\wt{u}^{(n)}$ be the global $(p,\a_{\crit},\s,q)$-solution to \eqref{eq:reaction_diffusion_system}  with data $\one_{\O_0}u_0$ and $\one_{\O_0}u_0^{(n)}$, respectively. By localization (see \cite[Theorem 4.7(4)]{AV19_QSEE_1}),
$$
\wt{u}= u\quad  \text{ and }\quad  \wt{u}^{(n)}=u^{(n)} \ \ \text{a.s.\ on $[0,\infty)\times \O_0$.}
$$
Thus
\begin{align*}
\P\Big(\sup_{t\in [0,T]}\|u(t)-u^{(n)}(t)\|_{\Y_p}\geq \varepsilon\Big)
&\leq
\P\Big(\sup_{t\in [0,T]}\|\wt{u}(t)-\wt{u}^{(n)}(t)\|_{\Y_p}\geq \varepsilon\Big)+
\P(\O\setminus \O_0)\\
&\leq
\P\Big(\sup_{t\in [0,T]}\|\wt{u}(t)-\wt{u}^{(n)}(t)\|_{\Y_p}\geq \varepsilon\Big)
+\eta.
\end{align*}
Since $(\one_{\O_0}u_0^{(n)})_{n\geq 1}$ is uniformly bounded, the assumption of Step 1 implies
\[\limsup_{n\to \infty}\P\Big(\sup_{t\in [0,T]}\|u(t)-u^{(n)}(t)\|_{\Y_p}\geq \varepsilon\Big)\leq \eta. \]
Since $\eta>0$ was arbitrary, this completes step 1.

\emph{Step 2: Splitting argument}. By Step 1 we may suppose that there is a constant $K$ such that $\|u_0^{(n)}\|_{\Y_p}\leq K$ a.s. Therefore, $u_{0}^{n}\to u_0$ in $L^{r}(\O;\Y_p)$ for all $r\in [1,\infty)$.

By \cite[Proposition 2.9]{AVreaction-local} with parameters $(p,\a_{\crit},\s,q)$, there exist $T_0,C_0,N_0>0$ and stopping times $\sigma_0,\sigma_1,\sigma_0^{(n)}\in (0,\infty)$ such that for all $\g>0$, $n\geq N_0$ and $t\in [0,T_0]$,
\begin{align}
\label{eq:local_continuity_p_setting_1}
\P\Big(\sup_{s\in [0,t]}\|u(s)-u_{0}^{(n)}(s)\|_{\Y_p}\geq \g ,\ \sigma_0\wedge \sigma_0^{(n)}>t\Big)
\leq \frac{C_0}{\g^p}\E\|u_0-u_0^{(n)}\|_{\Y_p}^p,&\\
\label{eq:local_continuity_p_setting_2}
\P(\sigma_0\wedge \sigma_0^{(n)}\leq t)
\leq C_0\big[\E\|u_0-u_0^{(n)}\|_{\Y_p}^p+\P(\sigma_1\leq t)\big].&
\end{align}

To exploit instantaneous regularization results for anisotropic spaces (see e.g.\ \cite[Theorem 1.2]{ALV21}), we also use \cite[Theorem 2.7]{AVreaction-local} in another situation.
Fix $r>  p\vee \zeta_0$ so large that $2-\s-\frac{2}{r}-\frac{d}{q}>-\frac{d}{\zeta_0}$ (recall that  $2-\s-\frac{d}{q}>-\frac{d}{\zeta_0}$ by Assumption \ref{ass:global_s}). The choice of $r$ and Sobolev embedding yield $B_{q,r}^{2-\s-\frac{2}{r}}\embed L^{\zeta_0}$.
Let $\mu:=r(\frac{h}{h-1}-\frac{1}{2}(\s+\frac{d}{q}))-1$.
By Theorem \ref{t:reaction_diffusion_global_critical_spaces} and \cite[Remark 3.4 and (3.31)]{AVreaction-local} with parameters $(r,\mu,\s,q)$ and \cite[Theorem 1.2]{ALV21}, there exist $T_1,C_1,N_1>0$ and stopping times $\tau_0,\tau_1,\tau_0^{(n)}\in (0,\infty)$ such that for all $\g>0$, $n\geq N_1$, and $t\in [0,T_1]$,
\begin{align}
\label{eq:proof_continuity_K_estimate}
\E\Big[\one_{\{\tau_0^{(n)}>t\}} \sup_{s\in (0,t]}\big(s^{\mu} \|u^{(n)}(s)\|_{L^{\zeta_0}}^r\big)\Big] &
\leq C_1,\\
\label{eq:local_continuity_r_setting_1}
\E\Big[\one_{\{\tau_0\wedge\tau_0^{(n)}>t\}}\sup_{s\in (0,t]}\big(s^{\mu} \|u(s)-u_{0}^{(n)}(s)\|^r_{L^{\zeta_0}}\big)\Big]
&\leq  C_1\E\|u_0-u_0^{(n)}\|_{\Y_r}^r,\\
\label{eq:local_continuity_r_setting_2}
\P(\tau_0\wedge \tau_0^{(n)}\leq t)  &
\leq C_1\big[\E\|u_0-u_0^{(n)}\|_{\Y_r}^r+\P(\tau_1\leq t)\big].
\end{align}
Although the estimate \eqref{eq:proof_continuity_K_estimate} is not contained in the statement of \cite[Theorem 2.7]{AVreaction-local}, it readily follows from the choice of $\tau_0$ in its proof and the fact that $\sup_{n\geq 1}\E\|u_0^{(n)}\|_{\Y_r}^r<\infty$. Note also that we used the compatibility result of \cite[Proposition 3.5]{AVreaction-local} to ensure that the solution provided by Theorem \ref{t:reaction_diffusion_global_critical_spaces} applied with $(r,\mu,\s,q)$ coincide with $u$.

With the above-introduced notation, we can now the assertion of the theorem. To begin, let us set $T_*:=T_0\wedge T_1$ and $\varphi_i:=\sigma_i\wedge \tau_i$ for $i\in \{0,1\}$ and $\varphi^{(n)}:=\sigma_0^{(n)}\wedge \tau_0^{(n)}$ for all $n\geq 1$.
Fix $t\in (0,T_*]$, $\g>0$ and $n\geq N_0\vee N_1$.
For $0\leq t_1\leq t_2\leq T$ and $\psi\in \{\sigma, \tau,\varphi\}$ we write
\[I^{\psi}_{t_1, t_2}:= \P\Big(\sup_{s\in [t_1,t_2]}\|u(s)-u^{(n)}(s)\|_{\Y_p}\geq \g,\ \psi_0\wedge \psi_0^{(n)}> t \Big).
\]
If $t_1\leq t_2\leq t_3$, then $I^{\psi}_{t_1, t_3}\leq I^{\psi}_{t_1, t_2} + I^{\psi}_{t_2, t_3}$ and $I^{\varphi}_{t_1, t_2}\leq \min\{I^{\sigma}_{t_1, t_2},I^{\tau}_{t_1, t_2}\}$. Therefore, from \eqref{eq:local_continuity_p_setting_1}-\eqref{eq:local_continuity_p_setting_2} and \eqref{eq:local_continuity_r_setting_2}, we obtain
\begin{align*}
\P\big(\sup_{t\in [0,T]}\|u(t)-u^{(n)}(t)\|_{\Y_p}\geq \g\big)
&\leq  \P(\varphi_0\wedge \varphi_0^{(n)}\leq t)+I_{0,T}^{\varphi}
\\ &\leq  \P(\sigma_0\wedge \sigma_0^{(n)}\leq t) + \P(\tau_0\wedge \tau_0^{(n)}\leq t) +I_{0,t}^{\sigma}+I_{t,T}^{\tau}
\\ & \leq C_2(1+\gamma^{-p})\E\|u_0-u_0^{(n)}\|_{\Y_p}^p+C_2 \E\|u_0-u_{0}^{(n)}\|_{\Y_r}^r
\\  &  +
 \P(\sigma_1\leq t)+ \P(\tau_1\leq t) + I_{t,T}^{\tau}.
\end{align*}

{\em Step 3: Estimating $I_{t,T}^{\tau}$.}
Let $\overline{u}$ and $\overline{u}^{(n)}$ be the global $(q,\a_{\crit},\s,q)$-solution to \eqref{eq:reaction_diffusion_system_continuity} with initial data $\one_{\{\tau_0\wedge \tau_0^{(n)}>t\}} u(t)$ and $\one_{\{\tau_0\wedge \tau_0^{(n)}>t\}} u^{(n)}(t)$ at time $t$, respectively.
By localization (see \cite[Theorem 4.7(4)]{AV19_QSEE_1})
\begin{equation*}
u=\overline{u} \ \text{ and } \  u^{(n)}=\overline{u}^{(n)}\ \ \text{ both a.e.\ on }[t,\infty)\times \{\tau_0\wedge \tau_0^{(n)}>t\}.
\end{equation*}
Recall also that $u(s),u^{(n)}(s)\in C^{\theta}$ a.s.\ for all $\theta\in (0,1)$ and $s>0$ by Theorem \ref{t:reaction_diffusion_global_critical_spaces}. Let $C$ be the embedding constant of $L^{q_0}\hookrightarrow \Y_p$ (here we used that $p\geq q_0$).
Then we can write, for all $R>0$,
\begin{align*}
&I_{t,T}^{\tau}
\leq \P\Big(\sup_{s\in [t,T]}\|\overline{u}(s)-\overline{u}^{(n)}(s)\|_{L^{q_0}}\geq \frac{\g}{C}\Big)\\
&\leq  \frac{2e^{R} C^{q_0}}{\g^{q_0}}\E\Big[\one_{\{\tau_0\wedge \tau_0^{(n)}>t\}}\|u(t)-u^{(n)}(t)\|_{L^{q_0}}^{q_0}\Big]\\
&\qquad + \wt{\psi}_t(R)\Big(1+\E\Big[\one_{\{\tau_0\wedge \tau_0^{(n)}>t\}} \|u(t)\|_{L^{\zeta_0}}^{\zeta_0}\Big]
+\E\Big[\one_{\{\tau_0\wedge \tau_0^{(n)}>t\}} \|u^{(n)}(t)\|_{L^{\zeta_0}}^{\zeta_0}\Big]\Big) & \text{(by \eqref{eq:tail_estimateFeller})} \\
&\leq  \frac{2e^{R} C^{q_0}}{\g^{q_0}}\Big[\E\big(\one_{\{\tau_0\wedge \tau_0^{(n)}>t\}}\|u(t)-u^{(n)}(t)\|_{L^{q_0}}^{r}\big)\Big]^{\frac{q_0}{r}}+ \Psi_t(R)(1+2C_1^{\frac{\zeta_0}{r}}) & \text{(by \eqref{eq:proof_continuity_K_estimate}-\eqref{eq:local_continuity_r_setting_1})} \\
&\leq
 \frac{2e^{R}C^{q_0}}{\g^{q_0} t^{(q_0\mu)/r}} \Big[\E\big[\one_{\{\tau_0\wedge \tau_0^{(n)}>t\}}\sup_{s\in (0,t]}\big(s^{\mu}  \|u(s)-u^{(n)}(s)\|_{L^{q_0}}^r\big)\big]\Big]^{\frac{q_0}{r}}+ \Psi_t(R)(1+2C_1^{\frac{\zeta_0}{r}})\\
 &\leq \frac{2e^{R} C_1^{q_0/r} C^{q_0}}{\g^{q_0} t^{(q_0\mu)/r}} \big[\E\|u_0-u_0^{(n)}\|_{\Y_r}^r\big]^{\frac{q_0}{r}} + \Psi_t(R)(1+2C_1^{\frac{\zeta_0}{r}}) & \text{(by \eqref{eq:local_continuity_r_setting_1}),}
\end{align*}
where $\Psi_t (R):=t^{-(\zeta_0\mu)/r} \wt{\psi}_t(R)$ for $R,t>0$.

{\em Step 3: Conclusion.}  Collecting the previous estimates and letting $n\to \infty$ we obtain
\begin{align*}
\limsup_{n\to \infty}
\P\Big(\sup_{t\in [0,T]}\|u(t)-u^{(n)}(t)\|_{\Y_p}\geq \g\Big)\leq \P(\sigma_1\leq t)+ \P(\tau_1\leq t)
+ \Psi_t(R)(1+2C_1^{\frac{\zeta_0}{r}}) ,
\end{align*}
Now first using $\lim_{R\uparrow \infty}\wt{\psi}_t(R)=0$, and then $\lim_{t\downarrow 0}\P(\sigma_1\leq t) = \lim_{t\downarrow 0}\P(\tau_1\leq t)=0$, we obtain the desired convergence in probability.
\end{proof}

\begin{proof}[Proof of Proposition \ref{prop:L_m_continuity}]

{\em Step 1: Continuity of $u$.}
We first prove that $u\in L^0(\O;C([0,T];L^{q}))$. By localization (see \cite[Theorem 4.7(4)]{AV19_QSEE_1}) we may suppose that $u_0\in L^q(\Omega;L^q)$.
Let $(u_0^{(n)})_{n\geq 1}$ be such that $u_0^{(n)}\to u_0$ in $L^{\m}(\O\times \Tor^d;\R^{\ell})$ and $u_0^{(n)}\in L^{\m}_{\F_0}(\O;C^1)$ for all $n\geq 1$. Let $u^{(n)}$ be the solution corresponding to initial value $u^{(n)}_0$.

By Assumption \ref{ass:global_s}$(0,\zeta)$ and Lemma \ref{l:tail_estimate}, we find that
\begin{align*}
\lim_{m,n\to\infty} \P\Big(\sup_{t\in [0,T]}\|u^{(n)}(t)-u^{(m)}(t)\|_{L^{q}}\geq \g\Big)
 \lesssim_T
 \psi_0(R) (1+2\E\|u_0\|_{L^{\m}}^{\m}).
\end{align*}
Letting $R\to \infty$, it follows that $(u^{(n)})_{n\geq 1}$ is a Cauchy sequence in $L^0(\O;C([0,T];L^{q}))$ and therefore it converges to some $\wt{u}\in L^0(\O;C([0,T];L^q))$. Since $u^{(n)}\to u$ in $L^0(\O;C([0,T];B^0_{q,p}))$ by Theorem \ref{t:continuity_general}, it follows that $\wt{u}=u$. This proves the required continuity.

{\em Step 2: For all $u_0,v_0\in L^q_{\F_0}(\O;L^q(\Tor^d;\R^{\ell}))$ and $T,R>0$ the following estimate holds:}
\begin{align}\label{eq:tail_estimateFeller2}
\P\Big(\sup_{t\in [0,T]}\|u(t)-v(t)\|_{L^{q}}\geq \g\Big) \lesssim_T
\frac{C_R}{\g^{q}}\E\|u_0-v_0\|_{L^{q}}^{q}+ \psi_0(R) (1+\E\|u_0\|_{L^{\m}}^{\m}+\E\|v_0\|_{L^{\m}}^{\m}),
\end{align}
{\em where $\psi_0$ is as in Assumption \ref{ass:global_s}$(0,\zeta)$.}

The estimate \eqref{eq:tail_estimateFeller2} is an extension of part of the estimate \eqref{eq:tail_estimateFeller}. To prove it we need to get rid of the H\"older continuity assumption via an approximation argument. Let $(u^{(n)}_0)_{n\geq 1}$ and $(v^{(n)}_0)_{n\geq 1}$ be such that
$u^{(n)}_0\to u_0$ and $v^{(n)}_0\to v_0$ in $L^q(\O;L^q(\Tor^d;\R^{\ell}))$ and $u^{(n)}_0,v^{(n)}_0\in L^q_{\F_0}(\Omega;C^1)$. Let $u^{(n)}$ and $v^{(n)}$ be the solutions corresponding to the initial data $u^{(n)}_0$ and $v^{(n)}_0$.

By Lemma \ref{l:tail_estimate}, estimate \eqref{eq:tail_estimateFeller2} holds for $s=0$ with $(u,v)$ replaced by $(u^{(n)},v^{(n)})$. From the proof of Step 1 we obtain that $u^{(n)}\to u$ and $v^{(n)}\to v$ in $L^0(\O;C([0,T];L^q))$. Therefore, \eqref{eq:tail_estimateFeller2} follows by taking limits $n\to \infty$.

{\em Step 3: The convergence assertion.} As in Step 1 of Theorem \ref{t:continuity_general} we may assume $u_0$ and $u_0^{(n)}$ in $L^p(\O\times \Tor^d;\R^{\ell})$. To prove the convergence assertion we can argue in a similar way as in Step 1, but now using \eqref{eq:tail_estimateFeller2}.
\end{proof}

It remains to prove Corollary \ref{cor:estimate_up_to_0}.

\begin{proof}[Proof of Corollary \ref{cor:estimate_up_to_0}]
We only prove \eqref{it:estimate_up_to_0_Lzeta} as the other follows similarly.
More precisely, we only prove the following estimate:
\begin{equation}
\label{eq:proof_corollary_estimate_up_to_zero}
\sup_{t\in [0,T]}\E \|u(t)\|_{L^{\m}}^{\m }+  \E \int_{0}^{T}\int_{\Tor^d} |u|^{\m-2} |\nabla u|^{2}\,\dd x\,\dd t
\leq N_0(1+\E\|u_0\|_{L^{\m}}^{\m}).
\end{equation}
The estimate \eqref{eq:aprioribounds2} follows similarly.
Recall that $\ell=i=1$ in the setting of \eqref{it:estimate_up_to_0_Lzeta}.

Fix $u_0\in L^{\m}_{\F_0}(\O\times \Tor^d)$.
Choose $(u_{0}^{(n)})_{n\geq 1}\subseteq L^{\m}_{\F_0}(\O;C^{1}(\Tor^d))$ such that $u_0^{(n)}\to u_0$ in $L^{\m}(\O\times \Tor^d)$. Let $u^{(n)}$  be the global $(p,\a_{\crit},\s,q)$-solution to \eqref{eq:reaction_diffusion_system_continuity} with $i=\ell=1$.
By the last assertion of Theorem \ref{t:global_well_posedness_general}, one sees that
\eqref{eq:proof_corollary_estimate_up_to_zero} holds with $(u,u_0)$ replaced by $(u^{(n)},u_0^{(n)})$.
From Proposition \ref{prop:L_m_continuity}, we see that up to extracting a subsequence, we have $u^{(n)}\to u$ a.s.\ in $C([0,T];L^q(\Tor^d))$, and in particular, in $L^0(\Omega\times(0,T)\times \T^d)$.

Since $\big|\nabla |u^{(n)}|^{\m/2}\big|^2= \frac{\m^2}{4} |u^{(n)}|^{\m-2}|\nabla u^{(n)}|^2$, by the above we see that $v_n = |u_{n}|^{\m/2}$ is a bounded sequence in $L^2(\Omega\times(0,T)\times \T^d)$. Therefore, it has a subsequence such that $v_{n_k}\to v$ weakly in $L^2(\Omega\times(0,T)\times \T^d)$. Since we already know that $u^{(n)}\to u$ in measure, it follows that $v=|u|^{\m/2}$. By lower semi-continuity of the $L^2$-norm in the topology of weak convergence, we see that
\begin{align*}
\E \int_{0}^{T}\int_{\Tor^d} \big|\nabla |u|^{\m/2}\big|^2\,\dd x\,\dd t
&\leq \liminf_{k\to \infty}\E \int_{0}^{T}\int_{\Tor^d} \big|\nabla |u^{(n_k)}|^{\m/2}\big|^2 \,\dd x\,\dd t
\\ & \leq \liminf_{k\to\infty} N_0(1+\E\|u_0^{(n_k)}\|_{L^{\m}}^{\m})
 =N_0(1+\E\|u_0\|_{L^{\m}}^{\m}).
\end{align*}
This gives the required estimate for the second term on the LHS of \eqref{eq:proof_corollary_estimate_up_to_zero}.  For the proof for the first term fix $t\in [0,T]$. Since $(u^{(n)}(t))_{n\geq 1}$ is bounded in $L^{\zeta}(\Omega\times \T^d)$ is has a subsequence such that $u^{(n)}(t)\to \xi$ weakly $L^{\zeta}(\Omega\times \T^d)$. Since $u^{(n)}(t)\to u(t)$ in $L^0(\Omega;L^q(\T^d))$, it follows that $u(t) = \xi$. Therefore,
as before
\begin{align*}
\E \|u(t)\|_{L^{\m}}^{\m } \leq \E \liminf_{k\to \infty}\|u^{(n_k)}(t)\|_{L^{\m}}^{\m } \leq \liminf_{k\to \infty} N_0(1+\E\|u_0^{(n_k)}\|_{L^{\m}}^{\m}) =
N_0(1+\E\|u_0\|_{L^{\m}}^{\m}).
\end{align*}
\end{proof}

\appendix

\section{Generalized It\^o's formula}
The next version of It\^o's formula will be applied several times. The proof is analogous to the one of  \cite[Proposition A.1]{DHV16} and it is omitted for brevity.

\begin{lemma}[Generalized It\^o's formula]
\label{lem:Ito_generalized}
Let $\xi:\R\to \R$ be a $C^2$-function with bounded derivatives up to the second order. Fix $0<T<\infty$ and let $\tau$ be a stopping time with values in $[0,T]$.
Assume that $v_{0}:\O\to L^2(\Tor^d)$ is $\F_{0}$-measurable and
\begin{itemize}
\item $\Phi:=(\Phi_j)_{j=1}^d\in L^2(0,T;L^2(\Tor^d;\R^d))$ and $\phi\in L^1(0,T;L^2(\Tor^d))$;
\item $(\psi_n)_{n\geq 1}\in L^2(0,T;L^2(\Tor^d;\ell^2))$ a.s.;
\item $\phi,\Phi$ and $\psi_n$ are progressively measurable.
\end{itemize}
Suppose that, a.s., $v\in C([0,T];L^2(\Tor^d))$, $v\in L^2(0,\tau;H^1(\Tor^d))$ and $v$ satisfies
\begin{equation*}
\dd v=\phi(t)\,\dd t +\one_{[0,\tau]}\div (\Phi(t))\,\dd t +\sum_{n\geq 1}  \psi_n(t)\,\dd w^n_t\ \ \text{ on }[0,T]\times \O, \quad  v(0)=v_{0},
\end{equation*}
both in $H^{-1}(\T^d)$. Then, a.s.\ for all $t\in [0,T]$,
\begin{align*}
\int_{\Tor^d} \xi(v(t))\,\dd x
&= \int_{\Tor^d} \xi(v_{0})\,\dd x+ \int_{0}^t \int_{\Tor^d} \xi'(v(s)) \phi(s)\,\dd x\,\dd s\\
& - \int_{0}^t \int_{\Tor^d}\one_{[0,\tau]} \xi''(v(s)) \nabla v(s)\cdot \Phi(s)\,\dd x\,\dd s\\
& + \sum_{n\geq 1}\int_{0}^t \int_{\Tor^d} \xi'(v(s)) \psi_n \,\dd x\,\dd w^n_s
+\frac{1}{2}\sum_{n\geq 1} \int_{0}^t \int_{\Tor^d} \xi''(v(s)) |\psi_n(s)|^2\,\dd x\,\dd s.
\end{align*}
\end{lemma}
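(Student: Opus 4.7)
The natural approach is spatial mollification followed by a limit argument. Let $(J_\varepsilon)_{\varepsilon>0}$ be a standard family of smooth mollifiers on $\Tor^d$, and set $v_\varepsilon := J_\varepsilon \ast v$, and similarly for $\phi_\varepsilon, \Phi_\varepsilon, \psi_{n,\varepsilon}, v_{0,\varepsilon}$. Since convolution commutes with differentiation on the torus and with the stochastic integral (via stochastic Fubini for the $\ell^2$-valued integrand), each $v_\varepsilon$ takes values in $C^{\infty}(\Tor^d)$ and satisfies
\begin{equation*}
v_\varepsilon(t) = v_{0,\varepsilon} + \int_0^t \bigl[\phi_\varepsilon(s)+\div\Phi_\varepsilon(s)\bigr]\,ds + \sum_{n\geq 1}\int_0^t \psi_{n,\varepsilon}(s)\,dw^n_s,
\end{equation*}
now as a genuine identity in $L^2(\Tor^d)$ at every time $t$, with \emph{no} distributional derivatives on the right-hand side.

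I would then apply the Hilbert-space It\^o formula to $F(v_\varepsilon(t))$, where $F:L^2(\Tor^d)\to \R$ is defined by $F(u):=\int_{\Tor^d}\xi(u)\,dx$. Since $\xi\in C^2_{\rm b}$, $F$ is twice continuously Fr\'echet differentiable on $L^2$ with $F'(u)[h]=\int \xi'(u)h\,dx$ and $F''(u)[h,k]=\int \xi''(u)hk\,dx$, both bounded in operator norm by $\|\xi'\|_\infty,\|\xi''\|_\infty$. The classical Hilbert-space It\^o formula (see e.g.\ the version in \cite{DHV16} referenced in the statement) yields the claimed identity, except with $v,\phi,\Phi,\psi_n$ replaced by their $\varepsilon$-versions and with $\int \xi'(v_\varepsilon)\div \Phi_\varepsilon\,dx$ rather than $-\int \xi''(v_\varepsilon)\nabla v_\varepsilon\cdot \Phi_\varepsilon\,dx$. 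Smoothness of $v_\varepsilon$ in space permits the latter identity via classical integration by parts.

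The last step is to send $\varepsilon\downarrow 0$. Standard properties of mollifiers give, a.s., $v_\varepsilon\to v$ in $C([0,T];L^2(\Tor^d))$, $\nabla v_\varepsilon\to \nabla v$ in $L^2(0,T;L^2)$, $\phi_\varepsilon\to \phi$ in $L^1(0,T;L^2)$, $\Phi_\varepsilon\to \Phi$ in $L^2(0,T;L^2)$, and $(\psi_{n,\varepsilon})_n\to (\psi_n)_n$ in $L^2(0,T;L^2(\ell^2))$; extracting a subsequence also yields pointwise convergence a.e.\ in $(t,x)$. Boundedness of $\xi'$ and $\xi''$, together with H\"older's inequality and dominated convergence, gives pathwise convergence of each deterministic Lebesgue integral; for the stochastic term one applies It\^o's isometry to the difference of integrands, using the uniform bound on $\xi'$ and the $L^2(0,T;L^2(\ell^2))$-convergence of $(\psi_{n,\varepsilon})_n$ to get convergence in probability.

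The only genuine technical point is the divergence integral: the identity $\int \xi'(v_\varepsilon)\div\Phi_\varepsilon\,dx=-\int \xi''(v_\varepsilon)\nabla v_\varepsilon\cdot\Phi_\varepsilon\,dx$ is trivial at the mollified level, but passage to the limit on the right-hand side is what forces the assumption $v\in L^2(0,T;H^1(\Tor^d))$ in the hypothesis (so that $\nabla v_\varepsilon$ converges strongly in $L^2$ and is paired with $\Phi_\varepsilon\to \Phi$ in $L^2$). Everything else is bookkeeping using the uniform bounds on $\xi,\xi',\xi''$ and the assumed integrabilities of $\phi,\Phi,\psi_n$.
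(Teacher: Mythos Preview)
The paper does not supply its own proof of this lemma; it merely records the statement and cites \cite[Proposition~A.1]{DHV16} as the source. Your mollification-then-limit argument is precisely the standard route to such results and is essentially how the cited reference proceeds, so there is nothing to compare against beyond noting that your sketch reconstructs the proof the paper outsources. The argument is sound: the only place care is genuinely needed is the one you flag, namely that the integration-by-parts identity for the $\div\Phi$ term survives the limit because $v\in L^2(0,T;H^1)$ forces $\nabla v_\varepsilon\to\nabla v$ strongly in $L^2((0,T)\times\Tor^d)$, which pairs correctly with the $L^2$-convergence of $\Phi_\varepsilon$. One minor remark: your parenthetical ``see e.g.\ the version in \cite{DHV16}'' for the classical Hilbert-space It\^o formula is slightly circular, since that reference is the source of the present lemma itself; you would cite a standard text (Da Prato--Zabczyk, Krylov, or Liu--R\"ockner) for the $C^2_{\rm b}$ It\^o formula on $L^2$ at the mollified level.
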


Note that $\nabla v\cdot \Phi\in L^1((0,\tau)\times \T^d)$ a.s.\ due to the assumptions of Lemma \ref{lem:Ito_generalized}.

\section{Proof of Lemma \ref{lem:dissipationII}}
\label{app:proofs_lemmas}

\begin{proof}[Proof of  Lemma \ref{lem:dissipationII}]
Since all arguments are pointwise in $(t,\omega)$ we omit it from the notation for convenience and just write $\cdot$ instead.

{\em Step 1: Estimate for $F$.}
Set $\mathcal{F}(\cdot,x,u):=\int_0^{u} |y|^{\m-2} F(\cdot,x,y)\,\dd y$ for $u\in\R$.
Since $(x,y)\mapsto F(\cdot,x,y)$ and $(x,y)\mapsto \partial_{x_j} F(\cdot,x,y)$ are continuous, by Leibniz integral rule we have, for $u\in C^1(\T^d)$,
$$
\div_x\big[ \mathcal{F}(\cdot,x,u(x))\big]
= |u|^{\m-2} F(\cdot,x,u(x))\cdot \nabla u
+ \int_{0}^{u(x)}|y|^{\m-2} \, \div_x F(\cdot,x,y)\,\dd y.
$$
Since $F$ and $u$ are periodic the integral over $\T^d$ of the LHS vanishes, and therefore
\begin{align*}
\Big|\int_{\T^d} |u(x)|^{\m-2}  &F(\cdot,x, u(x)) \cdot \nabla u \, \dd x\Big| =
\Big|\int_{\T^d} \int_{0}^{u(x)}|y|^{\m-2} \, \div_x F(\cdot,x,y)\,\dd y \, \dd x\Big|
\\ & \leq \frac{1}{\m-1}\int_{\T^d} |u(x)|^{\m-1} \sup_{|y|\leq |u(x)|}|\div_x F(\cdot,x,y)| \, \dd x
\\ & \leq \varepsilon \int_{\T^d} |u(x)|^{\m-2} \sup_{|y|\leq |u(x)|}|\div_x F(\cdot,x,y)|^2 \, \dd x
+ C_{\varepsilon} \int_{\T^d} |u(x)|^{\m} \, \dd x,
\end{align*}
where we used Young's inequality with $\varepsilon>0$ arbitrary and $C_{\varepsilon}>0$.

{\em Step 2: Estimate for the product of $b$ and $g$.}
Next, we estimate the product term involving $b_n$ and $g_n$ using a similar idea.
Set
$\mathcal{G}_n(\cdot,x,u):= \int_0^u |y|^{\m-2} g_n(\cdot,x,y)\,\dd y$ for $u\in\R$.
Then Leibniz integral rule gives that
$$
\partial_{x_j}\big[\mathcal{G}_n(\cdot,x,u(x))\big]
= |u(x)|^{\m-2} g_n(\cdot,x,u(x))\partial_j u(x) + \int_0^{u(t,x)} |y|^{\m-2}\partial_{x_j} g_n(\cdot,x,y)\,\dd y.
$$
Integrating by parts and arguing as before using the periodicity of $b$, $\mathcal{G}_n$ and $u$ we find that
\begin{align*}
-&\int_{\T^d} |u(x)|^{\m-2}\big[ (b_n(\cdot,x) \cdot \nabla) u(x) \, g_n(\cdot,x, u(x))\big] \, \dd x\\
& \quad =   \int_{\T^d} \div(b_n(\cdot, x)) \mathcal{G}_n(\cdot, x,u(x)) \,\dd x + \int_{\T^d} b_n(\cdot,x)  \int_0^{u(t,x)} |y|^{\m-2}\nabla_x g_n(\cdot,x,y)\,\dd y\, \dd x.
\end{align*}
Thus
\begin{align*}
 &\Big|\int_{\T^d} |u(x)|^{\m-2} \sum_{n\geq 1} \big[(b_n(\cdot,x) \cdot \nabla) u(x)\,  g_n(\cdot,x, u(x)) \big]\, \dd x\Big|
\\ & \leq \int_{\T^d} \int_{-|u(x)|}^{|u(x)|} |y|^{\m-2} \sum_{n\geq 1} |\div(b_n)(\cdot, x)|  |g_n(\cdot, x,y)| \, \dd y  \, \dd x
\\ & \quad + \int_{\T^d} \int_{-|u(x)|}^{|u(x)|} |y|^{\m-2} \sum_{n\geq 1} |b_n(\cdot,x)| \, |\nabla_x g_n(\cdot, x,y)| \,\dd y \, \dd x
\\ & \leq \|(\div(b_n))_{n\geq 1}\|_{L^\infty(\T^d;\ell^2)} \int_{\T^d}  \int_{-|u(x)|}^{|u(x)|} |y|^{\m-2}  \|(g_n(\cdot, x,y))_{n\geq 1}\|_{\ell^2} \, \dd y  \, \dd x
\\ & \quad + \|(b_n)_{n\geq 1}\|_{L^\infty(\T^d;\ell^2)} \int_{\T^d} \int_{-|u(x)|}^{|u(x)|} |y|^{\m-2} \, \|(\nabla_x g_n(\cdot, x,y))_{n\geq 1}\|_{\ell^2} \,\dd y \, \dd x.
\end{align*}
To estimate the last two terms for $k\in \{0,1\}$ and any $\delta>0$ we can write
\begin{align*}
&\int_{\T^d}  \int_{-|u(x)|}^{|u(x)|} |y|^{\m-2}  \|(\nabla_x^k g_n(\cdot, x,y))_{n\geq 1}\|_{\ell^2} \, \dd y  \, \dd x
\\ & \leq \frac{2}{\m-1}\int_{\T^d}  |u(x)|^{\m-1}  \sup_{|y|\leq |u(x)|} \|(\nabla_x^k g_n(\cdot, x,y))_{n\geq 1}\|_{\ell^2} \, \dd x
\\ & \leq \delta\int_{\T^d}  |u(x)|^{\m-2}  \sup_{|y|\leq |u(x)|} \|(\nabla_x^k g_n(\cdot, x,y))_{n\geq 1}\|_{\ell^2}^2 \, \dd x  + C_{\delta} \int_{\T^d}  |u(x)|^{\m} \, \dd x,
\end{align*}
where $C_{\delta}>0$ depends on $\delta$. Therefore, multiplying by the $L^\infty$-norm of the $b$ term or its derivative, we can conclude that for every $\varepsilon>0$
\begin{align*}
&\Big|\int_{\T^d} |u(x)|^{\m-2} \sum_{n\geq 1} (b_n(\cdot,x) \cdot \nabla) u  g_n(\cdot,x, u(x)) \, \dd x\Big|
\\ & \leq \varepsilon\sum_{k\in \{0,1\}}\int_{\T^d}  |u(x)|^{\m-2}  \sup_{|y|\leq |u(x)|} \|(\nabla_x^k g_n(\cdot, x,y))_{n\geq 1}\|_{\ell^2}^2 \, \dd x  + C_{b,\varepsilon} \int_{\T^d}  |u(x)|^{\m} \, \dd x.
\end{align*}

{\em Step 3: Conclusion.}
Therefore, we find that
\begin{align*}
& \am \nabla u\cdot \nabla u   + F(\cdot, u) \cdot \nabla u - \frac{u f(\cdot, u)}{\m-1} -\frac12 \sum_{n\geq 1} \big[(b_n \cdot \nabla) u + g_n(\cdot, u) \big]^2
\\ & \geq
\am \nabla u\cdot \nabla u  -\frac{1}{2} \sum_{n\geq 1} |(b_n\cdot \nabla)u|^2  - \frac{u f(\cdot, u)}{\m-1} - \frac{1}{2} \sum_{n\geq 1} |g_n\cdot,u)|^2-R(\cdot, u)
\\ & \geq \nu |\nabla u|^2 - \frac{u f(\cdot, u)}{\m-1} - \frac12\|(g_n(\cdot,u)_{n\geq 1}\|^2_{\ell^2} -R(\cdot,u),
\end{align*}
where $R(\cdot, u) = -F(\cdot, u) \cdot \nabla u +\sum_{n\geq 1} [(b_n \cdot \nabla) u \, g_n(\cdot, u)]$. Therefore, to check Assumption \ref{ass:dissipation_general} we multiply by $|u(x)|^{\m-2}$, and integrate over $\T^d$ to obtain
\begin{align*}
 \int_{\T^d} |u|^{\m-2} \Big( \am \nabla u\cdot \nabla u   + F(\cdot, u) \cdot \nabla u - \frac{u f(\cdot, u)}{\m-1} -\frac12 \sum_{n\geq 1} \big[(b_n \cdot \nabla) u + g_n(\cdot, u) \big]^2\Big) \, \dd x&
\\  \geq \int_{\T^d} |u|^{\m-2}\Big( \nu |\nabla u|^2 - \frac{u f(\cdot, u)}{\m-1} - \frac{1}{2} \|(g_n(\cdot,u)_{n\geq 1}\|^2_{\ell^2} -R(\cdot,u)\Big) \,\dd x&.
\end{align*}
Now note that for every $\varepsilon>0$
\begin{align*}
\int_{\T^d} |u|^{\m-2} R(\cdot,u) \,\dd x
& \leq C_{b,\varepsilon}' \int_{\T^d} |u(x)|^{\m} \, \dd x + \varepsilon \int_{\T^d} |u(x)|^{\m-2} \sup_{|y|\leq |u(x)|}|\div_x F(\cdot,x,y)|^2 \, \dd x
\\ & \ \ + \varepsilon\sum_{k\in \{0,1\}}\int_{\T^d}  |u(x)|^{\m-2}  \sup_{|y|\leq |u(x)|} \|(\nabla_x^k g_n(\cdot, x,y))_{n\geq 1}\|_{\ell^2}^2 \, \dd x.
\end{align*}
Using this together with the pointwise condition \eqref{eq:dissipationII}, Assumption \ref{ass:dissipation_general} follows.

The final assertion concerning $S$-valued functions follows in the same way.
\end{proof}

{\small
\subsubsection*{Acknowledgements}
The authors are grateful to Caterina Balzotti for her support in creating the figures.
The authors thank Udo B\"ohm and the anonymous referees for careful reading and helpful comments.
}

\bibliographystyle{plain}
\bibliography{literature}

\end{document}